\newtheorem{theorem}{Theorem}[subsection]
\newtheorem{definition}{Definition}[subsection]
\newtheorem{remark}{Remark}[section]
\newtheorem{proposition}{Proposition}[subsection]
\newtheorem{lemma}{Lemma}[subsection]
\newtheorem{problem}{Problem}
\newcommand{\vect}[1]{\boldsymbol{\mathbf{#1}}} 
\newcommand{\vertiii}[1]{{\left\vert\kern-0.25ex\left\vert\kern-0.25ex\left\vert #1 
    \right\vert\kern-0.25ex\right\vert\kern-0.25ex\right\vert}}
\newcommand{\zd}{v_{{D}}} 
\newcommand{\zdn}{v_{{D}}^{(n)}}
\newcommand{\zdk}{v_{{D}}^{(k)}}
\newcommand{\ezd}{\tilde{v}_{{D}}} 
\newcommand{\ezdn}{\tilde{v}_{{D}}^{(n)}}
\newcommand{\ezdk}{\tilde{v}_{{D}}^{(k)}}
\newcommand{\eun}{\tilde{u}_{{N}}}
\newcommand{\eud}{\tilde{u}_{{D}}} 
\newcommand{\eudn}{\tilde{u}_{{D}}^{(n)}}
\newcommand{\zo}{v(\Omega)}
\newcommand{\udo}{\tilde{u}_{{D}}}
\newcommand{\udpp}{u^{\prime\prime}_{D}} 
\newcommand{\udn}{u_{{D}}^{(n)}}
\newcommand{\un}{u_{{N}}}
\newcommand{\ud}{u_{{D}}}
\newcommand{\udt}{u_{{D}t}}
\newcommand{\unp}{u^{\prime}_{N}}
\newcommand{\udp}{u^{\prime}_{D}}
\newcommand{\pdp}{p^{\prime}_{D}}
\newcommand{\pnp}{p^{\prime}_{N}} 
\newcommand{\pn}{p_{{N}}}
\newcommand{\pd}{p_{{D}}}
\newcommand{\Wad}{\mathcal{W}_{\text{ad}}}
\newcommand{\sfj}{J}
\newcommand{\sfTheta}{\vect{\Theta}}
\newcommand{\VV}{\vect{V}}
\newcommand{\Vn}{V_n}
\newcommand{\WW}{\vect{W}}
\newcommand{\nn}{\nu}
\newcommand{\dn}[1]{\partial_{\nn}{#1}}
\newcommand{\intD}[1]{\int_{D}{#1}{\, d x}}
\newcommand{\intO}[1]{\int_{\Omega}{#1}{\, d x}}
\newcommand{\intS}[1]{\int_{\Sigma}{#1}{\, d s}}  
\newcommand{\intG}[1]{\int_{\Gamma}{#1}{\, d s}} 
\newcommand{\intGast}[1]{\int_{\Gamma^{\ast}}{#1}{\, d s}}
\begin{document} 
\title{{Boundary shape reconstruction with Robin condition: existence result, stability analysis, and inversion via multiple measurements}}
\author{Lekbir Afraites$^{\ast}$ and Julius Fergy T. Rabago$^{\dagger}$}

\date{%
	{\footnotesize
        $^\ast$Mathematics and Interactions Teams (EMI), Faculty of Sciences and Techniques\\%
        Sultan Moulay Slimane University, Beni Mellal, Morocco\\\vspace{-1pt}
        \texttt{l.afraites@usms.ma,\ lekbir.afraites@gmail.com}\\[2ex]	
	$^{\dagger}$Faculty of Mathematics and Physics, Institute of Science and Engineering\\%
         Kanazawa University, Kanazawa 920-1192, Japan\\\vspace{-1pt}
        \texttt{rabagojft@se.kanazawa-u.ac.jp,\ jfrabago@gmail.com}\\[2ex]       
}
    \today
}

\maketitle

\begin{abstract}
This study revisits the problem of identifying the unknown interior Robin boundary of a connected domain using Cauchy data from the exterior region of a harmonic function. 
It investigates two shape optimization reformulations employing least-squares boundary-data-tracking cost functionals. 
Firstly, it rigorously addresses the existence of optimal shape solutions, thus filling a gap in the literature. 
The argumentation utilized in the proof strategy is contingent upon the specific formulation under consideration. 
Secondly, it demonstrates the ill-posed nature of the two shape optimization formulations by establishing the compactness of the Riesz operator associated with the quadratic shape Hessian corresponding to each cost functional. 
Lastly, the study employs multiple sets of Cauchy data to address the difficulty of detecting concavities in the unknown boundary. 
Numerical experiments in two and three dimensions illustrate the numerical procedure relying on Sobolev gradients proposed herein.
\medskip

\textit{Keywords:}{ geometric inverse problem, Robin boundary condition, shape optimization, ill-posedness, multiple measurements}
\end{abstract}

\section{Introduction}
\label{sec:Introduction}

We revisit the problem of identifying a connected domain $\Omega$ with an exterior accessible boundary $\Sigma$ and an unknown, inaccessible (interior) boundary $\Gamma$ via a Cauchy pair of data $(f,g)$ on $\Sigma$ for a harmonic function $u$ in $\Omega$.
On $\Gamma$, $u$ is assumed to satisfy a homogeneous Robin boundary condition.
The problem can essentially be expressed as an overdetermined system of partial differential equations (PDEs)
	\begin{equation}
        \label{eq:gip}
        		  - \Delta u	 = 0 	\quad\text{in\ $\Omega$},\qquad
        			      u = f \quad \text{and}\quad \dn{u} = g 	\quad\text{ on \ $\Sigma$},\qquad
        	\dn{u} + \alpha u = 0	\quad\text{on\ $\Gamma$},
        \end{equation}
where $\alpha$, generally, is assumed to be a fixed non-negative Lipschitz function in $\mathbb{R}^{d}$, $d \in \{2,3\}$, such that $\alpha \geqslant \alpha_{0} > 0$, where $\alpha_{0}$ is a known constant and $\partial_{\nn}{u}$ is the (outward) unit normal derivative of $u$.
In inverse problem framework, \eqref{eq:gip} can be stated as follows:
\begin{problem}\label{eq:main_problem}
	Given the Dirichlet data $f$ on $\Sigma$ and the measured Neumann data
	\[
		g:= \dn{u}\qquad\text{on $\Sigma$},
	\]
	where $u:\Omega \to \mathbb{R}$ solves
	\begin{equation}
        \label{eq:state}
        		  - \Delta u	 = 0 	\quad\text{in\ $\Omega$},\qquad
        			      u = f 	\quad\text{on\ $\Sigma$},\qquad
        	\dn{u} + \alpha u = 0	\quad\text{on\ $ \Gamma $},
        \end{equation}
	determine the shape of the unknown portion of the boundary $\Gamma$.
\end{problem}
Actual application of the problem -- which is popular in engineering sciences, particularly in materials science and biomedical engineering \cite{KurahashiMaruokaIyama2017} -- includes the identification of a cavity or inclusion (or the reconstruction of inaccessible boundary or the detection of interior cracks in a conducting medium) by electrostatic measures or thermal imaging techniques on the external and accessible part $\Sigma$.
In the former case, $u$ is interpreted as the electrostatic potential in a conducting body $\Omega$ of which only the part $\Sigma$ of the boundary is accessible for testing and evaluation.
Problem \ref{eq:main_problem} can then be interpreted as the determination of the shape of the inaccessible portion $\Gamma$ of the boundary from the knowledge of the imposed voltage $u\big|_{\Sigma}$ and the measured resulting current $\dn{u}\big|_{\Sigma}$ on $\Sigma$.
Other applications of this problem (or slightly modified versions) are mentioned in \cite{AlessandriniDelPieroRondi2003,AlessandriniSincich2007,ChaabaneaJaoua1999,CakoniKress2007,FangLu2004,RabagoAzegami2018}.
For a more detailed discussion about the model equation, we refer the readers, for instance, to \cite{FasinoInglese2007,Inglese1997,KaupSantosa1995,KaupSantosaVogelius1996}.
Meanwhile, for some numerical studies on recovering $\Gamma$ from measurements on $\Sigma$, assuming the knowledge of $\alpha$, see \cite{AfraitesRabago2022,CakoniKress2007,CakoniKressSchuft2010a,CakoniKressSchuft2010a,KressRundell2005,Loh1987,Rundell2008,FangLinMa2019,FangZeng2009}.

In Problem \ref{eq:main_problem}, the cases $\alpha = 0$ and $\alpha = \infty$ correspond, respectively, to homogeneous Neumann and Dirichlet boundary conditions, as studied in \cite{KressRundell2005} and \cite{IvanyshynKress2006}. 
Our main focus is on the case where $\alpha \in \mathbb{R}^{+}$, except in subsection \ref{subsection:existence_tracking_Neumann_data}, where $\alpha$ is assumed to be a Lipschitz function with additional properties. 
Thus, unless specified otherwise, $\alpha$ as a positive constant.

Regarding the identifiability issue for the inverse Robin problem, we note the following. 
In \cite{IngleseMariani2004}, Inglese and Mariani established local uniqueness of $\Gamma$ under the assumption that $\Omega$ is a thin rectangular plate. 
In \cite{CakoniKress2007}, Cakoni and Kress highlighted that, generally, for a fixed constant impedance $\alpha$, a single Cauchy pair $(f,g)$ on $\Sigma$ can lead to infinitely many different domains $\Omega$. 
They provided counterexamples demonstrating that a single Cauchy pair is insufficient to identify $\Gamma$, particularly when determining both the shape $\Gamma$ and the impedance $\alpha$ simultaneously. 
Bacchelli further showed in \cite{Bacchelli2009} that two pairs of Cauchy data ensure uniqueness of $\Gamma$ and $\alpha$ simultaneously, provided that the input data are linearly independent and one of them is positive. 
Additionally, Pagani and Pierotti in \cite{PaganiPierotti2009} also established uniqueness results using two measures for the inverse Robin problem. 
On the topic of stability using solutions corresponding to two independent input data, we refer readers to the work of Sincich in \cite{Sincich2010}.
Meanwhile, for cases involving homogeneous Dirichlet or Neumann boundary conditions, one pair of Cauchy data uniquely determines the missing part of the boundary; see \cite[Thm. 2.3]{CakoniKress2007}. 
In the case of Neumann boundary data on $\Gamma$, for example, one can establish that $(f,g) = (u, \dn{u})|_{\Sigma}$ uniquely determines $\Gamma$ provided that $f$ is not constant \cite[Rem. 2.4]{CakoniKress2007}.
%
%

The shape inverse problem under consideration is well-known to be ill-posed in the sense of Hadamard \cite{EpplerHarbrecht2005,Fang2022}.
In fact, previous numerical studies \cite{AfraitesRabago2022,RabagoAzegami2018} have highlighted the difficulty of accurately detecting an unknown Robin boundary $\Gamma$, particularly when it includes concave parts (see \cite{AfraitesRabago2022} for illustrative examples, and \cite{CaubetDambrineKateb2013} for a related study).
Here we aim to demonstrate -- through numerical experiments -- that this difficulty can be partially overcome by utilizing multiple boundary measurements.
Employing this strategy significantly improves detection outcomes compared to using a single boundary measurement.
While it may seem intuitive to leverage multiple boundary data for enhanced detection, limited exploration of this idea exists in the context of the present shape inverse problem -- to the best of our knowledge.
Notable exceptions include recent works by Fang \cite{Fang2022}, Rundell \cite{Rundell2008}, a related investigation on an interface problem \cite{GiacominiPantzaTrabelsi2017}, and a study on identifying acoustic sources in a domain \cite{AlvesMartinsRoberty2009}.
This finding underscores our study's primary contribution to the existing literature.
Additionally, we extend our work to include a numerical experiment in three dimensions, highlighting the advantages of our proposed strategy over the classical approach of relying solely on a single boundary measurement.

Despite the well-established understanding of the boundary inverse problem for the Laplacian with Robin conditions, to our knowledge, the accurate detection of concave parts or regions on the unknown boundary remains unexplored in previous studies, at least from a numerical perspective.
Additionally, a rigorous proof of the existence of a shaped solution to the classical shape optimization formulation with boundary-data-tracking type cost functionals -- as far as we know  -- is currently lacking in the literature. 
Regarding this issue, it must be noted that for domains with even mildly oscillating boundaries converging to a smooth domain, the Robin condition might not be preserved. 
This makes the existence proof even more delicate.
These observations warrant further investigation of the problem within the context of shape optimization settings.

The present study is the first part of a two-part investigation.
In this work, the investigation that will be carried out covers the following: (ii) existence results for the minimizer of the considered cost functionals; (i) analyses of the shape Hessians of the cost functionals at their respective optimal shapes; and (iii) numerical investigations focusing on the comparison between reconstructions with one or more Cauchy data.
The second part or follow-up work of the study will focus on the following topics: (i) quantitative analyses; (ii) development of a second-order numerical method; and (iii) numerical analysis of the reconstruction's sensitivity to noise, involving the introduction of a suitable regularization term in the functional. 
This will, in fact, include a proposal for a different shape optimization approach based on an augmented Lagrangian method.
On the other hand, we emphasize here that since the shape Hessian is computed only at the minimizer, the analysis is insufficient to develop a second-order method, which, despite its described limitations, is still of interest. 
These research directions are deliberately postponed in a follow-up investigation to maintain the focus of the present study and to avoid making the discussion too extensive.

Let us briefly discuss some specifications of the shape inverse problem that we aim to address here in the next few lines.
Let $D$ be a (non-empty, open, bounded, and simply connected) planar set of class $\mathcal{C}^{2,1}$, and $\delta > 0$ be a fixed (small) real number.
Define the collection of all admissible (non-empty) inclusions $\omega$ by $\Wad$ as follows
\begin{equation}\label{eq:set_Wad}
\Wad :=  \left\{ \ \omega \Subset D \  \left|
\begin{aligned}
	&\ \ \text{$\omega \in \mathcal{C}^{2,1}$ is an open bounded set}, \\
	&\ \ \text{$d(x,\partial D) > \delta$ for all $x \in \omega$, and}\\
	&\ \ \text{$D\setminus \overline{\omega}$ is open, bounded, and connected.}
\end{aligned}
\right. \
\right\}
\end{equation}
In above set, we emphasize that essentially, the annular domain $D\setminus \overline{\omega}$ -- we simply denote here by $\Omega$ -- has $\mathcal{C}^{2,1}$ boundary $\partial D \cup \partial \omega$.
The said regularity assumption on the boundary is instrumental in the well-posedness of the shape optimization problem(s) that we shall consider here.

Our main problem here now is the following:
\begin{equation}\label{eq:shape_problem}
	\text{Find $\omega \in \Wad$ such that $u=u(\Omega):=u(D\setminus \overline{\omega})$ satisfies \eqref{eq:gip}.}
\end{equation}
We refer to $\Omega^{\ast}=D\setminus \overline{\omega}^{\ast}$, or equivalently $\Gamma^{\ast}=\partial\omega^{\ast}$, as a solution of \eqref{eq:gip} if the pair $(\Omega^{\ast},u(\Omega^{\ast}))$ satisfies \eqref{eq:shape_problem}. 
Hereafter, we understand that $\Sigma=\partial D$ and $\Gamma=\partial\omega$, and we assume, unless otherwise stated, that $f \in H^{5/2}(\Sigma)$ and $f \not\equiv 0$.
Also, we let $g \in H^{3/2}(\Sigma)$ be an admissible boundary measurement corresponding to $f$.
That is, $g$ belongs to the image of the Dirichlet-to-Neumann map $\Lambda_{\Sigma}: f \in H^{5/2}(\Sigma) \mapsto g:=\dn{u} \in H^{3/2}(\Sigma)$, where $u$ solves \eqref{eq:state}. 
\begin{remark}
The regularity assumptions given on the boundary data $f$ and $g$ are more than we can actually expect.
In fact, we can only assume that $\Sigma$ is Lipschitz and that $f \in H^{1/2}(\Sigma)$ and $g \in H^{-1/2}(\Sigma)$.
However, for ease of notations and to simplify many proofs, we assume the announced regularity.
Besides we will carry out a second-order analysis of the problem, so we need sufficient regularity (at least $\mathcal{C}^{2,1}$) for the domain as well as the regularity assumption on the data $f \in H^{5/2}(\Sigma)$ and $g \in H^{3/2}(\Sigma)$ to ensure the existence of the shape derivative $\udp$ of $\ud$ in $H^{1}(\Omega)$.
On a side note, we comment that the regularity assumptions: $ \Gamma $ is of class $ \mathcal{C}^{1,1} $ and $ f \in H^{1/2}(\Sigma) $ (with $ \alpha $ non-negative), are suitable assumptions to deal with the unique solvability of the inverse problem under consideration, see \cite{PaganiPierotti2009}. 
Thus, it is only reasonable to consider domains that are at least of class $ \mathcal{C}^{1,1} $ in the present study.
\end{remark}

The rest of the paper is divided into three main sections. 
Section \ref{sec:theory} discusses the existence of optimal solutions for the two shape optimization formulations with boundary-type data-tracking functionals considered in this context.
Section \ref{sec:second-order_analyses_and_stability_issue} focuses on characterizing the shape Hessian's structure of the cost functionals associated with each formulation at critical points and examines the problem's ill-posedness by analyzing the compactness of the Hessian expression. 
Section \ref{sec:numerics} outlines the numerical algorithm used to solve minimization problems with multiple boundary measurements and presents experiments in two and three dimensions. 
The study concludes with remarks in Section \ref{sec:conclusion}.
\section{Shape optimization formulations}\label{sec:theory}
\subsection{Tracking the Neumann data in least-squares approach}\label{sec:tracking_the_Neumann_data} 
A way to solve Problem \ref{eq:shape_problem} is to consider the following minimization problem which consists of tracking the Neumann data on the accessible boundary in $L^{2}$ sense:
\begin{equation}\label{eq:shop}
	J_{N}(\Omega) := J_{N}(D\setminus\overline{\omega}) = \frac12 \intS{\left(\dn{\ud} - g \right)^{2}} \to\ \inf,\footnote{We observe that, following \eqref{eq:shape_problem}, it is more appropriate to denote $J_{N}(\omega)$ rather than $J_{N}(\Omega)$ or $J_{N}(D\setminus\overline{\omega})$. Nonetheless, in this context, these notations are interchangeable, and the choice of notation is a matter of personal preference. Additionally, it is more accurate to express $J(\Omega,u(\Omega))$ instead of the reduced functional $J(\Omega)$. However, we have opted for the latter notation since it can be demonstrated that the mapping $\Omega \mapsto u(\Omega)$ is continuous (given suitable conditions, as assumed here, see subsection \ref{subsection:existence_tracking_Neumann_data}), and therefore well-defined. It is worth noting that, due to the unique solvability of \eqref{eq:state_ud}, the mapping $\Omega \mapsto u(\Omega)$ can be defined.}
\end{equation}
where $\ud:\Omega\to\mathbb{R}$ solves the system of PDEs
\begin{equation}
\label{eq:state_ud}
	- \Delta \ud  		= 0		\quad\text{in $\Omega$},\qquad
	\ud	 			= f		\quad\text{on $\Sigma$},\qquad
	\dn{\ud} + \alpha \ud = 0	\quad\text{on $\Gamma$}.
\end{equation}
In \eqref{eq:shop}, the infimum is naturally taken over a set of admissible domains, such as $\Wad$ defined in \eqref{eq:set_Wad}
%
%
%
\begin{remark}[Well-posedness of \eqref{eq:state_ud}]\label{rem:weakforms}
Define the bilinear form $a$ as follows
\begin{equation}
	\label{eq:bilinear_form}
	a({\varphi},{\psi}) = \intO{\nabla \varphi \cdot \nabla \psi} + \intG{ \alpha \varphi \psi},  \quad \text{where ${\varphi},{\psi}\in H^{1}(\Omega)$},
\end{equation}
and let $H_{\Sigma,0}^{1}(\Omega)$ be the Hilbert space $\{\varphi \in H^{1}(\Omega) \mid \varphi = 0 \ \text{on} \ \Sigma\}$.
Then, the variational formulation of \eqref{eq:state_ud} can be stated as follows:
	\begin{equation}\label{eq:state_weak_form_ud}
		\text{Find ${\ud} \in H^{1}(\Omega)$, $\ud = f$ on $\Sigma$, such that $a(\ud,\psi) = 0$, for all $\psi \in H_{\Sigma,0}^{1}(\Omega)$}.
	\end{equation}
Assuming that $f \in H^{1/2}(\Sigma)$ and $\Omega$ is Lipschitz, the existence of unique weak solution $\ud \in H^{1}(\Omega)$ to \eqref{eq:state_weak_form_ud} follows from the application of the Lax-Milgram lemma.
\end{remark}
%
%
%
The shape optimization problem given by \eqref{eq:shop} is equivalent to the overdetermined problem \eqref{eq:gip} provided that we have the perfect match of boundary data on the unknown boundary, i.e., $u = f$ and $\dn{u} = g$ on $\Sigma$.
Indeed, if $\omega^{\ast}$ (or equivalently, $\Omega^{\ast}$) solves \eqref{eq:shape_problem}, then $J_{N}(\Omega^{\ast})  = J_{N}(D\setminus \overline{\omega}^{\ast}) = 0$, and it holds that
\begin{equation}\label{eq:argmin}
\omega^{\ast} \in \operatorname{argmin}_{\omega \in \Wad}J_{N}(D\setminus\overline{\omega}).
\end{equation}
Conversely, if $\omega^{\ast}$ solves \eqref{eq:argmin} with $J_{N}(D\setminus\overline{\omega}^{\ast}) = 0$, then it is a solution of \eqref{eq:shape_problem}.

On a side note, we comment that $J_{N}$ requires a high level of regularity for the state $\ud$ to be well-defined. 
This requirement makes it impractical for numerical experiments without guaranteed regularity of the state variables. 
However, as we will demonstrate in the numerical section of this study, it still offers reasonable reconstruction of shapes for Problem \ref{eq:main_problem}.

\subsubsection{Existence of a shape solution}
\label{subsection:existence_tracking_Neumann_data}
In this subsection, we investigate the existence of an optimal solution to \eqref{eq:shop} within planar domains. 
To this end, we make the assumption that the entire boundary of any admissible domain (which will be rigorously defined later) is $\mathcal{C}^{k,1}$ regular. 
This implies that $\partial\Omega$ can be parametrized by a $\mathcal{C}^{k,1}$ function. 
We will specify the value of $k$ for technical clarity. Unless explicitly stated otherwise, this level of regularity will be imposed on any open and bounded set considered in this section.

To proceed, we consider instead of \eqref{eq:shop} the shape optimization problem
\begin{equation}
\label{eq:new_shape_opti}
	\min_{\Omega} J_{N}(\Omega) := \min_{\Omega} \left\{ \frac{1}{2} \intS{(\dn{\ud} - g)^{2}} \right\},
\end{equation}
where $\ud(\Omega)  = \zd(\Omega) + \udo(\Omega) =: \zd + \udo$ such that
 \begin{align}
	&-\Delta \zd = \Delta \udo\ \ \text{in} \ \Omega,\qquad
	\zd = 0\ \ \text{on} \ \Sigma,
	&\dn{\zd} + \alpha \zd = - \dn{\udo} - \alpha \udo\ \ \text{on} \ \Gamma\label{eq:state_zd}.
\end{align}
Here, $\udo \in H^{2}(D)$ is an arbitrarily fixed function such that $\udo = f$ on $\Sigma$, 
and $\ud(\Omega)$ satisfies \eqref{eq:state_ud}. 
The corresponding variational formulation of \eqref{eq:state_zd} can be given as follows:
	\begin{equation}	\label{eq:weak_vd}
		\text{Find $\zd \in H_{\Sigma,0}^{1}(\Omega)$ such that $a(\zd, \varphi) = -a(\udo, \varphi)$, for all $\varphi \in H_{\Sigma,0}^1(\Omega)$}.
	\end{equation}
%
Since \eqref{eq:weak_vd} has a unique solution in $H^{1}(\Omega)$, we can define the mapping $\Omega \rightarrow \zd := \zd(\Omega)$, and denote its graph by
\[
    \mathcal{G} = \{(\Omega,\zd): \text{$\Omega \in \mathcal{O}_\text{ad}$ and $\zd$ solves \eqref{eq:weak_vd}}\},
\]
where $\mathcal{O}_\text{ad}$ represents the set of admissible domains as defined in \eqref{eq:admissible_domain}.
This implies that \eqref{eq:new_shape_opti} is equivalent to minimizing $J_{N}(\Omega,\zo)$ over $\mathcal{G}$. To prove the existence of a solution, we equip $\mathcal{G}$ with a topology ensuring that $\mathcal{G}$ is compact and $J_{{N}}$ is lower semi-continuous.
Compactness implies that for any minimizing sequence in $\mathcal{O}_\text{ad}$, denoted by $\{\Omega^{(n)}\}:=\{\Omega^{(n)}\}_{n=1}^{\infty}$, there exists a subsequence $\{\Omega^{(k)}\}$ converging in some sense to a limit domain $\Omega^{0}$. We then investigate whether $\zd(\Omega^{(n)})$, satisfying \eqref{eq:state_zd} on $\Omega^{(n)}$, converges to the solution $\zd(\Omega^{0})$ of \eqref{eq:state_zd} on $\Omega^{0}$.
It is essential to note that unlike Dirichlet or Neumann problems, Robin problems may have different boundary conditions in the limit, depending on how the domains $\{\Omega^{(n)}\}$ approach the limit domain $\Omega^{0}$ \cite{DancerDaners1997}.
In fact, convergence of solutions is not guaranteed, even for smooth domains. For instance, in \cite[Ex. 5.2]{DancerDaners1997}, it was shown that for domains with mildly oscillating boundaries converging to a smooth domain, the limit solution satisfies the Robin boundary condition with a different Robin term (see \cite[Ex. 5.3]{DancerDaners1997}), where, for constant $\alpha$, the limit function satisfies a Robin boundary condition with a larger Robin coefficient.
So, in general, the solution of the PDE constraint may not converge to the solution for the limit domain when convergence is achieved only for the domain and not for the boundaries (i.e., $\Omega^{(n)} \rightarrow \Omega$ but $\partial \Omega^{(n)}$ does not converge to $\partial \Omega$). 
Nonetheless, the implied convergence ``$\Omega^{(n)} \rightarrow \Omega \Rightarrow \partial \Omega^{(n)} \rightarrow \partial \Omega$'' holds true in the Hausdorff sense for domains with Lipschitz boundaries \cite[Ex. 3.2]{Holzleitner2001} or the cone property \cite{Chenais1975}. 
For details about convergence in the sense of Hausdorff, we refer readers to \cite[Sec. 2.2.3, Def. 2.2.8, p. 30]{HenrotPierre2018}.

Let us now characterize the set $\mathcal{O}_\text{ad}$, and then specify an appropriate topology on it.
We consider the following problem:
	\begin{equation}
	\label{eq:minimization_problem}
	\text{Find $(\Omega^{\ast},\zd(\Omega^{\ast}))\in \mathcal{G}$ such that $J_{N}(\Omega^{\ast},\zd(\Omega^{\ast})) \leqslant J_{N}(\Omega,\zd)$, for all $(\Omega,\zd) \in \mathcal{G}$}.
	\end{equation}	
To prove the existence of optimal solution to \eqref{eq:minimization_problem}, we will follow the ideas developed in \cite{HKKP2004a,HKKP2003,HaslingerMakinen2003} and apply the tools furnished in \cite{BoulkhemairChakib2007,Boulkhemairetal2008,Boulkhemairetal2013}, while mimicking some arguments from \cite{RabagoAzegami2019b,RabagoAzegami2020} in proving the continuity of the state problems with respect to the domain. 
%
%

{{
Hereinafter, we denote by $\mathcal{C}^{k,1}_{1}$ ($k\in\mathbb{N}_{0}$) the space of restrictions to $(0,1]$ of the subspace of $1$-periodic functions in $\mathcal{C}^{k,1}(\mathbb{R};\mathbb{R}^{2})$, and assume that the free boundary $\Gamma=\partial \omega$ can be parametrized by a vector function $\vect{\phi} \in \mathcal{C}^{k,1}(\mathbb{R};\mathbb{R}^{2})$.

We issue the following definition.
\begin{definition}\label{def:assumptions}
	Let $\delta$ be a fixed positive number.
	A vector function $\vect{\phi} \in\mathcal{C}^{1,1}(\mathbb{R};\mathbb{R}^{2})$ is said to be in the set $\mathcal{U}_{\text{ad}}$, a closed and bounded subset of $\mathcal{C}^{1,1}(\mathbb{R};\mathbb{R}^{2})$, if it satisfies the following properties:
\begin{description}
	\item[$(\text{P1})$] $\vect{\phi}$ is injective on $(0,1]$ and is $1$-periodic;
	\item[$(\text{P2})$] there exist positive constants $c_{0}$, $c_{1}$, $c_2$, and $c_3$ such that
		\begin{align*}
			|\vect{\phi}(t)| \leqslant c_{0}, \ \
			c_{1} \leqslant |\vect{\phi}'(t)| \leqslant c_2, \ \forall t \in (0,1),
			\quad \text{and} \quad |\vect{\phi}''(t)| \leqslant c_3 \ \ \text{for a.e.}\ \  t \in (0,1),
		\end{align*}
		where $|\cdot|$ denotes the Euclidean norm;
	\item[$(\text{P3})$] $\overline{\Omega} = \overline{\Omega}(\vect{\phi}) \subset D \subset \mathbb{R}^{2}$;
	\item[$(\text{P4})$] there exists a positive constant $\delta_{0} < \delta$ such that $\operatorname{dist}(\Sigma,\Gamma(\vect{\phi})) \geqslant \delta_{0}$.
\end{description}
\end{definition}
Henceforth, we assume that $\vect{\phi} \in \mathcal{U}_{\text{ad}}$.
Accordingly, we define
\begin{equation}
	\label{eq:admissible_domain}
	\mathcal{O}_\text{ad}=\{\Omega=\Omega(\vect{\phi}) \mid \vect{\phi} \in \mathcal{V}_\text{ad}\},
\end{equation}
where $\Omega(\vect{\phi})=\Omega(\Gamma(\vect{\phi}))$, $\mathcal{V}_{\text{ad}} \Subset \mathcal{U}_{\text{ad}}$, and the regularity assumption on $\omega$ is for now relaxed to $\mathcal{C}^{1,1}$.

An example of $\mathcal{V}_\text{ad}$ is the set of elements $\vect{\phi} \in \mathcal{C}^{1,1}((0,1]; \mathbb{R}^{2})$ such that $\vect{\phi}$ satisfies $(\text{P2})$--$(\text{P4})$ (with $\omega \in \Wad$ for instance), except that the last inequality in $(\text{P2})$ is replaced by the Lipschitz condition $|\vect{\phi}'(t_{1}) - \vect{\phi}'(t_2)| \leqslant c_3 |t_{1} - t_2|$, for all $t_{1}, t_2 \in (0,1]$, and the set defined by these conditions is compact in {{$\mathcal{C}^{1,1}$.}}
Another example is any closed subset of $\mathcal{U}_{\text{ad}}$ which is bounded in $\mathcal{C}^{2,\mu}(\mathbb{R};\mathbb{R}^{2})$, for some $\mu \in (0,1]$.
Additionally to \eqref{eq:admissible_domain}, we also consider the larger set 
\[
	\tilde{\mathcal{O}}_\text{ad}:=\{\Omega=\Omega(\vect{\phi})  \mid \vect{\phi} \in \mathcal{U}_{\text{ad}}\}.
\] 

Let us emphasize some important features of the domains in $\mathcal{O}_\text{ad}$. 
The second inequality condition in $(\text{P2})$ essentially prevents impractical oscillations on the free boundary, which we aim to avoid due to the aforementioned issue.
With this remark, we also impose a condition for the fixed accessible boundary $\Sigma$ similar to $(\text{P2})$, but with different constants, to prevent unreasonable shapes for the specimen.

For later use, we note the following assumption:
\begin{equation}\label{eq:assumption_on_fixed_boundary}\tag{B}
\left\{
	\begin{aligned}
	&\text{the boundary $\Sigma$ can be parametrized by a $\mathcal{C}^{1,1}$ ($1$-periodic) function $\vect{\phi}_{0}$ such that}\\
	&\quad 	|\vect{\phi}_{0}(t)| \leqslant b_{0}, \
			b_{1} \leqslant |\vect{\phi}_{0}'(t)| \leqslant b_2, \ \forall t \in (0,1),
			\ \text{and} \ |\vect{\phi}_{0}''(t)| \leqslant b_3 \ \text{for a.e.}\  t \in (0,1),\\
	&\text{for some positive constants $b_{0}$, $b_{1}$, $b_2$, and $b_3$.}
	\end{aligned}\right.
\end{equation}
%

Meanwhile, the definition given in \eqref{eq:admissible_domain} implies that every admissible domain $\Omega(\vect{\phi})$ satisfy the well-known \emph{uniform cone property} \cite[Thm. 2.4.7, p. 56, and Rem. 2.4.8, p. 59]{HenrotPierre2018} (see Definition \ref{definition:cone_property} in subsection \ref{subsec:existence_tracking_the_Dirichlet_data} for the cone property) since every admissible domain is essentially a uniformly Lipschitz open set in $\mathbb{R}^{2}$.
	As a consequence, every set in $\mathcal{O}_\text{ad}$ satisfies a very important extension property in the sense of Chenais \cite{Chenais1975} (see Lemma \ref{lemma:extension_operator}). 
	More exactly, for every $k \geqslant 1$, $p > 1$, and domain $\Omega \in \tilde{\mathcal{O}}_\text{ad}$, there exists an extension operator $\mathcal{E}_{\Omega}:W^{k,p}(\Omega) \to W^{k,p}(D)$ such that $\|\mathcal{E}_{\Omega}\varphi\|_{W^{k,p}(D)} \leqslant C\|\varphi\|_{W^{k,p}(\Omega)}$, where $C>0$ is a constant independent of $\Omega$.
By these properties, we are guaranteed of an extension $\tilde{\varphi} \in H^{k}(D)$, $k \geqslant 1$, from $\Omega$ to $D$ of every function $\varphi \in H^{k}(\Omega)$.
For instance, because $\ud$ is $H^{2}(\Omega)$ regular, the function $\zd$ is also $H^{2}(\Omega)$, and so we can find an extension $\ezd$ of $\zd$ from $\Omega$ to $D$ that is $H^{2}(D)$ regular.
With these considerations, we can now specify the topology that we will use to investigate the existence of an optimal solution to the shape problem described by \eqref{eq:new_shape_opti}.

First, we define the convergence of a sequence $\{\vect{\phi}^{(n)}\} \subset \mathcal{U}_{\text{ad}}$ by
\begin{equation}
	\label{eq:phi_sequence_convergence}
	\vect{\phi}^{(n)} \to \vect{\phi} \qquad \Longleftrightarrow \qquad \text{$\vect{\phi}^{(n)} \to \vect{\phi}$ in the $\mathcal{C}^{1}([0,1])$-topology.}
\end{equation} 
In fact, the latter convergence can be inferred from the properties of elements of $\mathcal{U}_{\text{ad}}$ and by Arzel\`{a}-Ascoli theorem.
Accordingly, we define the convergence of a sequence of domains $\{\Omega^{(n)}\}:=\{\Omega(\vect{\phi}^{(n)})\}:=\{\Omega(\vect{\phi}^{(n)})\}_{n=1}^{\infty} \subset \tilde{\mathcal{O}}_\text{ad}$ by
\begin{equation}
	\label{eq:Omega_sequence_convergence}
	\Omega^{(n)} \to \Omega \qquad \Longleftrightarrow \qquad \vect{\phi}^{(n)} \to \vect{\phi}.
\end{equation}
Meanwhile, we define the convergence of a sequence $\{\zdn\}$ of solutions of \eqref{eq:weak_vd} on $\Omega^{(n)}$ to the solution of \eqref{eq:weak_vd} on $\Omega$ as follows
\begin{equation}
	\label{eq:state_un_sequence_convergence}
	\zdn \to \zd \qquad \Longleftrightarrow \qquad \text{$\mathcal{E}_{{{\Omega^{(n)}}}}{\zdn}=:\ezdn \to \ezd:=\mathcal{E}_{\Omega}{\zd}$ \quad weakly in $H^{1}(D)$}.
\end{equation}
{{{We emphasize here that the extension operator $\mathcal{E}_{{{\Omega^{(n)}}}}$ for $\zdn$ ensures that $\ezdn = \zdn$ on $\Omega^{(n)}$.}}

Finally, the topology we introduce on $\mathcal{G}$ is the one induced by the convergence defined by
\begin{equation}
	\label{eq:convergence}
	(\Omega^{(n)}, \zdn) \to (\Omega, \zd) \qquad \Longleftrightarrow \qquad
		\vect{\phi}^{(n)} \to \vect{\phi} \quad \text{and} \quad \zdn \to \zd.
\end{equation}
\begin{remark}
	\label{rem:convergence_of_domains_and_boundaries}
	As mentioned in passing, the uniform cone property of the admissible sets also implies a compactness property of $\mathcal{O}_\text{ad}$ with respect to the Hausdorff metric (see \cite[Sec. 2.2.3, Def. 2.2.7, p. 30]{HenrotPierre2018}). 
	This property ensures that once the convergence of domains $\Omega^{(n)} \to \Omega$ is true, it also holds that $\partial\Omega^{(n)} \to \partial\Omega$ (specifically, $\Gamma^{(n)} \to \Gamma$) for the free boundaries in the Hausdorff sense \cite[Thm. 2.4.10, p. 59]{HenrotPierre2018} (see also \cite{Holzleitner2001}).
Moreover, from the definition of the Hausdorff metric, one easily finds that if $\Gamma^{(n)} \to \Gamma$ in the Hausdorff sense, as $n \to \infty$, then for any $\varepsilon > 0$ there is a $k_{0}:=k_{0}(\varepsilon) \in \mathbb{N}$ such that $\Gamma_k$ belongs to the $\varepsilon$-neighborhood of $\Gamma$, for all $\mathbb{N} \ni k \geqslant k_{0}$.
\end{remark}
Denoting the convergence of a sequence $\{O^{(k)}\}$ of sets in $\mathbb{R}^{d}$ to $O \subset \mathbb{R}^{d}$ in the Hausdorff metric by $O^{(k)} \overset{\text{H}} \longrightarrow O$, as $k \to \infty$ (see Definition \ref{def:Hausdorff_convergence}), we give a proper statement of the above-mentioned property in the following lemma whose proof can be found in \cite{Holzleitner2001}.
\begin{lemma}
	\label{lem:Hausdorff_convergence}
	Let $\{\vect{\phi}^{(k)}\} \subset \mathcal{U}_{\text{ad}}$ and $\vect{\phi} \in \mathcal{U}_{\text{ad}}$.
	For any sequence of domains $\{\Omega(\vect{\phi}^{(n)})\}$, $\Omega(\vect{\phi}^{(n)}) \in \mathcal{O}_\text{ad}$, there is a subsequence $\{\Omega(\vect{\phi}^{(k)})\} \subset \{\Omega(\vect{\phi}^{(n)})\}$ and a domain  $\Omega(\vect{\phi}) \in \mathcal{O}_\text{ad}$ such that $\Omega(\vect{\phi}^{(k)})  \overset{\text{H}} \longrightarrow \Omega(\vect{\phi})$, and $\Gamma(\vect{\phi}^{(k)}) \overset{\text{H}} \longrightarrow \Gamma(\vect{\phi})$, as $k\to \infty$, where $\Gamma(\vect{\phi}^{(k)})$ and $\Gamma(\vect{\phi})$ are the free boundaries or graphs of $\vect{\phi}^{(k)}$ and $\vect{\phi}$, respectively.
\end{lemma}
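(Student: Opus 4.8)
The plan is to argue in two stages: a compactness step that produces a $\mathcal{C}^{1}$-convergent subsequence of the boundary parametrizations, followed by a transfer step that promotes this convergence to Hausdorff convergence of the free boundaries and then of the domains, exploiting that every admissible set enjoys the uniform cone property. For the compactness step I would apply Arzel\`{a}--Ascoli at the level of parametrizations, as already anticipated around \eqref{eq:phi_sequence_convergence}. By $(\text{P2})$ the family $\{\vect{\phi}^{(n)}\}$ is uniformly bounded (by $c_{0}$) and, since $|(\vect{\phi}^{(n)})'| \leqslant c_{2}$, uniformly Lipschitz, hence equicontinuous; likewise $|(\vect{\phi}^{(n)})''| \leqslant c_{3}$ makes the derivatives $\{(\vect{\phi}^{(n)})'\}$ uniformly bounded and equicontinuous. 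Two successive extractions then yield a subsequence, relabeled $\{\vect{\phi}^{(k)}\}$, with $\vect{\phi}^{(k)} \to \vect{\phi}$ in $\mathcal{C}^{1}([0,1])$, i.e.\ the convergence \eqref{eq:phi_sequence_convergence}.

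Next I would verify that the limit $\vect{\phi}$ is again admissible, so that $\Omega(\vect{\phi}) \in \mathcal{O}_\text{ad}$ is well defined. The pointwise and first-derivative bounds in $(\text{P2})$, the periodicity in $(\text{P1})$, and the inclusion and distance conditions $(\text{P3})$--$(\text{P4})$ all pass to the $\mathcal{C}^{1}$-limit; moreover, since each $(\vect{\phi}^{(k)})'$ is $c_{3}$-Lipschitz and converges uniformly, the limit $\vect{\phi}'$ is $c_{3}$-Lipschitz, whence $\vect{\phi} \in \mathcal{C}^{1,1}$ with the same constants. I expect the \emph{main obstacle} to be the preservation of injectivity $(\text{P1})$: a $\mathcal{C}^{1}$-limit of simple curves may in principle develop self-contacts. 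This is precisely where the non-degeneracy $|\vect{\phi}'| \geqslant c_{1} > 0$ together with the closedness of the admissible class enter; jointly they forbid the limit curve from collapsing onto itself, so that $\Gamma(\vect{\phi})$ remains a Jordan curve bounding a genuine inclusion $\omega(\vect{\phi})$.

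Then I would read off Hausdorff convergence of the free boundaries directly from the uniform convergence of the parametrizations. Writing $\Gamma(\vect{\phi}^{(k)}) = \vect{\phi}^{(k)}([0,1])$ and $\Gamma(\vect{\phi}) = \vect{\phi}([0,1])$, the elementary observation that $\vect{\phi}^{(k)}(t)$ and $\vect{\phi}(t)$ differ by at most $\|\vect{\phi}^{(k)} - \vect{\phi}\|_{\infty}$ for every $t$ gives $d_{H}(\Gamma(\vect{\phi}^{(k)}), \Gamma(\vect{\phi})) \leqslant \|\vect{\phi}^{(k)} - \vect{\phi}\|_{\infty} \to 0$, hence $\Gamma(\vect{\phi}^{(k)}) \overset{\text{H}}{\longrightarrow} \Gamma(\vect{\phi})$.

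Finally I would transfer the convergence to the domains. Since all admissible sets share the uniform cone property, the compactness theorem for such families \cite[Thm. 2.4.10, p. 59]{HenrotPierre2018} (see also \cite{Chenais1975}) provides a further subsequence of $\{\Omega(\vect{\phi}^{(k)})\}$ converging in the Hausdorff sense to an open set whose boundary is the Hausdorff limit of $\partial\Omega(\vect{\phi}^{(k)}) = \Sigma \cup \Gamma(\vect{\phi}^{(k)})$. As $\Sigma$ is fixed and $\Gamma(\vect{\phi}^{(k)}) \overset{\text{H}}{\longrightarrow} \Gamma(\vect{\phi})$ by the previous step, this boundary equals $\Sigma \cup \Gamma(\vect{\phi}) = \partial\Omega(\vect{\phi})$, which identifies the limit as $\Omega(\vect{\phi})$ and yields $\Omega(\vect{\phi}^{(k)}) \overset{\text{H}}{\longrightarrow} \Omega(\vect{\phi})$ after relabeling. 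The uniform cone property is indispensable here: it is exactly the ingredient guaranteeing that Hausdorff convergence of boundaries is compatible with Hausdorff convergence of the enclosed domains, ruling out the boundary-collapse pathology already flagged in Remark~\ref{rem:convergence_of_domains_and_boundaries}.
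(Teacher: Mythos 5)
The paper does not actually prove this lemma: it is stated as a known compactness property and its proof is deferred to \cite{Holzleitner2001}, with the surrounding text only gesturing at the ingredients (Arzel\`{a}--Ascoli for \eqref{eq:phi_sequence_convergence}, and the uniform-cone-property compactness theory of \cite[Thm. 2.4.10, p. 59]{HenrotPierre2018} recalled in Remark \ref{rem:convergence_of_domains_and_boundaries}). Your proposal is therefore by necessity a different route: a self-contained argument assembling exactly those ingredients. Its skeleton is sound, and the estimate $d_{H}(\Gamma(\vect{\phi}^{(k)}),\Gamma(\vect{\phi})) \leqslant \|\vect{\phi}^{(k)}-\vect{\phi}\|_{\infty}$ is precisely the right elementary bridge from $\mathcal{C}^{1}$-convergence of the parametrizations to Hausdorff convergence of the free boundaries, a point the paper never spells out.

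Two steps need tightening. First, your claim that the non-degeneracy $|\vect{\phi}'|\geqslant c_{1}>0$ helps prevent loss of injectivity is not correct: a uniform lower bound on the tangent does not rule out self-contact in the limit (think of a dumbbell-shaped curve whose neck pinches; each curve is simple with uniform $\mathcal{C}^{1,1}$ bounds and $|\vect{\phi}'|\geqslant c_{1}$, yet the limit touches itself). What keeps the limit admissible is solely the standing hypothesis that the class of parametrizations is compact/closed in the topology used: $\mathcal{V}_{\text{ad}}\Subset\mathcal{U}_{\text{ad}}$ is required to be compact in $\mathcal{C}^{1,1}$ (or closed and bounded in $\mathcal{C}^{2,\mu}$) in the examples following Definition \ref{def:assumptions}, so the $\mathcal{C}^{1}$-limit lies in the class -- injectivity included -- by assumption, not by an argument from $c_{1}$. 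Second, your concluding identification is not a valid inference as written: two distinct open sets can share the same boundary (in the present geometry, $D\setminus\overline{\omega}$ and $D\setminus\Gamma$ both have boundary $\Sigma\cup\Gamma$), so ``the Hausdorff limit has boundary $\Sigma\cup\Gamma(\vect{\phi})$, hence it equals $\Omega(\vect{\phi})$'' needs an extra ingredient. The cheapest patch inside your framework is to invoke the convergence in the sense of characteristic functions, which \cite[Thm. 2.4.10, p. 59]{HenrotPierre2018} provides along the same subsequence and which pins down the limit set. Alternatively, you can bypass the cone-property theorem entirely: by Definition \ref{def:Hausdorff_convergence}, $\Omega(\vect{\phi}^{(k)})\overset{\text{H}}{\longrightarrow}\Omega(\vect{\phi})$ means exactly $d_{H}\bigl(\overline{\omega(\vect{\phi}^{(k)})},\overline{\omega(\vect{\phi})}\bigr)\to 0$, and this follows directly from the uniform convergence of the parametrizations by a winding-number argument: any point $x$ with $d(x,\Gamma(\vect{\phi}))>\|\vect{\phi}^{(k)}-\vect{\phi}\|_{\infty}$ has the same index with respect to $\vect{\phi}^{(k)}$ and $\vect{\phi}$ (the straight-line homotopy between the two curves misses $x$), hence lies in $\omega(\vect{\phi}^{(k)})$ if and only if it lies in $\omega(\vect{\phi})$, while points within this threshold of $\Gamma(\vect{\phi})$ are automatically within $2\|\vect{\phi}^{(k)}-\vect{\phi}\|_{\infty}$ of both closed sets. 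With these two repairs your argument is complete and makes the lemma genuinely elementary, which is a real gain over the paper's citation to \cite{Holzleitner2001}.
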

\begin{remark}
	Properties $(\text{P1})$--$(\text{P4})$ of $\vect{\phi}$ and $\vect{\phi}^{(n)}$ not only provide a very well behaved convergence of a subsequence of open sets $\{\Omega^{(k)}\}$ of the sequence $\{\Omega^{(n)}\} \subset \mathcal{O}_\text{ad}$ to an open set $\Omega \in \mathcal{O}_\text{ad}$ in the sense of Hausdorff, but convergences in the sense of characteristic functions and in the sense of compacts are also achieved by these subsequence \cite[Thm. 2.4.10, p. 59]{HenrotPierre2018}.
\end{remark}
\begin{theorem}
	\label{prop:existence_of_optimal_solution}
	The minimization problem \eqref{eq:minimization_problem} admits a solution in $\mathcal{G}$.
\end{theorem}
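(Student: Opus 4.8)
The plan is to apply the direct method of the calculus of variations to the reduced problem on $\mathcal{G}$. First I would fix a minimizing sequence $\{(\Omega^{(n)}, \zdn)\} \subset \mathcal{G}$, so that $J_{N}(\Omega^{(n)}, \zdn) \to \inf_{\mathcal{G}} J_{N} =: m \geqslant 0$. Writing $\Omega^{(n)} = \Omega(\vect{\phi}^{(n)})$ with $\vect{\phi}^{(n)} \in \mathcal{V}_\text{ad}$, Lemma \ref{lem:Hausdorff_convergence} (together with the $\mathcal{C}^{1}$-compactness encoded in \eqref{eq:phi_sequence_convergence} via Arzel\`a--Ascoli) furnishes a subsequence, still indexed by $k$, and a limit $\vect{\phi} \in \mathcal{U}_{\text{ad}}$ with $\vect{\phi}^{(k)} \to \vect{\phi}$ in $\mathcal{C}^{1}([0,1])$, hence $\Omega^{(k)} \overset{\text{H}}{\longrightarrow} \Omega^{0} := \Omega(\vect{\phi})$ and $\Gamma^{(k)} \overset{\text{H}}{\longrightarrow} \Gamma^{0}$; by the remark preceding the statement one also gets $\chi_{\Omega^{(k)}} \to \chi_{\Omega^{0}}$ in $L^{p}(D)$ for every $p < \infty$. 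Since each $\zdk$ is $H^{2}(\Omega^{(k)})$-regular with a bound depending only on $\|\udo\|_{H^{2}(D)}$ and on the uniform $\mathcal{C}^{1,1}$ constants of $\mathcal{U}_{\text{ad}}$, the Chenais extensions obey $\|\ezdk\|_{H^{2}(D)} \leqslant C$ uniformly. Passing to a further subsequence, $\ezdk \rightharpoonup \ezd$ weakly in $H^{2}(D)$, whence $\ezdk \to \ezd$ strongly in $H^{1}(D)$ and, by the embedding $H^{2}(D) \hookrightarrow \mathcal{C}^{0}(\overline{D})$ in dimension two, uniformly on $\overline{D}$.

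Next I would identify $\zd^{0} := \ezd|_{\Omega^{0}}$ as the state $\zd(\Omega^{0})$; note $\zd^{0} \in H^{2}(\Omega^{0})$ and, since $\Sigma$ is fixed and the traces converge uniformly, $\zd^{0} = 0$ on $\Sigma$, so $\zd^{0} \in H_{\Sigma,0}^{1}(\Omega^{0})$. Testing the weak formulation \eqref{eq:weak_vd} on $\Omega^{(k)}$ against a fixed $\varphi \in \mathcal{C}^{\infty}(\overline{D})$ with $\varphi|_{\Sigma} = 0$ (whose restrictions form a dense class in each $H_{\Sigma,0}^{1}$), the volume term is handled by a weak--strong argument: $\nabla \ezdk \rightharpoonup \nabla \ezd$ in $L^{2}(D)^{2}$ while $\chi_{\Omega^{(k)}}\nabla\varphi \to \chi_{\Omega^{0}}\nabla\varphi$ strongly in $L^{2}(D)^{2}$, so $\int_{\Omega^{(k)}} \nabla\zdk\cdot\nabla\varphi\dx \to \int_{\Omega^{0}} \nabla\zd^{0}\cdot\nabla\varphi\dx$. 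The boundary term is the crux: writing $\int_{\Gamma^{(k)}}\alpha\,\zdk\varphi\ds$ as $\int_{0}^{1} \alpha((\vect{\phi}^{(k)})(t))\,\ezdk((\vect{\phi}^{(k)})(t))\,\varphi((\vect{\phi}^{(k)})(t))\,|(\vect{\phi}^{(k)})'(t)|\,dt$, the $\mathcal{C}^{1}$-convergence $\vect{\phi}^{(k)} \to \vect{\phi}$ yields uniform convergence of the arclength element $|(\vect{\phi}^{(k)})'|$ and of the composed factors $\alpha \circ \vect{\phi}^{(k)}$ and $\varphi \circ \vect{\phi}^{(k)}$ (by Lipschitz continuity of $\alpha$ and continuity of $\varphi$), while the uniform convergence $\ezdk \to \ezd$ on $\overline{D}$ controls the trace $\ezdk \circ \vect{\phi}^{(k)}$; thus the boundary integral converges to $\int_{\Gamma^{0}}\alpha\,\zd^{0}\varphi\ds$. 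The same computation applied to the right-hand side $-a(\udo,\varphi)$, with $\udo$ fixed in $H^{2}(D)$, shows that $\zd^{0}$ satisfies \eqref{eq:weak_vd} on $\Omega^{0}$; by the unique solvability recorded there, $\zd^{0} = \zd(\Omega^{0})$, so $(\Omega^{0}, \zd(\Omega^{0})) \in \mathcal{G}$ and $\zdk \to \zd(\Omega^{0})$ in the sense of \eqref{eq:state_un_sequence_convergence}.

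It then remains to establish the lower semi-continuity (in fact continuity) of $J_{N}$ along this subsequence. Here I would exploit $(\text{P4})$: every admissible domain contains the fixed collar $N := \{x \in D : \operatorname{dist}(x,\Sigma) < \delta_{0}/2\}$, on which each $\ud^{(k)} = \zdk + \udo$ solves the single fixed equation $-\Delta \ud^{(k)} = 0$ with $\ud^{(k)} = f$ on $\Sigma$. Interior elliptic estimates on $N$, combined with the strong $L^{2}$-convergence already obtained, bootstrap the weak $H^{1}$-convergence to strong $H^{2}$-convergence on a slightly smaller collar, so that $\dn{\ud^{(k)}} \to \dn{\ud^{0}}$ in $L^{2}(\Sigma)$ and $J_{N}(\Omega^{(k)}) \to J_{N}(\Omega^{0})$. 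Consequently $J_{N}(\Omega^{0}, \zd(\Omega^{0})) = m = \inf_{\mathcal{G}} J_{N}$, and since $(\Omega^{0}, \zd(\Omega^{0})) \in \mathcal{G}$, this pair solves \eqref{eq:minimization_problem}.

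The main obstacle is precisely the passage to the limit in the Robin boundary integral, which is where the Dancer--Daners phenomenon could in principle spoil the argument. The reason it does not here is the strength of the imposed topology: the $\mathcal{C}^{1}$-convergence of the parametrizations guarantees simultaneous convergence of the arclength element $|(\vect{\phi}^{(k)})'|$ and of the boundary traces, preventing the surface measure on $\Gamma^{(k)}$ from concentrating or the Robin coefficient from being inflated in the limit. The uniform cone/extension property underlying the uniform $H^{2}(D)$ bound is what upgrades the weak limit to a uniform one on $\overline{D}$ and thereby closes the continuity argument on both $\Gamma$ and $\Sigma$.
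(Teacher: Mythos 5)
Your proposal is correct in substance and follows the same direct-method skeleton as the paper (Arzel\`{a}--Ascoli/Hausdorff compactness of the admissible class, uniformly bounded Chenais extensions, identification of the weak limit as the state on the limit domain by passing to the limit in \eqref{eq:weak_vd}, then continuity of the cost along the subsequence), but you carry out the two key technical steps by genuinely different means. For the identification step, the paper stays at the $H^{1}(D)$ level: it uses the uniform trace estimate of Lemma \ref{lem:estimates}(ii), the compact injection $H^{1}(D)\hookrightarrow H^{q}(D)$, $q\in(1/2,1)$, and the convergence of compositions $v\circ\vect{\phi}^{(k)}\to v\circ\vect{\phi}$ from Boulkhemair et al.; you instead assert a uniform $H^{2}(D)$ bound on the extensions and exploit $H^{2}(D)\hookrightarrow\mathcal{C}^{0}(\overline{D})$ in two dimensions, which makes the passage to the limit in the Robin boundary integral elementary. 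For the continuity of $J_{N}$, the paper (Proposition \ref{prop:lower_semi_continuity_of_JN}) needs global weak $H^{2}(D)$ convergence --- available only after strengthening the admissible class and the topology to $\mathcal{C}^{2}$ --- together with the compact embedding $H^{2}(D)\hookrightarrow H^{q}(D)$, $q\in(3/2,2)$. Your collar argument is different and more economical: by $(\text{P4})$ every admissible domain contains the fixed collar $N$ near $\Sigma$, on which all states $\ud^{(k)}=\zdk+\udo$ are harmonic with the same Dirichlet datum $f$; the differences then vanish on $\Sigma$, and local elliptic estimates up to $\Sigma$ (note: these are \emph{boundary}, not interior, estimates, using $\ud^{(k)}-\ud^{0}=0$ on $\Sigma$ and the regularity of $\Sigma$ from \eqref{eq:assumption_on_fixed_boundary}) upgrade the strong $L^{2}(N)$ convergence to strong $H^{2}$ convergence on a smaller collar, hence $\dn{\ud^{(k)}}\to\dn{\ud^{0}}$ in $L^{2}(\Sigma)$. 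This localization is the real gain of your proof: combined with the paper's $H^{1}$-level identification argument, it would yield existence without ever invoking $H^{2}$ control on the whole domain, i.e.\ without the $\mathcal{C}^{2}$ strengthening the paper is forced to impose.

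The one step you should not leave as a bare assertion is the uniform bound $\|\ezdk\|_{H^{2}(D)}\leqslant C$ ``depending only on $\|\udo\|_{H^{2}(D)}$ and the uniform $\mathcal{C}^{1,1}$ constants of $\mathcal{U}_{\text{ad}}$.'' Elliptic $H^{2}$ regularity for the Robin problem holds on each fixed $\mathcal{C}^{1,1}$ domain, but uniformity of the constant over the whole class defined by $(\text{P2})$ requires a genuine argument (uniformly controlled boundary charts and a higher-order uniform extension; Chenais' theorem by itself is an $H^{1}$ statement, so one needs, e.g., a Stein-type extension with uniform Lipschitz character). The paper itself never claims this under $\mathcal{C}^{1,1}$: it strengthens the admissible class before making any $H^{2}$-level statement, and even then omits the restatements and proofs. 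Since your collar argument makes the $H^{2}$ bound dispensable --- the boundary-term passage can be done with the paper's trace-estimate machinery at the $H^{1}$ level --- the cleanest repair is to drop the $H^{2}$ claim altogether rather than to prove it.
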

As stated before, the existence proof is reduced to proving the compactness of $\mathcal{G}$ and the lower semi-continuity of $J$.
By the compactness of $\mathcal{V}_\text{ad}$ and the Arzel\`{a}-Ascoli theorem, we see that the convergence $\vect{\phi}^{(n)} \to \vect{\phi}$ already holds.
Hence, it only then remains to show the continuity of \eqref{eq:state_zd} with respect to the domain to complete the proof of compactness of $\mathcal{G}$.
The proof of this continuity is not straightforward, but follows a similar argument in \cite{RabagoAzegami2019b,RabagoAzegami2020} using the tools developed in \cite{BoulkhemairChakib2007,Boulkhemairetal2008}.
\begin{proposition}
	\label{prop:continuity_of_the_state_zd}
	Given the convergence of a sequence of domains stated in \eqref{eq:Omega_sequence_convergence}, we let $\{(\vect{\phi}^{(n)},\zdn)\}$ be a sequence in $\mathcal{G}$ where $\zdn:=\zd(\vect{\phi}^{(n)})$ satisfies \eqref{eq:weak_vd} on $\Omega^{(n)}:=\Omega(\vect{\phi}^{(n)}) \subset {\mathcal{O}}_\text{ad}$ (i.e., $\{\vect{\phi}^{(n)}\} \subset \mathcal{V}_\text{ad}$).
	Then, there exists a subsequence $\{(\vect{\phi}^{(k)},\zdk)\}$ of $\{(\vect{\phi}^{(n)},\zdn)\}$ and elements $\vect{\phi} \in \mathcal{V}_\text{ad}$ and $\zd \in H^{1}(D)$ such that
	$\vect{\phi}^{(k)} \to \vect{\phi}$ and $\ezdk \rightharpoonup \zd$ in $H^{1}(D)$,
	where $\zd=\zd(\vect{\phi})=\ezd|_{\Omega(\vect{\phi})}$ uniquely satisfies \eqref{eq:weak_vd} on $\Omega:=\Omega(\vect{\phi})$.
\end{proposition}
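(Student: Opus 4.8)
The plan is to combine a uniform a priori estimate, weak compactness in $H^1(D)$ furnished by the uniform (Chenais) extension operator, and the geometric convergence $\Gamma^{(k)} \overset{\text{H}}\longrightarrow \Gamma$ to pass to the limit in the variational identity \eqref{eq:weak_vd}. First I would extract the boundary subsequence: since $\mathcal{V}_\text{ad} \Subset \mathcal{U}_{\text{ad}}$ is compact and the family $\{\vect{\phi}^{(n)}\}$ is equibounded and equi-Lipschitz by $(\text{P2})$, the Arzel\`{a}--Ascoli theorem yields a subsequence $\vect{\phi}^{(k)} \to \vect{\phi}$ in $\mathcal{C}^{1}([0,1])$ with $\vect{\phi} \in \mathcal{V}_\text{ad}$, hence $\Omega^{(k)} \to \Omega := \Omega(\vect{\phi})$ in the sense of \eqref{eq:Omega_sequence_convergence}; Lemma \ref{lem:Hausdorff_convergence} then provides $\Omega^{(k)} \overset{\text{H}}\longrightarrow \Omega$ and $\Gamma^{(k)} \overset{\text{H}}\longrightarrow \Gamma$, together with the attendant convergence of characteristic functions and convergence in the sense of compacts.

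Next I would derive a uniform bound. Testing \eqref{eq:weak_vd} on $\Omega^{(k)}$ with $\varphi = \zdk$ gives $a(\zdk,\zdk) = -a(\udo,\zdk)$. Because every admissible $\Omega^{(k)}$ is uniformly Lipschitz and shares the fixed boundary $\Sigma$ of positive measure, the Friedrichs--Poincar\'{e} inequality on $H_{\Sigma,0}^{1}(\Omega^{(k)})$ holds with a constant independent of $k$; together with $\alpha \geqslant \alpha_{0} > 0$ this makes $a$ coercive with a uniform constant, while the uniform trace constant makes it continuous with a uniform constant. Since $\|\udo\|_{H^1(\Omega^{(k)})} \leqslant \|\udo\|_{H^1(D)}$ is fixed, I obtain $\|\zdk\|_{H^1(\Omega^{(k)})} \leqslant C$ uniformly, whence the extension bound gives $\|\ezdk\|_{H^1(D)} \leqslant C'$ uniformly. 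Passing to a further subsequence, $\ezdk \rightharpoonup \zd$ weakly in $H^1(D)$ for some $\zd \in H^1(D)$, and the compact embedding $H^1(D)\hookrightarrow L^2(D)$ gives strong $L^2(D)$ convergence. As each $\zdk$ vanishes on the fixed boundary $\Sigma$, continuity of the trace on $\Sigma$ yields $\zd = 0$ on $\Sigma$, i.e. $\zd|_\Omega \in H_{\Sigma,0}^{1}(\Omega)$.

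Then I would pass to the limit in the weak form. It suffices to test against a global $\varphi \in \mathcal{C}^{\infty}(\overline{D})$ with $\varphi|_\Sigma = 0$, whose restriction lies in $H_{\Sigma,0}^{1}(\Omega^{(k)})$ for every $k$ and in $H_{\Sigma,0}^{1}(\Omega)$; a density argument then extends the resulting identity to all test functions. For the Dirichlet term I write $\int_{\Omega^{(k)}} \nabla\zdk\cdot\nabla\varphi\,dx = \int_D \chi_{\Omega^{(k)}}\,\nabla\ezdk\cdot\nabla\varphi\,dx$; since $\chi_{\Omega^{(k)}} \to \chi_\Omega$ in every $L^p(D)$ with $p<\infty$, the product $\chi_{\Omega^{(k)}}\nabla\varphi \to \chi_\Omega\nabla\varphi$ strongly in $L^2(D)$, which pairs with the weak convergence $\nabla\ezdk \rightharpoonup \nabla\zd$ to give convergence to $\int_\Omega \nabla\zd\cdot\nabla\varphi\,dx$. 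The right-hand side $-a(\udo,\varphi)$ is handled identically, using that $\udo \in H^2(D)$ is fixed.

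The hard part is the Robin boundary term $\int_{\Gamma^{(k)}} \alpha\,\zdk\,\varphi\,ds$. I would pull it back to the reference interval via $\vect{\phi}^{(k)}$, writing it as $\int_0^1 \alpha(\vect{\phi}^{(k)})\,(\zdk\circ\vect{\phi}^{(k)})\,(\varphi\circ\vect{\phi}^{(k)})\,|(\vect{\phi}^{(k)})'|\,dt$. The Jacobian $|(\vect{\phi}^{(k)})'|$ and the factors $\alpha(\vect{\phi}^{(k)})$ and $\varphi\circ\vect{\phi}^{(k)}$ converge uniformly by the $\mathcal{C}^1$ convergence of $\vect{\phi}^{(k)}$, so everything reduces to the strong $L^2(0,1)$ convergence of the pulled-back traces $\zdk\circ\vect{\phi}^{(k)}$. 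This is exactly the delicate point, since weak $H^1(D)$ convergence does not by itself control traces on the moving curves $\Gamma^{(k)}$; it is where I would invoke the boundary-integral continuity machinery of \cite{BoulkhemairChakib2007,Boulkhemairetal2008}, together with the $H^2$-regularity of $\zdk$ (which upgrades the extensions to a uniform $H^2(D)$ bound and hence, via the compact embedding $H^2(D)\hookrightarrow H^1(D)$ and the uniform trace theorem, to strong trace convergence), following the scheme of \cite{RabagoAzegami2019b,RabagoAzegami2020}. Once the boundary term passes to the limit, the limit $\zd|_\Omega$ satisfies $a(\zd,\varphi) = -a(\udo,\varphi)$ for all $\varphi \in H_{\Sigma,0}^{1}(\Omega)$; uniqueness of this problem via Lax--Milgram identifies $\zd|_\Omega = \zd(\vect{\phi})$ and confirms $\zd = \ezd|_{\Omega(\vect{\phi})}$, completing the argument.
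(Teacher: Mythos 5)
Your overall architecture --- uniform energy bound via the uniform Poincar\'{e} inequality and the Chenais extension, weak $H^{1}(D)$ compactness, identification of the trace on $\Sigma$, and passage to the limit in the domain integrals through the convergence of characteristic functions --- matches the paper's proof step by step. The divergence, and the problem, occurs at exactly the point you yourself flag as delicate: the Robin term on the moving curves $\Gamma^{(k)}$. You resolve it by asserting that the $H^{2}$-regularity of $\zdk$ ``upgrades the extensions to a uniform $H^{2}(D)$ bound,'' from which strong $H^{1}(D)$ convergence and hence strong trace convergence would follow. That uniform bound is not justified and is not free: while each $\zdk$ is indeed $H^{2}(\Omega^{(k)})$ regular and the Chenais-type operator extends $H^{2}$ functions with a constant independent of the domain, what you actually need is $\sup_{k}\|\zdk\|_{H^{2}(\Omega^{(k)})}<\infty$, i.e., an elliptic $H^{2}$ a priori estimate for the Robin problem whose constant is uniform over the whole family $\{\Omega^{(k)}\}$ of $\mathcal{C}^{1,1}$ domains. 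Nothing in your argument (nor in the boundary-integral references you cite) supplies this, and it is a substantial claim: the paper itself only invokes $H^{2}$-type convergence later, in Proposition \ref{prop:lower_semi_continuity_of_JN}, and there it explicitly strengthens the admissible class and the topology ($\mathcal{C}^{2}$ convergence of the parametrizations, weak $H^{2}(D)$ convergence) rather than deriving such bounds within the $\mathcal{C}^{1,1}$ setting of the present proposition.

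The detour is also unnecessary, because the boundary term can be handled entirely at the $H^{1}$ level --- this is the paper's key trick. Lemma \ref{lem:estimates}(ii) gives a trace estimate that is uniform over $\mathcal{V}_\text{ad}$ and, crucially, is phrased in a fractional norm: $\|u\|_{L^{2}(\Gamma(\vect{\phi}))} \lesssim \|u\|_{H^{q}(D)}$ for $q \in (\frac12,1]$. Since the embedding $H^{1}(D) \hookrightarrow H^{q}(D)$ is compact for $\frac12 < q < 1$ (see \eqref{eq:injection}), the weak convergence $\ezdk \rightharpoonup \ezd$ in $H^{1}(D)$ already yields strong convergence in $H^{q}(D)$, hence $\|\ezdk - \ezd\|_{L^{2}(\Gamma(\vect{\phi}))} \to 0$; the remaining composition terms $\|\zdk \circ \vect{\phi}^{(k)} - \zdk \circ \vect{\phi}\|_{L^{2}([0,1])}$ and $\|v \circ \vect{\phi}^{(k)} - v \circ \vect{\phi}\|_{L^{2}([0,1])}$ are dispatched by \cite[Cor. 1]{Boulkhemairetal2008} together with the $\mathcal{C}^{1}$ convergence of the parametrizations, exactly as in your pull-back computation. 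If you replace your uniform-$H^{2}$ step with this compact-embedding argument, your proof closes and coincides with the paper's.
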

The proof of the proposition -- which we postpone a bit further below -- relies on three key results listed in the next lemma.
\begin{lemma}
	\label{lem:estimates}
	We have the following results.
	\begin{itemize}
		\item[(i)] For every $u \in H^{1}_{\Sigma,0}(\Omega)$ and $\Omega \in \tilde{\mathcal{O}}_\text{ad}$, it holds that $\|u\|_{L^{2}(\Omega)} \lesssim |u|_{H^{1}(\Omega)}{{=\|\nabla{u}\|_{L^{2}(\Omega)}}}$\footnote{Here, and in the rest of the discussion, the notation ``$\lesssim$'' means that if $x \lesssim y$, then we can find some constant $c > 0$ such that $x \leqslant c y$. Of course, $y \gtrsim x$ is defined as $x \lesssim y$.}.
		\item[(ii)] Let $q \in (\frac12,1]$. Then, for all $\vect{\phi} \in \mathcal{V}_\text{ad}$ and $u \in H^{1}(D)$, we have $\|u\|_{L^{2}(\Gamma(\vect{\phi}))}\lesssim \|u\|_{H^q(D)}$, where $\|\cdot\|_{L^{2}(\Gamma(\vect{\phi}))}$ is the $L^{2}(\Gamma(\vect{\phi}))$-norm and $\|\cdot\|_{H^q(D)}$ denotes the $H^q(D)$-norm.
		\item[(iii)] there exists an extension $\ezdn$ of $\zdn$ from $\Omega^{(n)}$ to $D$, and a constant $C_{D}>0$ independent of $n$ such that $\|\ezdn\|_{H^{1}(D)} \leqslant C_{D}$.
	\end{itemize}
\end{lemma}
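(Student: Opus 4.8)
The plan is to establish the three estimates in turn, observing that (iii) is deduced from (i) and (ii) together with the coercivity of $a$. For (i), the content is a Poincar\'e--Friedrichs inequality whose constant must be uniform over the whole family $\tilde{\mathcal{O}}_\text{ad}$, and I would argue by contradiction. Suppose no such constant exists; then there are domains $\Omega^{(n)}\in\tilde{\mathcal{O}}_\text{ad}$ and functions $u_n\in H_{\Sigma,0}^1(\Omega^{(n)})$ with $\|u_n\|_{L^2(\Omega^{(n)})}=1$ and $\|\nabla u_n\|_{L^2(\Omega^{(n)})}\to 0$. Applying the $\Omega$-independent Chenais extension operator $\mathcal{E}_{\Omega^{(n)}}$ yields $\tilde u_n:=\mathcal{E}_{\Omega^{(n)}}u_n$ bounded in $H^1(D)$, so along a subsequence $\tilde u_n\rightharpoonup u$ in $H^1(D)$ and $\tilde u_n\to u$ strongly in $L^2(D)$ by Rellich; simultaneously, by Lemma \ref{lem:Hausdorff_convergence}, $\Omega^{(n)}\to\Omega_0\in\tilde{\mathcal{O}}_\text{ad}$ in the Hausdorff sense with convergence of characteristic functions. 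On any $K\Subset\Omega_0$ one has $K\subset\Omega^{(n)}$ eventually and $\nabla\tilde u_n=\nabla u_n\to 0$ in $L^2(K)$, whence $\nabla u=0$ a.e.\ on $\Omega_0$; since $\Omega_0$ is connected and weak continuity of the trace gives $u=0$ on the fixed boundary $\Sigma$, we conclude $u\equiv 0$. However, the convergence of characteristic functions together with the strong $L^2$ convergence gives $1=\|u_n\|_{L^2(\Omega^{(n)})}^2=\int_D\mathbf{1}_{\Omega^{(n)}}\tilde u_n^2\to\int_D\mathbf{1}_{\Omega_0}u^2=\|u\|_{L^2(\Omega_0)}^2$, forcing $\|u\|_{L^2(\Omega_0)}^2=1$, a contradiction.

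For (ii), the estimate is a trace inequality onto the free boundary, again uniform in $\vect{\phi}$. Writing the line integral through the parametrization, $\|u\|_{L^2(\Gamma(\vect{\phi}))}^2=\int_0^1|u(\vect{\phi}(t))|^2|\vect{\phi}'(t)|\,dt\le c_2\int_0^1|u(\vect{\phi}(t))|^2\,dt$ by property $(\text{P2})$, so it suffices to control the pulled-back trace. I would first extend $u$ from $H^q(D)$ to $\bar u\in H^q(\mathbb{R}^2)$ with $\|\bar u\|_{H^q(\mathbb{R}^2)}\lesssim\|u\|_{H^q(D)}$ (the constant depending only on the fixed Lipschitz set $D$), and then invoke the trace theorem for $q>\tfrac12$. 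Uniformity is obtained by covering $\Gamma(\vect{\phi})$ with a controlled number of arcs, each of which---after a rigid motion---is the graph of a function whose Lipschitz and curvature bounds are governed solely by the constants $c_0,c_1,c_2,c_3$ of $\mathcal{U}_\text{ad}$; on each flattened piece the standard half-space estimate $\|w\|_{L^2(\mathbb{R}^{d-1})}\lesssim\|w\|_{H^q(\mathbb{R}^d)}$ applies, and summation yields $\|u\|_{L^2(\Gamma(\vect{\phi}))}\lesssim\|u\|_{H^q(D)}$ with a constant independent of $\vect{\phi}\in\mathcal{V}_\text{ad}$, since $\mathcal{V}_\text{ad}\Subset\mathcal{U}_\text{ad}$.

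For (iii), I would test the variational identity \eqref{eq:weak_vd} on $\Omega^{(n)}$ with $\varphi=\zdn\in H_{\Sigma,0}^1(\Omega^{(n)})$, giving $a(\zdn,\zdn)=-a(\udo,\zdn)$. Since $\alpha\ge\alpha_0>0$, the left-hand side dominates $\|\nabla\zdn\|_{L^2(\Omega^{(n)})}^2$, while the right-hand side is bounded by $\|\nabla\udo\|_{L^2(\Omega^{(n)})}\|\nabla\zdn\|_{L^2(\Omega^{(n)})}+\alpha\|\udo\|_{L^2(\Gamma^{(n)})}\|\zdn\|_{L^2(\Gamma^{(n)})}$. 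The first factor is bounded by $\|\udo\|_{H^1(D)}$; for the boundary terms I would use (ii) to get $\|\udo\|_{L^2(\Gamma^{(n)})}\lesssim\|\udo\|_{H^q(D)}$ and, since $\ezdn=\zdn$ on $\Omega^{(n)}$, $\|\zdn\|_{L^2(\Gamma^{(n)})}=\|\ezdn\|_{L^2(\Gamma^{(n)})}\lesssim\|\ezdn\|_{H^q(D)}\le\|\ezdn\|_{H^1(D)}\lesssim\|\zdn\|_{H^1(\Omega^{(n)})}$ (the last step by the uniform extension bound, using $q\le 1$). Finally (i) gives $\|\zdn\|_{H^1(\Omega^{(n)})}\lesssim\|\nabla\zdn\|_{L^2(\Omega^{(n)})}$, so every term on the right is $\lesssim\|\nabla\zdn\|_{L^2(\Omega^{(n)})}$ with constants independent of $n$. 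Dividing through yields a uniform bound on $\|\nabla\zdn\|_{L^2(\Omega^{(n)})}$, hence on $\|\zdn\|_{H^1(\Omega^{(n)})}$, and the extension estimate then produces the desired $\|\ezdn\|_{H^1(D)}\le C_D$.

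The principal difficulty is the uniformity of the constant in (i): for a single domain the inequality is classical, but the constant a priori degenerates as the geometry of $\Omega$---and in particular the hole $\omega$---varies, and here $u$ vanishes only on the fixed part $\Sigma$ of $\partial\Omega$ and not on the free boundary $\Gamma$. The compactness argument circumvents this precisely because the uniform cone property furnishes both the $\Omega$-independent extension operator and the Hausdorff and characteristic-function convergence needed to pass to the limit; the remaining care is to check that the limit $u$ genuinely inherits the homogeneous condition on $\Sigma$ and that no $L^2$ mass escapes as $\Omega^{(n)}\to\Omega_0$. The uniformity in (ii) is a secondary obstacle of the same nature, resolved by the uniform geometric bounds built into $\mathcal{U}_\text{ad}$.
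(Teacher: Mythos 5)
Your proposal is correct, but it does more work than the paper and differs in where the load is carried. The paper does not prove items (i) and (ii) at all: it imports (i), the uniform Poincar\'{e} inequality, from \cite[Cor.\ 3(ii)]{BoulkhemairChakib2007}, and (ii), the domain-uniform trace estimate, from \cite[Thm.\ 4]{Boulkhemairetal2013}; the displayed proof of Lemma \ref{lem:estimates} concerns only (iii). Your compactness-by-contradiction argument for (i) — uniform Chenais extension, weak $H^{1}(D)$ and strong $L^{2}(D)$ limits, Hausdorff plus characteristic-function convergence of the domains, vanishing of the limit via connectedness and the trace on $\Sigma$, and conservation of the unit $L^{2}$ mass — is essentially the standard argument underlying the cited result, and it is sound, though it silently relies on the same nontrivial facts the paper also assumes (that the limit parametrization stays admissible, and that Hausdorff convergence of $\Omega^{(n)}$ entails convergence in the sense of compacts and of characteristic functions); likewise your flattening-and-covering sketch for (ii) reproduces, with uniformity coming from $(\text{P2})$, what \cite{Boulkhemairetal2013} provides. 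For (iii) your route coincides with the paper's: test \eqref{eq:weak_vd} with $\varphi=\zdn$, use $\alpha\geqslant\alpha_{0}>0$ for coercivity, control the boundary terms through (ii) and the uniform extension bound, and close the estimate with (i). The only cosmetic difference is that the paper works with the norm $\vertiii{\cdot}_{\Omega^{(n)}}=\|\nabla\cdot\|_{L^{2}(\Omega^{(n)})}+\|\cdot\|_{L^{2}(\Gamma^{(n)})}$ and invokes its uniform equivalence with the natural $H^{1}$ norm (citing \cite{Meftahi2017}), whereas you absorb every right-hand term directly into the gradient seminorm via (i); both yield the same uniform bound $\|\zdn\|_{H^{1}(\Omega^{(n)})}\lesssim\|\udo\|_{H^{1}(D)}$ and then $\|\ezdn\|_{H^{1}(D)}\leqslant C_{D}$ by the extension inequality of \cite{Chenais1975}. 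What your version buys is self-containedness — the lemma no longer rests on two external theorems — at the cost of re-deriving compactness machinery the paper prefers to cite; what the paper's version buys is brevity and a cleaner separation between imported uniform-geometry facts and the one genuinely new estimate, the uniform a priori bound on the extended states.
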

Supported by the fact that the admissible domains satisfy the uniform cone property, Lemma \ref{lem:estimates}(i) issues a uniform Poincar\'{e} inequality proved in \cite[Cor. 3(ii)]{BoulkhemairChakib2007}.
Lemma \ref{lem:estimates}(ii), on the other hand, is related to the uniform continuity of the trace operator with respect to the domain established in \cite[Thm. 4]{Boulkhemairetal2013}, and Lemma \ref{lem:estimates}(iii) is about an extension of the state variable from $\Omega^{(n)}$ to $D$ whose ${H^{1}(D)}$-norm is bounded above by a positive constant.
This guarantees the existence of subsequence $\{ \ezdn \}$ which weakly converges in $H^{1}(D)$ to a limit denoted by $\ezd$.
Thus, the proof of Proposition \ref{prop:continuity_of_the_state_zd} is completed by showing that the restrictions of $\ezd$ in $\Omega(\vect{\phi})$ is in fact the unique solution to \eqref{eq:weak_vd}. 

In connection to Lemma \ref{lem:estimates}(ii), we note the compactness of the injection $H^{1}(D)$ into $H^q(D)$ for $q \in (1/2, 1)$, i.e., we have
\begin{equation}
	\label{eq:injection}
	 H^{1}(D) \xhookrightarrow{\text{compact}} H^q(D), \qquad \text{for $\frac12 < q < 1$}.
\end{equation}

Meanwhile, the proof of the third statement of Lemma \ref{lem:estimates} which make use of the first two estimates given in the lemma as seen in the following argumentations.
\begin{proof}[Proof of Lemma \ref{lem:estimates}]
The notation $(\cdot)^{(n)}$ is here (and throughout the proof) understood as $(\cdot)(\vect{\phi}^{(n)})$ .
We first show the boundedness of $\{\|\ezdn \|_{H^{1}(D)} \}$. 
By a result of Chenais \cite{Chenais1975} (i.e., the uniform extension property), the solution $\zdn$ of \eqref{eq:weak_vd} on $\Omega^{(n)}$ admits an extension $\ezdn$ in $H^{1}(D)$ such that 
\begin{equation}\label{eq:extension_to_D}
	\|\ezdn\|_{H^{1}(D)} \lesssim \|\zdn\|_{H^{1}(\Omega^{(n)})}.
\end{equation}
	Thus, we only need to find a uniform bound for $\|\zdn\|_{H^{1}(\Omega^{(n)})}$ with respect to $n$.
	To do this, we take $\varphi = \zdn \in H_{\Sigma,0}^1(\Omega^{(n)})$ in \eqref{eq:weak_vd}, and note of the assumption that $\alpha(x)$ is a function that can be extended in $D$ such that (denoted by the same notation) $\alpha(x) \geqslant \alpha_{0} > 0$, for all $x \in D$, to obtain
	$\vertiii{\zdn}_{\Omega^{(n)}}^{2} \lesssim a(\zdn,\zdn) 
			= -a(\udo, \zdn)
			\lesssim \vertiii{\zdn}_{\Omega^{(n)}} \vertiii{\udo}_{\Omega^{(n)}}
			\lesssim \vertiii{\zdn}_{\Omega^{(n)}} \vertiii{\udo}_{D}$,
	where $\vertiii{\cdot}_{\Omega^{(n)}}:=\| \nabla \cdot \|_{L^{2}(\Omega^{(n)})} + \| \cdot \|_{L^{2}(\Gamma^{(n)})}$.
	Note here that we can bound $\|\zdn \|_{L^{2}(\Gamma^{(n)})}$ by $|\zdn |_{H^{1}(\Omega^{(n)})}$. 
	Indeed, from Lemma \ref{lem:estimates}(i)--(ii) and \eqref{eq:extension_to_D}, we have
	$
		\|\zdn \|_{L^{2}(\Gamma^{(n)})}
		\lesssim \|\ezdn \|_{H^{1}(D)} 
		\lesssim \|\zdn \|_{H^{1}(\Omega^{(n)})}
		\lesssim |\zdn |_{H^{1}(\Omega^{(n)})}.
	$
	By these inequalities, together with the fact that $\vertiii{\cdot}_{\Omega}$ is a norm on $H^{1}$ over the (open and bounded) set $\Omega$ equivalent to the natural norm \cite[Appx. A, Prop. 2, p. 15]{Meftahi2017}, we get the uniform estimate $\| \zdn \|_{H^{1}(\Omega^{(n)})} \lesssim \| \udo \|_{H^{1}(D)}$.
	Noting that $\udo$ is a fixed function, we then achieve the boundedness of $\{\|\ezdn \|_{H^{1}(D)} \}$, completing the proof of the lemma.
\end{proof}
%
%
%
%
We now proceed to the demonstration of Proposition \ref{prop:continuity_of_the_state_zd}.
\begin{proof}[{{Proof of Proposition \ref{prop:continuity_of_the_state_zd}}}]
	Given the assumptions of the proposition and Lemma \ref{lem:estimates}(ii), there exists an element $\ezd$ in $H^{1}(D)$ and a subsequence $\{\ezdk\}$ of $\{\ezdn\}$ such that the weak convergence $\ezdk \rightharpoonup \ezd$ in $H^{1}(D)$ holds.
	
	First, the statement that $\zd=\ezd|_{\Omega(\vect{\phi})}$ is in $H_{\Sigma,0}^{1}(\Omega(\vect{\phi}))$ follows from the boundedness of the trace operator.
	Because the fixed boundary is Lipschitz, the trace operator $(\cdot )|_{\Sigma}:H^{1}(D) \to L^{2}(\Sigma)$ is compact, so it takes weakly convergent sequences into strongly convergent sequences.
	From this, we can infer that $\lim_{k\to\infty} \ezdk|_{\Sigma} = \ezd|_{\Sigma}$ in $L^{2}(\Sigma)$.
	Now, since $\ezdk|_{\Omega^{(k)}} = \zdk$, then $\zd|_{\Sigma} = \lim_{k\to\infty}\ezdk|_{\Sigma} = \lim_{k\to\infty}\zdk|_{\Sigma} = 0$, and so $\zd \in H_{\Sigma,0}^{1}(\Omega(\vect{\phi}))$.
	
	Next, we will show that $\zd(\vect{\phi})=\ezd|_{\Omega(\vect{\phi})}$ is the solution of \eqref{eq:weak_vd} on $\Omega(\vect{\phi})$.
	To this end, we will prove that the variational equation
	\begin{equation}\label{eq:weak_vd_Omegaphi}
	\begin{aligned}
	\int_{\Omega(\vect{\phi})}{\nabla \zd \cdot \nabla v \, d x}
		+ \int_{\Gamma(\vect{\phi})}{\alpha \zd v\, d s}
		 = -\int_{\Omega(\vect{\phi})}{\nabla \udo \cdot \nabla v \, d x }
			- \int_{\Gamma(\vect{\phi})}{ \alpha \udo v\, d s}, \quad \forall v \in H_{\Sigma,0}^{1}(\Omega(\vect{\phi})),
	\end{aligned}
	\end{equation}
	also holds for all test functions $v \in H_{\Sigma,0}^{1}(D)=\{v\in H^{1}(D) \mid v=0 \ \text{on}\ \Sigma\}$.
	Note that the restriction on $\Omega^{(k)}:=\Omega(\vect{\phi}^{(k)})$ of any element $v$ of $H_{\Sigma,0}^{1}(D)$ is in $H_{\Sigma,0}^{1}(\Omega^{(k)})$, for all $k$, which is exactly the test space of \eqref{eq:weak_vd} on $\Omega^{(k)}$.
	Hence, we have
	\begin{equation}\label{eq:weak_vd_Omegaphi_n}
	\begin{aligned}
	&\int_{\Omega(\vect{\phi}^{(k)})}{\nabla \zdk \cdot \nabla v \, d x}
		+ \int_{\Gamma(\vect{\phi}^{(k)})}{ \alpha \zdk v\, d s}\\
		&\qquad\qquad = - \int_{\Omega(\vect{\phi}^{(k)})}{\nabla \udo \cdot \nabla v \, d x }
			- \int_{\Gamma(\vect{\phi}^{(k)})}{ \alpha \udo v\, d s}, \quad \forall v \in H_{\Sigma,0}^{1}(D). 
	\end{aligned}
	\end{equation}
	For the next step, we will obtain \eqref{eq:weak_vd_Omegaphi} from \eqref{eq:weak_vd_Omegaphi_n} by passing to the limit.
	To do this, we look at the difference between equations \eqref{eq:weak_vd_Omegaphi} and \eqref{eq:weak_vd_Omegaphi_n}, and then we let $k$ tends to infinity.
	In the following computations, property $(\text{P2})$ of $\vect{\phi}$ and $\vect{\phi}_k$ will be use several times.
	
	For the difference between the last two corresponding integrals, we have
	\begin{align*}
		\mathbb{I}_{4}&:=\left| \int_{\Gamma(\vect{\phi}^{(k)})}{ \alpha \udo v\, d s} - \int_{\Gamma(\vect{\phi})}{ \alpha \udo v\, d s} \right|\\
		& \lesssim \|\alpha\|_{L^\infty(D)}\Bigg( \|v\|_{L^{2}(\Gamma(\vect{\phi}^{(k)}))} \|\udo \circ \vect{\phi}_k - \udo \circ \vect{\phi}\|_{L^{2}([0,1])}
		+ \|\udo\|_{L^{2}(\Gamma(\vect{\phi}))} \|v \circ \vect{\phi}_k - v \circ \vect{\phi}\|_{L^{2}([0,1])}\\
		&\quad \qquad \qquad  \qquad
			+ \sup_{[0,1]} |\vect{\phi}_k' - \vect{\phi}'| \left\| \udo \right\|_{L^{2}(\Gamma(\vect{\phi}))} \left\| v \right\|_{L^{2}(\Gamma(\vect{\phi}))} \Bigg),	\quad (v \in H_{\Sigma,0}^{1}(D) \subset H^{1}(D)),	
	\end{align*} 
	due to Lemma \ref{lem:estimates}. 
	Since $v \in H_{\Sigma,0}^{1}(D) \subset H^{1}(D)$, then by \cite[Cor. 1]{Boulkhemairetal2008}, we see that 
	\[
		\left\| v \circ \vect{\phi}_k - v \circ \vect{\phi}\right\|_{L^{2}([0,1])} \to 0
		\quad\text{and}\quad
		\|\udo \circ \vect{\phi}_k - \udo \circ \vect{\phi}\|_{L^{2}([0,1])} \to 0,
		\quad \text{as $t\to0$},
	\]
	for any sequence $\{\vect{\phi}^{(k)}\} \subset \{\vect{\phi}^{(n)}\} \subset \mathcal{U}_{\text{ad}}$ and element $\vect{\phi} \in \mathcal{U}_{\text{ad}}$ such that $\vect{\phi}^{(k)} \to \vect{\phi}$ in the sense of \eqref{eq:phi_sequence_convergence}. 
	By this previously mentioned convergence of $\vect{\phi}_k$ to $\vect{\phi}$ in the $\mathcal{C}^{1}([0,1],\mathbb{R}^{2})$-norm, we end up with the limit $\lim_{k\to\infty} \mathbb{I}_{4} = 0$.

	In the same fashion, applying the inequalities and bounds given in Lemma \ref{lem:estimates}, we obtain
		\begin{align*}
		\mathbb{I}_{3}
		&:=\left| \int_{\Gamma(\vect{\phi}^{(k)})}{ \alpha \zdk v\, d s} - \int_{\Gamma(\vect{\phi})}{ \alpha \zd v\, d s} \right|\\
		& \lesssim \|\alpha\|_{L^\infty(D)}\Bigg( \|v\|_{L^{2}(\Gamma(\vect{\phi}^{(k)}))}  \|\zdk \circ \vect{\phi}_k - \zdk \circ \vect{\phi}\|_{L^{2}([0,1])} 
			+ \|v\|_{L^{2}(\Gamma(\vect{\phi}^{(k)}))}   \|\zdk - \zd \|_{L^{2}(\Gamma(\vect{\phi}))}\\
		&\quad \quad \qquad\qquad +\|\zd\|_{L^{2}(\Gamma(\vect{\phi}))} \|v \circ \vect{\phi}_k - v \circ \vect{\phi}\|_{L^{2}([0,1])}
			+ \sup_{[0,1]} |\vect{\phi}_k' - \vect{\phi}'| \left\| \zd \right\|_{L^{2}(\Gamma(\vect{\phi}))} \left\| v \right\|_{L^{2}(\Gamma(\vect{\phi}))} \Bigg)\\ 
		& \lesssim \|\alpha\|_{L^\infty(D)}\Bigg( \|v\|_{H^{1}(D)}  \|\zdk \circ \vect{\phi}_k - \zdk \circ \vect{\phi}\|_{L^{2}([0,1])}
			+ \|v\|_{H^{1}(D)} \|\ezdk - \ezd \|_{H^q(D)} \\
		&\quad \quad \qquad \qquad
			 + \|\ezd\|_{H^{1}(D)} \|v \circ \vect{\phi}_k - v \circ \vect{\phi}\|_{L^{2}([0,1])}
			 + \sup_{[0,1]} |\vect{\phi}_k' - \vect{\phi}'| \left\| \ezd \right\|_{H^{1}(D)} \left\| v \right\|_{H^{1}(D)} \Bigg).
	\end{align*} 
	Because $\zd = \ezd|_{\Omega},\ \zdk = \ezdk|_{\Omega^{(k)}} \in H^{1}(D)$, the first and the third summands in the right side of the inequality above both disappear due to Lemma \ref{lem:estimates}(ii) combined with the application of \cite[Cor. 1]{Boulkhemairetal2008}.
	Similarly, the second summand also goes to zero as $k \to \infty$ because of Lemma \ref{lem:estimates}(ii) and \eqref{eq:injection}.
	Lastly, because of the convergence given in \eqref{eq:phi_sequence_convergence}, the fourth summand in the last inequality above also tends to zero as $k \to \infty$.
	Hence, we also have the limit $\lim_{k\to \infty} \mathbb{I}_{3}=0$.
	
	Now, for the remaining differences on domain integrals, we have
	\begin{align*}
		\mathbb{I}_{1}
			&= \int_{D}{\chi_{\Omega}( \nabla \ezdk - \nabla \ezd) \cdot \nabla v\, d x}
				+ \int_{D}{(\chi_{\Omega^{(k)}} - \chi_{\Omega})\nabla \ezdk \cdot \nabla v \, d x}, \\
		\mathbb{I}_{3} 
			&= \int_{D}{(\chi_{\Omega^{(k)}} - \chi_{\Omega})\nabla \udo \cdot \nabla v \, d x},
	\end{align*}
	and the desired limits $\lim_{k\to \infty} \mathbb{I}_{1} = \lim_{k\to \infty} \mathbb{I}_{2} =0 $ are achieved by applying the $H^{1}(D)$-weak convergence $\ezdk \rightharpoonup \ezd$, the convergence of characteristic functions (see, e.g., \cite[Prop. 2.2.28, p. 45]{HenrotPierre2018} and \cite{Pironneau1984})
	$\chi_{\Omega^{(k)}} \to \chi_{\Omega}$ in $L^\infty(D)$-weak$^\ast$ (see \eqref{eq:weak_star_convergence_of_characteristic_functions}),
	and the fact that the sequence $\{\|\ezdk\|_{H^{1}(D)}\}$ is bounded.
	This proves that $\zd(\vect{\phi})=\ezd|_{\Omega(\vect{\phi})}$ is the solution of \eqref{eq:weak_vd} on $\Omega(\vect{\phi})$.
\end{proof}
%
%
%
%
%
%
With Proposition \ref{prop:continuity_of_the_state_zd} established, we are now set to prove the second part of Theorem \ref{prop:existence_of_optimal_solution} by proving Proposition \ref{prop:lower_semi_continuity_of_JN}. In the proof, however, we need stronger assumptions to establish specific estimates appearing in the argumentation. While the proof of Proposition \ref{prop:continuity_of_the_state_zd} only requires the weak convergence of $\ezdn$ in $H^{1}(D)$, the proof of the next result needs the weak convergence $\ezdn \to \ezd$ to hold in the $H^{2}(D)$ sense.
For this reason, the properties of the admissible sets of functions parametrizing the free boundary $\Gamma$ given in Definition \ref{def:assumptions} need to be modified accordingly. As a consequence, after imposing the necessary assumptions and definitions, the results presented in Proposition \ref{prop:continuity_of_the_state_zd} and also in Lemma \ref{lem:estimates} -- particularly the convergences and estimates -- can be shown to also hold in the $H^{2}$ sense. We omit the exact re-statements for these results and their corresponding proofs for economy of space.
To be precise with the most important part, however, we simply underline here that we assume $\vect{\phi}^{(n)} \to \vect{\phi}$ in the $\mathcal{C}^{2}([0,1])$-topology and $\ezdn \to \ezd$ weakly in $H^{2}(D)$.
\begin{proposition}
	\label{prop:lower_semi_continuity_of_JN}
	The shape functional $J_{N}(\Omega) = \frac{1}{2} \intS{(\dn{\ud(\Omega)} - g)^{2}}$, where $\ud(\Omega)$ solves \eqref{eq:state_ud} in $\Omega$, is lower semi-continuous on $\mathcal{G}$ in the topology induced by \eqref{eq:convergence} where the convergences defined by \eqref{eq:phi_sequence_convergence} and \eqref{eq:Omega_sequence_convergence} are assume to hold in the $\mathcal{C}^{2}([0,1])$-topology and in $H^{2}(D)$-weakly, respectively.
\end{proposition}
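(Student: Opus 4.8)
The plan is to exploit the fact that the tracking functional is supported on the \emph{fixed} boundary $\Sigma$, so that---unlike the volume and free-boundary integrals handled in Proposition \ref{prop:continuity_of_the_state_zd}---no transport of the domain of integration is required. Write $\ud(\Omega)=\zd(\Omega)+\udo$, where $\udo\in H^{2}(D)$ is fixed; then on $\Sigma$ one has $\dn{\ud}=\dn{\zd}+\dn{\udo}$ with the second term independent of $\Omega$. It therefore suffices to control $\dn{\zd}$ on $\Sigma$ along the sequence.

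First I would take any sequence $(\Omega^{(n)},\zdn)\to(\Omega,\zd)$ in $\mathcal{G}$ in the strengthened topology, so that (by the $H^{2}$-analogue of Proposition \ref{prop:continuity_of_the_state_zd} whose restatement is granted in the paragraph preceding the statement) the extensions satisfy $\ezdn\rightharpoonup\ezd$ weakly in $H^{2}(D)$ and $\zd=\ezd|_{\Omega}$ solves \eqref{eq:weak_vd} on $\Omega$. The crucial structural observation is that, because every admissible $\omega$ satisfies $\operatorname{dist}(\Sigma,\Gamma(\vect{\phi}))\geqslant\delta_{0}>0$ (property $(\text{P4})$), a fixed collar neighborhood of $\Sigma$ is contained in every $\Omega^{(n)}$; since the extension agrees with the solution on the domain, $\ezdn=\zdn$ on that collar, and hence the normal-derivative traces on $\Sigma$ coincide, $\dn{\ezdn}|_{\Sigma}=\dn{\zdn}|_{\Sigma}$.

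The key step is a compactness argument for the normal-derivative trace. Since $\Sigma=\partial D$ is $\mathcal{C}^{1,1}$ (assumption \eqref{eq:assumption_on_fixed_boundary}), the map $H^{2}(D)\ni w\mapsto\dn{w}|_{\Sigma}\in H^{1/2}(\Sigma)$ is bounded, while the embedding $H^{1/2}(\Sigma)\hookrightarrow L^{2}(\Sigma)$ is compact because $\Sigma$ is a compact curve. Composing, $w\mapsto\dn{w}|_{\Sigma}$ is compact from $H^{2}(D)$ into $L^{2}(\Sigma)$, so it sends the weakly convergent sequence $\ezdn\rightharpoonup\ezd$ to a strongly convergent one, $\dn{\ezdn}|_{\Sigma}\to\dn{\ezd}|_{\Sigma}$ in $L^{2}(\Sigma)$. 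Combined with the collar identity this gives $\dn{\zdn}\to\dn{\zd}$ in $L^{2}(\Sigma)$, and therefore $\dn{\ud(\Omega^{(n)})}\to\dn{\ud(\Omega)}$ in $L^{2}(\Sigma)$. Since $g$ is fixed and $w\mapsto\frac12\|w-g\|_{L^{2}(\Sigma)}^{2}$ is continuous, I conclude $J_{N}(\Omega^{(n)})\to J_{N}(\Omega)$; in particular $J_{N}$ is sequentially continuous, which is stronger than the asserted lower semi-continuity. (Even had only weak $L^{2}(\Sigma)$ convergence of the traces been available, lower semi-continuity would follow from the convexity and continuity---hence weak lower semi-continuity---of $w\mapsto\frac12\|w-g\|_{L^{2}(\Sigma)}^{2}$.)

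I expect the main obstacle to be precisely the step that forces the strengthened hypotheses: to make sense of $\dn{\ud}$ as an $L^{2}(\Sigma)$ function and to obtain \emph{strong} $L^{2}(\Sigma)$ convergence of the normal traces, one genuinely needs $H^{2}(D)$ regularity and the weak $H^{2}(D)$ convergence of $\ezdn$, since an $H^{1}$ normal trace lies only in $H^{-1/2}(\Sigma)$, which does not embed compactly into $L^{2}(\Sigma)$. Verifying the $H^{2}$-analogues of Lemma \ref{lem:estimates} and Proposition \ref{prop:continuity_of_the_state_zd}---namely uniform $H^{2}(D)$ bounds on $\ezdn$ and identification of the weak limit as the solution---under the $\mathcal{C}^{2}$ parametrization assumptions is the technical heart of the matter; once these are in place, the lower semi-continuity is immediate from the compact trace embedding.
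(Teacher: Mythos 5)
Your proof is correct and follows essentially the same route as the paper's: both reduce the claim to strong $L^{2}(\Sigma)$ convergence of the normal derivatives on the fixed boundary (where the normal does not depend on the domain) and deduce it from the weak $H^{2}(D)$ convergence of the extensions via a compactness argument, thereby establishing full sequential continuity of $J_{N}$, which is stronger than the asserted lower semi-continuity. The only difference is cosmetic --- the paper routes the compactness through the embedding $H^{2}(D) \hookrightarrow H^{q}(D)$, $q \in (3/2,2)$, followed by the trace inequality, whereas you compose the bounded normal trace $H^{2}(D) \to H^{1/2}(\Sigma)$ with the compact embedding $H^{1/2}(\Sigma) \hookrightarrow L^{2}(\Sigma)$; your collar argument via $(\text{P4})$ also makes explicit the identification $\dn{\ezdn}\big|_{\Sigma} = \dn{\zdn}\big|_{\Sigma}$ that the paper leaves implicit.
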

Before proving the proposition, we once again recall that for any admissible domain $\Omega$ of class $\mathcal{C}^{1,1}$, we have $\nu\in C^{0,1}(N^{\varepsilon}) \subset W^{1,\infty}(N^{\varepsilon}) \subset H^{1}(N^{\varepsilon})$, where $N^{\varepsilon}$ is a small neighborhood of $\partial\Omega$ (see, e.g., \cite[Sec. 7.8]{DelfourZolesio2011}). 
It follows that $\nu$ can be extended to a Lipschitz continuous function $\tilde{\nu}$ in $\overline{\Omega}$, and even to the larger set $\overline{D}$. (The normal vector $\nu$ is even smoother for $\mathcal{C}^{2,1}$ domains).
\begin{proof}[Proof of Proposition \ref{prop:lower_semi_continuity_of_JN}]
	Let $\{(\Omega^{(n)}, \zdn)\}$ be a sequence in $\mathcal{G}$, $\Omega^{(n)} := \Omega(\vect{\phi}^{(n)})$, and assume that $(\Omega^{(n)}, \zdn) \to (\Omega, \zd)$ as $n \to \infty$,
	where $\Omega:=\Omega(\vect{\phi})$ and $(\Omega, \zd):=(\Omega, \zd(\Omega)) \in \mathcal{G}$.
	We denote the $H^{2}(D)$ extension of $\zd$ and $\zdn$ in $D$ by $\ezd$ and $\ezdn$, respectively, and let $\nu^{(n)}$ be the (outward) unit normal vector to $\Omega^{(n)}$.
	For simplicity, in the computations below, we drop the constant $1/2$.
	So, we have the following sequence of inequalities
	\begin{align*}
		h_{N}
		:= \left| \sqrt{J_{N}(\Omega^{(n)})} - \sqrt{J_{N}(\Omega)} \right| 
		&= \left|  \left\| \dn{\ud(\Omega^{(n)})}  - g \right\|_{L^{2}(\Sigma)} - \left\| \dn{\ud(\Omega)} - g  \right\|_{L^{2}(\Sigma)} \right|\\ 
		&\leqslant \left\| \dn{\ud(\Omega^{(n)})} - \dn{\ud(\Omega)} \right\|_{L^{2}(\Sigma)} \\ 
		&\leqslant \left\| \nabla \udn \cdot (\nu^{(n)} - \nu) \right\|_{L^{2}(\Sigma)} + \left\| \nabla (\udn - \ud) \cdot \nu \right\|_{L^{2}(\Sigma)}\\
		&\leqslant \left\| \nabla (\udn - \ud) \cdot \nu \right\|_{L^{2}(\Sigma)},
	\end{align*} 
	where the last inequality follows from the fact that the outward unit normal $\nu^{(n)} $ and $\nu$ coincide on the fixed boundary $\Sigma$.
	Further estimation gives us
	\[
	h_{N} \leqslant \left\| \nabla (\udn - \ud) \cdot \nu \right\|_{L^{2}(\Sigma)}
		\lesssim \left\| \eudn - \eud \right\|_{H^{q}(D)}, \quad q \in (3/2,2).
	\]
	In above computations, we have used Assumption \eqref{eq:assumption_on_fixed_boundary}, the property of the trace operator for $H^{2}$ functions (or the trace inequality), and the definition of $H^{q}(D)$ norm.
	We conclude by applying the compactness of the injection of $H^{2}(D)$ into $H^{q}(D)$ for $q \in (3/2,2)$.
\end{proof}
Before we end the section, and for the sake of completeness, we formally provide the proof of Theorem \ref{prop:existence_of_optimal_solution} using Proposition \ref{prop:continuity_of_the_state_zd} and Proposition \ref{prop:lower_semi_continuity_of_JN}.
\begin{proof}[Proof of Theorem \ref{prop:existence_of_optimal_solution}]
	\sloppy Let $\{ (\Omega^{(n)},\zdn)\} := \{ (\Omega(\vect{\phi}^{(n)}),\zd(\Omega(\vect{\phi}^{(n)}))\} $ be a minimizing sequence of the shape functional $J_{N}$;
	that is, we let $(\Omega^{(n)},\zdn)$ be such that 
		$
			\lim_{n\to\infty} J_{N}(\Omega^{(n)},\zdn) = \inf\{J_{N}(\hat{\Omega},\zd) \mid (\hat{\Omega},\zd)\in\mathcal{G}\}.
		$
	From a similar argumentation in proving Proposition \ref{prop:continuity_of_the_state_zd}, it can be shown that there exists a subsequence $\{ (\Omega^{(k)},\zdk) \}$ of $\{ (\Omega^{(n)},\zdn) \}$
	and an element $\Omega:=\Omega(\vect{\phi}) \in \mathcal{O}_\text{ad}$\footnote{Here, it must be noted that the definition of the set $\mathcal{O}_\text{ad}$ given in Definition \ref{def:assumptions} is provided with the necessary additional regularity conditions.} such that $\Omega^{(k)} \to \Omega$ (i.e., $\vect{\phi}^{(k)} \to \vect{\phi}$ uniformly in the $\mathcal{C}^{2}$ topology), $\ezdk \rightharpoonup \ezd$ in $H^{2}(D)$, and the function $\ezd|_{\Omega}$ is the unique weak solution to \eqref{eq:weak_vd} in $\Omega$.
	By these results, together with Proposition \ref{prop:lower_semi_continuity_of_JN}, we conclude that -- by \cite[Thm. 2.10]{HaslingerMakinen2003} --
	$
		J_{N}(\Omega,\ezd|_{\Omega})
		= \lim_{k \to \infty} J_{N}(\Omega^{(k)},\zdk)
		= \inf\{J_{N}(\hat{\Omega},\zd):(\hat{\Omega},\zd)\in\mathcal{G}\}.
	$ 
\end{proof} 
}}
%
%
%
\subsection{Tracking the Dirichlet data in least-squares approach}\label{subsec:tracking_the_Dirichlet-data}
In this subsection, we present the narrative of tracking Dirichlet data and provide analogous results from previous sections for this case.
\subsubsection{{{Shape optimization formulation}}}
We start by stating that the original problem can also be posed in the following format.
\begin{problem}\label{problem:main_problem}
	Given the Neumann data $g$ on $\Sigma$ and the measured Dirichlet data
	\[
		f:=  u \qquad\text{on $\Sigma$},
	\]
	where $u$ solves the system of PDEs
	\begin{equation}
        \label{equa:state}
        		  - \Delta u	 = 0 	\quad\text{in\ $\Omega$},\qquad
        			      \dn{u} = g	\quad\text{on\ $\Sigma$},\qquad
        	\dn{u} + \alpha u = 0	\quad\text{on\ $ \Gamma $},
        \end{equation}
	determine the shape of the unknown portion of the boundary $\Gamma$.
\end{problem}
In above formulation, we emphasize that $f \in H^{5/2}(\Sigma)$ is seen to be an admissible boundary measurement corresponding to the input flux $g \in  H^{3/2}(\Sigma)$.
In other words, $f$ belongs to the image of the Neumann-to-Dirichlet map $\Upsilon_{\Sigma}: g \in H^{3/2}(\Sigma) \mapsto f=u \in H^{5/2}(\Sigma)$, where $u$ solves \eqref{equa:state}. 
Then, accordingly, one can consider the following minimization problem which consists of tracking the Dirichlet data on the accessible boundary in $L^{2}$ sense:
\begin{equation}\label{equa:shop}
	J_{{D}}(\Omega) := J_{{D}}(D\setminus\overline{\omega}) = \frac12 \intS{\left(\un- f \right)^{2}} \to\ \inf,
\end{equation}
where $\un$ solves the following well-posed systems of PDEs
\begin{equation}
\label{equa:state_un}
	- \Delta\un 		= 0		\quad\text{in $\Omega$},\qquad
	\dn{\un} 	 		= g		\quad\text{on $\Sigma$},\qquad
	\dn{\un} + \alpha\un= 0	\quad\text{on $\Gamma$},
\end{equation}
whose variational formulation reads as follows:
	\begin{equation}\label{equa:state_weak_form_un}
		\text{Find ${\un} \in H^{1}(\Omega)$ such that  $a(\un,\psi) = \intS{g \psi}$, for all $\psi \in H^{1}(\Omega)$},
	\end{equation}
where $a$ is the bilinear form given in \eqref{eq:bilinear_form}.	
For $g \in H^{-1/2}(\Sigma)$ and Lipschitz $\Omega$, the existence of unique weak solution $\un \in H^{1}(\Omega)$ to \eqref{equa:state_weak_form_un} follows from Lax-Milgram lemma.

Problem \eqref{equa:shop} is equivalent to \eqref{eq:gip} provided we have a perfect match of boundary data on the unknown boundary, meaning $u = f$ and $\dn{u} = g$ on $\Sigma$.
Furthermore, the shape optimization problem \eqref{equa:shop} has a solution for $\mathcal{C}^{1,1}$ regular domains, as shown in Theorem \ref{theorem:main_result}. 
This claim is rigorously justified in the next subsection under a topology induced by a different definition of convergence of domains. 
This approach allows us to prove the existence of an optimal solution to \eqref{equa:shop} in both two and three spatial dimensions.
\subsubsection{{{Existence of a shape solution}}}
\label{subsec:existence_tracking_the_Dirichlet_data}
Here, we address the question of the existence of an optimal solution to \eqref{equa:shop}. 
To achieve this, it suffices to assume that the entire boundary of any admissible domain (to be elaborated later) is $\mathcal{C}^{1,1}$ regular. 
Unless otherwise specified, this regularity will be imposed on any (non-empty, open, and bounded) set considered in this section. 
On some occasions, the aforementioned regularity is explicitly stated for clarity.

To proceed, we rewrite \eqref{equa:shop} as follows:
\begin{equation}
\label{equa:new_shape_opti}
	\min_{\Omega} J_{{D}}(\Omega) := \min_{\Omega} J_{{D}}(\Omega,\un(\Omega)) = \min_{\Omega} \left\{ \frac{1}{2} \int_{\Omega} (\un(\Omega) - f)^{2} \right\},
\end{equation}
where $\un(\Omega)$ is subject to \eqref{equa:state_un}. 
Since \eqref{equa:state_un} has a unique solution, we can define the map $\Omega \ \mapsto \ \un:=\un(\Omega)$, and denote its graph by
\begin{align*}
	\mathcal{G}&=\{(\Omega,\un): \text{ $\Omega \in \mathcal{O}_\text{ad}$ and $\un$ solves \eqref{equa:state_un}}\},
\end{align*}
where $\mathcal{O}_\text{ad}$ is defined further below in \eqref{equa:admissible_set_of_domains}.

Our main result of this subsection is the following.
\begin{theorem}\label{theorem:main_result}
	The minimization problem \eqref{equa:new_shape_opti} has at least one solution.
\end{theorem}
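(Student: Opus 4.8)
The plan is to establish existence by the direct method of the calculus of variations, following the same three-step scheme (compactness of the admissible class, continuity of the state with respect to the domain, and lower semicontinuity of the cost) used for Theorem \ref{prop:existence_of_optimal_solution}, but exploiting the fact that $J_{D}$ now tracks the state $\un$ itself rather than its normal derivative. First I would fix the admissible class $\mathcal{O}_\text{ad}$ of \eqref{equa:admissible_set_of_domains} to consist of $\mathcal{C}^{1,1}$ (or merely uniformly Lipschitz) domains enjoying a uniform cone property; by the Chenais-type compactness results \cite{Chenais1975,HenrotPierre2018}, every minimizing sequence $\{\Omega^{(n)}\}$ admits a subsequence $\{\Omega^{(k)}\}$ converging to some $\Omega \in \mathcal{O}_\text{ad}$ in the Hausdorff sense, together with the convergence of characteristic functions $\chi_{\Omega^{(k)}} \to \chi_\Omega$ in $L^\infty(D)$-weak-$\ast$. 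Because this argument rests only on the cone property and not on a planar boundary parametrization, it is valid in both two and three dimensions, which is precisely the reason for adopting this topology here.

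Next I would derive a uniform a priori bound on the states. Testing \eqref{equa:state_weak_form_un} with $\psi = \un^{(k)}$ and using the coercivity of the bilinear form $a$ in \eqref{eq:bilinear_form} -- which holds because $\alpha \geqslant \alpha_{0} > 0$ renders the triple norm $\vertiii{\cdot}_{\Omega}$ equivalent to the $H^{1}$-norm uniformly in $\Omega$ -- together with the uniform Poincar\'e and trace estimates available under the cone property (cf.\ Lemma \ref{lem:estimates}(i)--(ii) and \cite{BoulkhemairChakib2007,Boulkhemairetal2013}) and the continuity of the trace $H^{1}(\Omega^{(k)}) \to L^{2}(\Sigma)$ applied to the source term $\intS{g\un^{(k)}}$, yields $\|\un^{(k)}\|_{H^{1}(\Omega^{(k)})} \lesssim \|g\|$ independently of $k$. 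Applying the uniform Chenais extension operator then produces extensions $\eunk$ with $\|\eunk\|_{H^{1}(D)} \leqslant C_{D}$, so after passing to a further subsequence $\eunk \rightharpoonup \eun$ weakly in $H^{1}(D)$.

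The crux -- and the step I expect to be the main obstacle -- is to show that $\un := \eun|_{\Omega}$ solves \eqref{equa:state_weak_form_un} on $\Omega$, i.e.\ the continuity of the Robin state under domain perturbation. This mirrors Proposition \ref{prop:continuity_of_the_state_zd}: for a test function $v \in H^{1}(D)$ I would write the weak form on $\Omega^{(k)}$ (its restriction lying in $H^{1}(\Omega^{(k)})$), subtract the candidate identity on $\Omega$, and pass to the limit. The volume terms $\intO{\nabla\un\cdot\nabla v}$ are handled by combining $\eunk \rightharpoonup \eun$ with $\chi_{\Omega^{(k)}} \to \chi_\Omega$, exactly as for the $\mathbb{I}_{1}$ terms there, while the fixed-boundary source $\intS{gv}$ passes to the limit trivially since $\Sigma$ does not move. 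The genuinely delicate contribution is the Robin boundary term $\int_{\Gamma^{(k)}} \alpha\un^{(k)} v \to \intG{\alpha\un v}$, since for Robin problems the boundary condition need not survive an arbitrary domain limit; here it is controlled precisely by the uniform trace estimate Lemma \ref{lem:estimates}(ii), the compact injection \eqref{eq:injection}, and the change-of-variables/continuity lemmas of \cite{Boulkhemairetal2008,Boulkhemairetal2013}, whose applicability is guaranteed by the uniform cone property. Since restrictions of $H^{1}(D)$ exhaust $H^{1}(\Omega)$ by the extension property, the limiting identity holds for all admissible test functions, and uniqueness from Lax--Milgram pins down $\un = \eun|_{\Omega}$.

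Finally, lower semicontinuity of $J_{D}$ is far easier here than in the Neumann-tracking case, because $J_{D}$ involves $\un$ and not $\nabla\un$. The weak convergence $\eunk \rightharpoonup \eun$ in $H^{1}(D)$ upgrades, by Rellich--Kondrachov (respectively the compactness of the trace onto the fixed $\Sigma$), to strong convergence in $L^{2}(D)$ (respectively $L^{2}(\Sigma)$); combined with $\chi_{\Omega^{(k)}} \to \chi_\Omega$ weak-$\ast$, this gives outright continuity $J_{D}(\Omega^{(k)}) \to J_{D}(\Omega)$, hence in particular lower semicontinuity. Invoking the abstract existence principle \cite[Thm. 2.10]{HaslingerMakinen2003} with these three ingredients then shows that the limit pair $(\Omega,\un)$ minimizes $J_{D}$ over $\mathcal{G}$, completing the proof. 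A pleasant by-product is that only $\mathcal{C}^{1,1}$ regularity and $H^{1}$ state estimates are required -- no $H^{2}$ bound as in Proposition \ref{prop:lower_semi_continuity_of_JN} -- which is exactly what allows the argument to run in dimension three.
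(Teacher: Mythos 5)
Your overall skeleton (direct method, Hausdorff compactness of $\mathcal{O}_\text{ad}$, uniform $H^{1}$ bound plus Chenais extension, identification of the weak limit as the state on the limit domain, continuity of $J_{D}$ via the compact trace on the fixed $\Sigma$) coincides with the paper's, and the compactness, a priori bound, and lower-semicontinuity steps are essentially right. But there is a genuine gap at exactly the step you yourself flag as the crux: the passage to the limit in the Robin term $\int_{\Gamma^{(k)}}\alpha\, \un^{(k)} v\, ds$. You propose to control it with Lemma \ref{lem:estimates}(ii), the injection \eqref{eq:injection}, and the change-of-variables results of \cite{Boulkhemairetal2008,Boulkhemairetal2013}. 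Those tools live entirely inside the planar parametrization framework of subsection \ref{subsection:existence_tracking_Neumann_data}: they require boundaries described by $1$-periodic $\mathcal{C}^{1,1}$ parametrizations $\vect{\phi}$ with uniform bounds, and, crucially, the convergence $\vect{\phi}^{(k)}\to\vect{\phi}$ in $\mathcal{C}^{1}([0,1])$ --- neither of which is available under the topology you adopted. Hausdorff convergence of domains with the $\varepsilon$-cone property yields convergence of characteristic functions and of compacts, but it does \emph{not} yield convergence of integrals over the moving boundary $\Gamma^{(k)}$, and the class \eqref{equa:admissible_set_of_domains} carries no uniform parametrization bounds from which a $\mathcal{C}^{1}$-convergent subsequence could be extracted. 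Moreover, this machinery is intrinsically two-dimensional, which contradicts your closing claim that the argument runs in dimension three; indeed the paper states explicitly, right after Theorem \ref{theorem:main_result}, that the technique of subsection \ref{subsection:existence_tracking_Neumann_data} cannot be applied here precisely because it relies on Lemma \ref{lem:estimates}, and that this is the reason for switching to the Hausdorff topology.

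What the paper does instead --- and what your proof is missing --- is to eliminate the moving boundary integral altogether: since $\nu$ admits a Lipschitz extension to $\overline{D}$, the Robin term is rewritten via the divergence theorem as a volume integral, so the weak form on $\Omega^{(n)}$ becomes
\[
	\intD{ \chi_{\Omega^{(n)}} \nabla \eun^{(n)} \cdot \nabla \psi} + \intD{ \chi_{\Omega^{(n)}} \operatorname{div}(\alpha\, \eun^{(n)} \psi\, \nu)} = \intS{g \psi},
	\qquad \forall\psi \in H^{1}(D),
\]
and the limit passage then requires only $\eun^{(n)}\rightharpoonup\eun$ in $H^{1}(D)$ and $\chi_{\Omega^{(n)}}\stackrel{\ast}{\rightharpoonup}\chi_{\Omega}$ in $L^{\infty}(D)$ --- exactly the two convergences that Hausdorff compactness supplies, in any dimension $d\in\{2,3\}$. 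Without this device (or some substitute treatment of the boundary term), your identification of $\eun|_{\Omega}$ as the solution on the limit domain does not go through. A secondary slip: you invoke Lemma \ref{lem:estimates}(i) (the uniform Poincar\'e inequality) in deriving the a priori bound, but that inequality holds for functions in $H^{1}_{\Sigma,0}$, and $\un^{(k)}$ does not vanish on $\Sigma$; the bound must come, as you also say and as the paper does, from the uniform equivalence of $\vertiii{\cdot}_{\Omega^{(k)}}$ with the $H^{1}$ norm, the $L^{2}(\Gamma^{(k)})$ term weighted by $\alpha\geqslant\alpha_{0}>0$ supplying the missing coercivity.
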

Again, to demonstrate the validity of the aforementioned assertion, we first need to endow the set $\mathcal{G}$ with a topology for which $\mathcal{G}$ is compact and $J_{{D}}$ is lower semi-continuous.
Due to the presence of the Neumann boundary condition on the exterior boundary $\Sigma$, we cannot apply the same technique used in subsection \ref{subsection:existence_tracking_Neumann_data} since it relies on Lemma \ref{lem:estimates}. 
Therefore, we employ a different approach to prove the statement. Instead of introducing a topology induced by the convergence \eqref{eq:convergence}, we introduce a topology on $\mathcal{G}$ induced by the Hausdorff convergence $\Omega^{(n)} \stackrel{H}{\longrightarrow} \Omega$. 
In this way, we can even prove the existence of the optimal solution to \eqref{equa:shop} in arbitrary dimensions ($d\in\{2,3\}$).

Considering the point discussed above, we will now briefly review the definitions of the Hausdorff distance, Hausdorff convergence, and the $\varepsilon$-cone property. 
For elaboration on these concepts, readers are directed to \cite[Ch. 3]{Pironneau1984}.
\begin{definition}
[{\cite[Def. 2.2.7, p. 30]{HenrotPierre2018}}]
\label{def:Hausdorff_distance}
	Let ${{\omega}}_{1}$ and ${{\omega}}_{2}$ be two (compact) subsets of $\mathbb{R}^{d}$, $d \geqslant 2$.
	The Hausdorff distance $d_{H}({{\omega}}_{1},{{\omega}}_{2})$ between ${{\omega}}_{1}$ and ${{\omega}}_{2}$ is defined as follows $d_{H}({{\omega}}_{1},{{\omega}}_{2}) = \max\{\rho({{\omega}}_{1},{{\omega}}_{2}),\rho({{\omega}}_{2},{{\omega}}_{1})\}$
	where $\rho({{\omega}}_{1},{{\omega}}_{2}) = \sup_{s\in{{\omega}}_{1}} d(x, {{\omega}}_{2})$ and $d(x,{{\omega}}_{2}) = \inf_{y \in {{\omega}}_{2}} |x-y|$.
	Note that $d_{H}$ defines a topology on the closed bounded sets of $\mathbb{R}^{d}$.
\end{definition}
\begin{definition}[{\cite[Def. 2.2.8, p. 30]{HenrotPierre2018}}]
\label{def:Hausdorff_convergence}
	Let $\{{{\omega}}^{(n)} \}$ and ${{\omega}}$ be open sets included in $D \subset \mathbb{R}^{d}$, $d \geqslant 2$.
	We say that the sequence ${{\omega}}^{(n)}$  converges in the sense of Hausdorff to ${{\omega}}$ if $d_{H}(D\setminus{{\omega}}^{(n)}, D\setminus{{\omega}})  \longrightarrow 0$ as $n \longrightarrow \infty$.
	We will denote this convergence by ${{\omega}}^{(n)} \stackrel{H}{\longrightarrow} {{\omega}}$ (or simply by ${{\omega}}^{(n)} \longrightarrow {{\omega}}$ when there is no confusion).
\end{definition}
\begin{definition}[{\cite[Def. 2.4.1, p. 54]{HenrotPierre2018}}]
\label{definition:cone_property}
Let $\xi$ be a unitary vector in $\mathbb{R}^{d}$, $d \geqslant 2$, $\varepsilon > 0$ be a real number, and $y \in \mathbb{R}^{d}$.
A cone $C$ with vertex $y$, direction $\xi$, and dimension $\varepsilon$ is the set defined by
\[
	C(y,\xi,\varepsilon) = \{ x\in \mathbb{R}^{d} \mid \langle x-y, \xi\rangle_{\mathbb{R}_{d}} \geqslant \cos(\varepsilon) \|x-y\|_{\mathbb{R}_{d}} \ \ \text{and} \ \ 0 < \|x-y\|_{\mathbb{R}_{d}} < \varepsilon \},
\]
where $\langle \cdot,\cdot\rangle_{\mathbb{R}_{d}}$ is the Euclidean scalar product of $\mathbb{R}^{d}$ and $\|\cdot\|_{\mathbb{R}^{d}}$ is the associated euclidean norm.

An open bounded set $\Omega \subset \mathbb{R}^{d}$ satisfies the $\varepsilon$-cone property, if for $x \in \partial \Omega$, there exists a unitary vector $\xi_{x} \in \mathbb{R}^{d}$ such that for all $y \in \overline{\Omega} \cap B_{\varepsilon}(x)$, we have $C(y,\xi,\varepsilon) \subset \Omega$, where $B_{\varepsilon}(x)$ denotes the open ball with center $x$ and radius $\varepsilon$.
\end{definition}
In light of the definitions provided above, we hereby assert the ensuing proposition, pivotal in substantiating the proof of Theorem \ref{theorem:main_result}.
\begin{proposition}[{\cite[Thm. 2.4.7, p. 56]{HenrotPierre2018}}]
	An open bounded set $\Omega \subset \mathbb{R}^{d}$ has the $\varepsilon$-cone property if and only if it has a Lipschitz boundary.
\end{proposition}
The set of admissible domains $\mathcal{O}_{\text{ad}}$ is defined as follows:
\begin{equation}\label{equa:admissible_set_of_domains}
\mathcal{O}_{\text{ad}} = \{\Omega = D \setminus \overline{\omega} \mid D \in \mathcal{C}^{1,1}, \ \omega \in \Wad, \ \text{$\Omega$ is an open, connected, bounded set}  \}.
\end{equation}
Here, $\Wad$ is essentially the set given previously in \eqref{eq:set_Wad}, with the distinction that we relax the regularity requirement on the domains. 
Specifically, we only assume that $D$, $\omega$, and consequently $\Omega$, are of class $\mathcal{C}^{1,1}$.
It is essential to emphasize that the elements within $\Wad$ exhibit the $\varepsilon$-cone property. 
Furthermore, we recall from Remark \ref{rem:convergence_of_domains_and_boundaries} that given a sequence $\Omega^{(n)}$ of open sets in $\mathcal{O}\text{ad}$, there exists an open set $\Omega \in \mathcal{O}_\text{ad}$ and a subsequence $\Omega^{(k)}$ that converges to $\Omega$ in the Hausdorff sense. 
Moreover, both $\Omega^{(k)}$ and $\partial \Omega^{(k)}$ converge, respectively, in the Hausdorff sense to $\Omega$ and $\partial \Omega$. 
Additionally, these convergences extend in the sense of characteristic functions and in the sense of compacts as well \cite[Thm 2.4.10, p. 59]{HenrotPierre2018}.

In the proof of Proposition \ref{prop:continuity_of_the_state_zd} given in subsection \ref{subsection:existence_tracking_Neumann_data}, we have already mentioned in passing that for any sequence of measurable sets $\Omega^{(n)}$, the corresponding sequence of characteristic functions $\chi_{\Omega^{(n)}}$ is weakly-$\ast$ relatively compact in $L^{\infty}(\mathbb{R}^{d})$.
This means that we can find an element $\chi \in L^{\infty}(\mathbb{R}^{d})$ and a subsequence $\{\Omega^{(k)}\}_{k \geqslant 0} \subset \{\Omega^{(n)}\}_{n \geqslant 0}$ such that
\begin{equation}\label{eq:weak_star_convergence_of_characteristic_functions}
	\text{for all $\psi \in L^{1}(\mathbb{R}^{d})$}, 
	\quad \lim_{k\to \infty} \int_{\mathbb{R}^{d}} \chi_{\Omega^{(k)}} \psi \, d{x}
	=  \int_{\mathbb{R}^{d}} \chi_{\Omega} \psi \, d{x}.
\end{equation}
\begin{remark}[{\cite[p. 27]{HenrotPierre2018}}]
\label{rem:limit_is_not_a_characteristic_function}
We point out here that the limit $\chi$ is not, in general, a characteristic function.
It only takes values between $0$ and $1$ \cite[Prop. 2.2.28, p. 45]{HenrotPierre2018}.
Nevertheless, the limit will be a characteristic function if the convergence is ``strong'' in the sense that it takes place in $L_{loc}^{p}$ for some $p \in [1, \infty)$. 
Indeed, it is then possible to extract a subsequence that converges almost everywhere.
Hence, in the limit, $\chi$ takes only the values $0$, $1$ and it coincides with the characteristic function of the set where it takes the value $1$.
\end{remark}
From the previous paragraph and remark, we observe that the weak-$\ast$ limit is a characteristic function only when the convergence is strong, as precisely stated in the following proposition.
\begin{proposition}[{\cite[Prop. 2.2.1, p. 27]{HenrotPierre2018}}]\label{prop:local_convergence_of_characteristic_functions_in_Lp}
If $\{\Omega^{(n)}\}_{n\geqslant 0}$ and $\Omega$ are measurable sets in $\mathbb{R}^{d}$ such that $\chi_{\Omega^{(n)}}$
weakly-$\ast$ converges in $L^{\infty}(\mathbb{R}^{d}$) in the sense of \eqref{eq:weak_star_convergence_of_characteristic_functions} to $\chi_{\Omega}$, then $\chi_{\Omega^{(n)}} \longrightarrow \chi_{\Omega}$  in $L_{loc}^{p}(\mathbb{R}^{d})$ for any $p<+\infty$ and almost everywhere.
\end{proposition}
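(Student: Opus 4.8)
The plan is to show that the hypothesis—that the weak-$\ast$ limit of the $\{0,1\}$-valued functions $\chi_{\Omega^{(n)}}$ is \emph{itself} a characteristic function $\chi_\Omega$—forces the convergence to be strong, and that the whole argument reduces to a single computation in $L^2_{\mathrm{loc}}$. Two elementary algebraic facts drive everything: (a) for any measurable $E$ one has $\chi_E^2=\chi_E$ pointwise; and (b) the difference $\chi_{\Omega^{(n)}}-\chi_\Omega$ takes values only in $\{-1,0,1\}$, so $|\chi_{\Omega^{(n)}}-\chi_\Omega|^p=|\chi_{\Omega^{(n)}}-\chi_\Omega|$ pointwise for every $p>0$. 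Fact (b) shows at once that, on any bounded set $K$, the $L^p(K)$-norm of the difference is a power of its $L^1(K)$-norm; hence it will suffice to establish convergence in $L^1_{\mathrm{loc}}$ (equivalently $L^2_{\mathrm{loc}}$) in order to obtain it for every $p<+\infty$ simultaneously.

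First I would fix a bounded measurable set $K\subset\mathbb{R}^d$ (say a ball $B_R$), so that both $\chi_K$ and $\chi_{\Omega\cap K}$ lie in $L^1(\mathbb{R}^d)$ and are therefore admissible test functions in \eqref{eq:weak_star_convergence_of_characteristic_functions}. Expanding the square and using fact (a) gives
\[
\int_K\big(\chi_{\Omega^{(n)}}-\chi_\Omega\big)^2\,dx
=\int_K\chi_{\Omega^{(n)}}\,dx-2\int_K\chi_{\Omega^{(n)}}\chi_\Omega\,dx+\int_K\chi_\Omega\,dx.
\]
Testing \eqref{eq:weak_star_convergence_of_characteristic_functions} against $\psi=\chi_K$ sends the first term to $\int_K\chi_\Omega\,dx$, while testing against $\psi=\chi_{\Omega\cap K}$ sends $\int_K\chi_{\Omega^{(n)}}\chi_\Omega\,dx=\int_{\mathbb{R}^d}\chi_{\Omega^{(n)}}\chi_{\Omega\cap K}\,dx$ to $\int_{\mathbb{R}^d}\chi_\Omega\chi_{\Omega\cap K}\,dx=\int_K\chi_\Omega\,dx$, the last equality because $\chi_\Omega\chi_{\Omega\cap K}=\chi_{\Omega\cap K}$. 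Substituting these three limits shows the right-hand side tends to $\int_K\chi_\Omega-2\int_K\chi_\Omega+\int_K\chi_\Omega=0$, so $\chi_{\Omega^{(n)}}\to\chi_\Omega$ in $L^2(K)$. Since $K$ was an arbitrary bounded set, this is precisely convergence in $L^2_{\mathrm{loc}}(\mathbb{R}^d)$, and by fact (b) it upgrades immediately to convergence in $L^p_{\mathrm{loc}}(\mathbb{R}^d)$ for every $p<+\infty$.

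Finally, the $L^1_{\mathrm{loc}}$ convergence yields almost everywhere convergence along a subsequence: on each ball $B_m$ ($m\in\mathbb{N}$) the $L^1(B_m)$ convergence allows extraction of a subsequence converging a.e.\ on $B_m$, and a diagonal extraction over $m$ produces a single subsequence converging a.e.\ on all of $\mathbb{R}^d$ (as already anticipated in Remark \ref{rem:limit_is_not_a_characteristic_function}). I do not expect any genuinely hard estimate; the sole conceptual point—and the reason the statement would fail without the hypothesis—is the use of the \emph{limit's own} characteristic-function property as a test function. Indeed, had the weak-$\ast$ limit been merely some $\theta$ with $0\le\theta\le1$, the identical expansion would give $\int_K(\chi_{\Omega^{(n)}}-\theta)^2\to\int_K\theta(1-\theta)\,dx$, which vanishes exactly when $\theta\in\{0,1\}$ a.e.; thus the only obstacle is encoded entirely in the hypothesis that the limit is a characteristic function, and once that is granted the proof is a short computation.
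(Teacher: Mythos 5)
Your proof is correct and is essentially the standard one: the paper itself gives no argument (it simply cites \cite[Prop. 2.2.1, p. 27]{HenrotPierre2018}), and the cited proof is exactly your computation --- expand $\int_{K}(\chi_{\Omega^{(n)}}-\chi_{\Omega})^{2}\,dx$ using $\chi_{E}^{2}=\chi_{E}$, test the weak-$\ast$ convergence against the admissible $L^{1}$ functions $\chi_{K}$ and $\chi_{\Omega\cap K}$ to get the limit $0$, and upgrade from $L^{2}_{loc}$ to every $L^{p}_{loc}$, $p<+\infty$, because the difference is $\{-1,0,1\}$-valued. Your added precision that the almost-everywhere convergence is obtained along a diagonal subsequence is the correct reading of the statement, since strong $L^{1}_{loc}$ convergence of $\{0,1\}$-valued functions does not force full-sequence a.e.\ convergence (typewriter-type sequences converge to $\chi_{\emptyset}$ in $L^{1}$ but nowhere pointwise), so the proposition's bare ``and almost everywhere'' should be understood up to a subsequence.
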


As previously established (refer to the statement following Assumption \eqref{eq:assumption_on_fixed_boundary} in subsection \ref{subsection:existence_tracking_Neumann_data}), the set $\Wad$ possesses a significant property regarding its elements. 
Specifically, this property relates to the existence of a uniform extension operator, as precisely stated in the following lemma ({cf. \cite[Eq. (3.83), p. 129]{HenrotPierre2018}}).
\begin{lemma}
	\label{lemma:extension_operator}
	There exists a constant $c>0$ such that for all $\Omega \in {{\mathcal{O}_\text{ad}}}$, there exists a bounded linear extension operator $\mathcal{E}_{\Omega} : H^{m}(\Omega) \longrightarrow H^{m}(D)$ such that $\max_{m=0,1}\{\|\mathcal{E}_{\Omega}\|_{\mathscr{B}^{m}(\Omega)}\} \leqslant c$, where $\|\mathcal{E}_{\Omega}\|_{\mathscr{B}^{m}(\Omega)} = \sup_{v \in H^{m}(\Omega)\setminus\{0\}} \left\{\|\mathcal{E}_{\Omega}v\|_{H^{m}(D)}/\|v\|_{H^{m}(\Omega)}\right\}$ and $\mathscr{B}^{m}:= \mathscr{L}(H^{m}(\Omega),H^{m}(D))$.
\end{lemma}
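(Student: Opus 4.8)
The plan is to obtain the lemma as a \emph{uniform} (in $\Omega$) version of the classical Calderón--Stein extension theorem, the uniformity being supplied entirely by the uniform cone property of the family $\mathcal{O}_\text{ad}$. First I would record the geometric input. By the construction of $\Wad$ (with the relaxed $\mathcal{C}^{1,1}$ regularity), the fixed $\mathcal{C}^{1,1}$ outer boundary $\partial D$, and the separation condition $d(x,\partial D) > \delta$, every $\Omega = D\setminus\overline{\omega} \in \mathcal{O}_\text{ad}$ satisfies the $\varepsilon$-cone property of Definition \ref{definition:cone_property} for \emph{one and the same} $\varepsilon > 0$. By the Proposition following that definition, this is equivalent to the whole family being \emph{uniformly} Lipschitz: $\partial\Omega$ can be covered by finitely many local graphs whose Lipschitz constants, chart radii, and number are bounded by constants depending only on $\varepsilon$ and $D$, and not on the individual $\Omega$.

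Next I would run the standard localize--straighten--reflect--patch construction, while tracking the dependence of every constant on this geometric data. Concretely, I cover a neighborhood of $\partial\Omega$ by balls $U_1,\dots,U_N$ (with $N$ uniform) in each of which, after a rotation, $\Omega\cap U_i$ lies on one side of the graph of a Lipschitz function $a_i$ with $\mathrm{Lip}(a_i)\le L$ (with $L$ uniform). In each chart I flatten the boundary by the bi-Lipschitz change of variables $y \mapsto (y', y_d - a_i(y'))$ and then extend across the flattened boundary by even reflection $y_d \mapsto -y_d$, which is bounded simultaneously on $H^0$ and $H^1$ (for $m \le 1$ the simple even reflection already lands in $H^1$, so no higher-order Hestenes--Stein reflection is needed). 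Composing with the flattening map, whose $H^1$ distortion is controlled by $L$, yields a local extension $R_i$ bounded $H^m \to H^m$ for $m = 0,1$ with norm controlled by a power of $(1+L)$. Choosing a partition of unity $\{\theta_i\}_{i=0}^N$ subordinate to $\{U_i\}$ (with $\theta_0$ supported in the interior), whose $\mathcal{C}^1$ norms are again bounded in terms of the uniform chart radii, I set
\[
	\mathcal{E}_\Omega v \;=\; \theta_0\, v \;+\; \sum_{i=1}^{N} \theta_i\, R_i\big(v|_{\Omega\cap U_i}\big).
\]
This is linear, agrees with $v$ on $\Omega$, and lands in $H^m(D)$ for $m=0,1$.

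The operator-norm estimate is then routine bookkeeping: expanding $\|\mathcal{E}_\Omega v\|_{H^m(D)}$ by the triangle inequality and the Leibniz rule, every factor that enters—the number of charts $N$, the reflection norms $\lesssim (1+L)$, the $\mathcal{C}^1$ bounds on the $\theta_i$, and the Jacobian bounds of the flattening maps—is controlled by quantities depending only on $\varepsilon$ and $D$. Taking the maximum over $m\in\{0,1\}$ and the supremum over $v \in H^m(\Omega)\setminus\{0\}$ gives $\max_{m=0,1}\{\|\mathcal{E}_\Omega\|_{\mathscr{B}^m(\Omega)}\}\le c$ with $c$ independent of $\Omega$, which is the assertion.

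The main obstacle—and the only genuinely non-formal point—is establishing the \emph{uniformity} of the geometric data over the entire family $\mathcal{O}_\text{ad}$, namely that a single triple $(N, L, \text{chart radius})$ serves for \emph{all} admissible $\Omega$ at once. This is exactly what the uniform $\varepsilon$-cone property buys; once it is in hand, the extension construction and its bound are classical (this is precisely \cite[Eq. (3.83), p. 129]{HenrotPierre2018}, going back to Chenais \cite{Chenais1975}), and the rest is the constant-tracking sketched above. A secondary technical point worth flagging is that one must use a reflection bounded on $H^1$ and not merely on $L^2$—extension by zero would fail at $m=1$—so that the \emph{single} operator $\mathcal{E}_\Omega$ controls both indices simultaneously, as the statement requires.
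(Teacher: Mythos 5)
Your proposal is correct and takes essentially the same approach as the paper: the paper proves Lemma \ref{lemma:extension_operator} by noting that every $\Omega \in \mathcal{O}_\text{ad}$ enjoys the $\varepsilon$-cone property and then invoking the uniform extension theorem of Chenais \cite{Chenais1975} (cf. \cite[Eq. (3.83), p. 129]{HenrotPierre2018}), which is precisely your reduction. The only difference is that you unpack the classical localize--flatten--reflect--patch construction behind that citation (with constant tracking), whereas the paper simply cites it; both arguments rest on the same key point, which you rightly flag as the non-formal one, that a single $\varepsilon$ in the cone property serves the entire admissible family.
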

Note that by the choice of the set $\mathcal{O}_\text{ad}$, every admissible domain enjoys the $\varepsilon$-cone property which is a sufficient condition for the result in Lemma \ref{lemma:extension_operator} to hold.
Now, with the previous lemma at our disposal, we can easily prove the following proposition which is the analog result of Proposition \ref{prop:continuity_of_the_state_zd}.
\begin{proposition}\label{proposition:convergence_to_solution}
	Let the following assumptions be satisfied:
	\begin{description}
		\item[$(\text{A1})$]  $\{\Omega^{(n)}\} \subset {{\mathcal{O}_\text{ad}}}$ is a sequence that converges to $\Omega \in {{\mathcal{O}_\text{ad}}}$ in the Hausdorff sense.
		\item[$(\text{A2})$]  For each $n\in\mathbb{N}$, $\Omega^{(n)} \in \mathcal{O}_\text{ad}$, and $\un^{(n)} \in H^{1}(\Omega^{(n)})$ solves \eqref{equa:state_weak_form_un}.
		\item[$(\text{A3})$]  There is some constant $c^{\star} > 0$ such that, for all $\Omega^{(n)} \in {{\mathcal{O}_\text{ad}}}$, $n \in \mathbb{N}$, the extension operator $\mathcal{E}_{\Omega^{(n)}} : H^{m}(\Omega^{(n)}) \longrightarrow H^{m}(D)$ satisfy, for all $n \in \mathbb{N}$, the inequality condition $\max_{m=0,1}\{\|\mathcal{E}_{\Omega^{(n)}}\|_{\mathscr{B}^{m}(\Omega^{(n)})}\} \leqslant c^{\star}$ (and the same holds for the limit shape $\Omega \in \mathcal{O}_\text{ad}$).
	\end{description}
	Then, the sequence of extensions $\eun^{(n)} := \mathcal{E}_{\Omega^{(n)}}\un^{(n)} \in H^{1}(D)$ converges (up to a subsequence) to a function $\eun$ in $H^{1}(D)$-weak and in $L^{2}(D)$-strong such that $\eun \big|_{\Omega} = \un$ solves \eqref{equa:state_weak_form_un} in $\Omega$.
	Moreover, $\chi_{\Omega^{(n)}} \nabla \eun^{(n)}$ converges strongly in $L^{2}(D)^{d}$ to $\chi_{\Omega}\nabla\eun$.
	In addition, if the extension operators $\{\mathcal{E}_{\Omega^{(n)}}\}$ satisfy the compatibility condition 
	\begin{equation}\label{eq:combatibility_condition_for_the_convergence_of_extension_operators}
		 \mathcal{E}_{\Omega^{(n)}} (\chi_{\Omega^{(n)}} \un) \longrightarrow \eun\quad \text{strongly in $H^{1}(D)$},
	\end{equation}
	then the convergence of $\eun^{(n)}$ to $\eun$ also holds strongly in $H^{1}(D)$.
\end{proposition}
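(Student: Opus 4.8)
The plan is to follow the classical compactness-and-lower-semicontinuity scheme, with the decisive step being the identification of the weak limit through a passage to the limit in the variational formulation \eqref{equa:state_weak_form_un}. First I would establish a uniform a priori bound. Testing \eqref{equa:state_weak_form_un} with $\psi=\unn$ gives $a(\unn,\unn)=\intS{g\unn}$. Since the interior Robin term yields $a(v,v)\geqslant \|\nabla v\|_{L^{2}(\Omega^{(n)})}^{2}+\alpha_{0}\|v\|_{L^{2}(\Gamma^{(n)})}^{2}$, and since the associated triple-bar norm is equivalent to the full $H^{1}$-norm \emph{uniformly in $n$} (this uniformity is precisely what the $\varepsilon$-cone property buys, exactly as in Lemma \ref{lem:estimates}(i)), I would bound the right-hand side by $\|g\|_{L^{2}(\Sigma)}\|\unn\|_{L^{2}(\Sigma)}\lesssim \|g\|_{L^{2}(\Sigma)}\|\unn\|_{H^{1}(\Omega^{(n)})}$ via the uniform trace inequality on the fixed boundary $\Sigma$. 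This gives $\|\unn\|_{H^{1}(\Omega^{(n)})}\lesssim \|g\|_{L^{2}(\Sigma)}$, after which (A3) furnishes the $n$-independent bound $\|\eunn\|_{H^{1}(D)}\leqslant c^{\star}\|\unn\|_{H^{1}(\Omega^{(n)})}\lesssim\|g\|_{L^{2}(\Sigma)}$. Reflexivity of $H^{1}(D)$ then produces a subsequence with $\eunn\rightharpoonup \eun$ in $H^{1}(D)$, and the compact embedding $H^{1}(D)\hookrightarrow L^{2}(D)$ upgrades this to strong $L^{2}(D)$ convergence.

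Next I would identify $\eun$ as the solution. Fix $\psi\in C^{\infty}(\overline{D})$ (restrictions of such functions are dense in $H^{1}(\Omega)$); its restriction is admissible in \eqref{equa:state_weak_form_un} on each $\Omega^{(n)}$, so $\int_{D}\chi_{\Omega^{(n)}}\nabla\eunn\cdot\nabla\psi\,dx+\int_{\Gamma^{(n)}}\alpha\unn\psi\,ds=\intS{g\psi}$. The volume term passes to the limit because $\chi_{\Omega^{(n)}}\nabla\eunn\rightharpoonup\chi_{\Omega}\nabla\eun$ weakly in $L^{2}(D)^{d}$: writing $\chi_{\Omega^{(n)}}\nabla\eunn=\chi_{\Omega}\nabla\eunn+(\chi_{\Omega^{(n)}}-\chi_{\Omega})\nabla\eunn$, the first piece converges weakly since $\nabla\eunn\rightharpoonup\nabla\eun$, while the second vanishes because $\chi_{\Omega^{(n)}}\to\chi_{\Omega}$ in $L^{p}(D)$ and a.e. by Proposition \ref{prop:local_convergence_of_characteristic_functions_in_Lp}, together with the boundedness of $\{\nabla\eunn\}$. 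The integral over $\Sigma$ is fixed. The convergence of the Robin integral over the \emph{moving} boundary $\Gamma^{(n)}$ to $\intG{\alpha u_{N}\psi}$ is the main obstacle, being precisely where the Dancer--Daners phenomenon could spoil the limit. I would control it by upgrading the Hausdorff convergence: the uniform $\mathcal{C}^{1,1}$ (hence uniform cone) regularity lets me represent each $\Gamma^{(n)}$ locally as a graph with uniformly bounded $\mathcal{C}^{1,1}$-norm, so by Arzel\`{a}--Ascoli a subsequence of these local charts converges in $\mathcal{C}^{1}$ to the corresponding chart of $\Gamma$; pulling the surface integrals back to a fixed reference chart, the Jacobians converge uniformly and the traces converge by the uniform continuity of the trace operator combined with the strong $L^{2}(D)$/weak $H^{1}(D)$ convergence of $\eunn$. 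Passing to the limit yields the weak form on $\Omega$ for all such $\psi$, and density plus Lax--Milgram identify $\eun|_{\Omega}=u_{N}$ as its unique solution.

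For the second assertion I would argue by convergence of norms. Choosing $\psi=\unn$ gives $\|\nabla\unn\|_{L^{2}(\Omega^{(n)})}^{2}=\intS{g\unn}-\int_{\Gamma^{(n)}}\alpha|\unn|^{2}\,ds$; the compact trace $H^{1}(D)\to L^{2}(\Sigma)$ sends $\eunn|_{\Sigma}\to \eun|_{\Sigma}$ strongly, so $\intS{g\unn}\to\intS{g u_{N}}$, while the boundary term converges as above to $\intG{\alpha u_{N}^{2}}$. Testing the limit weak form with $u_{N}$, the right-hand side tends to $\|\nabla u_{N}\|_{L^{2}(\Omega)}^{2}=\|\chi_{\Omega}\nabla\eun\|_{L^{2}(D)}^{2}$. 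Hence $\|\chi_{\Omega^{(n)}}\nabla\eunn\|_{L^{2}(D)}\to\|\chi_{\Omega}\nabla\eun\|_{L^{2}(D)}$, which together with the weak $L^{2}(D)^{d}$ convergence already obtained yields strong convergence of $\chi_{\Omega^{(n)}}\nabla\eunn$ to $\chi_{\Omega}\nabla\eun$.

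Finally, for the strong $H^{1}(D)$ convergence I would decompose $\eunn-\eun=\mathcal{E}_{\Omega^{(n)}}\big(\chi_{\Omega^{(n)}}(\unn-u_{N})\big)+\big(\mathcal{E}_{\Omega^{(n)}}(\chi_{\Omega^{(n)}}u_{N})-\eun\big)$. The second bracket tends to zero in $H^{1}(D)$ by the compatibility hypothesis \eqref{eq:combatibility_condition_for_the_convergence_of_extension_operators}. For the first, (A3) gives $\|\mathcal{E}_{\Omega^{(n)}}(\chi_{\Omega^{(n)}}(\unn-u_{N}))\|_{H^{1}(D)}\leqslant c^{\star}\|\unn-u_{N}\|_{H^{1}(\Omega^{(n)})}$; its $L^{2}$-part vanishes since $\|\unn-u_{N}\|_{L^{2}(\Omega^{(n)})}\leqslant\|\eunn-\eun\|_{L^{2}(D)}\to 0$, and its gradient part vanishes because $\chi_{\Omega^{(n)}}(\nabla\eunn-\nabla\eun)\to 0$ strongly in $L^{2}(D)$ — the second assertion gives $\chi_{\Omega^{(n)}}\nabla\eunn\to\chi_{\Omega}\nabla\eun$, and dominated convergence gives $\chi_{\Omega^{(n)}}\nabla\eun\to\chi_{\Omega}\nabla\eun$. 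Thus $\eunn\to\eun$ strongly in $H^{1}(D)$. I expect the genuine difficulty to be concentrated entirely in the limit of the Robin integral over $\Gamma^{(n)}$; every other step is a routine assembly of weak compactness, the uniform extension and trace bounds, and the strong convergence of characteristic functions.
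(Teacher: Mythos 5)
Your proposal is correct in outline and reaches all three conclusions, but it is a genuinely different proof at the two decisive points, so a comparison is in order. The a priori bound, the extraction of a weakly $H^{1}(D)$ / strongly $L^{2}(D)$ convergent subsequence, and the passage to the limit in the volume term $\intD{\chi_{\Omega^{(n)}}\nabla\eunn\cdot\nabla\psi}$ coincide with the paper. Where you diverge is the Robin term: you keep it as a surface integral over the moving boundary $\Gamma^{(n)}$ and control it by upgrading Hausdorff convergence to $\mathcal{C}^{1}$ convergence of local charts via Arzel\`{a}--Ascoli, i.e.\ you import the parametrization technique the paper uses only in subsection \ref{subsection:existence_tracking_Neumann_data} (Proposition \ref{prop:continuity_of_the_state_zd}, the estimates $\mathbb{I}_{3}$, $\mathbb{I}_{4}$, and Lemma \ref{lem:estimates}(ii)). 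The paper's proof of \emph{this} proposition instead eliminates the moving boundary altogether: using a fixed Lipschitz extension $\nu$ of the normal field, it rewrites the Robin integral as the volume integral $\intD{\chi_{\Omega^{(n)}}\operatorname{div}(\alpha\eunn\psi\nu)}$ (equation \eqref{eq:variational_equation_over_domain_D}) and passes to the limit using only the weak-$\ast$ convergence of characteristic functions and the weak $H^{1}(D)$ convergence, with no chart convergence invoked. Your second genuine difference concerns the ``Moreover'' claim: you derive the strong $L^{2}(D)^{d}$ convergence of $\chi_{\Omega^{(n)}}\nabla\eunn$ from weak convergence plus convergence of norms, \emph{independently} of the compatibility condition \eqref{eq:combatibility_condition_for_the_convergence_of_extension_operators}, and only then feed it into the final $H^{1}(D)$ step. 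The paper establishes that claim only inside the compatibility argument, and its assertion that $\|\unn-\chi_{\Omega^{(n)}}\eun\|_{H^{1}(\Omega^{(n)})}\to 0$ ``evidently'' holds is, as written, circular: the gradient part of that norm is exactly the convergence being proved. Your ordering repairs this and matches the logical structure of the statement better; this is a real improvement, not just a stylistic variant.

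One caveat you should state explicitly: the uniform $\mathcal{C}^{1,1}$ bound on the local graphs of $\Gamma^{(n)}$, on which your Arzel\`{a}--Ascoli step rests, is not literally contained in $(\text{A1})$--$(\text{A3})$ or in the definition \eqref{equa:admissible_set_of_domains} of $\mathcal{O}_{\text{ad}}$, where each admissible domain is only \emph{individually} of class $\mathcal{C}^{1,1}$. A uniform Lipschitz (cone) bound alone does not suffice: graphs such as $n^{-1}\sin(nx)$ are individually smooth, uniformly Lipschitz, and Hausdorff-convergent, yet their surface measures --- and hence the Robin terms --- do not converge to those of the limit; this is precisely the Dancer--Daners phenomenon the paper warns about. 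So the uniform second-order bound is the exact point at which your argument uses more than the stated hypotheses. In fairness, the paper's route carries a matching implicit assumption: the identity \eqref{eq:variational_equation_over_domain_D} with the fixed extension $\nu$ of the \emph{limit} normal is exact for finite $n$ only if $\nu$ restricted to $\Gamma^{(n)}$ agrees with the unit normal $\nu^{(n)}$ of $\Gamma^{(n)}$; otherwise an error term involving $1-\nu\cdot\nu^{(n)}$ appears, and its vanishing again requires $\mathcal{C}^{1}$-closeness of the boundaries, which Hausdorff convergence alone does not provide. Both proofs therefore need the same strengthening of $(\text{A1})$; yours makes it explicit where the paper leaves it hidden.
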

\begin{proof} 
	Let the given assumptions (A1), (A2), and (A3) be satisfied.
    We have
    \begin{equation}\label{eq:varform_per_n}
        a(\un^{(n)},\psi) = \intS{g \psi}, \quad \text{for all $\psi \in H^{1}(\Omega^{(n)})$}.
    \end{equation}
    Taking $\psi = \un^{(n)} \in H^{1}(\Omega^{(n)})$ and using the equivalence between the norm $\vertiii{\cdot}_{\Omega^{(n)}}$ and the usual $H^{1}(\Omega^{(n)})$-Sobolev norm, we obtain the inequality $\|\un^{(n)}\|_{H^{1}(\Omega^{(n)})} \lesssim \|g\|_{L^{2}(\Sigma)}$.
    Using the assumption on the choice of extension operators $\mathcal{E}_{\Omega^{(n)}}$, $n \in \mathbb{N}$, combined with Lemma \ref{lemma:extension_operator}, we get the estimate
    \begin{equation}\label{eq:uniform_bound_for_extension}
        \|\eun^{(n)}\|_{H^{1}(D)} \leqslant \|\mathcal{E}_{\Omega^{(n)}}\|_{\mathscr{B}^{1}(\Omega^{(n)})} \|\un^{(n)}\|_{H^{1}(\Omega^{(n)})} \leqslant c^{\star} \|g\|_{L^{2}(\Sigma)}.
    \end{equation}
    Clearly, from \eqref{eq:uniform_bound_for_extension}, we see that the sequence $\eun^{(n)}$ is bounded in $H^{1}(D)$.
    By the Rellich-Kondrachov and Banach-Alaoglu theorems, we may extract a subsequence $\{\eun^{(k)}\} \subset \{\eun^{(n)}\}$ such that we have weak convergence $\eun^{(k)} \rightharpoonup \eun$ in $H^{1}(D)$ and strong convergence $\eun^{(k)} \rightarrow \eun$ in $L^{2}(D)$, for some element $\eun \in H^{1}(D)$.

	We next show that the limit point $\eun \in H^{1}(D)$ actually solves \eqref{equa:state_weak_form_un} in $\Omega$ (i.e., $\eun \big|_{\Omega} = \un$ where $\un$ solves \eqref{equa:state_weak_form_un}) by passing through the limit and using the pointwise almost everywhere convergence of the characteristic function $\chi_{\Omega^{(n)}}$ to $\chi_{\Omega}$ (i.e., we use the fact that there exists $\chi \in L^{\infty}(D)$ such that $\chi_{\Omega^{(n)}} \stackrel{\ast}{\rightharpoonup} \chi$ in $L^{\infty}(D)$ with $\chi_{\Omega} \leqslant \chi \leqslant 1$).
	In the rest of the proof, we use the fact that $\nu$ can be extended to a Lipschitz continuous function, again denoted by ${\nu}$, on $\overline{\Omega}$, and even to the larger set $\overline{D}$.
	
	Let us first note that for every $n$, each test function $\psi \in H^{1}(\Omega^{(n)})$ admits an extension in $H^{1}(\mathbb{R}^{d})$ and, specifically, in $H^{1}(D)$ -- still we denote by $\psi$ -- by Stein's extension theorem \cite[Thm. 5.24, p. 154]{AdamsFournier2003} and by a result of Chenais \cite{Chenais1975}.
	Therefore, we can also pose the variational problem \eqref{eq:varform_per_n} with the test space $H^{1}(D)$ by considering the following variational equation
	\begin{equation}\label{eq:variational_equation_over_domain_D}
		\mathcal{A}^{(n)}:=\intD{ \chi_{\Omega^{(n)}} \nabla \eun^{(n)} \cdot \nabla \psi} + \intD{ \chi_{\Omega^{(n)}} \operatorname{div}(\alpha \eun^{(n)} \psi \nu)} = \intS{g \psi},
		\quad \forall\psi \in {{H^{1}(D)}},%
	\end{equation}
	where $\nu$ is the unit normal vector to the boundary $\Gamma$, pointing outward from $\Gamma$.
	From Proposition \ref{prop:local_convergence_of_characteristic_functions_in_Lp}, we know that $\chi_{\Omega^{(n)}}$ almost everywhere converges to $\chi_{\Omega}$ in $L^{1}(D)$.
	As a consequence, we get 
	\begin{equation}\label{equa:L2D_convergence}
	\text{$\chi_{\Omega^{(n)}} \nabla{\psi} \longrightarrow \chi_{\Omega}\nabla{\psi}$ \qquad strongly in $L^{2}(D)$.}
	\end{equation}
	Next, let us show that $\eun \big|_{\Omega} = \un$ actually solves \eqref{equa:state_weak_form_un} by proving that
	\[
		\mathcal{A}^{(\infty)} := \intD{ \chi_{\Omega} \nabla \eun \cdot \nabla \psi} + \intD{ \chi_{\Omega} \operatorname{div}(\alpha \eun \psi \nu)} = \intS{g \psi},
		\quad \forall\psi \in {{H^{1}(D)}}.
	\]
	Using \eqref{equa:L2D_convergence}, the weak convergence $\eun^{(n)} \rightharpoonup \eun$ in $H^{1}(D)$, and the weak-$^{\ast}$ convergence $\chi_{\Omega^{(n)}} \stackrel{\ast}{\rightharpoonup} \chi_{\Omega}$ in $L^{\infty}(D)$, we see that $\mathcal{A}^{(n)} \longrightarrow \mathcal{A}^{(\infty)}$.
	Therefore, we have $\mathcal{A}^{(\infty)} = \intS{g \psi}$, for all $\psi \in {{H^{1}(D)}}$,
	or equivalently,
	\[
		\intO{ \nabla \eun \cdot \nabla \psi} + \intO{ \operatorname{div}(\alpha \eun  \psi \nu)} = \intS{g \psi},
		\quad \text{for all $\psi \in {{H^{1}(D)}}$}.
	\]	
	It is not hard to see that this is also valid for all $\psi \in H^{1}(\Omega)$, thanks to the extension property of $\Omega \in \mathcal{O}_\text{ad}$.
	Thus, with the uniqueness of the limit, we conclude that $\eun\big|_{\Omega} = \un$ -- recovering the variational equation in \eqref{equa:state_weak_form_un}.

	We finish the proof by verifying the last two claims in the proposition.
	We emphasize that we can also obtain a uniform estimate for $\|\chi_{\Omega^{(n)}} \nabla \eun^{(n)} \|_{L^{2}(D)^{d}}^{2}$ which can be deduced from \eqref{eq:variational_equation_over_domain_D}.
	Note that by taking $\psi = \eun^{(n)} \in H^{1}(D)$ in the previous variational equation, and since $\chi_{\Omega^{(n)}}^{2} = \chi_{\Omega^{(n)}}$, then we also have the inequality condition
	\[
		\|\chi_{\Omega^{(n)}} \nabla \eun^{(n)}\|_{L^{2}(D)^{d}}^{2}
			\leqslant \intD{ \chi_{\Omega^{(n)}} |\nabla \eun^{(n)}|^{2} }
			+ \intD{ \chi_{\Omega^{(n)}} \operatorname{div}(\alpha (\eun^{(n)})^{2} \nu)} 
			\lesssim \|g\|_{L^{2}(\Sigma) }^{2},
	\]
	where the rightmost inequality is due to \eqref{eq:uniform_bound_for_extension}.
	This gives us the estimate $\|\chi_{\Omega^{(n)}} \nabla \eun^{(n)}\|_{L^{2}(D)^{d}} \lesssim \|g\|_{L^{2}(\Sigma) }$.

	Now, if the extension operators $\{\mathcal{E}_{\Omega^{(n)}}\}$ satisfy the compatibility condition \eqref{eq:combatibility_condition_for_the_convergence_of_extension_operators}, then
	\[
		\lim_{n \to \infty} \mathcal{B}^{(n)} :=
		\lim_{n \to \infty} \intD{ \chi_{\Omega^{(n)}} \nabla \eun^{(n)} \cdot \nabla \eun^{(n)} } = \intD{\chi_{\Omega} \nabla \eun \cdot \nabla \eun }
		=: \mathcal{B}^{(\infty)}
	\]
	because
	\begin{equation}\label{eq:estimate_computation}
	\begin{aligned}
	(\text{LHS}) := \left| \mathcal{B}^{(n)} -  \mathcal{B}^{(\infty)}\right|
		&\ \lesssim \left| \intD{ \left( \chi_{\Omega^{(n)}} - \chi_{\Omega} \right) \nabla \eun^{(n)} \cdot \nabla \eun^{(n)}} \right|\\
		&\qquad	+ \left| \intD{ ( | \nabla \eun^{(n)}|^{2}  - |\nabla \eun|^{2}) } \right|,			
	\end{aligned}
	\end{equation}
	and that we have the convergences $\chi_{\Omega^{(n)}} \stackrel{\ast}{\rightharpoonup} \chi_{\Omega}$ in $L^{\infty}(D)$ and $\eun^{(n)} \to \eun$ strongly in $H^{1}(D)$.
	With respect to the second integral in \eqref{eq:estimate_computation} and the latter convergence, let us note that it holds that
	\[
		\| \mathcal{E}_{\Omega^{(n)}}(\un^{(n)}) -  \mathcal{E}_{\Omega^{(n)}} (\chi_{\Omega^{(n)}} \eun) \|_{H^{1}(D)}
			\lesssim \| \un^{(n)} - \chi_{\Omega^{(n)}} \eun \|_{H^{1}(\Omega^{(n)})}.
	\]
	Evidently, the right-hand side vanishes as $n$ tends to infinity.
	So, due to \eqref{eq:combatibility_condition_for_the_convergence_of_extension_operators}, it follows that $\un^{(n)} \to \un$ strongly in $H^{1}(D)$.
	Hence, we also deduce the convergence $\chi_{\Omega^{(n)}} \nabla \eun^{(n)} \cdot \nabla \eun^{(n)} \to \chi_{\Omega} \nabla \eun \cdot \nabla \eun$ in $L^{1}(D)$.
	Because $\Omega^{(n)}, \Omega \in {{\mathcal{O}_{\text{ad}}}}$, $\Omega^{(n)}$ and $\Omega$ are both (measurable) subsets of $D$, and we have the convergence of $\Omega^{(n)} \to \Omega$ not only in the sense of Hausdorff, but also in the case of characteristic functions (and also in the sense of compacts), then we can extract subsequences $\{\Omega^{(k)}\}$ and $\{\chi_{\Omega^{(k)}} \nabla \eun^{(k)} \cdot \nabla \eun^{(k)}\}$ such that
	\[
		\lim_{k \to \infty} \int_{\Omega^{(k)}}{ \chi_{\Omega^{(k)}} \nabla \eun^{(k)} \cdot \nabla \eun^{(k)} } \, dx = \intO{\chi_{\Omega} \nabla \eun \cdot \nabla \eun }.
	\]
	Indeed, this is immediate from \eqref{eq:estimate_computation} as we have\footnote{We can also deduce from the previous computations that
	$
		\|\chi_{\Omega^{(n)}} \nabla \eun^{(n)} \|_{L^{2}(D)^{d}}^{2} \to \|\chi_{\Omega} \nabla \eun\|_{L^{2}(D)^{d}}^{2}
	$ 
	which implies the strong convergence $\chi_{\Omega^{(n)}} \nabla \eun^{(n)} \to \chi_{\Omega}\nabla\eun$ in $L^{2}(D)^{d}$.
	Additionally, by a similar argument, we can also deduce the strong convergence $\chi_{\Omega^{(n)}} \eun^{(n)} \to \chi_{\Omega} \eun$ in $L^{2}(D)$.}
	\[
	\left| \int_{\Omega^{(k)}}{ \chi_{\Omega^{(k)}} \nabla \eun^{(k)} \cdot \nabla \eun^{(k)} } \, dx - \intO{\chi_{\Omega} \nabla \eun \cdot \nabla \eun } \right| \leqslant (\text{LHS}).
	\]
	This finishes the proof of the proposition.
\end{proof}
To close out this subsection, we provide the proof of Theorem \ref{theorem:main_result}.
\begin{proof}[Proof of Theorem \ref{theorem:main_result}]
	Observe that the infimum of $J_{D}(\Omega)$ is finite.
	Hence, we can find a minimizing sequence $\{\Omega^{(n)}\} \subset {{\mathcal{O}_\text{ad}}}$ which is bounded such that $\lim_{n\to\infty} J_{D}(\Omega^{(n)}) = \inf_{\Omega \in {{\mathcal{O}_\text{ad}}}} J_{D}(\Omega)$.
	By Remark \ref{rem:convergence_of_domains_and_boundaries} (cf. \cite[Thm. 2.4.10, p. 59]{HenrotPierre2018}), there exists ${{\Omega}} \in {{\mathcal{O}_\text{ad}}}$, and a subsequence $\{\Omega^{(k)}\} \subset \{\Omega^{(n)}\}$ such that $\Omega^{(k)}$ converges to ${{\Omega}}$ in the sense of Hausdorff (Definition \ref{def:Hausdorff_convergence}).
	Then, with the premise of Proposition \ref{proposition:convergence_to_solution}, we know that the sequence of extensions $\eun^{(n)} := \mathcal{E}_{\Omega^{(n)}}\un^{(n)} \in H^{1}(D)$ (of functions $\un^{(n)} \in H^{1}(\Omega^{(n)})$ which solves \eqref{equa:state_weak_form_un} on each of its respective domain) -- taking a further subsequence if necessary -- converges to (the unique limit) ${{\eun}} \in H^{1}(D)$ where ${{\eun}} \big|_{{{\Omega}}} = \un^{}$ solves \eqref{equa:state_weak_form_un} in ${{\Omega}}$.
	Now, to conclude, it is left to show that the shape functional $J_{D}(\Omega)$ is lower-semicontinuous; that is, we have $J_{D}({{\Omega}}) \leqslant \lim_{k \to \infty} J_{D}(\Omega^{(k)}) = \inf_{{{\hat{\Omega}}} \in {{\mathcal{O}_\text{ad}}}} J_{D}({{\hat{\Omega}}}) \leqslant J_{D}({{\hat{\Omega}}})$.
	From Proposition \ref{proposition:convergence_to_solution}, we know that the map $\Omega \mapsto \un(\Omega)$ is continuous.
	Therefore, the map $\Omega \mapsto J_{D}(\Omega)$ is also continuous, in particular, it is lower-semicontinuous.	
	This completes the proof of Theorem \ref{theorem:main_result}.
\end{proof}
Having addressed the existence of optimal shape solutions for equations \eqref{eq:shop} and \eqref{equa:shop}, we are now prepared to discuss the numerical solution of these optimization problems. 
A common approach for solving such problems numerically involves employing a gradient-based descent scheme. 
To enable this, we need the shape gradient of the cost functionals, which can be readily computed using shape calculus \cite{DelfourZolesio2011,HenrotPierre2018,MuratSimon1976,Simon1980,SokolowskiZolesio1992}. 
Recently, in \cite{AfraitesRabago2022}, the expression for the shape gradient of both $J_{N}$ and $J_{D}$ was established using a chain rule approach. 
Thus, in the subsequent part of this section, our focus will shift to the issue of ill-posedness of optimization problems \eqref{eq:shop} and \eqref{equa:shop}. 
Following this discussion, we will present a numerical algorithm for solving the minimization problems, which involves utilizing multiple measurements. 
Finally, we will provide some numerical examples.
\section{Second-order analyses and stability issue}\label{sec:second-order_analyses_and_stability_issue}
In \cite{AfraitesRabago2022}, it was observed that $J_{N}$ exhibits insensitivity to perturbations, which is likely due to the ill-posed nature of the optimization problem. 
The primary objective of this subsection is to explore this issue by analyzing the shape Hessian of $J_{N}$ at a critical shape. 
This analysis aims to underscore the ill-posedness inherent in the shape problem \eqref{eq:shop}. 
Furthermore, to address the numerical difficulties arising from this ill-posedness, particularly in identifying concave regions of the unknown interior boundary, we adopt an approach based on multiple data measurements along the accessible boundary. 
This method builds upon the concept of utilizing multiple boundary measurements, as discussed in \cite{AlvesMartinsRoberty2009,GiacominiPantzaTrabelsi2017}, and has recently been proposed in \cite{Fang2022}. 
Instead of resorting to a second-order method, which can be computationally demanding and intricate, especially in higher-dimensional scenarios, we opt for multiple boundary measurements. 
This approach is straightforward to implement and only necessitates knowledge of the first-order derivative of the cost function.

In the following, we briefly present some preliminary concepts related to shape derivatives from shape calculus.

\subsection{Some elements of shape calculus}
Throughout the paper, vectorial functions and spaces are written in bold faces. 
Let us define $D_{\delta}$ as an open set with a $\mathcal{C}^{\infty}$ boundary, such that $\{ x \in D \mid \text{$d(x,\partial D) > \delta/2$}\} \subset D_{{\delta}} \subset \{ x\in D \mid \text{$d(x,\partial D) > \delta/3$}\}$. 
We let $\vect{\mathcal{C}}^{2,1}(\mathbb{R}^{d})$, $d \in \{2,3\}$, be a smooth vector field with compact support in $\overline{D}_{\delta}$ and define $\sfTheta$ as the collection of all such admissible deformation fields. 
We represent the normal component of $\VV$ as $\Vn=\langle V,\nn\rangle$, where $\nn$ is the outward unit normal to $\Omega$. 

Let $t_{0}$ be a fixed (made sufficiently small when necessary) positive number.
We define a perturbation {{$\Omega_{t}:=\Omega_{t}(\VV)$ due to the $t$-independent deformation field $\VV$}}, $t \in \mathcal{I} := [0,t_{0})$, of $\Omega$, $T_{0}(\Omega) = \Omega$, by the diffeomorphic map
\[
	T_{t} : t\in\mathcal{I}\mapsto \operatorname{id} + t \VV \in \vect{\mathcal{C}}^{2,1}(\mathbb{R}^{d}), \qquad \VV \in \sfTheta.
\]
Observe that $(d/dt)T_t \big|_{t=0} = \VV$ vanishes on $\Sigma$ and on some small tubular neighborhood ${D \setminus\overline{D}_{\delta}}$ of $\Sigma$ since $\operatorname{supp}(\VV) \subset \overline{D}_{\delta}$.
Throughout the paper, an expression with subscript `$t$' means it is defined on the perturbed domain $\Omega_{t}$ (e.g., $\udt$ satisfies \eqref{eq:state_ud} with $\Omega$ replaced by $\Omega_{t}=T_{t}(\Omega)$).
We emphasize that here, and throughout the study, $\VV$ is understood to be an \textit{autonomous} (admissible) deformation field.
Moreover, it is always understood hereinafter that $\Omega = D \setminus \overline{\omega}$ where $\omega \in \Wad$ and $\Wad$ is given by \eqref{eq:set_Wad}.

We say that the function $u(\Omega)$ has a \textit{shape} derivative $u'= u'(\Omega)[\VV] $ at $0$ (that is, with respect to $\Omega$) in the direction of the vector field $\VV$ if the limit $u' = \lim_{t \searrow 0} \frac{1}{t}[u(\Omega_t) - u(\Omega)]$ exists.
Meanwhile, a shape functional $\sfj : \Omega \to \mathbb{R}$ has a directional Eulerian derivative at $\Omega$ in the direction $\VV$ if the limit $\lim_{t \searrow0} \frac{1}{t}[\sfj(\Omega_t) - \sfj(\Omega)] =: {d} \sfj(\Omega)[\VV]$ exists (cf. \cite[Eq. (3.6), p. 172]{DelfourZolesio2011}). 
If the map $\VV \mapsto {d} \sfj(\Omega)[\VV]$ is linear and continuous, then $\sfj$ is \textit{shape differentiable} at $\Omega$, and the map is referred to as the \textit{shape gradient} of $\sfj$.

Similarly, the \textit{second-order} Eulerian derivative of $\sfj$ at $\Omega$ along the two vector fields $\VV$ and $\WW$ is given by
\[
	\lim_{s \searrow0} \frac{{d}\sfj(\Omega_s(\WW))[\VV] - {d}\sfj(\Omega)[\VV]}{s} 
	=: d^{2}\sfj(\Omega)[\VV,\WW],
\]
if the limit exists {{(cf. \cite[Chap. 9, Sec. 6, Def. 6.1, p. 506]{DelfourZolesio2011})}}. 
In addition, $\sfj$ is said to be \textit{twice shape differentiable} if, for all $\VV$ and $\WW$, $d^{2}\sfj(\Omega)[\VV,\WW]$ exists, and is bilinear and continuous with respect to $\VV, \WW$.
In this case, we call the expression the \textit{shape Hessian} of $\sfj$.
\subsection{Shape derivative of $\ud$} \label{subsection:shape_derivative_of_the_Dirichlet_state}
To carry out a second-order analysis, we recall the shape derivative of the state variable $\ud$.
For this purpose, let us assume the following:
\begin{description}
	\item[$(\text{{A}})$]  the impedance function\footnote{Recall that $\alpha$, generally, is assumed to be a fixed non-negative Lipschitz function in $\mathbb{R}^{d}$, $d \in \{2,3\}$, such that $\alpha \geqslant \alpha_{0} > 0$, where $\alpha_{0}$ is a known constant.} (i.e., the Robin coefficient) $\alpha \geqslant \alpha_{0} > 0$ on $\Gamma$ has an $H^{2}$ extension (still denoted by $\alpha$ for simplicity) in some neighborhood of $\Gamma$ and is constant in the normal direction, i.e., it is such that $\dn{\alpha} = 0$.
\end{description}
The reason we impose assumption $(\text{{A}})$ is to simplify the discussion throughout the paper.
In general, without assumption $(\text{{A}})$, an additional boundary term involving the normal derivative of $\alpha$ on $\Gamma$ must be considered in $\Upsilon(v)$.
Hereinafter, Assumption (A) will be assumed without further notice.
%
%
%
\begin{lemma}[\cite{AfraitesRabago2022}]\label{lem:shape_derivative_of_the_states}
	Let $\Omega \in \mathcal{C}^{2,1}$ be an admissible domain (i.e., $\Omega = D\setminus \overline{\omega}$ where $\omega \in \Wad$), $\VV \in \sfTheta$, and $\ud \in H^{1}(\Omega)$ be the solution to \eqref{eq:state_ud}.
	Then, $\ud \in H^{3}(\Omega)$ and is shape differentiable with respect to $\Omega$ in the direction of $\VV$. 
	Its shape derivative $\udp \in H^{1}(\Omega)$ uniquely solves the boundary value problem
	\begin{equation}
	\label{eq:shape_derivative_of_the state_ud}
		-\Delta \udp 			=0 \quad \text{in $\Omega$},\qquad
		\udp 					=0 \quad \text{on $\Sigma$},\qquad
		\dn{\udp} + \alpha \udp	=\Upsilon(\ud)[\Vn] \quad \text{on $\Gamma$},
	\end{equation}
	where 
	\begin{equation}\label{eq:Upsilon_definition}
		\Upsilon(v)[\Vn] = {\operatorname{div}}_{\tau} (\Vn\nabla_{\tau} {v}) 
						- \alpha (\dn{{v}}+\kappa {v})\Vn,
	\end{equation}
	for $v \in H^{3}(\Omega)$, and $\kappa =  {\operatorname{div}}_{\tau} \nn$ is the mean curvature of $\Gamma$.\footnote{Here $\nabla_{\tau}$ denotes the tangential gradient operator while ${\operatorname{div}}_{\tau}$ denotes the tangential divergence on $\Gamma$; see, e.g., \cite{DelfourZolesio2011}.}
\end{lemma}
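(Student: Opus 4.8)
The plan is to compute the shape derivative of $\ud$ by differentiating its weak formulation \eqref{eq:state_weak_form_ud} with respect to the domain and identifying the limiting boundary-value problem. First I would establish the improved interior regularity $\ud \in H^{3}(\Omega)$: since $\Omega$ is of class $\mathcal{C}^{2,1}$ and $\ud$ is harmonic with $\ud = f \in H^{5/2}(\Sigma)$ on $\Sigma$ and the Robin condition $\dn{\ud} + \alpha\ud = 0$ on $\Gamma$ (with $\alpha$ an $H^{2}$ function by Assumption (A)), elliptic regularity for mixed Dirichlet--Robin boundary data on a sufficiently smooth domain yields $\ud \in H^{3}(\Omega)$. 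This extra regularity is exactly what is needed so that the tangential operators appearing in $\Upsilon(\ud)$ in \eqref{eq:Upsilon_definition} act on a function with enough smoothness, and so that the boundary traces in \eqref{eq:shape_derivative_of_the state_ud} make sense in $H^{1}(\Omega)$.

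Next I would prove shape differentiability via the material-derivative method. Transporting the state to the fixed reference domain through $T_{t} = \operatorname{id} + t\VV$, one obtains a family $\ud^{t} \circ T_{t}$ solving a transported variational problem on $\Omega$ with $t$-dependent coefficients (Jacobians and cofactor matrices of $DT_{t}$) that are smooth in $t$. An implicit-function-theorem or direct-difference-quotient argument then gives the existence of the material derivative $\dot{u}_{D} \in H^{1}(\Omega)$, and the shape derivative is recovered as $\udp = \dot{u}_{D} - \nabla\ud \cdot \VV$. Because $\VV$ vanishes on $\Sigma$ and on a neighborhood of it (as $\operatorname{supp}\VV \subset \overline{D}_{\delta}$), the Dirichlet condition $\ud = f$ is preserved to first order, so $\udp = 0$ on $\Sigma$; harmonicity $-\Delta\udp = 0$ in $\Omega$ follows by differentiating $-\Delta\ud = 0$ in the interior.

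The heart of the computation, and the step I expect to be the main obstacle, is deriving the Robin-type boundary condition $\dn{\udp} + \alpha\udp = \Upsilon(\ud)[\Vn]$ on $\Gamma$. Differentiating the Robin condition $\dn{\ud} + \alpha\ud = 0$ along the moving boundary requires the shape-differentiation formulas for the normal derivative and for boundary integrals, in which the surface measure, the normal field $\nn$, and the curvature $\kappa = \operatorname{div}_{\tau}\nn$ all vary with $t$. One must carefully combine the tangential Laplace--Beltrami / tangential-divergence identities with the transport of $\nn$ to produce the surface term $\operatorname{div}_{\tau}(\Vn\nabla_{\tau}\ud)$, and use Assumption (A) ($\dn{\alpha}=0$) to cancel the term that would otherwise involve $\dn{\alpha}$, leaving only $-\alpha(\dn{\ud} + \kappa\ud)\Vn$. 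Keeping track of tangential versus normal contributions in these boundary computations — especially reconciling the curvature term with the variation of the measure — is the delicate part; I would lean on the shape-calculus identities in \cite{DelfourZolesio2011} to organize it. Finally, uniqueness of $\udp$ in $H^{1}(\Omega)$ follows from the well-posedness of \eqref{eq:shape_derivative_of_the state_ud}, which is of the same Dirichlet--Robin type as \eqref{eq:state_ud} and hence uniquely solvable by Lax--Milgram, given that the right-hand side $\Upsilon(\ud)[\Vn]$ lies in the appropriate dual space once $\ud \in H^{3}(\Omega)$.
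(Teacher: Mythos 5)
The paper contains no proof of this lemma to compare against: it is recalled verbatim from \cite{AfraitesRabago2022}, where (as the paper notes) the result was established by exactly the chain-rule/material-derivative route you outline. Your plan matches that approach and the paper's surrounding remarks in all the essential points -- $H^{3}$ regularity of $\ud$ from elliptic theory on a $\mathcal{C}^{2,1}$ domain, $\udp = 0$ on $\Sigma$ because $\VV$ vanishes near the fixed boundary, Assumption (A) ($\dn{\alpha}=0$) cancelling the extra boundary term, and unique solvability of \eqref{eq:shape_derivative_of_the state_ud} by Lax--Milgram with $\Upsilon(\ud)[\Vn] \in H^{-1/2}(\Gamma)$, which is precisely what the paper asserts in the remark following the lemma.
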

\begin{remark}
Introducing the linear form ${l}_{{D}}(\psi) = \intG{\Upsilon({u}_{{D}})[\Vn] \psi}$, where $\Upsilon$ is given by \eqref{eq:Upsilon_definition},
we can state the variational formulation of \eqref{eq:shape_derivative_of_the state_ud} as follows
	\begin{equation}\label{eq:shape_derivative_of_the_state_weak_form_ud}
		\text{Find ${\udp} \in H_{\Sigma,0}^{1}(\Omega)$ such that $a(\udp,\psi) = l_{{D}}(\psi)$, for all $\psi \in H_{\Sigma,0}^{1}(\Omega)$}.
	\end{equation}
With $\Upsilon(\ud)[\Vn] \in H^{-1/2}(\Gamma)$, \eqref{eq:shape_derivative_of_the_state_weak_form_ud} can be shown to admit a unique weak solution in $H^{1}(\Omega)$ via the Lax-Milgram lemma.
\end{remark}
\subsection{Shape gradient of $J_{N}$} 
We recall the shape gradient of the shape function $J_{N}$ computed in \cite{AfraitesRabago2022}.
\begin{proposition}[\cite{AfraitesRabago2022}]
	\label{prop:shape_gradients}
	Let $\Omega \in \mathcal{C}^{2,1}$ be an admissible domain, $\VV \in \sfTheta$, and $\ud \in H^{1}(\Omega)$ be the solution to \eqref{eq:state_ud}.
	The map $t \mapsto J_{N}(\Omega_t)$, is $\mathcal{C}^1$ in a neighborhood of $0$, and its shape derivative at $0$ is given by $dJ_{N}(\Omega)[\VV] = \intG{G_{N} \nu \cdot \VV}$, where the shape gradient $G_{N}$ is given by
	\begin{equation}\label{eq:shape_gradient_gn}
		G_{N} = \nabla_{\tau} \ud \cdot \nabla_{\tau} \pd + \alpha \left( \dn{\ud} + \kappa \ud \right)\pd.
	\end{equation}
	Here, the adjoint variable $\pd \in H^{1}(\Omega)$ solves the following system of PDEs
	\begin{equation}\label{eq:adjoint_system_pd}
		-\Delta \pd 		=0 \ \text{in $\Omega$},\qquad\quad
		\pd 				=\dn{\ud} - g \ \text{on $\Sigma$},\qquad\quad
		\dn{\pd} + \alpha \pd	=0 \ \text{on $\Gamma$}.
	\end{equation}		 
\end{proposition}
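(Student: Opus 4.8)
The plan is to differentiate $J_N$ directly on the fixed observation boundary $\Sigma$ and then transport the resulting integral to the free boundary $\Gamma$ by introducing the adjoint state $\pd$. First I would exploit that $\VV\in\sfTheta$ is supported in $\overline{D}_\delta$ and hence vanishes on a neighborhood of $\Sigma$, so that $\Sigma$ and its unit normal $\nu$ are unmoved by the perturbation $T_t$. Therefore the only $t$-dependence of $J_N(\Omega_t)$ enters through the integrand, and the map $t\mapsto J_N(\Omega_t)$ inherits its $\mathcal{C}^1$ regularity from the continuous shape differentiability of $\ud$ supplied by Lemma~\ref{lem:shape_derivative_of_the_states}. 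Differentiating under the integral sign, and using that on $\Sigma$ the shape derivative of the normal trace coincides with the normal trace of the shape derivative (because the normal is frozen there), I obtain
\[
	dJ_N(\Omega)[\VV] = \intS{(\dn{\ud} - g)\,\dn{\udp}},
\]
where $\udp$ is the shape derivative of $\ud$ given in Lemma~\ref{lem:shape_derivative_of_the_states}.

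Next I would bring in the adjoint state $\pd$ solving \eqref{eq:adjoint_system_pd}; its Dirichlet datum is precisely $\pd = \dn{\ud} - g$ on $\Sigma$, so the previous display becomes $dJ_N(\Omega)[\VV] = \intS{\pd\,\dn{\udp}}$. Since $\udp$ and $\pd$ are both harmonic in $\Omega$, Green's second identity gives
\[
	0 = \intS{(\pd\,\dn{\udp} - \udp\,\dn{\pd})} + \intG{(\pd\,\dn{\udp} - \udp\,\dn{\pd})}.
\]
On $\Sigma$ the homogeneous condition $\udp = 0$ eliminates the $\udp\,\dn{\pd}$ term, while on $\Gamma$ I substitute the Robin relations $\dn{\udp} = \Upsilon(\ud)[\Vn] - \alpha\udp$ and $\dn{\pd} = -\alpha\pd$; the two $\alpha\,\udp\,\pd$ contributions cancel, leaving $\pd\,\dn{\udp} - \udp\,\dn{\pd} = \pd\,\Upsilon(\ud)[\Vn]$ on $\Gamma$. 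Combining these facts yields the compact intermediate identity
\[
	dJ_N(\Omega)[\VV] = -\intG{\pd\,\Upsilon(\ud)[\Vn]}.
\]

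Finally I would insert the definition \eqref{eq:Upsilon_definition} of $\Upsilon$ and remove the tangential-divergence term by a surface Green's formula. Since $\Gamma=\partial\omega$ is a closed $\mathcal{C}^{2,1}$ hypersurface without boundary and $\Vn\nabla_{\tau}\ud$ is tangential, the integration by parts carries no boundary contribution and no curvature term survives, so $\intG{\pd\,{\operatorname{div}}_{\tau}(\Vn\nabla_{\tau}\ud)} = -\intG{\Vn\,\nabla_{\tau}\pd\cdot\nabla_{\tau}\ud}$. Substituting and collecting terms produces
\[
	dJ_N(\Omega)[\VV] = \intG{\Vn\big(\nabla_{\tau}\ud\cdot\nabla_{\tau}\pd + \alpha(\dn{\ud}+\kappa\ud)\pd\big)},
\]
and since $\Vn = \nu\cdot\VV$ this is exactly $\intG{G_N\,\nu\cdot\VV}$ with $G_N$ as claimed. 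The step I expect to demand the most care is the justification of the very first display: rigorously interchanging differentiation with the boundary integral (equivalently, establishing the $\mathcal{C}^1$ dependence and the commutation $(\dn{\ud})'=\dn{\udp}$ on $\Sigma$) requires that the traces $\dn{\ud}$ and $\dn{\udp}$ be well defined and vary continuously in $t$, which is where the elevated regularity $\ud\in H^{3}(\Omega)$ from Lemma~\ref{lem:shape_derivative_of_the_states} and the data assumptions $f\in H^{5/2}(\Sigma)$, $g\in H^{3/2}(\Sigma)$ are indispensable. Once this regularity is in hand, the Green identities and the tangential integration by parts are routine.
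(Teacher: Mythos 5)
Your proof is correct: the differentiation on the fixed boundary $\Sigma$ (valid since $\VV$ vanishes near $\Sigma$), the introduction of the adjoint $\pd$ via its Dirichlet datum $\dn{\ud}-g$, Green's second identity with the cancellation of the $\alpha\,\udp\,\pd$ terms on $\Gamma$, and the tangential integration by parts (with no curvature contribution, since $\Vn\nabla_{\tau}\ud$ is tangential and $\Gamma$ is closed) all check out, including signs. The paper itself does not prove this proposition---it imports it from \cite{AfraitesRabago2022}, which uses exactly this chain-rule/adjoint route---and your argument also mirrors the paper's own proof of Proposition \ref{prop:shape_Hessian}, which runs the same machinery (differentiation on $\Sigma$, adjoint system, integration by parts between the state-derivative and adjoint problems) at second order.
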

The variational formulation of \eqref{eq:adjoint_system_pd} can be stated as follows:
	\begin{equation}\label{eq:adjoint_weak_form_ud}
		\text{Find ${\pd} \in H^{1}(\Omega)$, $\pd = \dn{\ud} - g$ on $\Sigma$, such that $a(\pd,\psi) = 0$, $\forall \psi \in H_{\Sigma,0}^{1}(\Omega)$}.
	\end{equation}
The regularities $\Omega \in \mathcal{C}^{1,1}$ and $f \in H^{3/2}(\Sigma)$ are sufficient for \eqref{eq:adjoint_system_pd} to have a unique weak solution in $H^{1}(\Omega)$ since $\dn{\ud} - g \in H^{1/2}(\Sigma)$.
The existence of unique weak solution to \eqref{eq:adjoint_weak_form_ud} then follows from the Lax-Milgram lemma.
%
%
%
\subsection{Shape Hessian of $J_{N}$ at a critical shape}
\label{subsec:shape_Hessian}
Our goal here is to prove Proposition \ref{prop:shape_Hessian} which provides the structure of the shape Hessian at a critical shape.
In the next lemma, we give a result on the necessary optimality condition of our control problem that will be used in Proposition \ref{prop:shape_Hessian}.  
\begin{lemma}\label{lem:necessary_optimality_condition_tracking_Neumann}
Let ${\omega}^{\ast}$, or equivalently $\Omega^{\ast} = D\setminus \overline{\omega}^{\ast}$, be the solution of Problem \ref{eq:main_problem}. 
That is, the domain $\Omega^{\ast}$ is such that $\ud=\ud(\Omega^{\ast})$ satisfies \eqref{eq:gip}; i.e., it holds that $\dn{\ud}=g$ on $\Sigma$ where $\ud$ solves \eqref{eq:state_ud}.
Then, the adjoint state $\pd$ satisfying \eqref{eq:adjoint_system_pd} vanishes in $\Omega^{\ast}$, and with $G_{N}$ given by \eqref{eq:shape_gradient_gn}, there holds the necessary optimality condition $G_{N}=0$ on $\Gamma^{\ast}$.\footnote{This also gives us the \textit{shape Euler equation} \cite[p. 260]{DelfourZolesio2011} or Euler equation $dJ_{N}(\Omega^{\ast})[\VV]=0$.}
\end{lemma}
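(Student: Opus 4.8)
The plan is to exploit the defining property of the optimal shape to collapse the adjoint system into a fully homogeneous boundary value problem, and then to read off the vanishing of the shape gradient directly from its expression. First I would observe that, by hypothesis, $\Omega^{\ast}$ solves the inverse problem, so $\dn{\ud} = g$ on $\Sigma$, where $\ud = \ud(\Omega^{\ast})$ solves \eqref{eq:state_ud}. Consequently the Dirichlet datum in the adjoint system \eqref{eq:adjoint_system_pd} becomes $\pd = \dn{\ud} - g = 0$ on $\Sigma$, and $\pd$ now satisfies
\begin{equation*}
	-\Delta \pd = 0 \ \text{in $\Omega^{\ast}$}, \qquad \pd = 0 \ \text{on $\Sigma$}, \qquad \dn{\pd} + \alpha \pd = 0 \ \text{on $\Gamma^{\ast}$}.
\end{equation*}

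Next I would argue that $\pd \equiv 0$ in $\Omega^{\ast}$ by an energy estimate. Since $\pd \in H_{\Sigma,0}^{1}(\Omega^{\ast})$, I may take $\psi = \pd$ as a test function in the weak formulation \eqref{eq:adjoint_weak_form_ud}, which yields $a(\pd,\pd) = 0$, that is,
\begin{equation*}
	\int_{\Omega^{\ast}} |\nabla \pd|^{2} \, dx + \int_{\Gamma^{\ast}} \alpha \pd^{2} \, ds = 0.
\end{equation*}
Both summands are nonnegative because $\alpha \geqslant \alpha_{0} > 0$, so $\nabla \pd = 0$ in $\Omega^{\ast}$; hence $\pd$ is constant on the connected domain $\Omega^{\ast}$, and since $\pd = 0$ on $\Sigma$ it vanishes identically. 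Equivalently, this is simply the uniqueness of the weak solution furnished by the Lax--Milgram lemma applied to the coercive form $a$ on $H_{\Sigma,0}^{1}(\Omega^{\ast})$.

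Finally, with $\pd \equiv 0$ in $\Omega^{\ast}$, both its trace on $\Gamma^{\ast}$ and its tangential gradient $\nabla_{\tau}\pd$ vanish on $\Gamma^{\ast}$. Substituting into the shape gradient \eqref{eq:shape_gradient_gn},
\begin{equation*}
	G_{N} = \nabla_{\tau} \ud \cdot \nabla_{\tau} \pd + \alpha \left( \dn{\ud} + \kappa \ud \right) \pd = 0 \quad \text{on $\Gamma^{\ast}$},
\end{equation*}
since every term carries a factor of $\pd$ or $\nabla_{\tau}\pd$. In particular $dJ_{N}(\Omega^{\ast})[\VV] = \int_{\Gamma^{\ast}} G_{N}\, \nu \cdot \VV \, ds = 0$ for every admissible field $\VV$, which is the asserted shape Euler equation. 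I do not anticipate a genuine obstacle here: the argument is elementary once the overdetermination $\dn{\ud} = g$ is invoked, and the only point deserving care is the uniqueness of the homogeneous adjoint problem, which is immediate from the coercivity of $a$.
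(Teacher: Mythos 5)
Your proof is correct and takes essentially the same approach as the paper: invoke the overdetermination $\dn{\ud}=g$ on $\Sigma$ to make the adjoint system fully homogeneous, conclude $\pd\equiv 0$ in $\Omega^{\ast}$, and read off $G_{N}=0$ from \eqref{eq:shape_gradient_gn}. The only difference is that you spell out the energy/coercivity argument (testing with $\psi=\pd$) for the vanishing of $\pd$, a step the paper compresses to ``because $\alpha>0$, clearly $\pd\equiv 0$.''
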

\begin{proof}
We assume that there exists an admissible shape $\Omega^{\ast} = D\setminus \overline{\omega}^{\ast}$ such that it realizes the absolute minimum of the criterion $J_{N}$.
That is, $J_{N}(\Omega^{\ast})=0$, or equivalently, $\dn{\ud(\Omega^{\ast})} = g$ on $\Sigma$.
This is satisfied by the solution of the inverse problem under consideration (i.e., Problem \ref{eq:main_problem}), and the solution of the adjoint problem 
\begin{equation}\label{eq:adjoint_system_pd_optimal}
		-\Delta \pd 		=0 \ \text{in $\Omega^{\ast}$},\qquad\quad
		\pd 				=0\ \text{on $\Sigma$},\qquad\quad
		\dn{\pd} + \alpha \pd	=0 \ \text{on $\Gamma^{\ast}$}.
	\end{equation}	
Because $\alpha > 0$, then clearly, $\pd\equiv0$ in $\overline{\Omega}^{\ast}$, {{and we get $G_{N}=0$ on $\Gamma^{\ast}$.}}
\end{proof}
We next give the characterization of the shape Hessian at the critical shape $\Omega^{\ast}$.
\begin{proposition}\label{prop:shape_Hessian}
	Let $\Omega \in \mathcal{C}^{2,1}$ be an admissible domain, $\VV, \WW \in \sfTheta$, and $\ud \in H^{3}(\Omega)$ and $\udp \in H^{1}(\Omega)$ be the respective solutions of \eqref{eq:state_ud} and \eqref{eq:shape_derivative_of_the state_ud}.
	The shape Hessian of $J_{N}$ at the solution $\Omega^{\ast}$ of \eqref{eq:gip} has the following structure:
	\begin{equation}\label{eq:shape_Hessian_at_stationary_domain}
		d^{2}J_{N}(\Omega^{\ast})[\VV,\WW] = -\intGast{\Upsilon(\ud)[\Vn] \pdp[\WW]},
	\end{equation}
where the adjoint variable $\pdp=\pdp[\WW]$ satisfies the system of PDEs 	
	\begin{equation}\label{eq:adjoint_system_pdp}
		-\Delta \pdp 		=0 \ \text{in $\Omega^{\ast}$},\qquad\quad
		\pdp 				= \dn{\udp}[\WW]  \ \text{on $\Sigma$},\qquad\quad
		\dn{\pdp} + \alpha \pdp	=  0 \ \text{on $\Gamma^{\ast}$}.
	\end{equation}	
\end{proposition}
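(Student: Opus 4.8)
The plan is to obtain the shape Hessian by differentiating the first-order shape gradient in the direction $\WW$ and then exploiting the fact that the adjoint state vanishes identically at the optimum. First I would recast the shape gradient of Proposition \ref{prop:shape_gradients} in a ``distributed adjoint'' form in which $\pd$ appears as an explicit multiplicative factor. Since the deformation $\VV$ vanishes near the fixed boundary $\Sigma$, only $\ud$ depends on the domain inside $J_N$, so $dJ_{N}(\Omega)[\VV] = \intS{(\dn{\ud}-g)\,\dn{\udp[\VV]}}$, where $\udp[\VV]$ is the shape derivative from Lemma \ref{lem:shape_derivative_of_the_states}. Replacing $\dn{\ud}-g$ by the Dirichlet trace of $\pd$ from \eqref{eq:adjoint_system_pd} and applying Green's identity to the harmonic pair $(\udp[\VV],\pd)$, the interface contributions on $\Gamma$ combine through the two Robin conditions to yield
\[
	dJ_{N}(\Omega)[\VV] = -\intG{\Upsilon(\ud)[\Vn]\,\pd}.
\]
This is nothing but the Hadamard form $\intG{G_{N}\Vn}$ after a single tangential integration by parts against \eqref{eq:Upsilon_definition}; no new information is introduced, but $\pd$ is now isolated as a factor.

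Next I would apply the definition of the second-order Eulerian derivative, writing $dJ_{N}(\Omega_s(\WW))[\VV] = -\int_{\Gamma_s}\Upsilon(\ud)[\Vn]\,\pd\,ds_s$ with all quantities now built from the state and adjoint on the perturbed domain $\Omega_s(\WW)$, and differentiating at $s=0$ via the transport formula $\frac{d}{ds}\big|_{s=0}\int_{\Gamma_s} f_s\,ds_s = \intG{(\dot f + f\,\operatorname{div}_{\tau}\WW)}$, with $\dot f$ the material derivative. Here the optimality condition does the decisive work: by Lemma \ref{lem:necessary_optimality_condition_tracking_Neumann}, $\pd \equiv 0$ on the whole of $\overline{\Omega}^{\ast}$, so both $\pd$ and $\nabla\pd$ vanish on $\Gamma^{\ast}$. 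Consequently $f = -\Upsilon(\ud)[\Vn]\pd \equiv 0$ on $\Gamma^{\ast}$, which kills the $f\,\operatorname{div}_{\tau}\WW$ term, and in $\dot f$ every summand carrying an undifferentiated factor of $\pd$ drops out, leaving only $\dot f = -\Upsilon(\ud)[\Vn]\,\dot{\pd} = -\Upsilon(\ud)[\Vn]\,\pdp[\WW]$, since $\dot{\pd} = \pdp[\WW] + \nabla\pd\cdot\WW = \pdp[\WW]$ at $\Omega^{\ast}$. This produces exactly
\[
	d^{2}J_{N}(\Omega^{\ast})[\VV,\WW] = -\intGast{\Upsilon(\ud)[\Vn]\,\pdp[\WW]},
\]
which is \eqref{eq:shape_Hessian_at_stationary_domain}.

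It then remains to identify the boundary value problem solved by $\pdp=\pdp[\WW]$, obtained by shape-differentiating \eqref{eq:adjoint_system_pd}. Harmonicity is preserved, so $-\Delta\pdp = 0$ in $\Omega^{\ast}$. On the fixed boundary $\Sigma$ the normal does not move, so $(\dn{\ud})' = \dn{\udp[\WW]}$, and since $g$ is fixed the Dirichlet datum differentiates to $\pdp = \dn{\udp[\WW]}$ on $\Sigma$. On $\Gamma^{\ast}$, differentiating the homogeneous Robin condition $\dn{\pd} + \alpha\pd = 0$ gives, exactly as in Lemma \ref{lem:shape_derivative_of_the_states}, the right-hand side $\Upsilon(\pd)[W_n]$; but reading \eqref{eq:Upsilon_definition} with $v=\pd$ and using $\nabla_{\tau}\pd = 0$, $\dn{\pd}=0$, and $\pd=0$ on $\Gamma^{\ast}$ shows $\Upsilon(\pd)[W_n]=0$, hence $\dn{\pdp} + \alpha\pdp = 0$. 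This is precisely system \eqref{eq:adjoint_system_pdp}, completing the proof.

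The main obstacle I anticipate is the careful bookkeeping in the transport step: one must confirm that both the geometric contribution and the material derivative of the domain-dependent operator $v \mapsto \Upsilon(v)[\Vn]$ genuinely vanish at $\Omega^{\ast}$, which hinges on $\pd$ (together with its normal and tangential derivatives) vanishing on $\Gamma^{\ast}$ — a consequence of $\pd\equiv 0$ on the whole of $\overline{\Omega}^{\ast}$ rather than merely of a pointwise boundary condition. The regularity $\ud\in H^{3}(\Omega)$ assumed in the statement is what guarantees that $\Upsilon(\ud)[\Vn]\in H^{-1/2}(\Gamma^{\ast})$ and that $\pdp[\WW]\in H^{1}(\Omega^{\ast})$ exists and solves \eqref{eq:adjoint_system_pdp}, so that every integral above is well defined.
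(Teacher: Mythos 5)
Your proof is correct, and it reaches \eqref{eq:shape_Hessian_at_stationary_domain} by a genuinely different route than the paper. The paper never leaves the fixed boundary: by Hadamard's formula it writes $d^{2}J_{N}(\Omega^{\ast})[\VV,\WW] = \intS{\left(\dn{\udp}[\VV]\dn{\udp}[\WW]+(\dn{\ud}-g)\dn{\udpp}[\VV,\WW]\right)}$, uses the optimality condition $\dn{\ud}=g$ on $\Sigma$ to discard the term containing the second-order state derivative $\udpp$, and then transfers the surviving integral $\intS{\dn{\udp}[\VV]\dn{\udp}[\WW]}$ to $\Gamma^{\ast}$ by Green's identity for the harmonic pair $(\udp[\VV],\pdp[\WW])$, the two Robin conditions on $\Gamma^{\ast}$ making the $\alpha$-terms cancel. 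You instead first rewrite the first derivative in the moving-boundary form $dJ_{N}(\Omega)[\VV]=-\intG{\Upsilon(\ud)[\Vn]\,\pd}$ with the adjoint isolated as a factor, then differentiate that integral over $\Gamma_{s}$ with the transport formula, letting $\pd\equiv 0$ on $\overline{\Omega}^{\ast}$ (hence $\nabla\pd\equiv 0$ as well) annihilate the tangential-divergence term and every material-derivative term except the one hitting $\pd$ itself, leaving $-\Upsilon(\ud)[\Vn]\,\pdp[\WW]$. Both arguments rest on the same two pillars, namely Lemma \ref{lem:necessary_optimality_condition_tracking_Neumann} and the shape-differentiated adjoint system \eqref{eq:adjoint_system_pdp}, which you justify exactly as the paper does (the datum $\Upsilon(\pd)[\WW\cdot\nn]$ on $\Gamma^{\ast}$ vanishes because $\pd\equiv 0$). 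What your route buys: $\udpp$ never appears, whereas the paper must implicitly assume its existence (as its own footnote concedes) even though that term drops out at $\Omega^{\ast}$. What the paper's route buys: the intermediate symmetric formula \eqref{eq:Hessian}, $d^{2}J_{N}(\Omega^{\ast})[\VV,\WW]=\intS{\dn{\udp}[\VV]\dn{\udp}[\WW]}$, which is reused immediately after the proposition for the positivity observation and again, through the operator $\mathcal{K}_{1}:\VV\mapsto\dn{\udp[\VV]}$ on $\Sigma$, in the compactness proof of Proposition \ref{prop:compactness_result}; your moving-boundary computation forgoes this formula and instead has to justify the transport calculus (existence of material derivatives of $\pd$ and of the domain-dependent quantity $\Upsilon(\ud(\Omega_{s}))[\Vn]$ along $s\mapsto\Omega_{s}(\WW)$), which demands regularity comparable to what the paper already assumes.
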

\begin{proof}
Let the assumptions of the proposition hold.
Using Hadamard's formula, the second-order derivative of $J_{N}$ is obtained as 
\begin{equation}\label{eq:shape_Hessian_temporary_representation}
	d^{2}J_{N}(\Omega^{\ast})[\VV,\WW]
	= \intS{\left(\dn{\udp}[\VV]\dn{\udp}[\WW]+(\dn{\ud} - g)\dn{\udpp}[\VV,\WW] \right)},
\end{equation}
where $\udpp[\VV,\WW]$\footnote{The existence of $\udpp$ is implicitly assumed here to ensure the well-defined nature of the representation in \eqref{eq:shape_Hessian_temporary_representation}. However, this expression will not appear in the final expression of the shape Hessian, as demonstrated in \eqref{eq:shape_Hessian_at_stationary_domain}.} is the second-order derivative of the state $\ud$ at $0$ in the direction of the vector fields $\VV$ and $\WW$.
Using the necessary optimality condition in Lemma \ref{lem:necessary_optimality_condition_tracking_Neumann}, we obtain  
\begin{equation}\label{eq:Hessian}
	d^{2}J_{N}(\Omega^{\ast})[\VV,\WW]=\intS{\dn{\udp}[\VV]\dn{\udp}[\WW]}. 
\end{equation}
To transform the integral over $\Sigma$ into an integral over $\Gamma^{\ast}$, we utilize the shape derivative of \eqref{eq:adjoint_system_pd} given by the adjoint system \eqref{eq:adjoint_system_pdp}. 
Thanks to the properties of the adjoint state at the optimum shape, as given in Lemma \ref{lem:necessary_optimality_condition_tracking_Neumann}, $\pdp[\WW]$ shares the same characteristics as $\udp$, leading to the formulation in \eqref{eq:adjoint_system_pdp}. 
We then apply integration by parts to \eqref{eq:shape_derivative_of_the state_ud} and \eqref{eq:adjoint_system_pdp} with multipliers $\udp$ and $\pdp$, respectively, to obtain 
\[
	\intS{\left(\dn{\udp}[\VV]\dn{\udp}[\WW]\right)} = -\intGast{ \dn{\udp}[\VV]\pdp[\WW]}+\intGast{\dn{\pdp[\WW]}\udp[\VV]}.
\]	
Using the boundary conditions of \eqref{eq:shape_derivative_of_the state_ud} and \eqref{eq:adjoint_system_pdp} on $\Gamma^{\ast}$, we get 
\begin{equation*}
\begin{split}
\intS{\left(\dn{\udp}[\VV]\dn{\udp}[\WW]\right)}
	&=- \intGast{\Upsilon(\ud)[\Vn] \pdp[\WW]}.
\end{split}
\end{equation*}
This proves the proposition.
\end{proof}
Let us assume the existence of an admissible inclusion $\Omega^{\ast}$ such that $J_{N}(\Omega^{\ast})=0$. 
This condition is evidently satisfied by the solution of Problem \ref{eq:main_problem}, or equivalently, the solution of the overdetermined boundary value problem \eqref{eq:gip}. 
Consequently, Euler's equation $dJ_{N}(\Omega^{\ast})[\VV]=0$ holds, and we have demonstrated in \eqref{eq:Hessian} that $d^{2}J_{N}(\Omega^{\ast})[\VV,\VV]=\intS{\left(\dn{\udp}[\VV]\right)^{2}}$. 
It is noteworthy that if $V_n\neq 0$ on $\Sigma$, then $d^{2}J_{N}(\Omega^{\ast})[\VV,\VV] > 0$. 
However, this positivity of $d^{2}J_{N}(\Omega^{\ast})$ does not imply the well-posedness of the minimization problem. 
Indeed, Proposition \ref{prop:compactness_result}, provided in the next subsection, elucidates the instability of the shape optimization problem under consideration.
%
\subsection{Instability analysis of the critical shape of $J_{N}$}
\label{subsec:instability_analysis}
{{Now that we have obtained the expression for the shape Hessian at the optimal shape solution $\Omega^{\ast}$, we can evaluate whether the shape optimization problem \eqref{eq:shop} is well-posed. 
The shape Hessian derived, as outlined in Proposition \ref{prop:shape_Hessian}, offers insight into this issue.
The motivation behind this investigation stems from the fundamental interest in understanding the behavior of any reconstruction algorithm in the case under examination. 

Let us briefly review the current understanding of the stability of optimal shapes. 
The shape calculus developed in preceding sections remains valid within the $\mathcal{C}^2$ topology for deformation, with relevant findings documented in \cite{AfraitesDambrineEpplerKateb2007,Dambrine2002,DambrineSokolowskiZochowski2003,EpplerHarbrechtSchneider2007}. 
As noted in prior investigation (see, e.g., \cite{AfraitesDambrineEpplerKateb2007}), it is generally unreasonable to expect a shape Hessian to exhibit coercivity in this norm. 
However, coercivity might hold in a weaker norm: this situation is known as the \textit{two-norm discrepancy} problem, see \cite{Dambrine2002,Eppler2000a,EpplerHarbrechtSchneider2007}.
For instance, given additional conditions of continuity, both the stability results of  \cite{Dambrine2002,DambrineSokolowskiZochowski2003}  and the convergence result of \cite{EpplerHarbrechtSchneider2007} necessitate that the shape Hessian at the critical shape be coercive in such a weaker norm.
As emphasized in \cite{AfraitesDambrineEpplerKateb2007} and elucidated in \cite{EpplerHarbrecht2005} for a closely related shape inverse problem, encountering such a favorable scenario in this context is unlikely.
Consequently, the objective of this subsection is to carry out a precise analysis of the positivity of the shape Hessian. 
For a more comprehensive exploration of the topic, particularly regarding the stability of critical shapes, we recommend consulting \cite{DambrinePierre2000,Dambrine2002,EpplerHarbrecht2005}.

In this section, we establish conclusively that the problem under examination is ill-posed by proving the compactness of the shape Hessian at a critical shape. To achieve this, we employ a methodology akin to that utilized in \cite{Afraites2022} and \cite{AfraitesMasnaouiNachaoui2022}, building upon their methodologies.
Our argumentation hinges on the regularities observed in the admissible domains, deformation fields, the solution to the state problem \eqref{eq:state_ud}, and notably, the continuity of the mean curvature for boundaries of class $\mathcal{C}^{2,1}$. Subsequently, we employ a local regularity result to demonstrate the compactness of the Riesz operator corresponding to the shape Hessian at the exact solution of Problem \ref{eq:main_problem}.
Alternatively, one could pursue the desired result by applying potential layers, as exemplified in \cite{AfraitesDambrineEpplerKateb2007}, \cite{AfraitesDambrineKateb2008}, and \cite{EpplerHarbrecht2005}.
\begin{proposition}
\label{prop:compactness_result}
If $\Omega^{\ast}$ is the critical shape of $J_{N}$, then the Riesz operator associated to the quadratic shape Hessian $d^{2}J_{N}(\Omega^{\ast}): \vect{H}^{1/2}(\Gamma^{\ast}) \longrightarrow \vect{H}^{-1/2}(\Gamma^{\ast})$ {{given by \eqref{eq:shape_Hessian_at_stationary_domain}}} is compact.
\end{proposition}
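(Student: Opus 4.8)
The plan is to exploit the symmetric representation of the shape Hessian obtained in \eqref{eq:Hessian}, namely
\begin{equation*}
	d^{2}J_{N}(\Omega^{\ast})[\VV,\WW] = \intS{\dn{\udp}[\VV]\,\dn{\udp}[\WW]},
\end{equation*}
and to recognize the associated Riesz operator as $T=R^{\ast}R$ for a suitable linear map $R$ that turns out to be smoothing. Concretely, I would define
\begin{equation*}
	R : \VV \in \vect{H}^{1/2}(\Gamma^{\ast}) \longmapsto \dn{\udp}[\VV]\big|_{\Sigma} \in L^{2}(\Sigma),
\end{equation*}
where $\udp=\udp[\VV]$ solves \eqref{eq:shape_derivative_of_the state_ud}. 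Since $\udp$ depends on $\VV$ only through the normal trace $\Vn$ on $\Gamma^{\ast}$, and since the displayed identity reads $d^{2}J_{N}(\Omega^{\ast})[\VV,\WW]=(R\VV,R\WW)_{L^{2}(\Sigma)}$, the Riesz operator $T:\vect{H}^{1/2}(\Gamma^{\ast})\to\vect{H}^{-1/2}(\Gamma^{\ast})$ satisfies $T=R^{\ast}R$. It therefore suffices to prove that $R$ is \emph{compact} into $L^{2}(\Sigma)$; compactness of $T$ then follows, since the composition of a bounded operator with a compact one is compact.

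To establish the compactness of $R$, I would track the regularity along the chain $\Vn \mapsto \Upsilon(\ud)[\Vn] \mapsto \udp \mapsto \dn{\udp}|_{\Sigma}$ and show that the final trace gains derivatives. First, because $\ud\in H^{3}(\Omega^{\ast})$, the operator $\Upsilon(\ud)[\,\cdot\,]$ defined in \eqref{eq:Upsilon_definition} maps $H^{1/2}(\Gamma^{\ast})$ continuously into $H^{-1/2}(\Gamma^{\ast})$: the principal part $\operatorname{div}_{\tau}(\Vn\nabla_{\tau}\ud)$ costs one tangential derivative, while the zeroth-order term $\alpha(\dn{\ud}+\kappa\ud)\Vn$ remains in $H^{1/2}$. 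By the well-posedness of \eqref{eq:shape_derivative_of_the state_ud} with Robin datum in $H^{-1/2}(\Gamma^{\ast})$ one then gets $\|\udp\|_{H^{1}(\Omega^{\ast})}\lesssim\|\Upsilon(\ud)[\Vn]\|_{H^{-1/2}(\Gamma^{\ast})}\lesssim\|\Vn\|_{H^{1/2}(\Gamma^{\ast})}$. The decisive step is the smoothing of the normal trace on $\Sigma$: because $\operatorname{dist}(\Sigma,\Gamma^{\ast})\geqslant\delta_{0}>0$, the harmonic function $\udp$ carries no source in a fixed neighborhood of $\Sigma$ and vanishes there in the Dirichlet sense. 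Introducing a cutoff $\eta\in\mathcal{C}^{\infty}$ equal to $1$ near $\Sigma$ and supported in $\{x\in\Omega^{\ast}:d(x,\Gamma^{\ast})>\delta_{0}/2\}$, the function $\eta\udp$ solves an elliptic problem whose right-hand side $-(\Delta\eta)\udp-2\nabla\eta\cdot\nabla\udp$ lies in $L^{2}$ with norm $\lesssim\|\udp\|_{H^{1}(\Omega^{\ast})}$; elliptic regularity up to the $\mathcal{C}^{2,1}$ boundary $\Sigma$ then yields $\udp\in H^{2}$ in a neighborhood $N_{\Sigma}$ of $\Sigma$ with
\begin{equation*}
	\|\udp\|_{H^{2}(N_{\Sigma})} \lesssim \|\udp\|_{H^{1}(\Omega^{\ast})} \lesssim \|\Vn\|_{H^{1/2}(\Gamma^{\ast})},
\end{equation*}
so that, by the trace theorem, $\dn{\udp}|_{\Sigma}\in H^{1/2}(\Sigma)$ with the same bound. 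Hence $R$ is bounded from $\vect{H}^{1/2}(\Gamma^{\ast})$ into $H^{1/2}(\Sigma)$.

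Finally, the compact Sobolev embedding $H^{1/2}(\Sigma)\hookrightarrow L^{2}(\Sigma)$ shows that $R:\vect{H}^{1/2}(\Gamma^{\ast})\to L^{2}(\Sigma)$ is compact, whence $T=R^{\ast}R$ is compact, which is the claim. The reduction to normal components is harmless, since $R$ factors through the bounded projection $\VV\mapsto\Vn$. The main obstacle, and the genuine content of the argument, is the smoothing estimate of the third paragraph: one must argue that the map sending the Robin datum on $\Gamma^{\ast}$ to the Neumann trace on the \emph{disjoint} component $\Sigma$ gains regularity, relying on the interior smoothness of harmonic functions away from $\Gamma^{\ast}$ together with the positive separation $\delta_{0}$ guaranteed by the admissible class $\Wad$. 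Everything else reduces to bookkeeping of Sobolev indices and the elementary fact that $R^{\ast}R$ inherits compactness from $R$. (Any regularity index $s>0$ in place of $1/2$ would serve equally well, since the embedding $H^{s}(\Sigma)\hookrightarrow L^{2}(\Sigma)$ is compact for all $s>0$.)
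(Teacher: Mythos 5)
Your proof is correct, but it takes a genuinely different route from the paper's. The paper works with the adjoint-based representation \eqref{eq:shape_Hessian_at_stationary_domain}: it writes $d^{2}J_{N}(\Omega^{\ast})[\VV,\VV]=-\langle\mathcal{L}(\VV),\mathcal{K}(\VV)\rangle$, where $\mathcal{L}(\VV)=\Upsilon(\ud)[\Vn]$ is continuous (Lemma \ref{lem:aux_result}) and $\mathcal{K}(\VV)=\pdp[\VV]$ is compact, the latter being factored as $\mathcal{K}=\mathcal{K}_{2}\circ\mathcal{K}_{1}$ with $\mathcal{K}_{1}:\VV\mapsto\dn{\udp[\VV]}\big|_{\Sigma}$ bounded into $H^{1/2}(\Sigma)$ and $\mathcal{K}_{2}$ the solution operator carrying Dirichlet data on $\Sigma$ to the trace on $\Gamma^{\ast}$, compact by local regularity, the trace theorem, and the embedding $\vect{H}^{1/2}(\Gamma^{\ast})\hookrightarrow H^{-1/2}(\Gamma^{\ast})$. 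You instead stay with the symmetric representation \eqref{eq:Hessian} and factor the Riesz operator as $T=R^{\ast}R$, where $R$ is exactly the paper's $\mathcal{K}_{1}$ viewed into $L^{2}(\Sigma)$; compactness is then carried entirely by the embedding $H^{1/2}(\Sigma)\hookrightarrow L^{2}(\Sigma)$, and neither the adjoint state $\pdp[\VV]$ (whose existence the paper has to assume), nor the operator $\mathcal{K}_{2}$, nor Lemma \ref{lem:aux_result} is needed --- your more conservative mapping property $\Upsilon(\ud)[\cdot]:H^{1/2}(\Gamma^{\ast})\to H^{-1/2}(\Gamma^{\ast})$ suffices, and is in fact the safer statement (the lemma's claimed range $H^{1/2}(\Gamma^{\ast})$ is delicate for the term $\operatorname{div}_{\tau}(\Vn\nabla_{\tau}\ud)$, which spends a derivative of $\Vn$). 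The analytic core is the same in both arguments: the gain of regularity of $\VV\mapsto\dn{\udp}\big|_{\Sigma}$ coming from the positive distance between $\Gamma^{\ast}$ and $\Sigma$; your cutoff/elliptic-regularity paragraph is precisely the justification that the paper leaves implicit when asserting that $\mathcal{K}_{1}$ maps continuously into $H^{1/2}(\Sigma)$. What your factorization buys is economy and transparency: $T=R^{\ast}R$ makes the positive semidefiniteness of the Hessian at $\Omega^{\ast}$ manifest and shortens the argument to a single smoothing step. What the paper's route buys is that it proves compactness directly for the representation \eqref{eq:shape_Hessian_at_stationary_domain} quoted in the statement, exhibiting the smoothing in both passages $\Gamma^{\ast}\to\Sigma$ and $\Sigma\to\Gamma^{\ast}$, a scheme the authors then reuse verbatim for the Dirichlet-tracking functional $J_{D}$. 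Since \eqref{eq:Hessian} and \eqref{eq:shape_Hessian_at_stationary_domain} are the same bilinear form (the latter is obtained from the former by integration by parts), the Riesz operator is the same in both cases, so your argument fully proves the proposition.
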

The previous result underlines the lack of stability of the shape optimization problem \eqref{eq:shop}.
It suggests, roughly, that in the vicinity of the critical shape $\omega^{\ast}$, or equivalently, $\Omega^{\ast}$ (i.e., for $t$ very small), the cost functional $J_{N}$ behaves as its second-order approximation and one cannot expect an estimate of the kind $t \leqslant C \sqrt{J_N(\Omega_{t})}$ with a constant $C$ uniform in $\VV$.
In other words, the proposition points out that the shape gradient does not have a uniform sensitivity with respect to the deformation directions; that is, $J_{N}$ is degenerate for wildly oscillating perturbations.

To prove Proposition \ref{prop:compactness_result}, we prepare the following small lemma.
\begin{lemma}
\label{lem:aux_result}
	Let $\Omega \in \mathcal{C}^{2,1}$ be an admissible domain, $\VV \in \sfTheta$, and $\ud \in H^{1}(\Omega)$ be the solution to \eqref{eq:state_ud}.
	Then, the map $\VV \longmapsto \Upsilon(\ud)[\Vn]$ is a continuous map from $\vect{H}^{1/2}(\Gamma)$ to $H^{1/2}(\Gamma)$.
\end{lemma}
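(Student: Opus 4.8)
The plan is to pin down the regularity of the data entering $\Upsilon$ on $\Gamma$ and then to treat $\Vn\mapsto\Upsilon(\ud)[\Vn]$ as the sum of a zeroth-order (multiplication) operator and a genuinely first-order (tangential-divergence) operator acting on the scalar $\Vn$. By Lemma \ref{lem:shape_derivative_of_the_states} we have $\ud\in H^{3}(\Omega)$; since $\ud$ is harmonic and $\Gamma\in\mathcal{C}^{2,1}$ with $\alpha$ admitting an $H^{2}$ extension, interior Schauder estimates up to $\Gamma$ give $\ud\in C^{2,\beta}$ in a one-sided neighborhood of $\Gamma$ for every $\beta\in(0,1)$. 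Hence the trace data $\nabla_{\tau}\ud$, $\dn{\ud}$ and $\ud|_{\Gamma}$ are Lipschitz on $\Gamma$, while $\kappa={\operatorname{div}}_{\tau}\nn\in C^{0,1}(\Gamma)$ because $\nn$ extends to a Lipschitz field near the $\mathcal{C}^{2,1}$ boundary. Writing
\[
\Upsilon(\ud)[\Vn] = \underbrace{{\operatorname{div}}_{\tau}\!\big(\Vn\,\nabla_{\tau}\ud\big)}_{=:\,T_{1}\Vn}\;-\;\underbrace{\alpha\big(\dn{\ud}+\kappa\ud\big)\Vn}_{=:\,T_{0}\Vn},
\]
it suffices to bound $T_{0}$ and $T_{1}$ on $H^{1/2}(\Gamma)$, after observing that $\Vn=\langle\VV,\nn\rangle$ already defines a bounded map $\vect{H}^{1/2}(\Gamma)\to H^{1/2}(\Gamma)$ since $\nn\in C^{0,1}(\Gamma)$ is a multiplier on $H^{1/2}(\Gamma)$.

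The zeroth-order term is the routine part. The coefficient $c:=\alpha(\dn{\ud}+\kappa\ud)$ is a product of Lipschitz functions on $\Gamma$, so $c\in C^{0,1}(\Gamma)=W^{1,\infty}(\Gamma)$. As multiplication by a $W^{1,\infty}(\Gamma)$-function is bounded on $H^{s}(\Gamma)$ for every $|s|\le 1$, in particular on $H^{1/2}(\Gamma)$, I obtain $\|T_{0}\Vn\|_{H^{1/2}(\Gamma)}\lesssim\|c\|_{W^{1,\infty}(\Gamma)}\|\Vn\|_{H^{1/2}(\Gamma)}$, uniformly in $d\in\{2,3\}$. This settles the continuity of the zeroth-order part from $\vect{H}^{1/2}(\Gamma)$ to $H^{1/2}(\Gamma)$.

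The decisive — and hardest — step is the principal term $T_{1}\Vn={\operatorname{div}}_{\tau}(\Vn\,\nabla_{\tau}\ud)$, a first-order tangential operator with the smooth (Lipschitz, indeed $H^{3/2}$) vector coefficient $\nabla_{\tau}\ud$. The difficulty is the order: a crude count only places $\Vn\,\nabla_{\tau}\ud$ in $\vect{H}^{1/2}(\Gamma)$ and hence $T_{1}\Vn$ in $H^{-1/2}(\Gamma)$, one derivative short of the target. The plan to recover the full $H^{1/2}(\Gamma)$ bound is to use the high regularity of $\nabla_{\tau}\ud$ rather than merely its boundedness: decomposing the product $\Vn\,\nabla_{\tau}\ud$ by a paraproduct / fractional-Leibniz argument on the closed manifold $\Gamma$, the interactions in which the smooth factor $\nabla_{\tau}\ud$ carries the derivatives are controlled by $\|\nabla_{\tau}\ud\|_{H^{3/2}(\Gamma)}\|\Vn\|_{H^{1/2}(\Gamma)}$, and one must show that the remaining genuinely rough interaction is absorbed using the extra smoothness $\ud\in C^{2,\beta}$. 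This is exactly where the $\mathcal{C}^{2,1}$-regularity of the domain and the $H^{3}\cap C^{2,\beta}$-regularity of the state are indispensable, and I expect establishing this single estimate to be the crux of the whole argument.

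As a robust alternative that sidesteps the manifold paraproduct bookkeeping, I would realize $\Upsilon(\ud)[\Vn]$ through single- and double-layer potentials, as in \cite{AfraitesDambrineEpplerKateb2007,AfraitesDambrineKateb2008,EpplerHarbrecht2005}, and read off the mapping properties from the known continuity and smoothing of the boundary integral operators on $\mathcal{C}^{2,1}$ curves and surfaces. This layer-potential route is the safer fallback should the direct product estimate for $T_{1}$ prove too delicate, and it would yield the claimed continuity from $\vect{H}^{1/2}(\Gamma)$ to $H^{1/2}(\Gamma)$ with constants depending only on $\|\ud\|_{H^{3}(\Omega)}$ and the $\mathcal{C}^{2,1}$-norm of $\Gamma$.
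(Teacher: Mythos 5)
Your zeroth-order term $T_{0}$ is handled essentially as in the paper: the paper extends $\kappa$ by the McShane--Whitney theorem and combines Grisvard's product theorem \cite[Thm. 1.4.1.1, p. 21]{Grisvard1985} with the trace theorem, which amounts to the same Lipschitz-multiplier-on-$H^{1/2}(\Gamma)$ bound you invoke. The genuine gap is the principal term $T_{1}\Vn={\operatorname{div}}_{\tau}(\Vn\nabla_{\tau}\ud)$, which you explicitly leave unproven (``I expect establishing this single estimate to be the crux''), and the route you sketch cannot close it. After the tangential product rule, $T_{1}\Vn = {\operatorname{div}}_{\tau}(\nabla_{\tau}\ud)\,\Vn + \nabla_{\tau}\ud\cdot\nabla_{\tau}\Vn$, and the second summand is an honest first-order operator in $\Vn$ wherever $\nabla_{\tau}\ud\neq 0$: no paraproduct or fractional-Leibniz decomposition can absorb the interaction in which $\Vn$ carries the high frequencies, since smoothness of the coefficient $\nabla_{\tau}\ud$ (even $\mathcal{C}^{\infty}$) only controls the other interactions. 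Concretely, if $\Gamma$ were the unit circle and $\nabla_{\tau}\ud$ nonvanishing near a point, testing with $\Vn$ built from $e^{int}$ localized there gives $\|\nabla_{\tau}\ud\cdot\nabla_{\tau}\Vn\|_{H^{1/2}(\Gamma)}/\|\Vn\|_{H^{1/2}(\Gamma)}\gtrsim n$, so no bound from $H^{1/2}(\Gamma)$ to $H^{1/2}(\Gamma)$ with constants depending only on $\|\ud\|_{H^{3}(\Omega)}$ and the $\mathcal{C}^{2,1}$-norm of $\Gamma$ can hold --- your own order count ($T_{1}\Vn\in H^{-1/2}(\Gamma)$ from $\Vn\in H^{1/2}(\Gamma)$) already tells you why. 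Your layer-potential fallback fails for the same reason: potential-theoretic smoothing pertains to solution operators of the boundary value problems (in the paper this is exactly where compactness enters, via the map $\mathcal{K}$ in the proof of Proposition \ref{prop:compactness_result}), not to $\mathcal{L}:\VV\mapsto\Upsilon(\ud)[\Vn]$, which is a differential, not an integral, expression in $\Vn$.

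What the paper actually does at this step is different in kind: having expanded $T_{1}$ by the product rule, it invokes ``the regularities of $\VV$, $\ud$, and of $\nu$'' --- that is, it uses that admissible fields $\VV\in\sfTheta$ are $\vect{\mathcal{C}}^{2,1}(\mathbb{R}^{d})$, so that $\nabla_{\tau}\Vn$ is itself Lipschitz on $\Gamma$ and the term $\nabla_{\tau}\ud\cdot\nabla_{\tau}\Vn$ is again a mere product, amenable to the same Grisvard-plus-trace reasoning as $T_{0}$. In other words, the derivative falling on $\Vn$ is paid for by the a priori smoothness of the deformation fields, not by the $\vect{H}^{1/2}(\Gamma)$-norm; the paper never attempts the coefficient-smoothness-only estimate your paraproduct plan aims at, and that stronger estimate is false. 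To repair your write-up in the paper's spirit, you should state and use the $\mathcal{C}^{2,1}$-regularity of $\sfTheta$ when bounding $\nabla_{\tau}\Vn$ (or else weaken the target space of $T_{1}$ to $H^{-1/2}(\Gamma)$); as it stands, the crux step of your proposal is missing, and both routes you propose toward it would fail.
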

\begin{proof}
	Let $\Omega \in \mathcal{C}^{2,1}$ be an admissible domain, $\VV \in \sfTheta$, and $\ud \in H^{1}(\Omega)$ be the solution to \eqref{eq:state_ud}.
	Then, $\ud$ is $H^{3}(\Omega)$ regular, and we have the following regularities: $\nn \in \mathcal{C}^{1,1}(N^{\varepsilon})$ and $\kappa \in C^{0,1}(N^{\varepsilon}) \subset W^{1,\infty}(N^{\varepsilon}) \subset H^{1}(N^{\varepsilon})$, where $N^{\varepsilon}$ is a neighborhood of $\partial\Omega$; see, e.g., \cite[Sec. 7.8]{DelfourZolesio2011}.
	Recall that $\Upsilon(\ud)[\Vn] = {\operatorname{div}}_{\tau} (\Vn\nabla_{\tau} {\ud}) - \alpha (\dn{{\ud}}+\kappa {\ud})\Vn$.
	Let us first look at the map $\VV \mapsto \kappa {\ud} \Vn$.
	By McShane-Whitney extension theorem, there is some function $\tilde{\kappa} \in \mathcal{C}^{0,1}(\overline{\Omega})$ such that $\tilde{\kappa}|_{\Gamma} = \kappa$.
	Hence, in view of \cite[Thm. 1.4.1.1, p. 21]{Grisvard1985} and by trace theorem, we see that the operator $\VV \mapsto \Upsilon(\ud)[\Vn]$ is continuous from $\vect{H}^{1/2}(\Gamma)$ to $H^{1/2}(\Gamma)$ since the trace of $\tilde{\kappa} \ud \Vn$ is a composition of bounded operators.
	Now, on the other hand, we note that ${\operatorname{div}}_{\tau} (\Vn\nabla_{\tau} {\ud}) = {\operatorname{div}}_{\tau} (\nabla_{\tau} {\ud})\Vn + \nabla_{\tau} {\ud} \cdot \nabla_{\tau} {\Vn}$.
	By the same reasoning, using the regularities of $\VV$, $\ud$, and of $\nu$ mentioned earlier, the map $\VV \mapsto {\operatorname{div}}_{\tau} (\Vn\nabla_{\tau} {\ud})$ is also continuous from $\vect{H}^{1/2}(\Gamma)$ to $H^{1/2}(\Gamma)$.
\end{proof}
\begin{proof}[Proof of Proposition \ref{prop:compactness_result}]
The proof is based on the observation that the shape Hessian can be expressed as a composition of some linear continuous operators and a compact one.
The compactness being a consequence of the compact embedding between two Sobolev spaces.

Letting $\WW = \VV$, we recall the formula of the shape Hessian given in Proposition \ref{prop:shape_Hessian}, and denote by $\langle \cdot, \cdot \rangle$ the product of dualities $\vect{H}^{1/2}(\Gamma^{\ast}) \times \vect{H}^{-1/2}(\Gamma^{\ast})$, so that we can write $d^{2}J_{N}(\Omega^{\ast})$ as
\[
 d^{2}J_{N}(\Omega^{\ast})[\VV,\VV]=-\Big\langle \Upsilon(\ud)[\Vn],\pdp[\VV]\Big\rangle.
\]
%
%
%
{{Here, $\pdp = \pdp[\VV]$ (which is assumed to exists)}} solves the system of PDEs
	\begin{equation*}
		-\Delta \pdp 		=0 \ \text{in $\Omega^{\ast}$},\qquad\quad
		\pdp 				= \dn{\udp}[\VV]  \ \text{on $\Sigma$},\qquad\quad
		\dn{\pdp} + \alpha \pdp	=  0 \ \text{on $\Gamma^{\ast}$}.
	\end{equation*}	
We introduce the mappings
\begin{equation*}
\begin{array}{rclrcl}
 \mathcal{L}: \vect{H}^{1/2}(\Gamma^{\ast}) &  \longrightarrow  &  H^{1/2}(\Gamma^{\ast})  ~~~~~~~~&\mathcal{K} : \vect{H}^{1/2}(\Gamma^{\ast}) &\longrightarrow &  H^{-1/2}(\Gamma^{\ast})  \\
   \VV& \longmapsto & \Upsilon(\ud)[\Vn], &  ~~~~~~~~\VV& \longmapsto  & \pdp[\VV].
\end{array}
\end{equation*}
By these maps we can express the shape Hessian as follows
\begin{equation*}
	d^{2}J_{N}(\Omega^{\ast})[\VV,\VV] = - \Big\langle  \mathcal{L}(\VV), \mathcal{K}(\VV)\Big\rangle.
\end{equation*}
The operator $ \mathcal{L}$ is clearly linear and continuous by Lemma \ref{lem:aux_result}, but the operator $\mathcal{K}$ is compact.\footnote{The composition of a compact linear operator and a bounded linear operator yields a compact linear operator \cite[Lem. 8.3-2, p. 422]{Kreyszig1989}}
Indeed, according to the characterization of $\pdp[\VV]$, we can decompose $\mathcal{K}$ into $\mathcal{K}=\mathcal{K}_{2} \circ \mathcal{K}_{1}$ with  
\begin{equation*}
\begin{array}{rclrcl}
 \mathcal{K}_{1}: \vect{H}^{1/2}(\Gamma^{\ast}) &  \longrightarrow  &  H^{1/2}(\Sigma) ~~~~~~~~~~ &\mathcal{K}_{2} :H^{1/2}(\Sigma) &\longrightarrow &  H^{-1/2}(\Gamma^{\ast})  \\
   \VV& \longmapsto & \dn{\udp[\VV]}, &  ~~~~~~~~~~\Psi& \longmapsto  & \Phi,
\end{array}
\end{equation*}
and $\Phi$ is the trace on $\Gamma^{\ast}$ of the solution $\Phi \in H^{1}(\Omega^{\ast})$ of the following problem
	\begin{equation*}\label{eq:regularite_local}
		-\Delta \Phi 		=0 \ \text{in $\Omega^{\ast}$},\qquad\quad
		\Phi  				= \Psi  \ \text{on $\Sigma$},\qquad\quad
		\dn{\Phi } + \alpha \Phi 	=  0 \ \text{on $\Gamma^{\ast}$}.
	\end{equation*}	
The map $\mathcal{K}_{1}$ is linear and continuous, but $\mathcal{K}_{2}$ is compact. 
The latter claim follows from the $H^{1}(\Omega^{\ast})$ regularity (globally) of $\Phi$, which is locally $H^{3}(D_{\delta}\setminus\overline{\omega}^{\ast})$ regular, the trace theorem, and the compact embedding $\vect{H}^{\frac{1}{2}}(\Gamma^{\ast}) \hookrightarrow H^{-\frac{1}{2}}(\Gamma^{\ast})$. 
This, in turn, concludes the desired compactness result.
\end{proof}
%
%
%
%
%
%
\subsection{Shape Hessian of $J_{D}$ and stability issue concerning \eqref{equa:shop}}
As in the investigation outlined in the previous subsection, the analysis for the stability issue concerning \eqref{equa:shop} requires knowledge of the shape Hessian. 
In this regard, the shape derivative of the state variable $\un$, provided below, is needed to compute the expression for $d^{2}J_{{D}}$ using the chain rule approach.
\begin{lemma}[\cite{AfraitesRabago2022}]\label{lem:shape_derivative_of_the_states}
	Let $\Omega \in \mathcal{C}^{2,1}$ be an admissible domain (i.e., $\Omega = D\setminus \overline{\omega}$ where $\omega \in \Wad$), $\VV \in \sfTheta$, and $\un \in H^{1}(\Omega)$ be the solution to \eqref{equa:state_un}.
	Then, $\un \in H^{3}(\Omega)$ is shape differentiable with respect to $\Omega$ in the direction of $\VV$. 
	Its shape derivative $\unp \in H^{1}(\Omega)$ uniquely solves the boundary value problem
	\begin{equation}
	\label{equa:shape_derivative_of_the state_un}
		-\Delta \unp 			=0 \quad \text{in $\Omega$},\qquad
		{\dn{\un'}} 				=0 \quad \text{on $\Sigma$},\qquad
		\dn{\unp} + \alpha \unp	=\Upsilon(\un)[\Vn] \quad \text{on $\Gamma$},
	\end{equation}
	where $\Upsilon(v)[\Vn] = {\operatorname{div}}_{\tau} (\Vn\nabla_{\tau} {v}) 
						- \alpha (\dn{{v}}+\kappa {v})\Vn$, for $v \in H^{3}(\Omega)$, and $\kappa =  {\operatorname{div}}_{\tau} \nn$ is the mean curvature of $\Gamma$.
\end{lemma}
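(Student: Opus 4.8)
The plan is to mirror the argument already used above for the Dirichlet state $\ud$, changing only the two ingredients that distinguish the Neumann problem: the condition carried on the fixed boundary $\Sigma$, and the resulting boundary condition for the shape derivative there. First I would secure the interior regularity. Since $\Omega$ is of class $\mathcal{C}^{2,1}$, $\alpha$ satisfies Assumption (A), and $g \in H^{3/2}(\Sigma)$, elliptic regularity applied to \eqref{equa:state_un} bootstraps the weak solution $\un \in H^1(\Omega)$ up to $\un \in H^3(\Omega)$; this is precisely what gives meaning to the tangential and curvature terms in $\Upsilon(\un)[\Vn]$ and guarantees that $\dn{\un}$ and $\nabla_{\tau}\un$ have traces of sufficient regularity on $\Gamma$. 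Next I would establish shape differentiability by transporting the state to the fixed reference domain: setting $\un^{t} := \un(\Omega_t)\circ T_t \in H^1(\Omega)$, the pulled-back function solves a variational problem on $\Omega$ whose coefficients depend smoothly on $t$ through $DT_t$ and $\det DT_t$. Differentiating this transported weak formulation at $t=0$ — equivalently, applying the implicit function theorem to $t\mapsto\un^{t}$ using the uniform coercivity of the bilinear form $a$ — produces the material derivative $\dot u_N$, and the shape derivative is recovered as $\unp = \dot u_N - \nabla\un\cdot\VV \in H^1(\Omega)$.

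It then remains to identify the boundary value problem solved by $\unp$. Harmonicity in the interior is immediate, since differentiating $-\Delta\un=0$ commutes with the perturbation and gives $-\Delta\unp=0$ in $\Omega$. On $\Sigma$ the argument is clean precisely because $\VV$ has support in $\overline{D}_{\delta}$ and hence vanishes in a neighborhood of $\Sigma$: the boundary $\Sigma$, together with its unit normal $\nn$, is left unperturbed by $T_t$, and the datum $g$ is shape-independent. Differentiating the relation $\dn{\un(\Omega_t)}=g$ on $\Sigma$ therefore produces no geometric correction term, yielding the \emph{natural} condition $\dn{\unp}=0$ on $\Sigma$ — in contrast to the \emph{essential} condition $\udp=0$ that appeared for the Dirichlet state.

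The delicate point, and where I expect the main obstacle to lie, is the condition on the moving boundary $\Gamma$. There $\VV$ is nonzero and $\Gamma$ genuinely deforms, so differentiating the Robin relation $\dn{\un}+\alpha\un=0$ (which holds on each $\Gamma_t$) picks up tangential and curvature contributions. The mechanism is to differentiate the weak formulation \eqref{equa:state_weak_form_un} using the Hadamard transport formulas for the domain and boundary integrals; since $\Vn=0$ on $\Sigma$, the moving-boundary contributions localize entirely on $\Gamma$. Using the shape-derivative formula for the unit normal, $\nn'[\VV]=-\nabla_{\tau}\Vn$, together with the tangential--normal splitting $\Delta\un=\Delta_{\tau}\un+\kappa\,\dn{\un}+\dnn{\un}$ and harmonicity $\Delta\un=0$ to eliminate the pure second normal derivative, and then a tangential integration by parts to assemble the $\operatorname{div}_{\tau}(\Vn\nabla_{\tau}\un)$ term, one arrives exactly at $\dn{\unp}+\alpha\unp=\Upsilon(\un)[\Vn]$ on $\Gamma$, with $\Upsilon$ as in \eqref{eq:Upsilon_definition}; Assumption (A), namely $\dn{\alpha}=0$, is what suppresses the extra $\dn{\alpha}$ contribution that would otherwise enter. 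Carrying this out rigorously requires the $\mathcal{C}^{2,1}$ regularity of $\Gamma$ (so that $\kappa$ is Lipschitz and $\nn\in\mathcal{C}^{1,1}$) and the $H^3$ regularity of $\un$ from the first step. Finally, uniqueness of $\unp$ in $H^1(\Omega)$ follows from the weak form of the derived system: since $\Upsilon(\un)[\Vn]\in H^{-1/2}(\Gamma)$, the problem reads $a(\unp,\psi)=\langle\Upsilon(\un)[\Vn],\psi\rangle_{\Gamma}$ for all $\psi\in H^1(\Omega)$, and $a$ is coercive on $H^1(\Omega)$ because the term $\intG{\alpha\,\unp\,\psi}$ with $\alpha\geqslant\alpha_0>0$ removes the constant kernel left by the homogeneous Neumann condition on $\Sigma$, so the Lax--Milgram lemma applies.
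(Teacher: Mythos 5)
Your proposal is correct and takes essentially the same route as the paper: the paper states this lemma without proof, importing it from \cite{AfraitesRabago2022}, where the result is obtained by exactly the material-derivative/chain-rule argument you outline (transport to the fixed domain, $\unp = \dot{u}_N - \nabla\un\cdot\VV$, differentiation of the Robin relation on $\Gamma$ using $\nn'[\VV]=-\nabla_{\tau}\Vn$, the tangential splitting of $\Delta\un$, and Assumption (A)). Your uniqueness step also matches the paper's own accompanying remark verbatim: it introduces the linear form $l_{N}(\psi)=\intG{\Upsilon(\un)[\Vn]\,\psi}$, notes $\Upsilon(\un)[\Vn]\in H^{-1/2}(\Gamma)$, and invokes the Lax--Milgram lemma with the coercivity of $a$ on $H^{1}(\Omega)$ supplied by the boundary term with $\alpha\geqslant\alpha_{0}>0$.
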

By defining the linear form $l_{N}(\psi) = \intG{\Upsilon(\un)[\Vn] \psi}$, the variational formulation of \eqref{equa:shape_derivative_of_the state_un} can be stated as follows:
	\begin{equation}\label{equa:shape_derivative_of_the_state_weak_form_un}
		\text{Find ${\unp} \in H^{1}(\Omega)$ such that  $a(\unp,\psi) = l_{N}(\psi)$, for all $\psi \in H^{1}(\Omega)$},
	\end{equation}
where $a$ is of course the bilinear form defined in \eqref{eq:bilinear_form}.	

With $\Upsilon(\un)[\Vn] \in H^{-1/2}(\Gamma)$, it can be verified that \eqref{equa:shape_derivative_of_the_state_weak_form_un} admits a unique weak solution in $H^{1}(\Omega)$ via the Lax-Milgram lemma.
The $\mathcal{C}^{2,1}$ regularity assumption on the admissible domains is enough to justify the existence of $\unp$.

Meanwhile, the shape functional $J_{{D}}$ can be readily computed as $dJ_{{D}}(\Omega)[\VV] = \int_{\Gamma} G_{{D}} \nu \cdot \VV$, where the shape gradient $G_{{D}}$ is defined as (see \cite{AfraitesRabago2022})
	\begin{equation} \label{equa:shape_gradient_gd} 
		G_{{D}} = - \nabla_{\tau}\un\cdot \nabla_{\tau} \pn - \alpha \left( \dn{\un} + \kappa\un\right)\pn.
	\end{equation}
Here, $\pn:\Omega \to \mathbb{R}$ is the adjoint variable that uniquely satisfies the following system of PDEs:
\begin{equation}\label{equa:adjoint_system_pn}
    -\Delta \pn = 0 \quad \text{in $\Omega$}, \qquad
    \dn{\pn} = \un - f \quad \text{on $\Sigma$}, \qquad
    \dn{\pn} + \alpha \pn = 0 \quad \text{on $\Gamma$}.
\end{equation}
The weak formulation of \eqref{equa:adjoint_system_pn} can be stated as follows:
\begin{equation}\label{equa:adjoint_weak_form_un}
    \text{Find $\pn \in H^{1}(\Omega)$ such that $a(\pn,\psi) = \int_{\Sigma} (\un - f)\psi$, for all $\psi \in H^{1}(\Omega)$}.
\end{equation}

Due to the sufficient regularities on the domain and the data, the existence of a unique weak solution $\pn \in H^{1}(\Omega)$ to \eqref{equa:adjoint_weak_form_un} can then be readily deduced from the Lax-Milgram lemma.

The corresponding necessary optimality condition for the optimization problem \eqref{equa:shop}, analogous to Lemma \ref{lem:necessary_optimality_condition_tracking_Neumann}, can be immediately established and is provided in the next lemma.
\begin{lemma}\label{lem:necessary_optimality_condition_tracking_Dirichlet}
Let $\Omega^{\ast} = D\setminus \overline{\omega}^{\ast}$ be the solution of Problem \ref{problem:main_problem}.
That is, the domain $\Omega^{\ast}$ is such that $\un=\un(\Omega^{\ast})$ satisfies \eqref{eq:gip}; i.e., it holds that $\un=f$ on $\Sigma$ where $\un$ solves \eqref{equa:state_un}.
Then, the adjoint state $\pn$ satisfying \eqref{equa:adjoint_system_pn} vanishes in $\Omega^{\ast}$, and with $G_{D}$ given by \eqref{equa:shape_gradient_gd}, there holds the necessary optimality condition $G_{D}=0$ on $\Gamma^{\ast}$. 
\end{lemma}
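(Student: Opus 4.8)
The plan is to replicate, \emph{mutatis mutandis}, the argument used for Lemma \ref{lem:necessary_optimality_condition_tracking_Neumann}, the only structural difference being that here the homogeneity of the adjoint problem is transmitted through the Neumann trace on $\Sigma$ rather than through the Dirichlet trace. First I would use the hypothesis that $\Omega^{\ast}$ solves Problem \ref{problem:main_problem}, so that $\un = \un(\Omega^{\ast})$ satisfies the overdetermined system \eqref{eq:gip}; in particular $\un = f$ on $\Sigma$. Consequently $J_{D}(\Omega^{\ast}) = \frac12 \intS{(\un - f)^{2}} = 0$, and the Neumann datum in the adjoint system \eqref{equa:adjoint_system_pn} vanishes, i.e. $\dn{\pn} = \un - f = 0$ on $\Sigma$. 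Thus $\pn$ solves the \emph{homogeneous} Robin--Neumann problem $-\Delta\pn = 0$ in $\Omega^{\ast}$, $\dn{\pn}=0$ on $\Sigma$, and $\dn{\pn}+\alpha\pn = 0$ on $\Gamma^{\ast}$.

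Next I would run the energy argument at the level of the weak formulation \eqref{equa:adjoint_weak_form_un}. Taking the test function $\psi = \pn$ and noting that the right-hand side $\intS{(\un - f)\psi}$ vanishes, I obtain $a(\pn,\pn) = \|\nabla\pn\|_{L^{2}(\Omega^{\ast})}^{2} + \intGast{\alpha \pn^{2}} = 0$. Since $\alpha \geqslant \alpha_{0} > 0$, both nonnegative contributions must vanish separately: the gradient term forces $\pn$ to be constant on the connected domain $\Omega^{\ast}$, while the boundary term forces $\pn = 0$ on $\Gamma^{\ast}$. Together these give $\pn \equiv 0$ in $\overline{\Omega}^{\ast}$, which is the first assertion of the lemma.

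Finally, inserting $\pn \equiv 0$ (and hence $\nabla_{\tau}\pn = 0$ on $\Gamma^{\ast}$) into the shape gradient \eqref{equa:shape_gradient_gd} immediately yields $G_{D} = -\nabla_{\tau}\un\cdot\nabla_{\tau}\pn - \alpha(\dn{\un}+\kappa\un)\pn = 0$ on $\Gamma^{\ast}$, delivering the necessary optimality condition $G_{D}=0$ (equivalently, the shape Euler equation $dJ_{D}(\Omega^{\ast})[\VV]=0$).

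I expect no genuine obstacle here, since the computation is elementary once the adjoint data is recognized to be homogeneous; the one point meriting care is the uniqueness step. Unlike the Neumann-tracking case of Lemma \ref{lem:necessary_optimality_condition_tracking_Neumann} --- where the adjoint carries a homogeneous \emph{Dirichlet} condition on $\Sigma$ and vanishing is essentially built into the well-posedness --- here the trace on $\Sigma$ is of Neumann type, so the bare Laplace--Neumann problem would determine $\pn$ only up to an additive constant. It is precisely the strict positivity $\alpha \geqslant \alpha_{0} > 0$ of the Robin coefficient on $\Gamma^{\ast}$ that pins down this constant to zero and restores uniqueness; this is why the coercivity of $a$ (through the $\intGast{\alpha\pn^{2}}$ term) must be invoked rather than a maximum-principle argument alone.
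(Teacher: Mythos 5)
Your proposal is correct and follows essentially the approach the paper intends: the paper gives no separate proof of this lemma, stating only that it ``can be immediately established'' analogously to Lemma \ref{lem:necessary_optimality_condition_tracking_Neumann}, whose proof is exactly the homogenization-of-the-adjoint-plus-$\alpha>0$ argument you carry out (vanishing adjoint data on $\Sigma$ at the optimum, hence $\pn\equiv 0$, hence $G_{D}=0$ on $\Gamma^{\ast}$). Your explicit energy step --- observing that the Neumann condition on $\Sigma$ alone would leave $\pn$ determined only up to a constant, and that it is the Robin term $\intGast{\alpha\pn^{2}}$ in the coercive form $a(\pn,\pn)=0$ that pins this constant to zero --- is precisely the detail that makes the analogy rigorous, since the terse ``because $\alpha>0$, clearly $\pd\equiv 0$'' in the Neumann-tracking proof glosses over this distinction.
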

Using the above result, the characterization of the shape Hessian at the critical shape $\Omega^{\ast}$ is established in the same way as in the proof of Proposition \ref{prop:shape_Hessian}.
\begin{proposition}
	Let $\Omega \in \mathcal{C}^{2,1}$ be an admissible domain, $\VV, \WW \in \sfTheta$, and $\un \in H^{3}(\Omega)$ and $\unp \in H^{1}(\Omega)$ be the respective solutions of \eqref{equa:state_un} and \eqref{equa:shape_derivative_of_the state_un}.
	The shape Hessian of $J_{{D}}$ at the solution $\Omega^{\ast}$ of \eqref{eq:gip} has the following structure:
	\begin{equation}\label{equa:shape_Hessian_of_JD_at_stationary_domain}
		d^{2}J_{{D}}(\Omega^{\ast})[\VV,\WW] = \intGast{\Upsilon(\un)[\Vn] \pnp[\WW]},
	\end{equation}
where the adjoint variable $\pnp=\pnp[\WW]$ satisfies the system of PDEs 	
	\begin{equation}\label{equa:adjoint_system_pdp}
		-\Delta \pnp 		=0 \ \text{in $\Omega^{\ast}$},\qquad\quad
		\dn{\pnp} 				= \unp[\WW]  \ \text{on $\Sigma$},\qquad\quad
		\dn{\pnp} + \alpha \pnp	=  0 \ \text{on $\Gamma^{\ast}$}.
	\end{equation}	
\end{proposition}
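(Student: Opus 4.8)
The plan is to reproduce, step for step, the strategy used for $J_{N}$ in the proof of Proposition \ref{prop:shape_Hessian}, adapting each step to the fact that the state $\un$ now carries a Neumann condition on $\Sigma$. First I would apply Hadamard's boundary-differentiation formula to the reduced functional $J_{D}(\Omega)=\frac12\intS{(\un(\Omega)-f)^{2}}$. Since $\VV$ and $\WW$ are supported away from the fixed boundary $\Sigma$, no boundary-velocity contribution is produced on $\Sigma$, and differentiating twice gives
\[
	d^{2}J_{D}(\Omega^{\ast})[\VV,\WW]
		= \intS{\left( \unp[\VV]\,\unp[\WW] + (\un-f)\,\un''[\VV,\WW] \right)},
\]
where $\un''[\VV,\WW]$ is the second-order shape derivative of $\un$, whose existence is tacitly assumed so that the expression is meaningful.

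Next I would invoke the necessary optimality condition from Lemma \ref{lem:necessary_optimality_condition_tracking_Dirichlet}: at the exact solution $\Omega^{\ast}$ of Problem \ref{problem:main_problem} one has $\un=f$ on $\Sigma$, so the second summand vanishes and the Hessian collapses to the clean quadratic form $d^{2}J_{D}(\Omega^{\ast})[\VV,\WW]=\intS{\unp[\VV]\,\unp[\WW]}$, the exact analogue of \eqref{eq:Hessian}.

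The decisive step is to transfer this boundary integral from $\Sigma$ to the free boundary $\Gamma^{\ast}$. I would apply Green's second identity to the pair $(\unp[\VV],\pnp[\WW])$, both harmonic in $\Omega^{\ast}$, so the volume contribution drops out and $\int_{\partial\Omega^{\ast}}(\unp\,\dn{\pnp}-\pnp\,\dn{\unp})\,ds = 0$. On $\Sigma$ the shape-derivative system \eqref{equa:shape_derivative_of_the state_un} gives $\dn{\unp[\VV]}=0$ while the adjoint system \eqref{equa:adjoint_system_pdp} gives $\dn{\pnp[\WW]}=\unp[\WW]$, so the $\Sigma$-part of the integrand reduces to exactly $\unp[\VV]\,\unp[\WW]$. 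On $\Gamma^{\ast}$ the two Robin conditions $\dn{\unp}+\alpha\unp=\Upsilon(\un)[\Vn]$ and $\dn{\pnp}+\alpha\pnp=0$ combine so that the $\alpha$-terms cancel, leaving only $-\pnp[\WW]\,\Upsilon(\un)[\Vn]$. Splitting $\partial\Omega^{\ast}=\Sigma\cup\Gamma^{\ast}$ and rearranging then yields $\intS{\unp[\VV]\,\unp[\WW]}=\intGast{\Upsilon(\un)[\Vn]\,\pnp[\WW]}$, which is precisely \eqref{equa:shape_Hessian_of_JD_at_stationary_domain}. I note that the overall sign here is opposite to the $J_{N}$ case, and this sign flip is traceable directly to the interchange of Dirichlet and Neumann data on $\Sigma$ between the two formulations.

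The main obstacle I anticipate is twofold, both concerning the adjoint variable. First, one must justify that $\pnp[\WW]$ defined by \eqref{equa:adjoint_system_pdp} is well-posed: because its datum on $\Sigma$ is a Neumann condition $\dn{\pnp}=\unp[\WW]$, this is a genuine mixed Neumann--Robin problem, and its unique solvability in $H^{1}(\Omega^{\ast})$ follows from Lax--Milgram exactly as for \eqref{equa:adjoint_weak_form_un}, using the coercivity granted by $\alpha\geqslant\alpha_0>0$ on $\Gamma^{\ast}$. Second, the adjoint system must be the correct one for the boundary terms to telescope; the choice in \eqref{equa:adjoint_system_pdp} is dictated by matching the Neumann trace of $\unp$ on $\Sigma$ (mirroring how, in the $J_{N}$ case, the Dirichlet trace was matched), and the only delicate bookkeeping is verifying that this forces the cancellation of the $\alpha$-terms on $\Gamma^{\ast}$. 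Everything else is routine integration by parts, legitimized by the regularities $\un\in H^{3}(\Omega^{\ast})$ and $\Omega^{\ast}\in\mathcal{C}^{2,1}$ that ensure the boundary integrals are well-defined.
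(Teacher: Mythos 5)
Your proposal is correct and follows essentially the same route as the paper: the paper establishes this proposition by noting it goes ``in the same way'' as the proof of Proposition~\ref{prop:shape_Hessian}, i.e., Hadamard's formula, the optimality condition $\un=f$ on $\Sigma$ from Lemma~\ref{lem:necessary_optimality_condition_tracking_Dirichlet}, and integration by parts (Green's identity) between $\unp[\VV]$ and $\pnp[\WW]$ using the boundary conditions of \eqref{equa:shape_derivative_of_the state_un} and \eqref{equa:adjoint_system_pdp} -- exactly your argument, including the correct sign and the cancellation of the $\alpha$-terms on $\Gamma^{\ast}$.
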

With all the necessary ingredients prepared, one can now obtain a compactness result for the shape Hessian $d^{2}J_{{D}}$, analogous to Proposition \ref{prop:compactness_result}, as follows:
\begin{proposition}
If $\Omega^{\ast}$ is the critical shape of $J_{{D}}$, then the Riesz operator associated to the quadratic shape Hessian $d^{2}J_{{D}}(\Omega^{\ast}): \vect{H}^{1/2}(\Gamma^{\ast}) \longrightarrow \vect{H}^{-1/2}(\Gamma^{\ast})$ given by \eqref{equa:shape_Hessian_of_JD_at_stationary_domain} is compact. 
\end{proposition}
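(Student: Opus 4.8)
The plan is to transcribe the proof of Proposition \ref{prop:compactness_result} almost verbatim, the sole structural change being that the Dirichlet trace condition $\pdp = \dn{\udp}$ on $\Sigma$ appearing in the $J_N$-adjoint is now the Neumann trace condition $\dn{\pnp} = \unp$ on $\Sigma$ in \eqref{equa:adjoint_system_pdp}. Setting $\WW = \VV$ in \eqref{equa:shape_Hessian_of_JD_at_stationary_domain} and denoting by $\langle \cdot, \cdot \rangle$ the $\vect{H}^{1/2}(\Gamma^{\ast}) \times \vect{H}^{-1/2}(\Gamma^{\ast})$ duality pairing, I would write the quadratic Hessian as
\[
	d^{2}J_{{D}}(\Omega^{\ast})[\VV,\VV] = \big\langle \Upsilon(\un)[\Vn],\, \pnp[\VV] \big\rangle,
\]
and then exhibit it as the pairing of a bounded operator $\mathcal{L}\colon \VV \mapsto \Upsilon(\un)[\Vn]$ against a compact operator $\mathcal{K}\colon \VV \mapsto \pnp[\VV]$; the Riesz operator of a bilinear form of this shape is then compact because the composition of a bounded and a compact operator is compact.

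First I would establish the analog of Lemma \ref{lem:aux_result} for the Neumann state. Because $\un \in H^{3}(\Omega^{\ast})$ by Lemma \ref{lem:shape_derivative_of_the_states}, while the curvature $\kappa$ and the unit normal $\nu$ retain their $\mathcal{C}^{0,1}$ and $\mathcal{C}^{1,1}$ regularities on the $\mathcal{C}^{2,1}$ boundary $\Gamma^{\ast}$, the structural identity $\Upsilon(\un)[\Vn] = {\operatorname{div}}_{\tau}(\Vn \nabla_{\tau}\un) - \alpha(\dn{\un} + \kappa \un)\Vn$ is handled by exactly the same ingredients — a McShane-Whitney extension of $\kappa$, the Sobolev multiplication result of \cite{Grisvard1985}, and the trace theorem — to conclude that $\mathcal{L}\colon \vect{H}^{1/2}(\Gamma^{\ast}) \to H^{1/2}(\Gamma^{\ast})$ is linear and continuous. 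The proposition then rests entirely on the compactness of $\mathcal{K}$.

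For the compactness of $\mathcal{K}$, I would factor it as $\mathcal{K} = \mathcal{K}_2 \circ \mathcal{K}_1$. The first factor $\mathcal{K}_1\colon \VV \mapsto \unp[\VV]\big|_{\Sigma}$ maps $\vect{H}^{1/2}(\Gamma^{\ast})$ continuously into $H^{1/2}(\Sigma)$: the shape derivative $\unp$ of \eqref{equa:shape_derivative_of_the state_un} is harmonic with homogeneous Neumann data on $\Sigma$, and since $\Sigma$ is separated from $\Gamma^{\ast}$ by the distance $\delta_{0} > 0$, interior regularity makes $\unp$ smooth near $\Sigma$ so that its trace there is controlled. The second factor $\mathcal{K}_2\colon \Psi \mapsto \Phi\big|_{\Gamma^{\ast}}$ assigns to $\Psi \in H^{1/2}(\Sigma)$ the trace on $\Gamma^{\ast}$ of the solution $\Phi \in H^{1}(\Omega^{\ast})$ of
\[
	-\Delta \Phi = 0 \ \text{in $\Omega^{\ast}$}, \qquad \dn{\Phi} = \Psi \ \text{on $\Sigma$}, \qquad \dn{\Phi} + \alpha \Phi = 0 \ \text{on $\Gamma^{\ast}$},
\]
which is precisely the adjoint system \eqref{equa:adjoint_system_pdp} carrying the Neumann datum $\Psi$ on $\Sigma$. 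The decisive point is that, although $\Phi$ is only globally $H^{1}$, it is locally $H^{3}$ in $D_{\delta}\setminus\overline{\omega}^{\ast}$ — being harmonic away from $\Sigma$ and satisfying a homogeneous Robin condition on the smooth boundary $\Gamma^{\ast}$ — so the trace theorem places $\Phi\big|_{\Gamma^{\ast}}$ in $H^{5/2}(\Gamma^{\ast})$, and the compact embedding $H^{5/2}(\Gamma^{\ast}) \hookrightarrow H^{-1/2}(\Gamma^{\ast})$ renders $\mathcal{K}_2$, and hence $\mathcal{K}$, compact.

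As in the $J_N$ case, the main obstacle is the clean justification of this interior smoothing: one must verify that the merely $H^{1/2}$ Neumann datum $\Psi$ on $\Sigma$, which limits the global regularity of $\Phi$ near $\Sigma$, does not degrade the regularity of $\Phi$ up to $\Gamma^{\ast}$. The remedy is to apply elliptic regularity on a subdomain that contains $\Gamma^{\ast}$ but stays a positive distance away from $\Sigma$ (for instance on $D_{\delta}\setminus \overline{\omega}^{\ast}$), where $\Phi$ solves a homogeneous harmonic-Robin problem with $\mathcal{C}^{2,1}$ data and therefore gains the two derivatives needed for the $H^{5/2}$ trace. Everything else is a direct transcription of the argument for $d^{2}J_{N}$, with the roles of the Dirichlet and Neumann traces on $\Sigma$ interchanged.
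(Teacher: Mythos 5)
Your proposal is correct and follows essentially the same route as the paper, which itself disposes of this proposition by noting that the argument closely mirrors the proof of Proposition \ref{prop:compactness_result}: the pairing of the bounded operator $\mathcal{L}\colon \VV \mapsto \Upsilon(\un)[\Vn]$ (the Neumann-state analog of Lemma \ref{lem:aux_result}) against the compact operator $\mathcal{K} = \mathcal{K}_2 \circ \mathcal{K}_1$, with the Dirichlet and Neumann traces on $\Sigma$ interchanged exactly as you describe. Your explicit justification of the smoothing step — local $H^{3}$ regularity of $\Phi$ away from $\Sigma$ giving an $H^{5/2}(\Gamma^{\ast})$ trace, hence compactness into $H^{-1/2}(\Gamma^{\ast})$ — is, if anything, slightly more detailed than the paper's own remark.
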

The argument to verify the above claim, as one could expect, closely resembles the proof provided for Proposition \ref{prop:compactness_result}. 
Specifically, the conclusion follows from the regularities of $\Omega$, $\VV$, and the data $g$, in conjunction with the trace theorem and \cite[Thm. 1.4.1.1, p. 21]{Grisvard1985}.
\section{Numerical implementation and experiments}\label{sec:numerics}
\subsection{Employing the Neumann data-tracking least-squares approach}
\label{subsec:numerical_experiments_tracking_the_Neumann_data}

As indicated in subsection \ref{sec:tracking_the_Neumann_data}, we will solve the minimization problem \eqref{eq:shop} numerically using a shape-gradient-based descent method. 
This approach will be implemented via the finite element method (FEM), following the methodology outlined in our previous work \cite{AfraitesRabago2022}. 
A significant departure from \cite{AfraitesRabago2022} is our use of multiple pairs of Cauchy data. 
This choice aims to mitigate the numerical instability associated with identifying internal boundaries with concavities.
We emphasize that as opposed to \cite{RabagoAzegami2018,CaubetDambrineKateb2013}, our numerical scheme will not incorporate any remeshing techniques, such as adaptive mesh refinement. 
Furthermore, we will limit our investigation to cases of exact measurements and exclude consideration of noisy data. 
This decision is intentional, as we aim to examine the ill-posed nature of the problem, with a particular emphasis on cases where exact matching of the boundary data is achieved.
\begin{remark}
In geometric inverse problems, regularization, such as perimeter penalization, is crucial, particularly when dealing with noisy data. 
The presence of (weighted) perimeter functional in the objective function adds compactness properties to minimizing sequences and therefore contributes to the existence of optimal shapes. 
Such regularization methods not only provide stable reconstruction by effectively controlling the curve's length numerically but also aid in demonstrating the existence of minimizers for the shape optimization reformulation of the overdetermined problem. 
In future studies, we will explore the inclusion of a regularizing term as we examine inversion with noisy data, focusing on reconstructing Robin boundaries with concavities in greater detail. 
We note that adding a regularizing term to the shape functional can definitely result in significant changes to the analysis of existence optimal shape solution. 
For further insights, we refer the reader to \cite[Chap. 4.6, pp. 166-169]{HenrotPierre2018} regarding the effect of perimeter constraints on shape optimization problems.
\end{remark}
\subsubsection{Numerical algorithm}
\label{subsec:Numerical_Algorithm}
For clarity and completeness, we provide essential details of our numerical method.

%
\textit{Choice of descent direction.} 
In computing the descent direction, we will make use of the Riesz representation \cite{Neuberger1997} of the shape gradient $G_{N}$ by solving the variational equation 
\begin{equation}\label{eq:smoothing}
	\intO{( \nabla \VV : \nabla \vect{\varphi} + \VV \cdot \vect{\varphi} ) } 
		= - \intG{G_{N}\nn \cdot \vect{\varphi}}, \qquad \text{for all $\vect{\varphi} \in H_{\Sigma,0}^1(\Omega)^{d}$}.
\end{equation}
The reason for this choice is straightforward: the $L^{2}(\Gamma)$ regularity of the shape gradient $G_{N}$ alone is not sufficient for a stable approximation of the exact unknown boundary. 
Additionally, since $G_{N}$ is only supported in $\Gamma$, we require a smooth extension of $G_{N}$ across the entire domain $\Omega$ to efficiently address the minimization problem using the finite element method.
This technique, which involves smoothing and preconditioning the extension of $-{G}_{N}\nn$ over the entire domain $\Omega$, has been employed in previous studies, as seen in, for example, \cite{CaubetDambrineKateb2013,RabagoAzegami2018}. 
For a more in-depth investigation of discrete gradient flows in shape optimization, we refer the reader to \cite{Doganetal2007}.

Now our algorithm for computing $k$th domain $\Omega^{k}$ is given as follows:
\begin{description}
\setlength{\itemsep}{0.05em}
	\item[1. \it{Initilization}] Choose an initial shape $\Omega^{0}$.  
	\item[2. \it{Iteration}] For $k = 0, 1, 2, \ldots$
		\begin{enumerate}
			\item[2.1] Solve the state and adjoint state systems on the current domain $\Omega^{k}$.
			\item[2.2] Choose $t^{k}>0$, and compute $\VV^{k}$ in $\Omega=\Omega^{k}$ according to \eqref{eq:smoothing}.
			\item[2.3] Update the current domain by setting $\Omega^{k+1} := \{ x + t^{k} \VV^{k}(x) \mid x \in \Omega^{k} \} $. 
		\end{enumerate}
	\item[3. \it{Stop Test}] Repeat \textit{Iteration} until convergence.
\end{description}
The step size $t^{k}$ in Step 2.2 is determined using a backtracking line search procedure, which relies on an Armijo-Goldstein-like condition for the shape optimization method, as described in \cite[p. 281]{RabagoAzegami2020}. 
Additionally, this value is decreased further to prevent reversed triangles within the mesh after the update. 
Meanwhile, convergence is achieved in Step 3 when the algorithm has completed a finite number of iterations.\footnote{Here, the number of iterations is set to be large enough so that the absolute difference between two consecutive cost values (i.e., the magnitude of $|J(\Omega^{k+1})-J(\Omega^{k})|$) eventually becomes smaller than some specified small positive number.}
Certainly, this criterion can be modified and improved upon. 
However, as it stands, it enables us to achieve good results, especially when utilizing multiple boundary measurements.}

\begin{remark}\label{rem:Newton_method}
The idea that integrating Hessian information into a gradient-based iterative scheme enhances convergence is widely acknowledged. 
However, second-order methods entail the drawback of increased computational burden and time, especially when dealing with complex Hessians \cite{NovruziRoche2000,Simon1989}. 
Moreover, the effort required to compute the Hessian often does not justify the reduction in iteration count \cite{AfraitesDambrineEpplerKateb2007}. Consequently, we choose not to utilize a second-order method for numerical optimization. 
The second-order analysis was conducted solely for stability assessment regarding the proposed optimization problem(s). 
Instead, our approach employs a simpler and more direct strategy, using multiple measurements, to enhance inclusion detection and algorithm convergence.
\end{remark}

For the input data, we will consider multiple sets of Cauchy pairs $\{(f^{(i)},g^{(i)})\}_{1\leqslant i \leqslant M}$, where $M$ denotes the maximum number of pairs, on $\Sigma$. 
Here, the prescribed data is the Dirichlet boundary data $f^{(i)}$, while its corresponding additional boundary measurement $g^{(i)}$ on the outer part $\Sigma$ is obtained synthetically. 
In other words, the corresponding Neumann flux $g^{(i)}$ measured on the accessible boundary $\Sigma$ is generated by numerically solving the direct problem corresponding to Problem \ref{eq:main_problem} using the finite element method. 
This is achieved by specifying $f$ and fixing the shape $\Omega^{\ast}$ (or equivalently, $\Sigma = \partial{D}$ and $\Gamma^{\ast} = \partial{\omega}^{\ast}$), then solving \eqref{eq:state} in $\Omega^{\ast}$, and extracting the measurement $g$ by computing $\dn{u}$ on $\Sigma$.
To prevent ``inverse crimes'' -- as discussed by Colton and Kress in \cite[p. 154]{ColtonKress2013} -- we create the synthetic data using a different numerical scheme compared to the inversion process. 
Specifically, we utilize a larger number of discretization points and apply ${P}{2}$ finite element basis functions in the \textsc{FreeFem++} \cite{Hecht2012} code. 
In the inversion procedure, all variational problems are solved using ${P}{1}$ finite elements, and we initially discretize the domain with a uniform mesh size of $h = 0.03$.
Subsequently, we extract $g^{(i)}$ for each $i = 1, 2, \ldots, M$ by computing $\dn{u^{(i)}}$, where $u^{(i)}$ solves \eqref{eq:state_ud} with $f = f^{(i)}$, on $\Sigma$. 
For our numerical experiments, we consider up to four linearly independent Cauchy pairs, with the values for $f^{(i)}$, $i=1,2,3,4$, given as follows: $f^{(1)} = \sin(t)$, $f^{(2)} = \cos(t)$, $f^{(3)} = \sin(2t)$, and $f^{(4)} = \cos(2t)$.

Depending on the value of $M$, we simply replace the shape gradient $G_{N}$ in \eqref{eq:smoothing} with the sum $\sum_{i=1}^{M} G_{N}^{(i)}$. 
Each $G_{N}^{(i)}$ corresponds to the shape gradient computed using the input data $f^{(i)}$, along with the cost function $J_{N}^{(i)} = \frac12 \int_{\Sigma} (\partial_{\nu} u^{(i)} - g^{(i)})^{2} \, ds$ for $i = 1,2,\ldots,M$.

For the exact geometry of the unknown boundary $\Gamma$ in the forward problem, we consider the following shapes:
\begin{itemize}
	\item a kite-shape boundary with parametrization 
	\[
		\Gamma_{\text{K}} = \left\{\begin{pmatrix} 0.195+0.4(\cos{t}+0.65\cos{2t})\\ 0.55\sin{t}\end{pmatrix}, \ t \in [0, 2\pi) \right\},
	\]
	\item a ribbon-shape boundary with parametrization 
	\[
		\Gamma_{\text{R}} = \left\{\begin{pmatrix} 0.64 \cos{t} \\ 0.48 \sin{t} (1.8 + \cos{(2t)}) \end{pmatrix}, \ t \in [0, 2\pi) \right\},
	\]
	\item a peanut-shape boundary with parametrization 
	\[
		\Gamma_{\text{F}} 
		= \left\{\begin{pmatrix} -0.25 + \displaystyle\frac{0.6+0.54\cos{t}+0.06\sin{2t}}{1+0.75\cos{t}}\cos{t} \\[0.75em] 0.05 + \displaystyle\frac{0.6+0.54\cos{t}+0.06\sin{2t}}{1+0.75\cos{t}}\sin{t}\end{pmatrix}, \ t \in [0, 2\pi) \right\},
	\]
	\item and the shape given by the boundary $\Gamma_{\text{L}}$ of the domain $\omega=(-0.55,0.55)^{2}\setminus[0,0.55]^{2}$.	
\end{itemize}
To condense statements and discussions, all experiments and figures will occasionally be referenced using the following notation: $\texttt{Test}(\Gamma^{\ast})$, where $\Gamma^{\ast}$ (the shapes defined above) represents the exact geometry of $\Gamma$. Lastly, in all of our experiments, the Robin coefficient is set to $\alpha = 1$, and the results are obtained with the initial guess $\Gamma^{(0)}$ being the circle of radius $0.3$. 
%
\subsubsection{Tests with single boundary measurement}
Before we present the numerical results for detections under multiple boundary measurements, we first motivate this section by providing numerical findings in the case when we only have a single input data for the inversion process. To this end, we consider three different inputs $f$: (i) $f_{1} = 1$, (ii) $f_{2} = \sin{t}$, $t\in [0,2\pi)$, and (iii) $f_{3} = \cos{t}$, $t\in [0,2\pi)$.

The results of the experiments, which were tested for shapes with exact interior boundaries given by $\Gamma_{\text{K}}$, $\Gamma_{\text{R}}$, $\Gamma_{\text{P}}$, and $\Gamma_{\text{L}}$, are plotted in Figure \ref{fig:figure1}. It is evident from the plotted figures that the detected shapes are far from the exact geometries, except in the case when $f=1$ is the input data. In this case, the algorithm was able to cover at least the convex hull of $\Gamma_{\text{P}}$ and $\Gamma_{\text{L}}$, and produce fair detections for the other two test cases. These results motivate us to consider multiple boundary measurements in the inversion process, which we discuss next.
\subsubsection{Tests with multiple Cauchy data}\label{subsec:multiple_measurements_tracking_Neumann}
For our numerical experiments involving multiple measurements, we maintain consistency by using the same set of geometries for $\Gamma^{\ast}$. 
The results, depicted in Figure \ref{fig:figure2}, illustrate the results when employing two, three, and four linearly independent input data or boundary measurements.

As anticipated, the detections significantly improve with multiple boundary measurements compared to using only a single pair of Cauchy data in the inversion process. 
Particularly, accurate detection of the concave parts of the exact interior boundary is achieved for \texttt{Test}($\Gamma_{\text{K}}$), \texttt{Test}($\Gamma_{\text{R}}$), and \texttt{Test}($\Gamma_{\text{P}}$), with detection generally enhancing as the number of boundary measurements increases.

However, for \texttt{Test}($\Gamma_{\text{L}}$), detection of the concave part appears less effective, likely due to the less smooth shape of $\Gamma_{\text{L}}$ and its distance from the measurement region. 
Nevertheless, employing multiple measurements substantially enhances the detection of unknown interior boundaries with non-convex shapes, partially mitigating the ill-posedness of the shape inverse problem.

Figure \ref{fig:figure3} summarizes the histories of the (normalized) cost value, Sobolev gradients' norm, and Hausdorff distances between the approximate and exact solutions against the number of iterations for \texttt{Test}($\Gamma_{\text{L}}$). 
Overall, as the number of boundary measurements increases, both the accuracy of the solution and the convergence behavior of the algorithm improve, as expected.

In conclusion, our results demonstrate the effectiveness of the least-squares method in tracking Neumann data using multiple boundary measurements compared to using a single pair of boundary data. 
While the proposed strategy provides valuable information for detecting inclusion boundaries near the measurement region, detecting interior boundaries with sharp concavities and located farther from the measurement region remains challenging, primarily due to the inherent nature of the problem.
%
\begin{figure}[htb!]
\centering
\resizebox{0.235\linewidth}{!}{\includegraphics{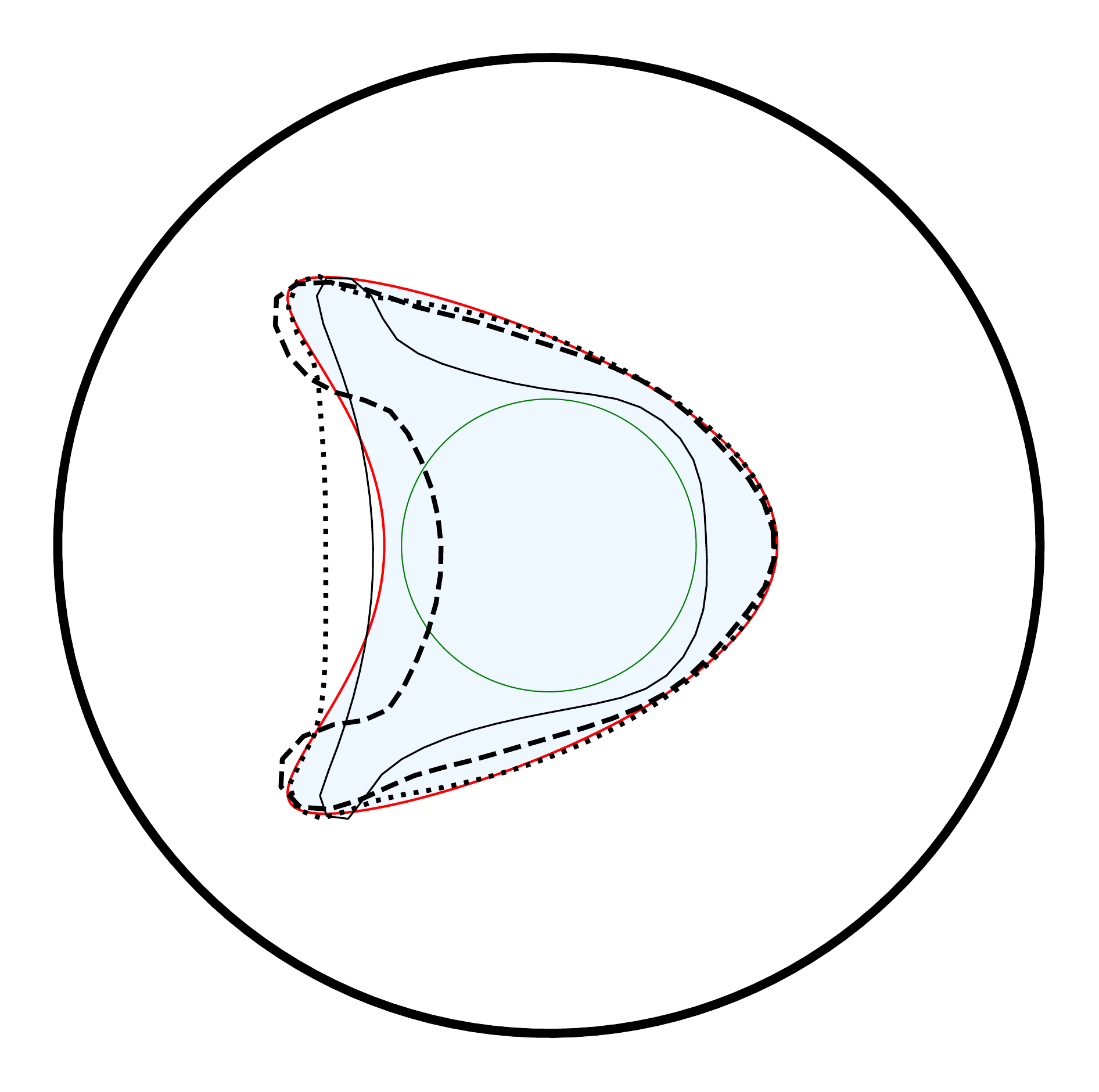}} \
\resizebox{0.235\linewidth}{!}{\includegraphics{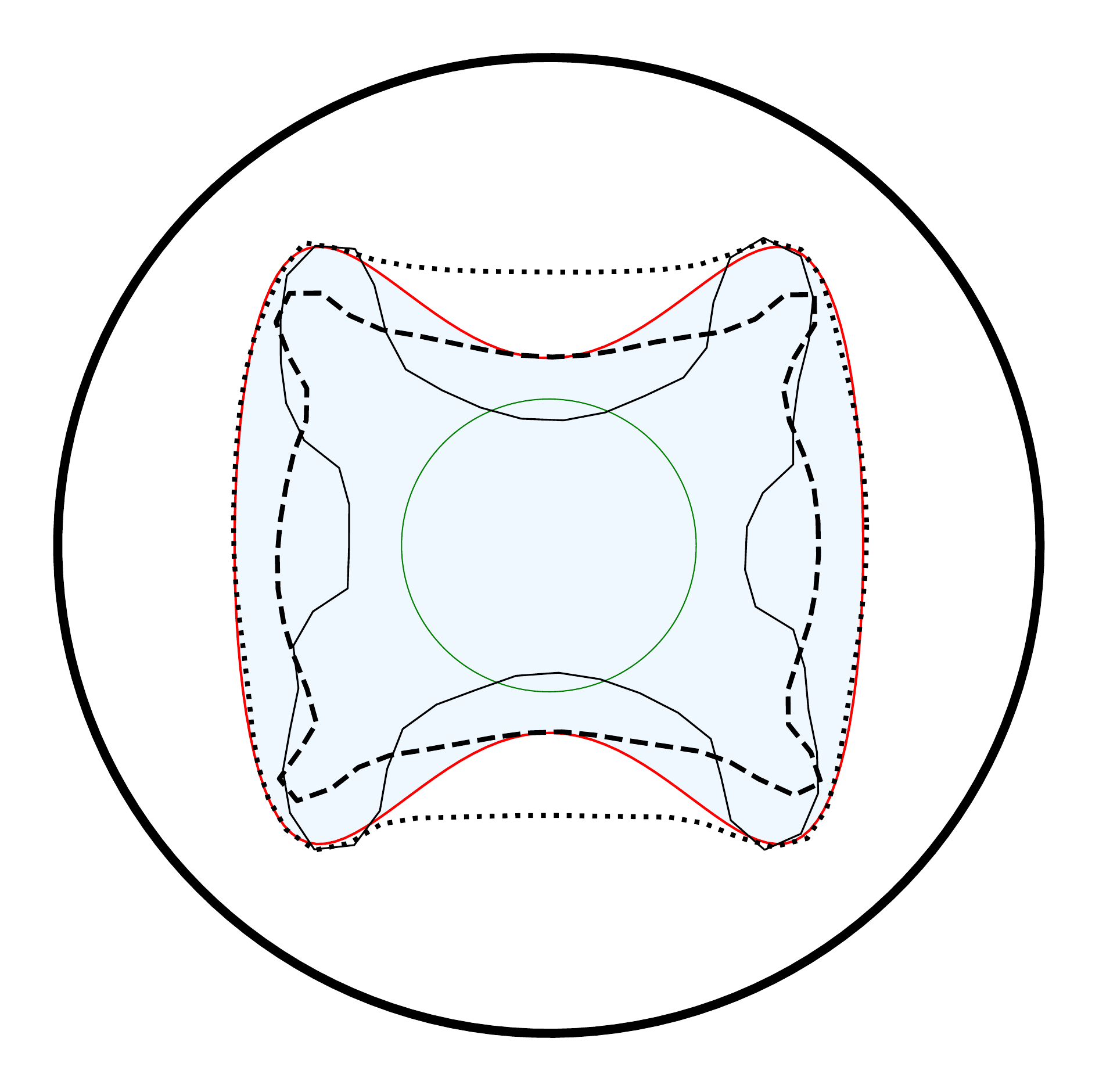}} \
\resizebox{0.235\linewidth}{!}{\includegraphics{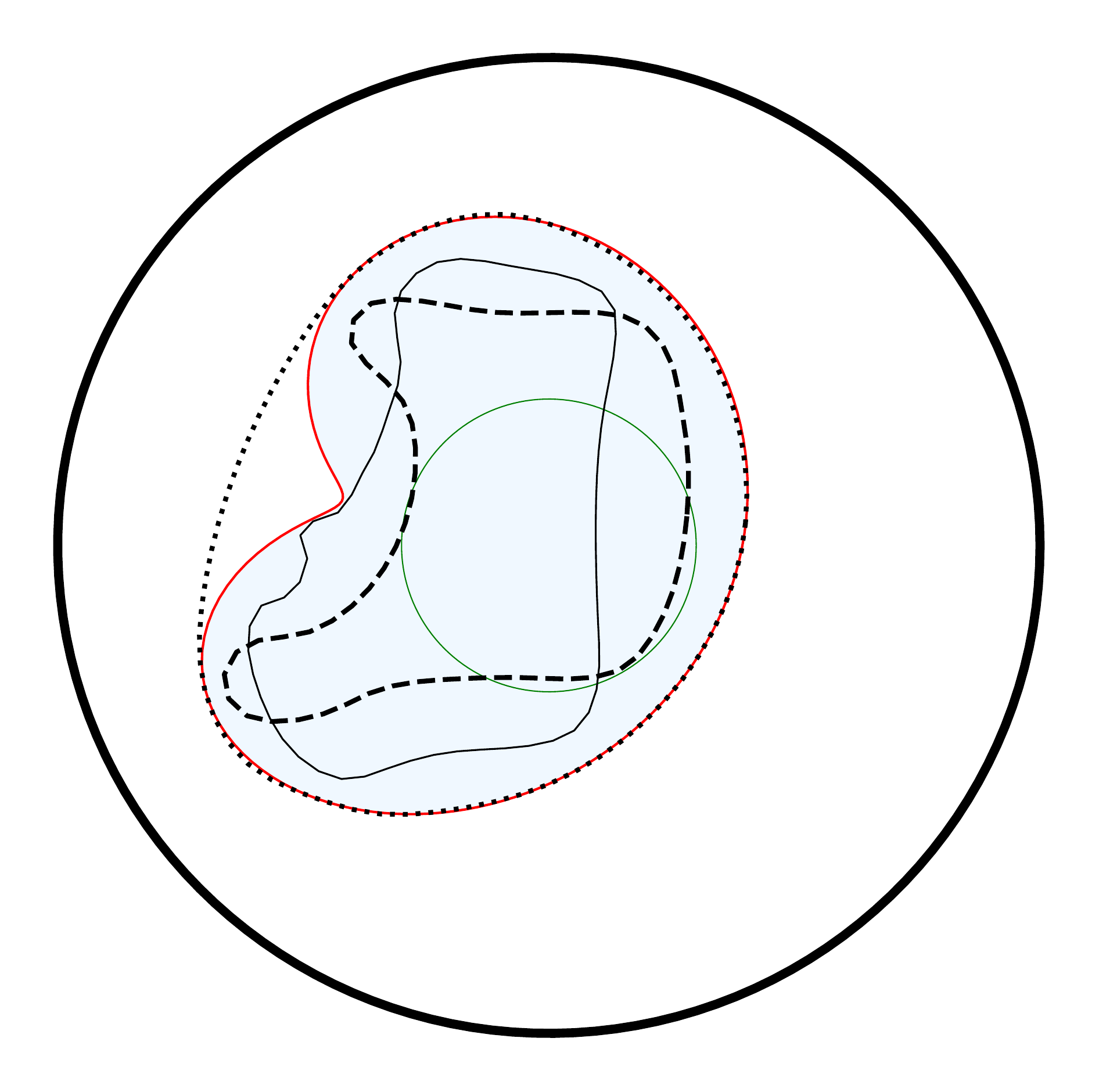}} \
\resizebox{0.235\linewidth}{!}{\includegraphics{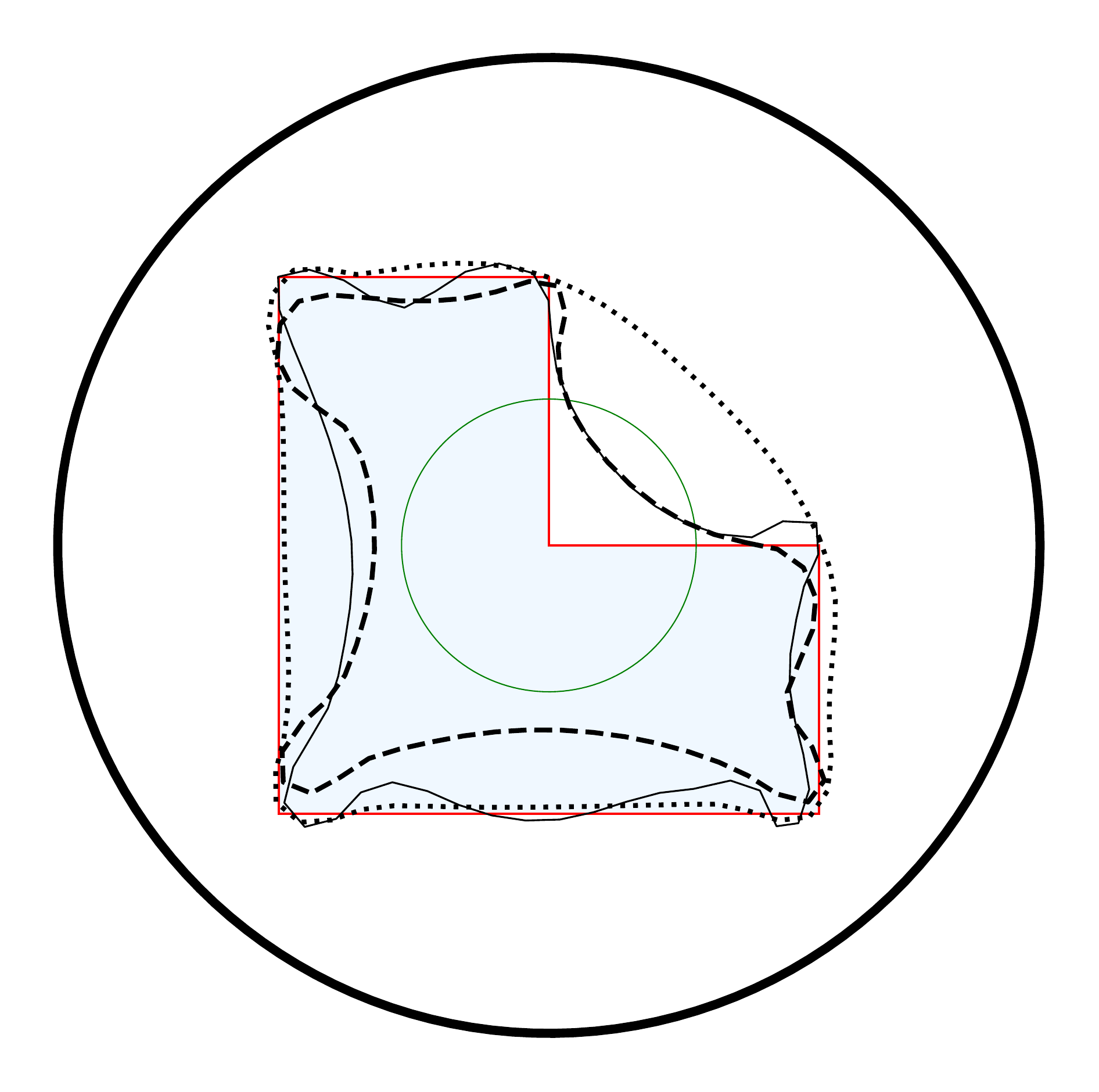}} \\
\resizebox{0.8\linewidth}{!}{\includegraphics{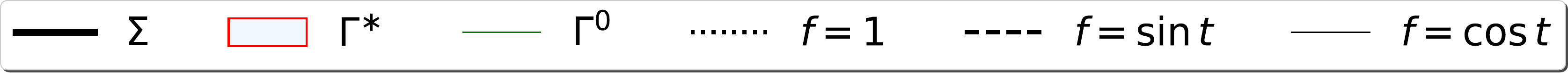}} 
\caption{Results of the detections with a single boundary measurement} 
\label{fig:figure1}
\end{figure}
%
%
%
%
%
%
\begin{figure}[htp!]
\centering
\resizebox{0.235\linewidth}{!}{\includegraphics{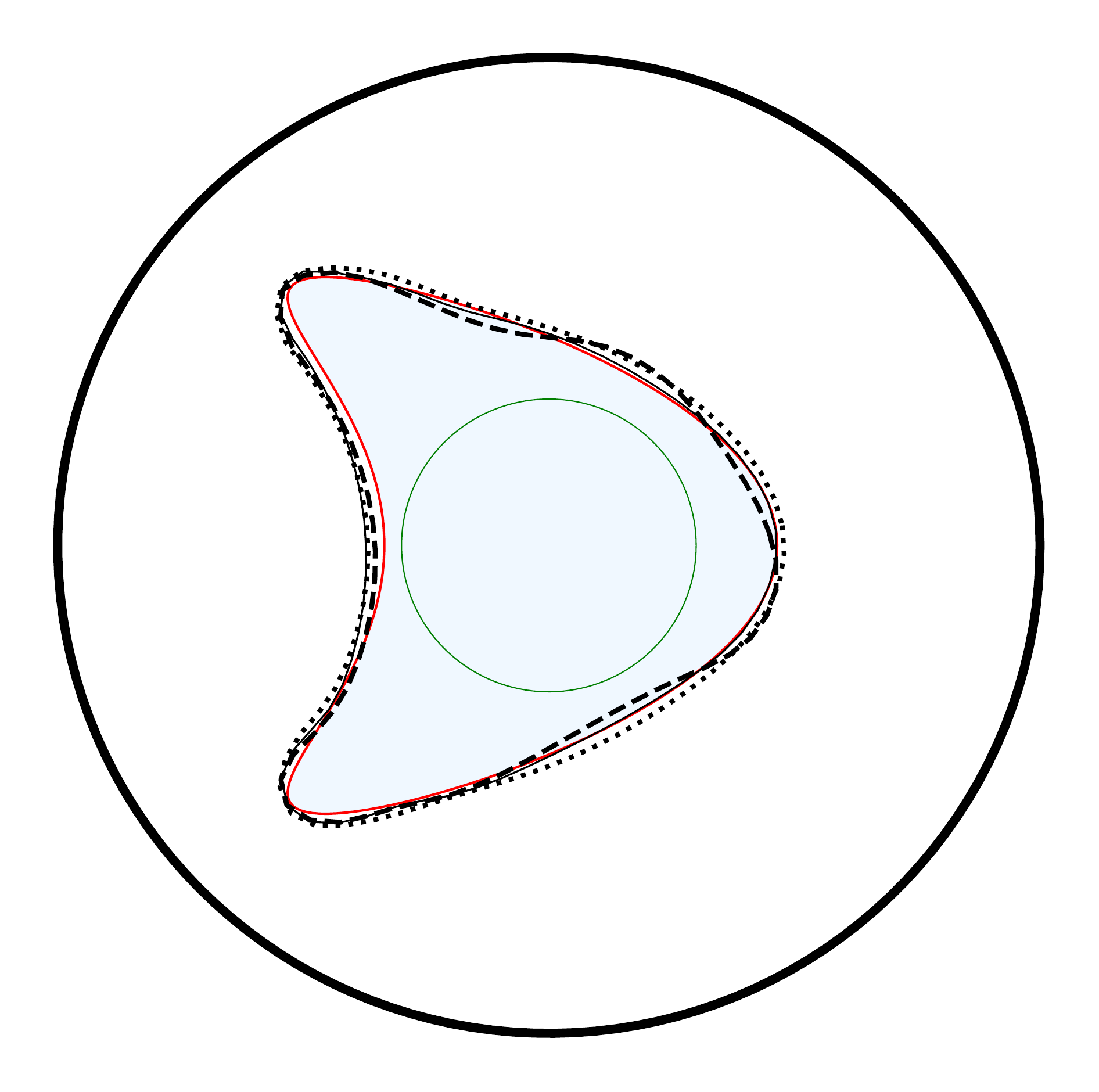}} \
\resizebox{0.235\linewidth}{!}{\includegraphics{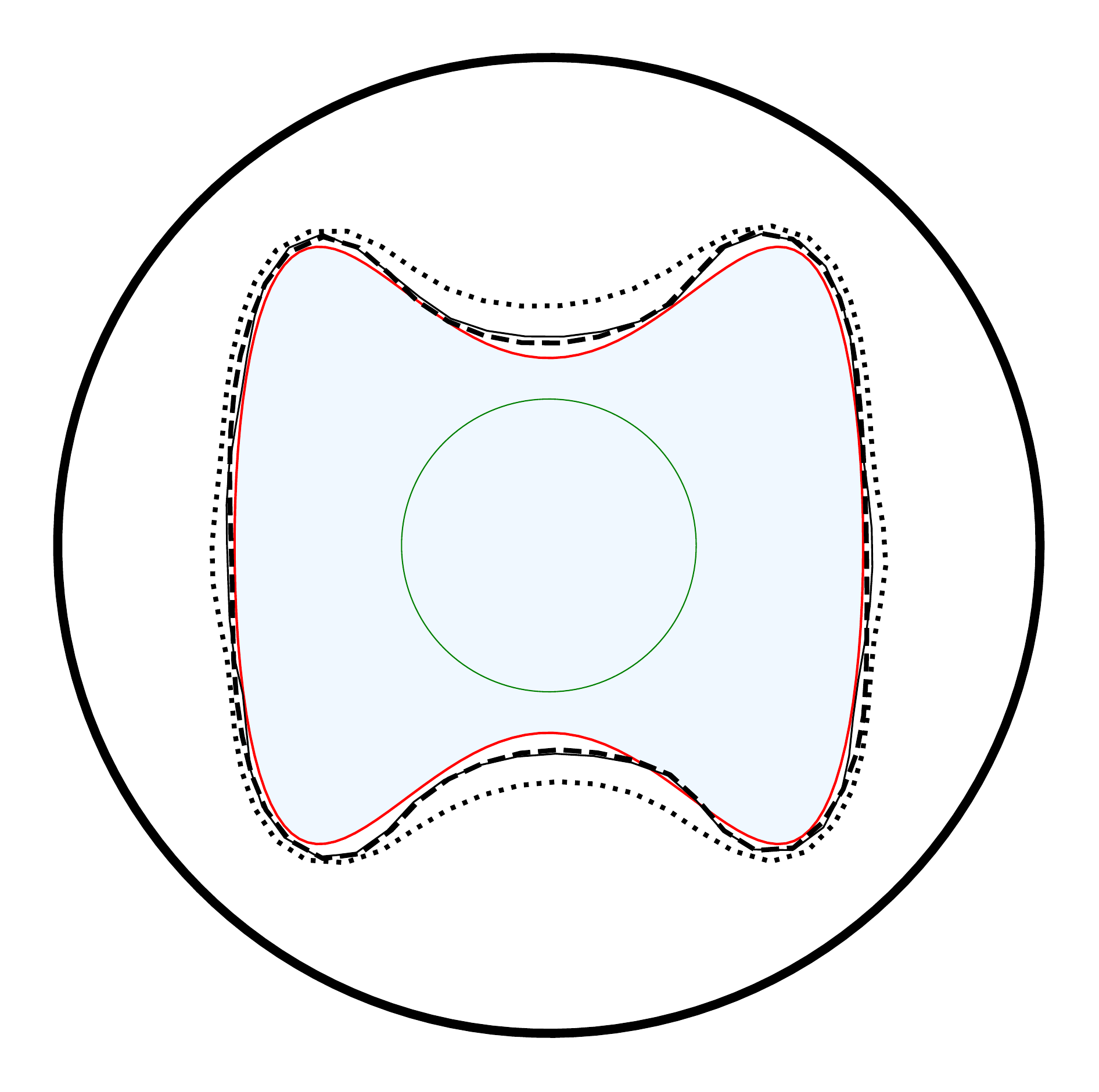}} \
\resizebox{0.235\linewidth}{!}{\includegraphics{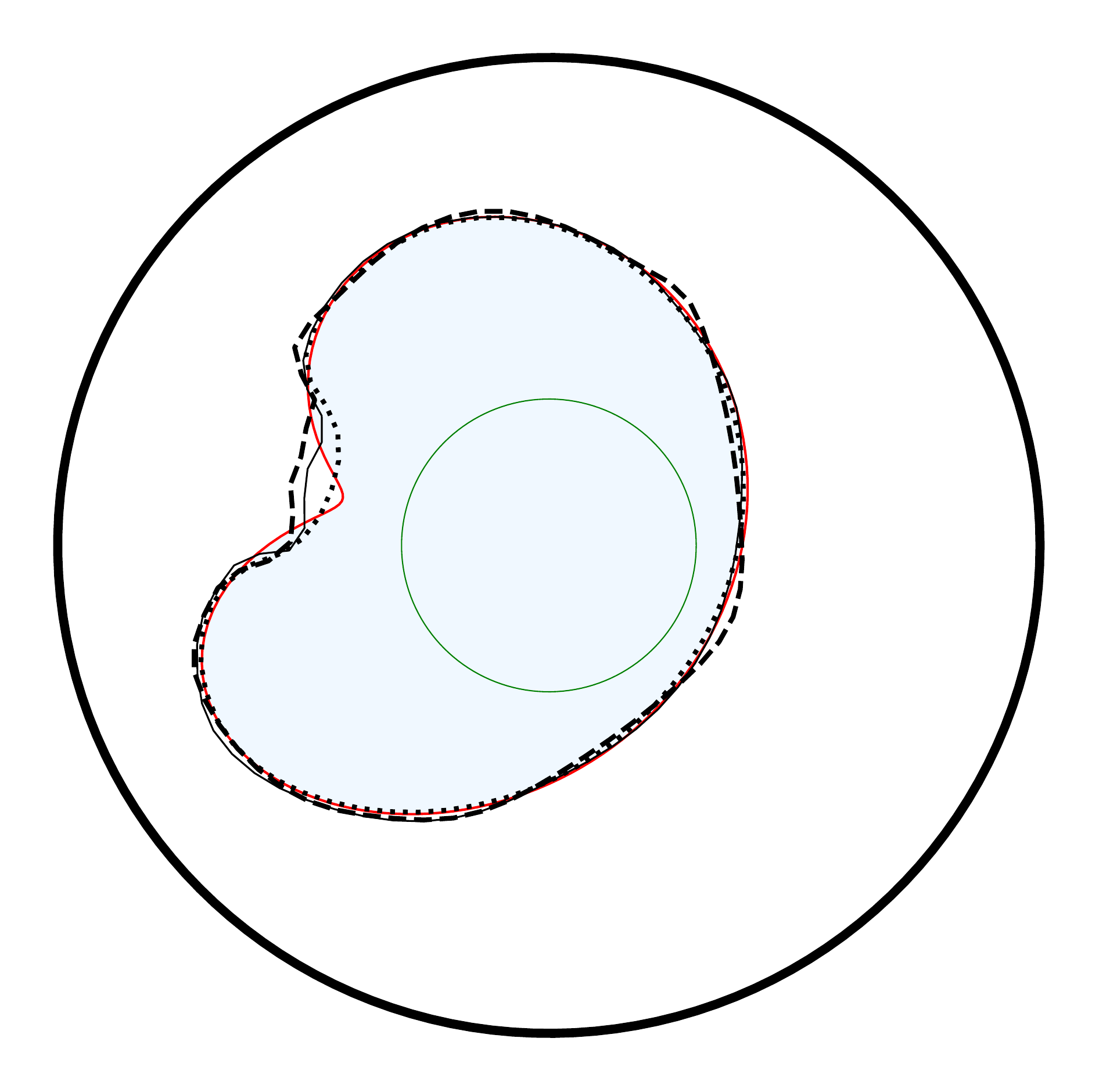}} \
\resizebox{0.235\linewidth}{!}{\includegraphics{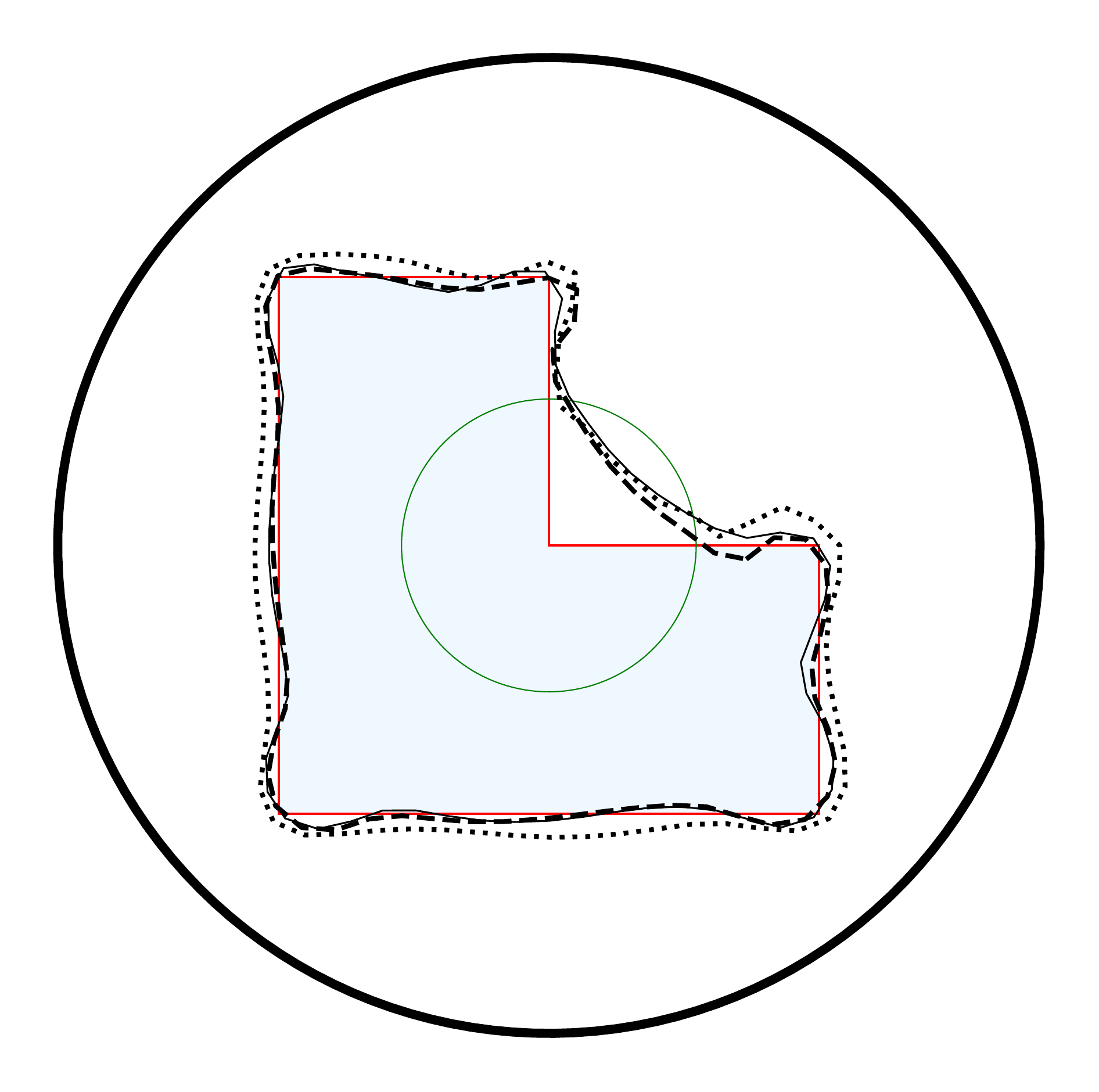}} \\ 
\resizebox{0.8\linewidth}{!}{\includegraphics{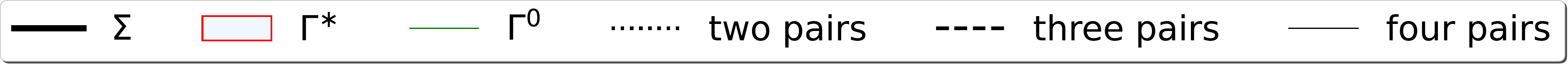}} 
\caption{Results of the detections with multiple boundary measurements}
\label{fig:figure2}
\end{figure}
%
%
\begin{figure}[htp!]
\centering
\resizebox{0.325\linewidth}{!}{\includegraphics{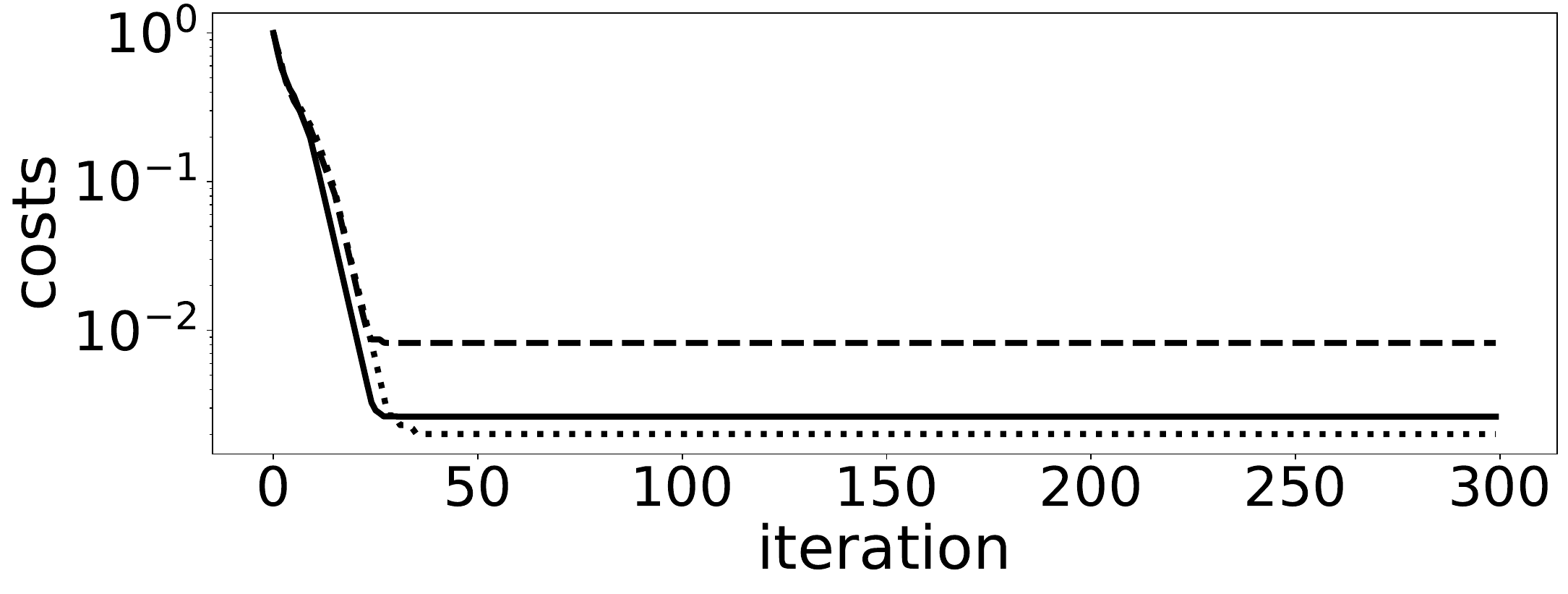}} \hfill
\resizebox{0.325\linewidth}{!}{\includegraphics{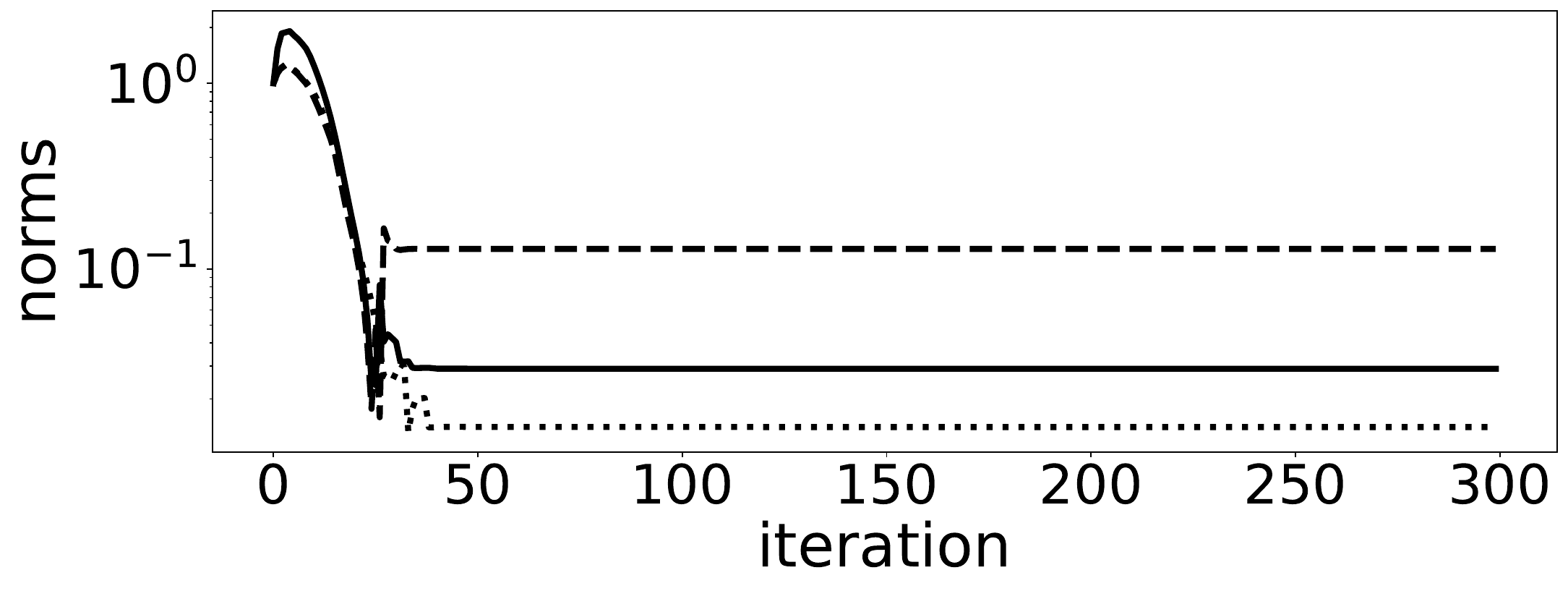}} \hfill
\resizebox{0.325\linewidth}{!}{\includegraphics{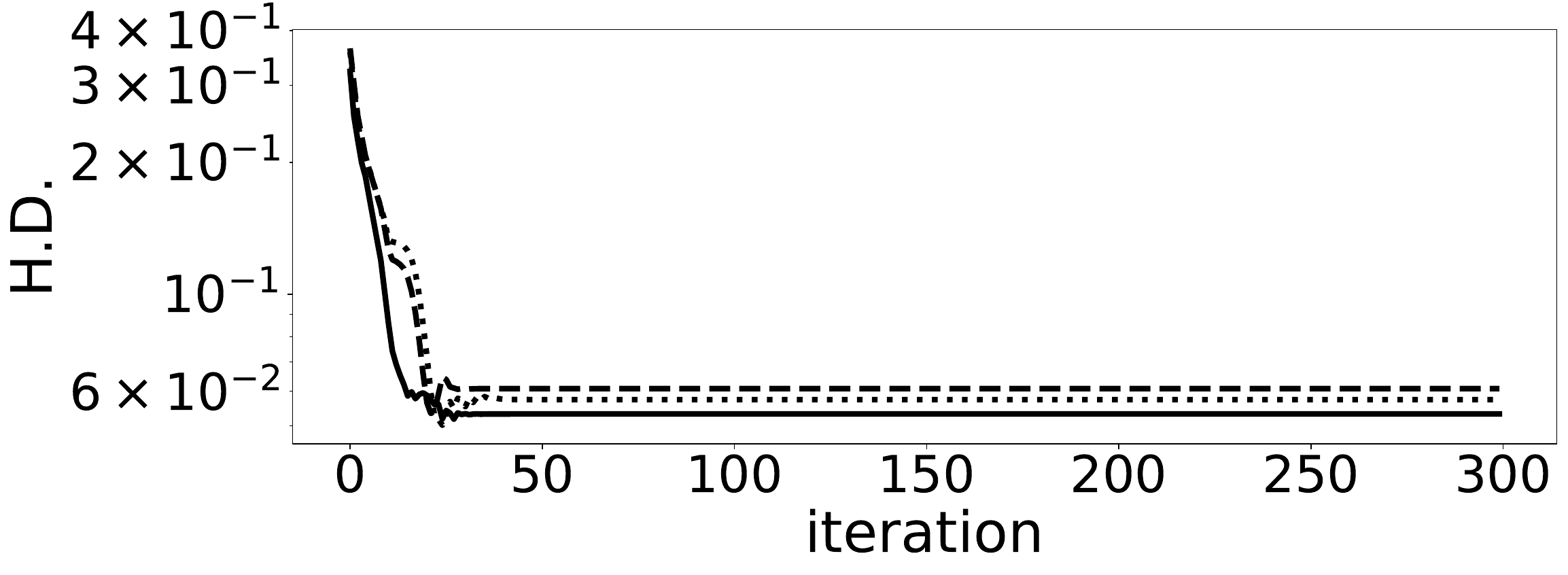}} \\[0.5em]
\resizebox{0.3\linewidth}{!}{\includegraphics{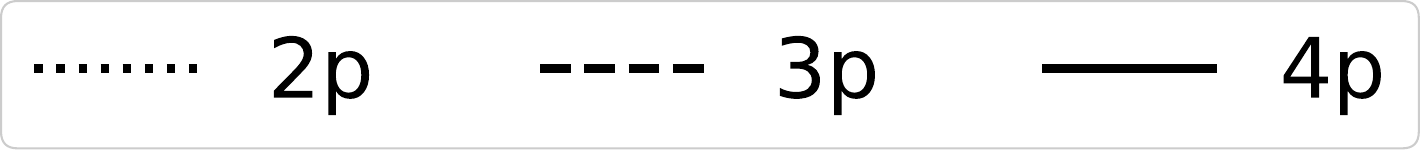}} 
\caption{Histories of (normalized) cost values, Sobolev gradient norms, and Hausdorff distances $d_{H}(\Gamma^{(k)},\Gamma^{\ast})$ for \texttt{Test}($\Gamma_{\text{K}}$)}
\label{fig:figure3}
\end{figure}
\begin{remark}
In Figure \ref{fig:figure2}, it is observed that even with multiple boundary measurements, the numerical results for an $\textsf{L}$-shaped non-convex shape (which violates the regularity assumption) reconstruction seem to be unsatisfactory. 
Possible reasons behind this result may include inaccurate computations of shape gradients due to the non-remeshing technique employed in the reconstruction process, as well as the smoothness loss on the domain during discretization. 
Additionally, mesh qualities may be compromised during shape changes, leading to inaccurate finite element approximations. 
To address this issue, an improved boundary type of shape gradient, as proposed in \cite{GongZhu2021} and \cite{GongLiZhu2022}, may be employed. 
The boundary correction formula therein can be incorporated into numerical algorithms to enhance the accuracy of reconstructions, not only when employing the Neumann-data tracking approach but also when utilizing a boundary-type cost functional more generally.
\end{remark}
\subsection{Employing the Dirichlet data-tracking least-squares approach}
\label{subsec:numerical_experiments_tracking_the_Dirichlet_data}
Using the same algorithm laid out in subsection \ref{subsec:Numerical_Algorithm}, we provide here some numerical experiments for the optimization problem \eqref{equa:shop} with multiple boundary measurements.
For the input data, we consider up to four linearly independent Cauchy pairs for our numerical tests with values for $g^{(i)}$, $i=1,\ldots,4$, given as follows: $g^{(i)} = \sin((i+1)t/2)$, for $i=1,3$, and $g^{(i)} = \cos(it/2)$, for $i =2,4$, for $t \in [0,2\pi)$.
With the above consideration and as before, depending on the value of $M$, we need to replace $G_{{D}}$ in \eqref{eq:smoothing} with $\sum_{i=1}^{M} G_{{D}}^{(i)}$, where, for each $i = 1,2,\ldots,M$, $G_{{D}}^{(i)}$ corresponds to the shape gradient computed with the input data $g^{(i)}$ with the cost function $J_{{D}}^{(i)} = \intS{(\un^{(i)} - f^{(i)})^{2}}$.
%
%
In the forward problem, we consider the same set of test geometries $\{\Gamma_{\text{K}},\Gamma_{\text{R}},\Gamma_{\text{F}},\Gamma_{\text{L}}\}$ for $\Gamma$.

\subsubsection{Tests with single boundary measurement}
Again, to prompt the use of more than one set of Cauchy data in the inversion procedure, we first issue some numerical results obtained from a single boundary measurement.
On this purpose, we consider three different inputs for the Neumann flux $g$, namely, (i) $g = 1$, (ii) $g = \sin{t}$, $t\in [0,2\pi)$, and (iii) $g = \cos{t}$, $t\in [0,2\pi)$.

The experimental results carried out for the present problem with exact interior boundaries given by $\Gamma_{\text{L}}$, $\Gamma_{\text{K}}$, $\Gamma_{\text{F}}$, and $\Gamma_{\text{R}}$, are plotted in Figure \ref{fig2:figure4}.
As evident from the plotted figures, the detected shapes are far from the exact geometries and the algorithm was only able to locate the position of the inclusion. 
Moreover, it seems difficult for the method to detect the concave regions of the unknown inclusion. 
Clearly, the detections are far from being acceptable, and hence require attention for improvement.
So, as in subsection \ref{subsec:multiple_measurements_tracking_Neumann}, these motivate us to consider multiple boundary measurements in the inversion process, which we give next in the following subsection.
%
\begin{figure}[htp!]
\centering
\resizebox{0.235\linewidth}{!}{\includegraphics{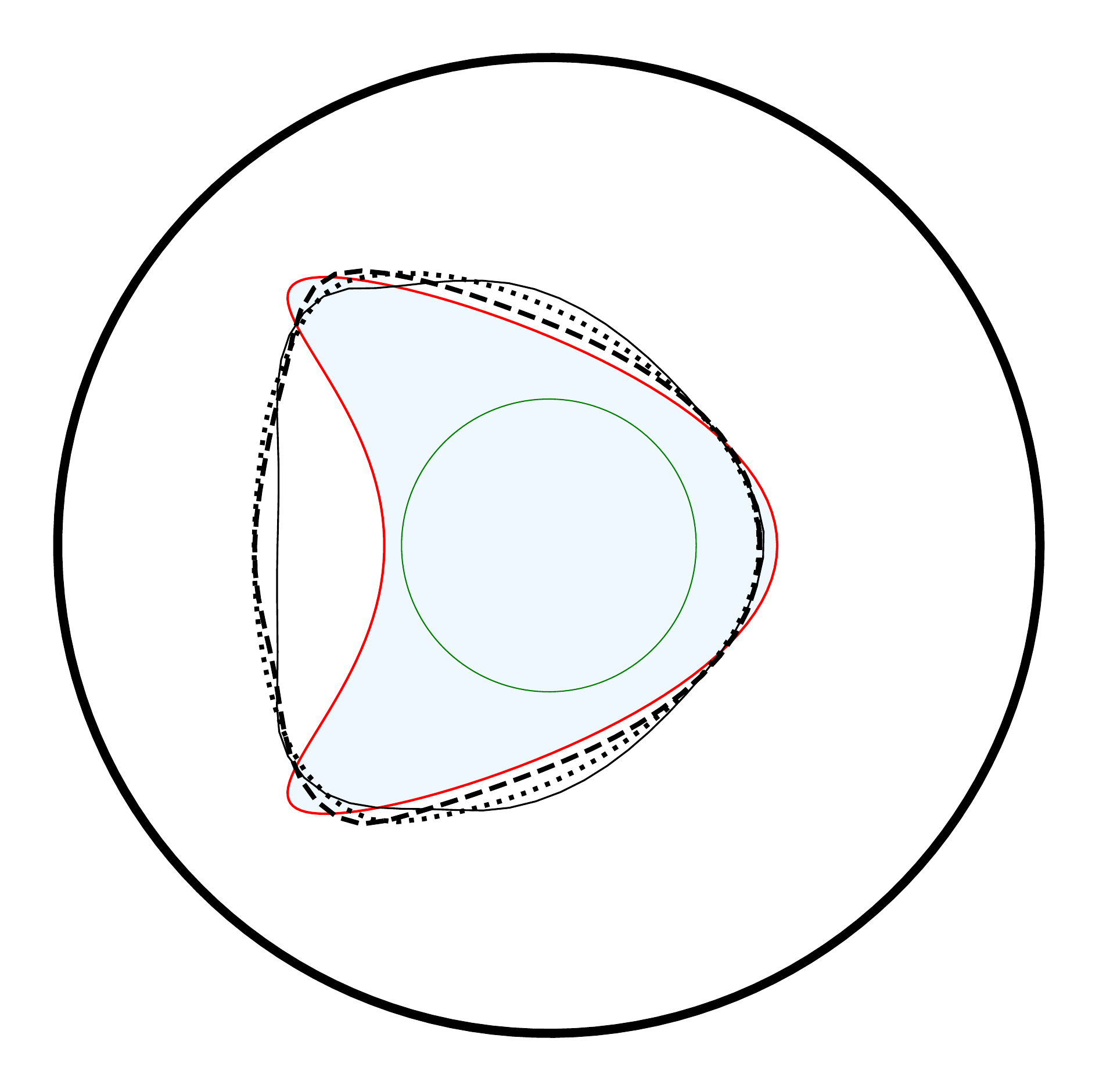}} \
\resizebox{0.235\linewidth}{!}{\includegraphics{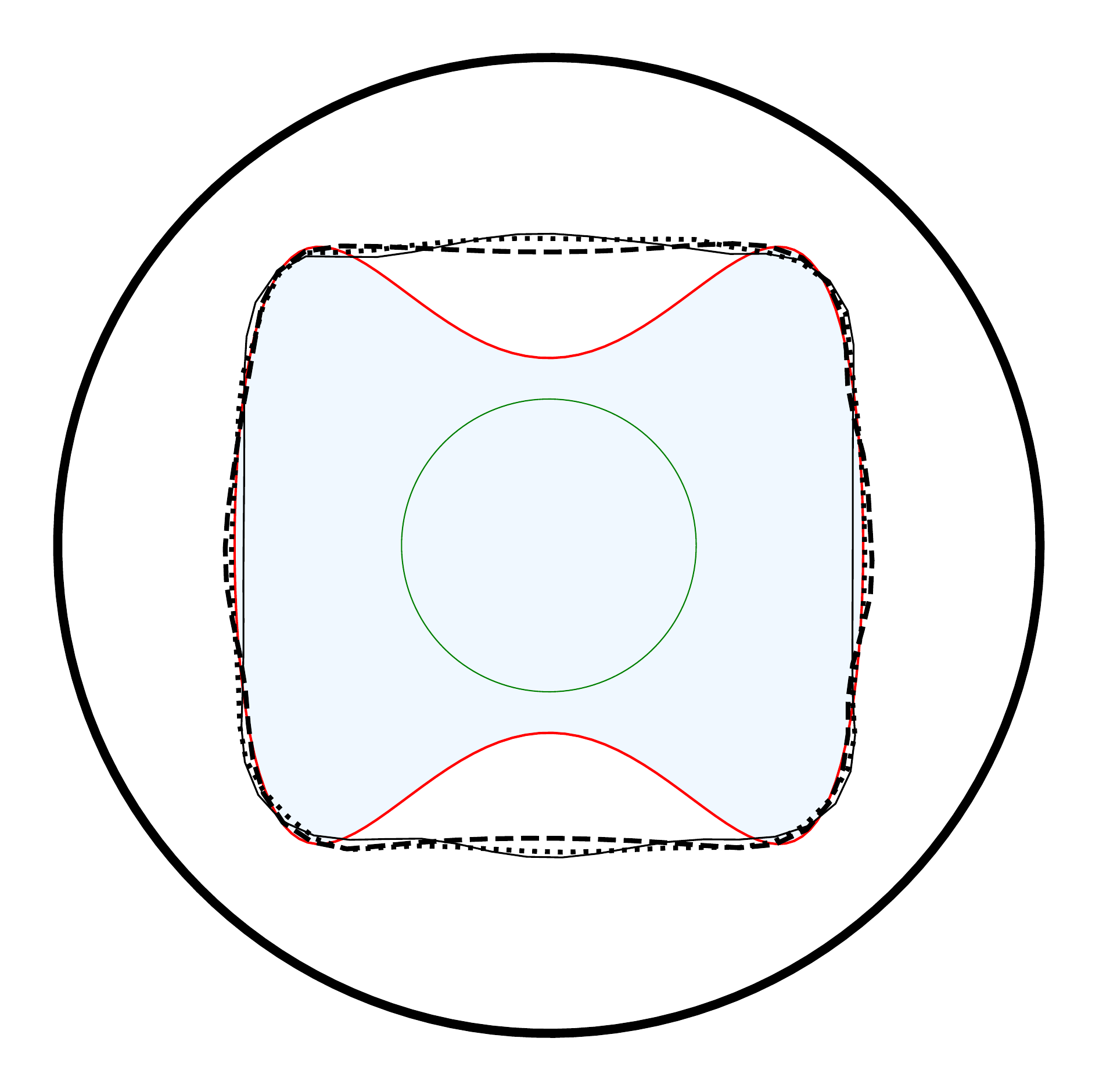}} \
\resizebox{0.235\linewidth}{!}{\includegraphics{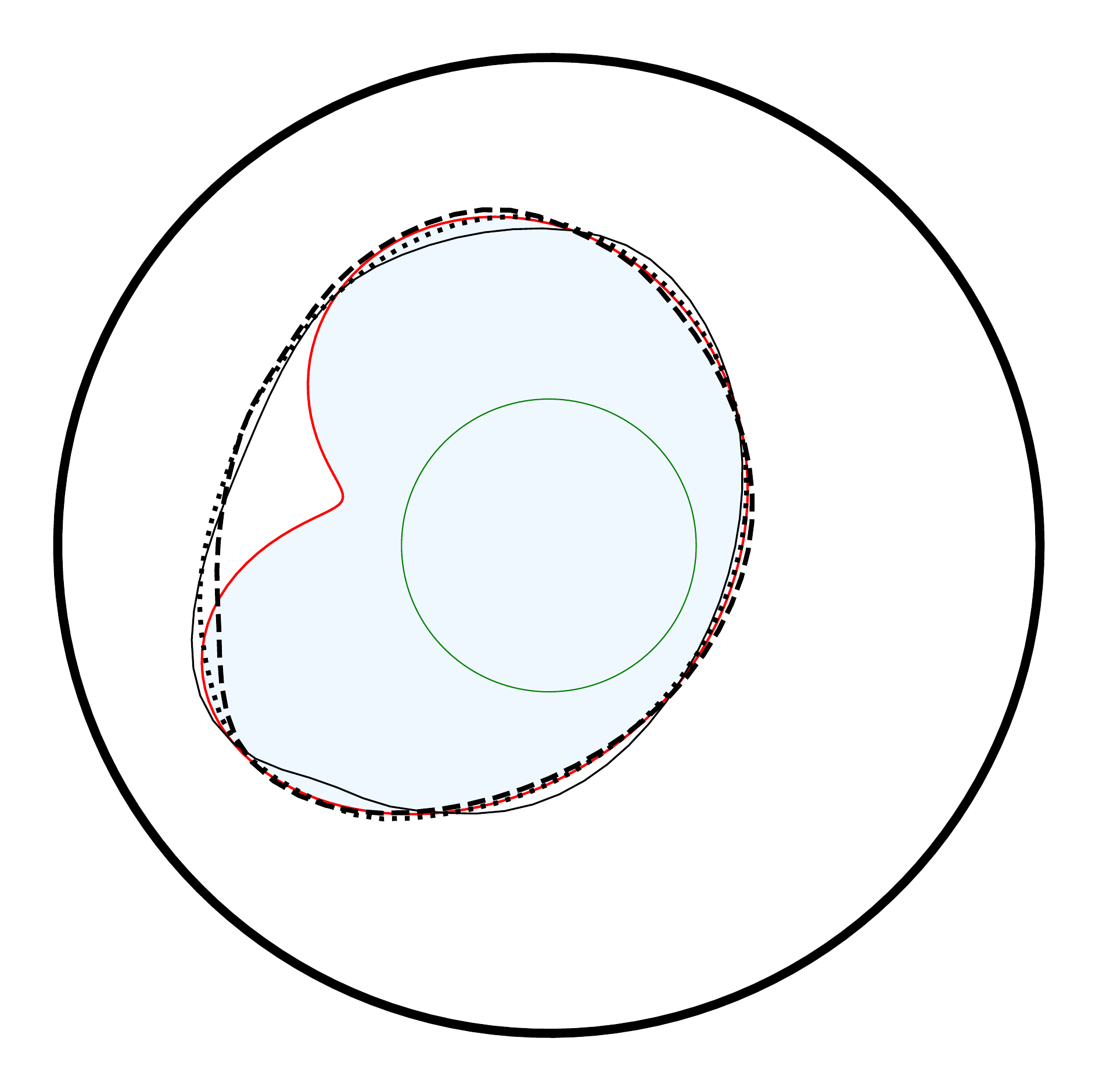}} \
\resizebox{0.235\linewidth}{!}{\includegraphics{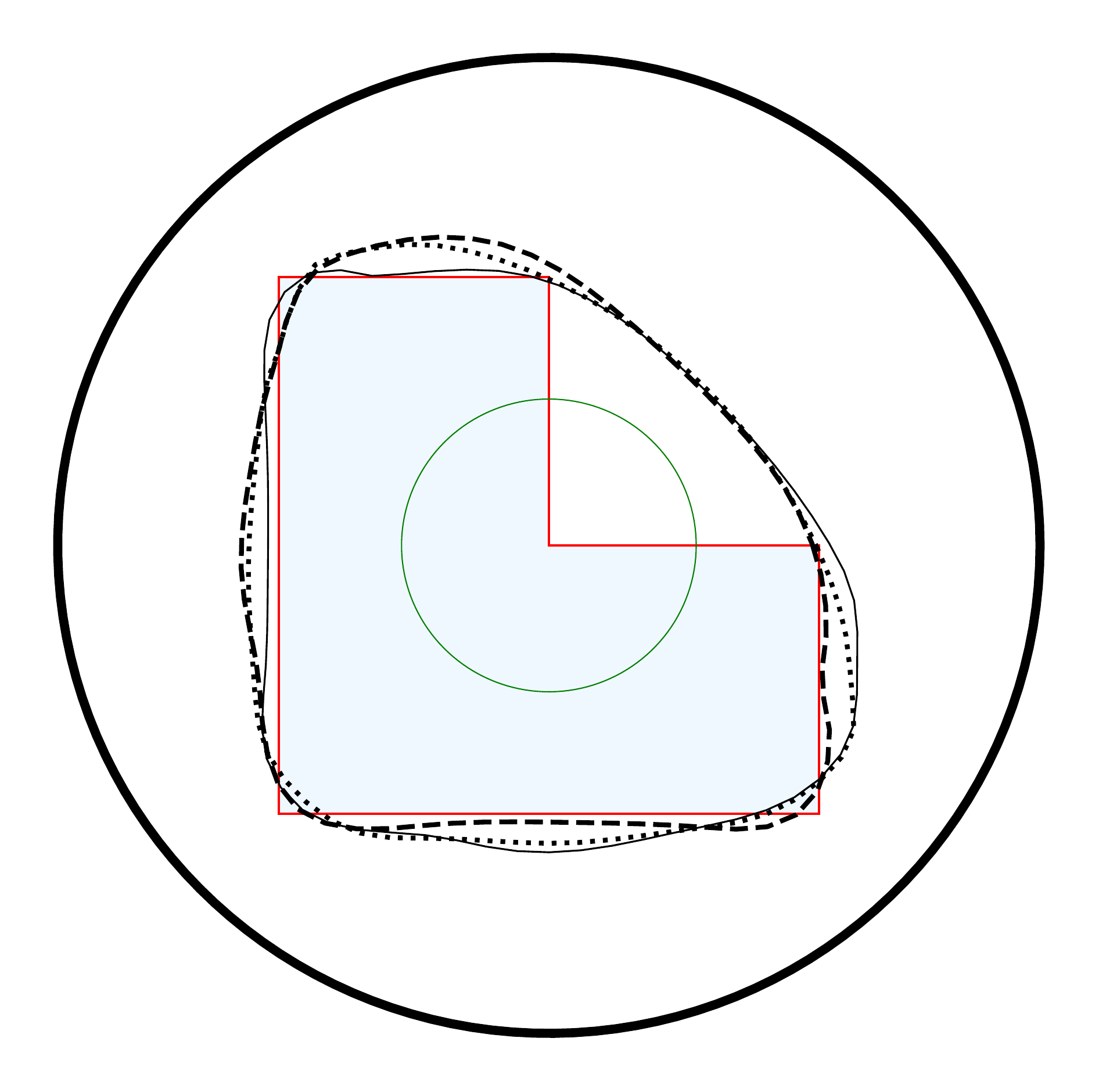}} \\
\resizebox{0.8\linewidth}{!}{\includegraphics{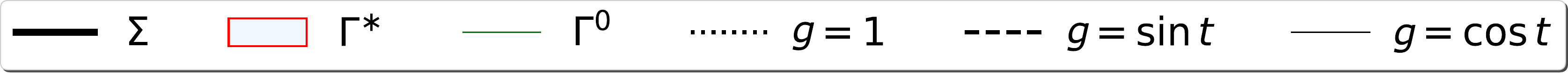}} 
\caption{Results of the detections with a single boundary measurement}
\label{fig2:figure4}
\end{figure}
%
%
%
%
\subsubsection{Tests with multiple Cauchy data}

For our experiments involving multiple measurements, we use the same set of geometries as described in the previous subsection. 
The results of these experiments, where we employ two to four linearly independent input data, are depicted in Figure \ref{fig2:figure5} and Figure \ref{fig2:figure7}, respectively. 
We initialize the process with a circle of radius $0.3$ and $0.6$, as shown in the figures.

Consistent with our prior experiments, employing more than one boundary measurement in the inversion process yields more accurate detections compared to using only a single pair of Cauchy data. 
Notably, the algorithm accurately identifies the concave parts of the exact interior boundary. Particularly for \texttt{Test}($\Gamma_{\text{K}}$), \texttt{Test}($\Gamma_{\text{P}}$), and \texttt{Test}($\Gamma_{\text{R}}$), we achieve highly accurate detections of the exact shapes, with detection improving as the number of boundary measurements increases. 
Additionally, we observe that the reconstruction is more precise when the initial guess is closer to the exact inclusion, as demonstrated in Figure \ref{fig2:figure7}.
However, for \texttt{Test}($\Gamma_{\text{L}}$), even with a close initial guess, the detection of the concave part appears less effective. This limitation could be attributed to the less smooth shape of $\Gamma_{\text{L}}$ compared to the other shapes, along with its distance from the measurement region.

Further insights are provided in Figure \ref{fig2:figure6} and Figure \ref{fig2:figure8}, which summarize the histories of (normalized) cost values, Sobolev gradients' norms, and the Hausdorff distances between the approximate and exact solutions against the number of iterations for \texttt{Test}($\Gamma_{\text{K}}$). 
We observe similar trends as discussed in subsection \ref{subsec:multiple_measurements_tracking_Neumann} for the previous approach. 
In summary, our findings suggest that employing multiple measurements significantly improves the detection of unknown inclusions with non-convex shapes, thereby partially addressing the ill-posedness of the shape inverse problem under consideration.
%
%
\begin{figure}[htp!]
\centering
\resizebox{0.235\linewidth}{!}{\includegraphics{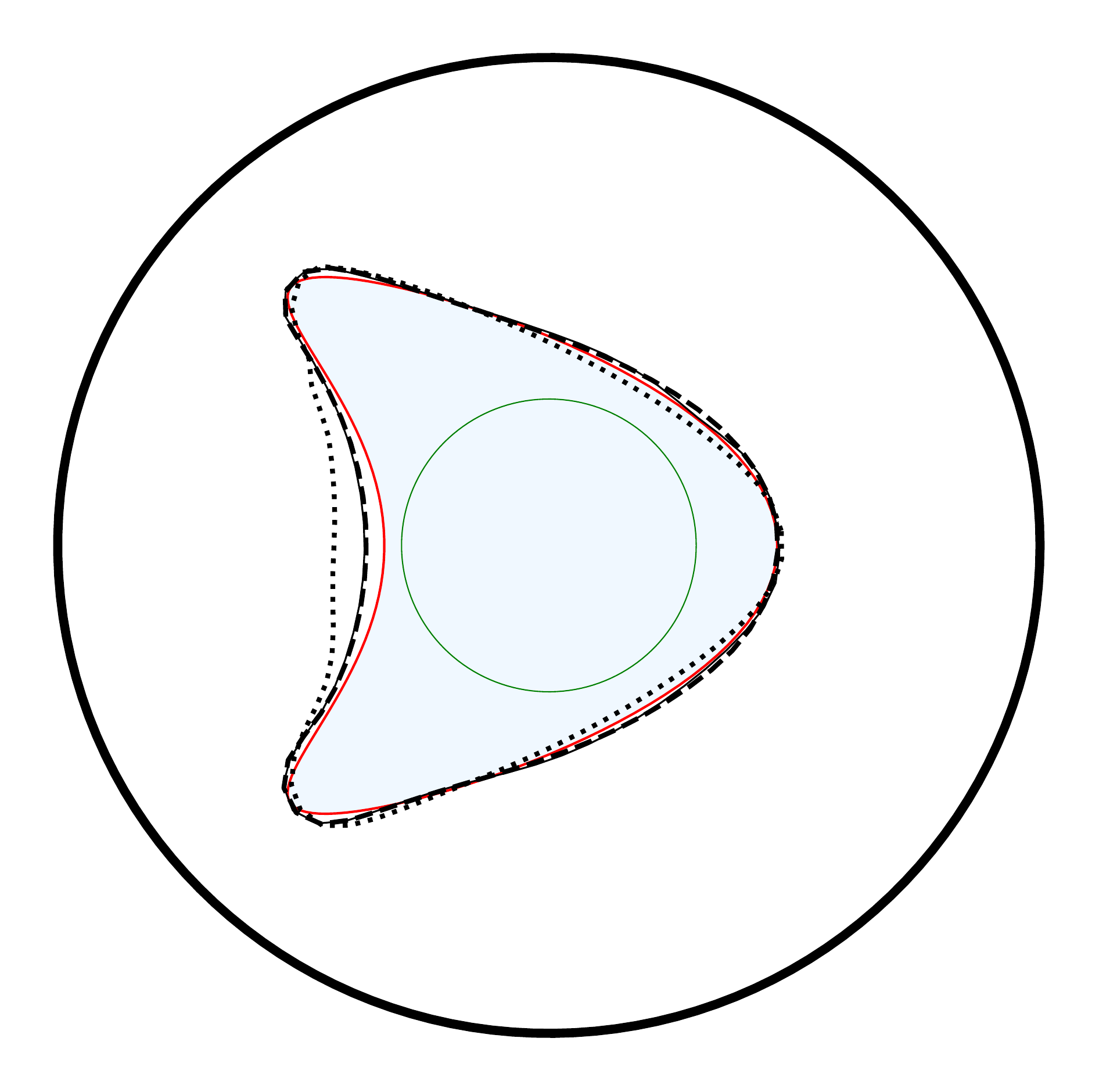}} \
\resizebox{0.235\linewidth}{!}{\includegraphics{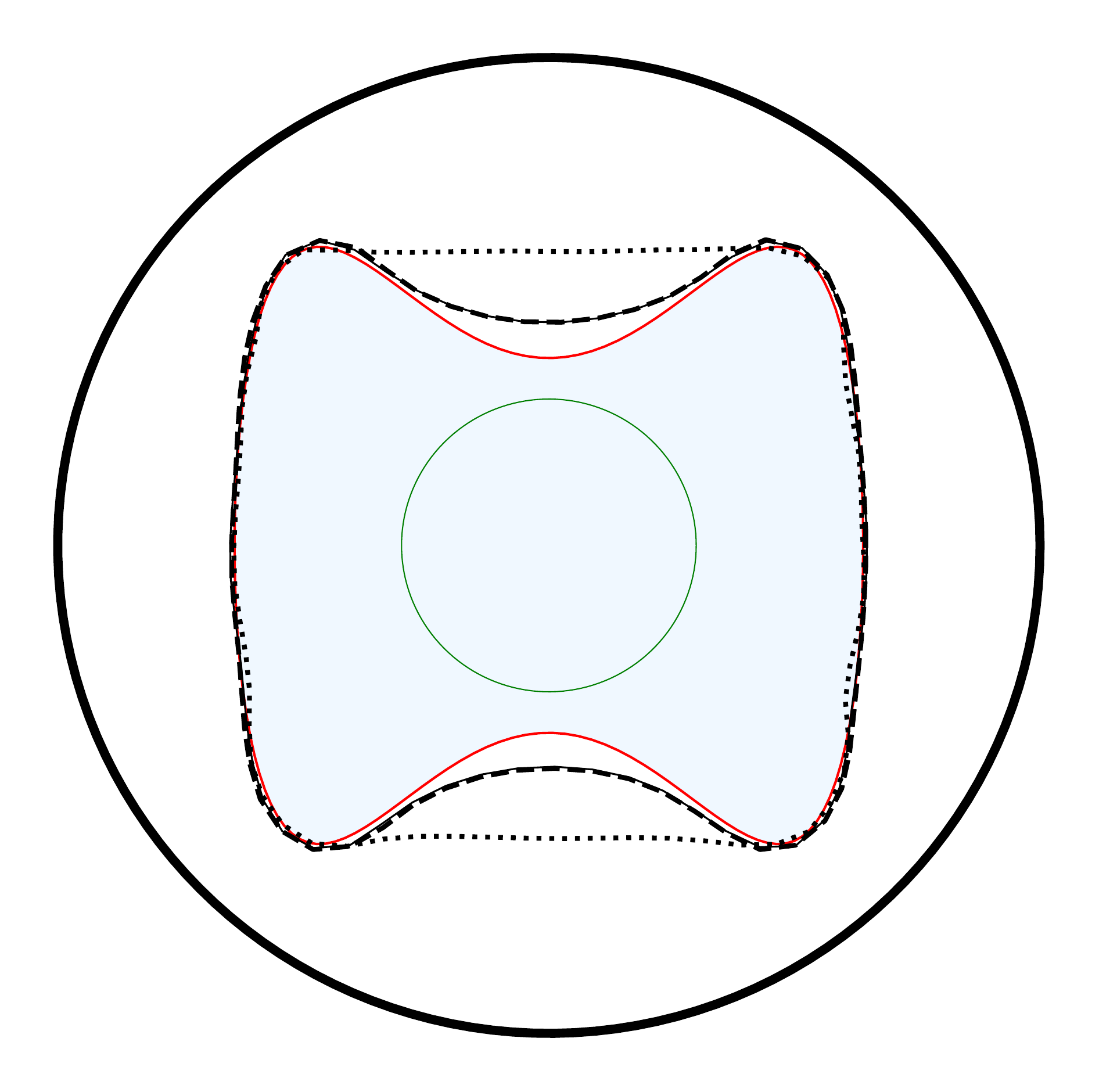}} \
\resizebox{0.235\linewidth}{!}{\includegraphics{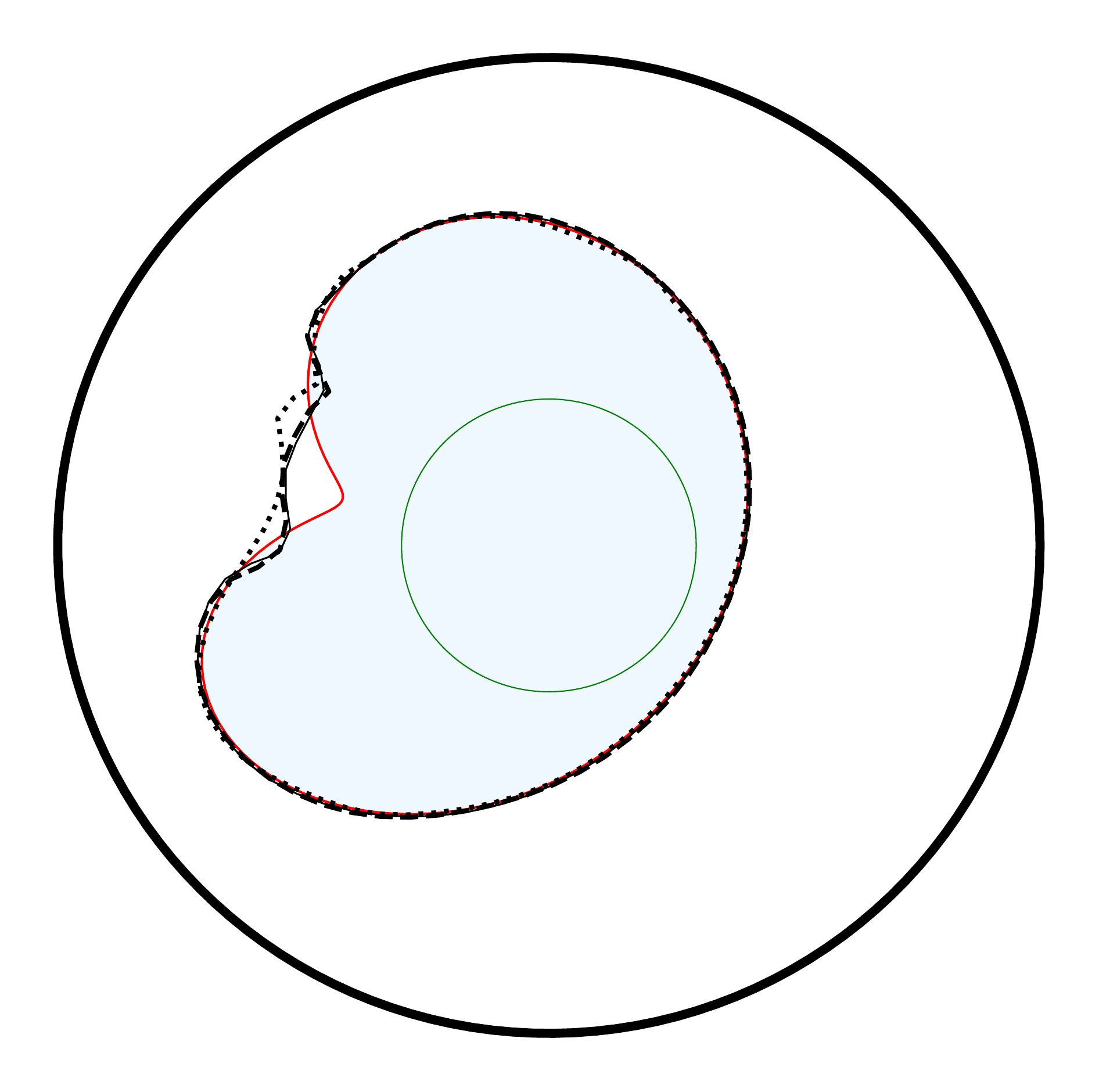}} \
\resizebox{0.235\linewidth}{!}{\includegraphics{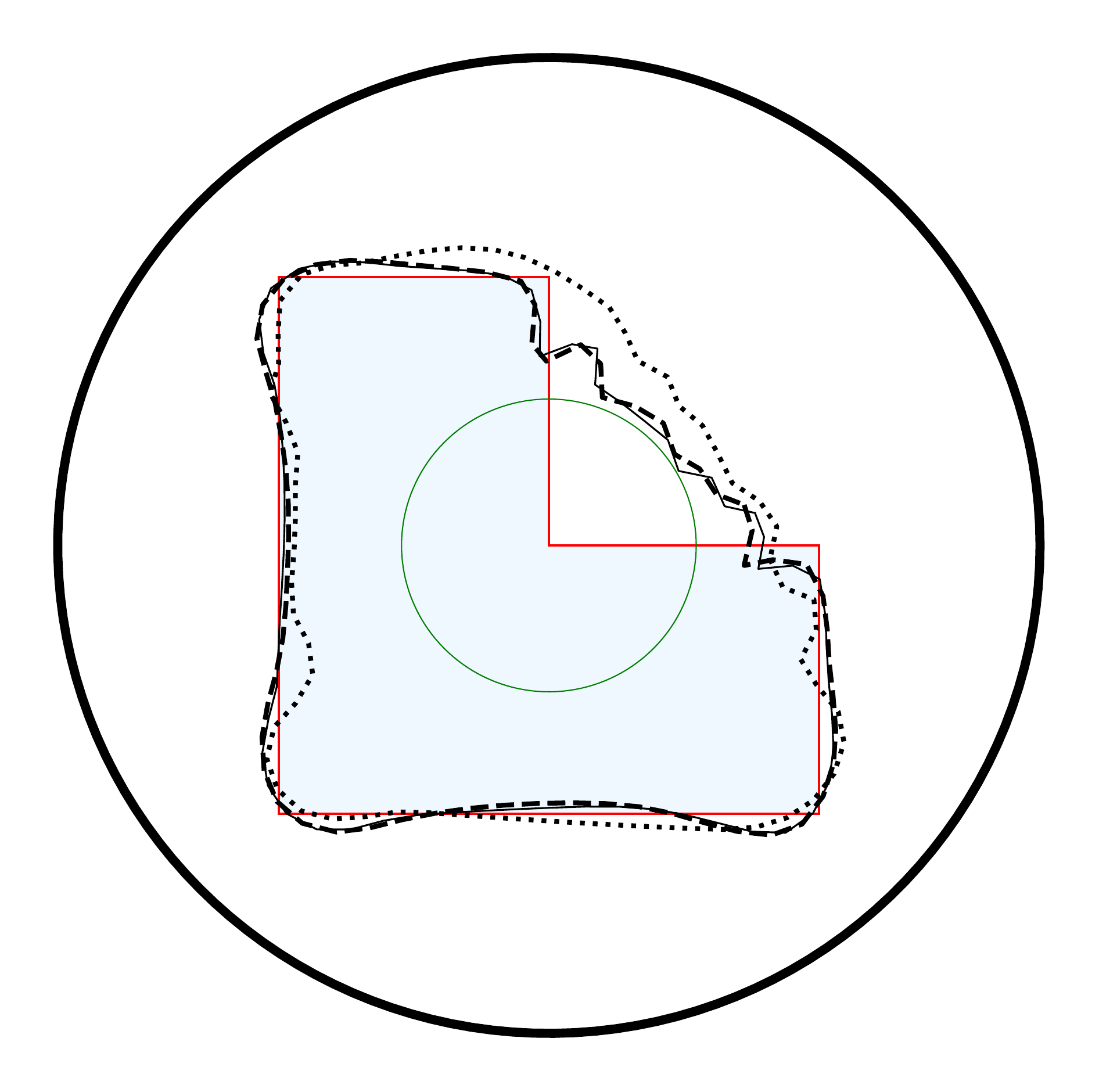}} \\
\resizebox{0.8\linewidth}{!}{\includegraphics{legend2.pdf}} 
\caption{Results of the detections with multiple boundary measurements ($\Gamma^{(0)} = C(\vect{0},0.3)$)}
\label{fig2:figure5}
\end{figure}
%
%
\begin{figure}[htp!]
\centering
\resizebox{0.325\linewidth}{!}{\includegraphics{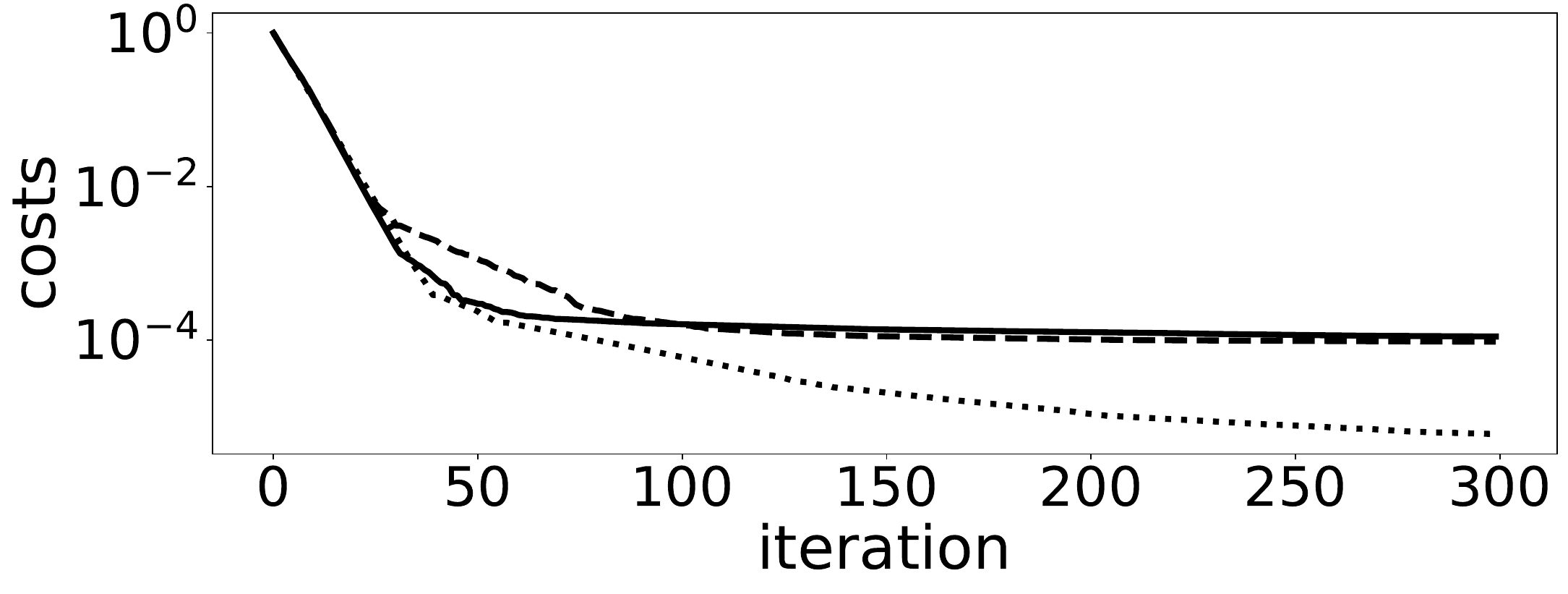}} \hfill
\resizebox{0.325\linewidth}{!}{\includegraphics{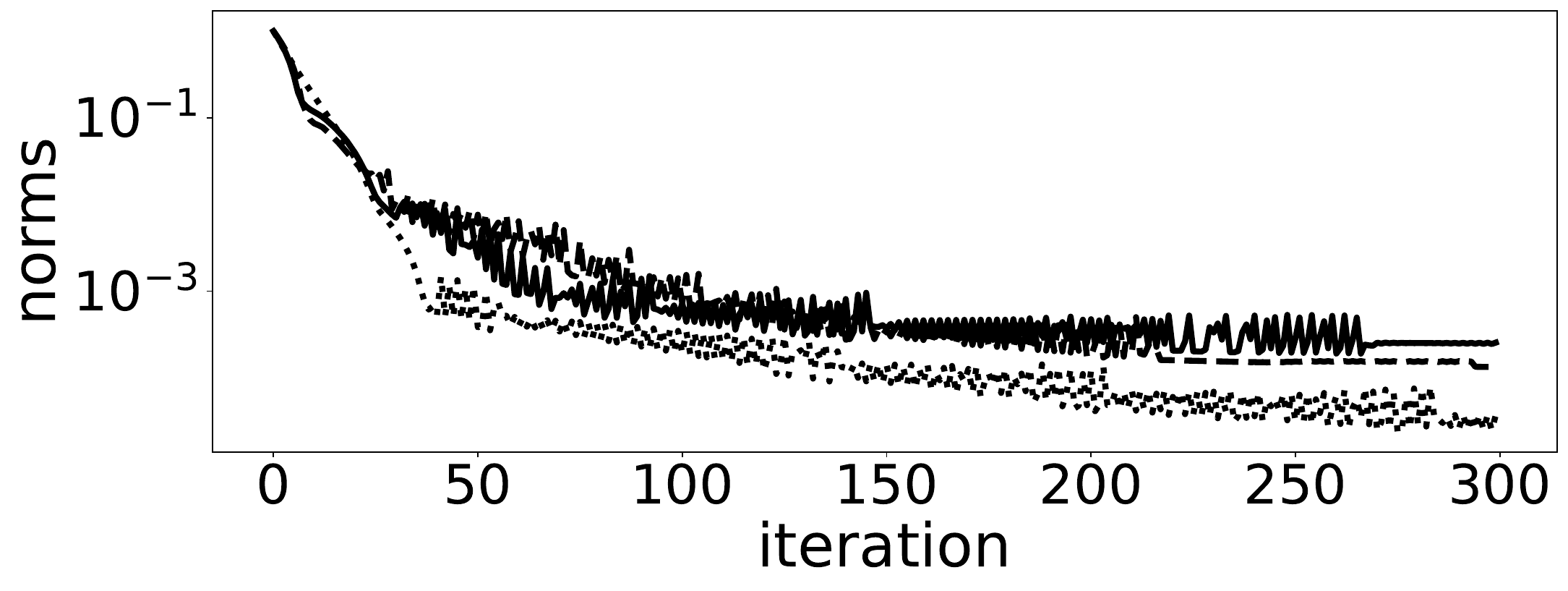}} \hfill
\resizebox{0.325\linewidth}{!}{\includegraphics{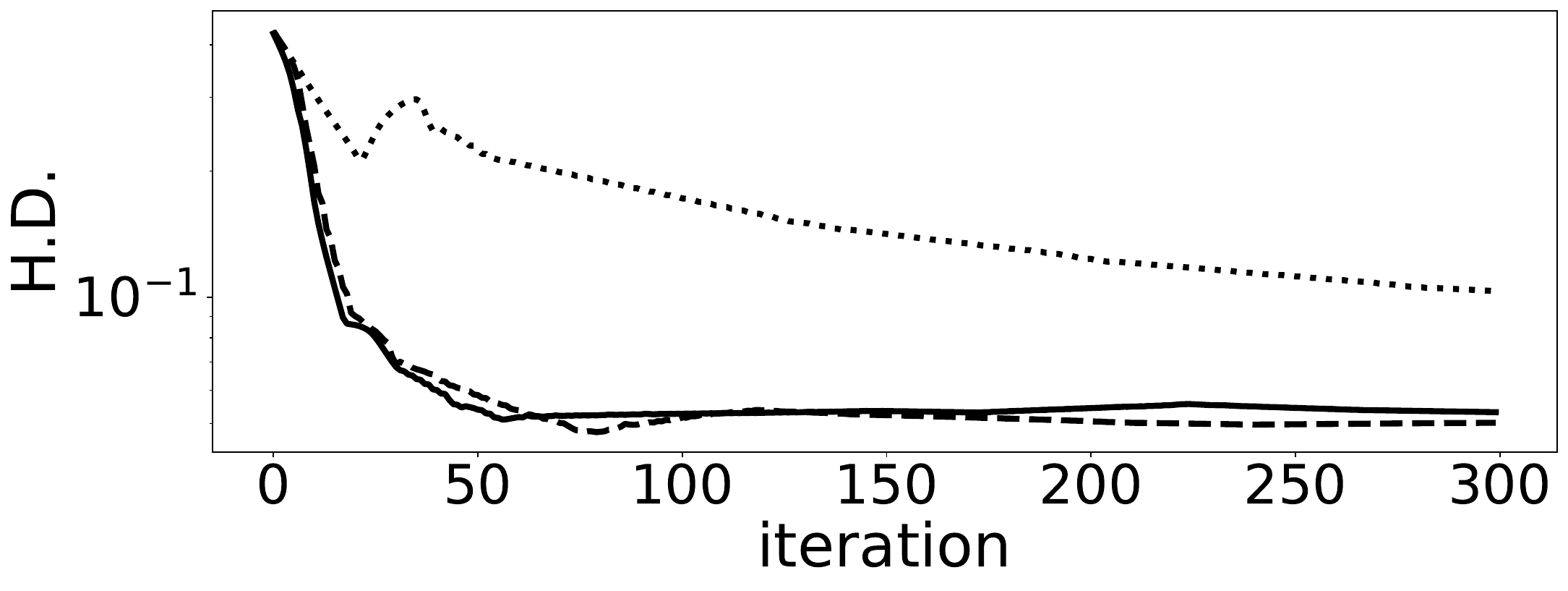}} \\[0.5em]
\resizebox{0.3\linewidth}{!}{\includegraphics{legendlines.pdf}} 
\caption{Histories of (normalized) cost values, Sobolev gradient norms, and Hausdorff distances $d_{H}(\Gamma^{(k)},\Gamma^{\ast})$ for \texttt{Test}($\Gamma_{\text{K}}$)}
\label{fig2:figure6}
\end{figure}%
%
\begin{figure}[htp!]
\centering
\resizebox{0.235\linewidth}{!}{\includegraphics{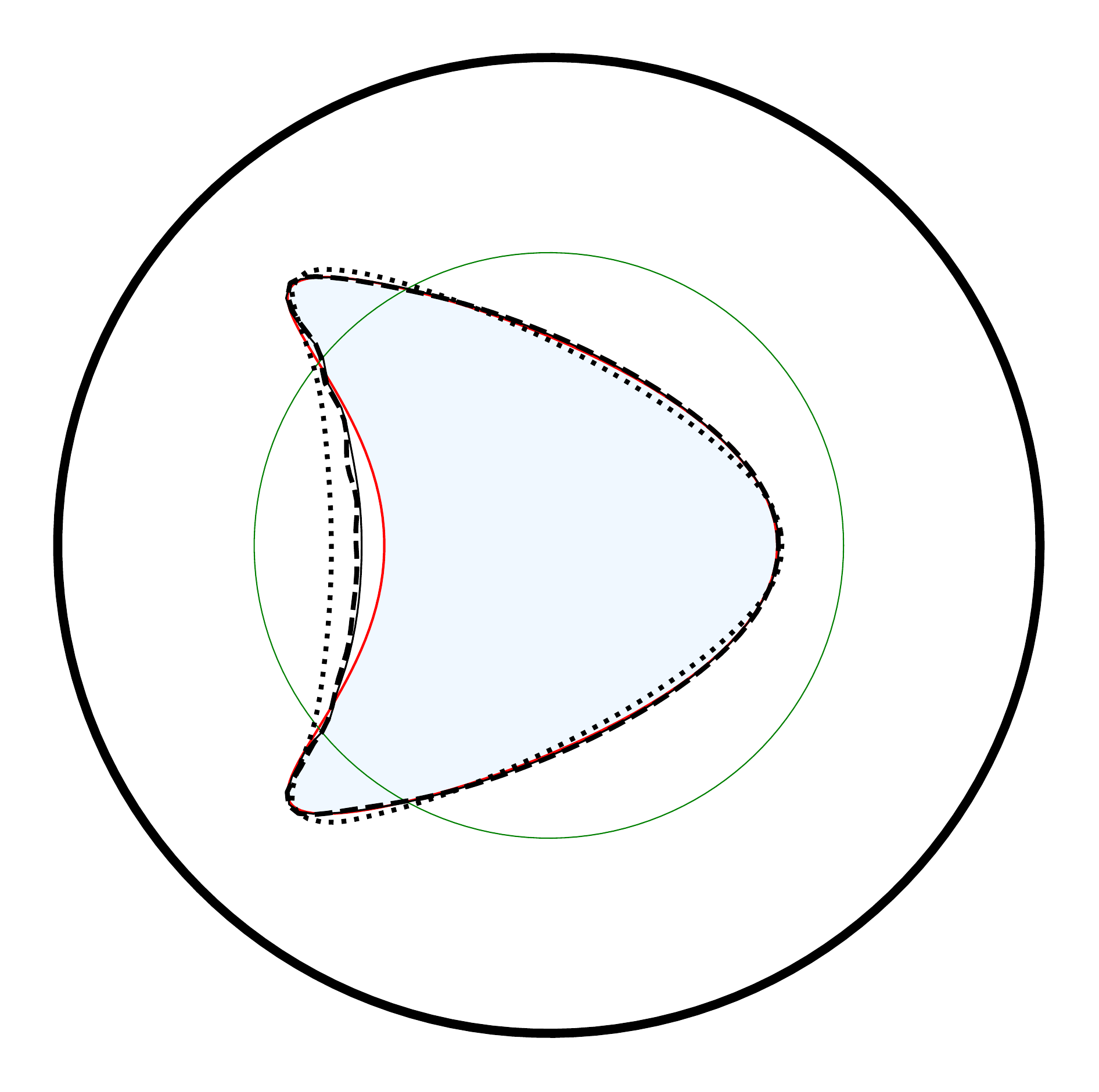}} \
\resizebox{0.235\linewidth}{!}{\includegraphics{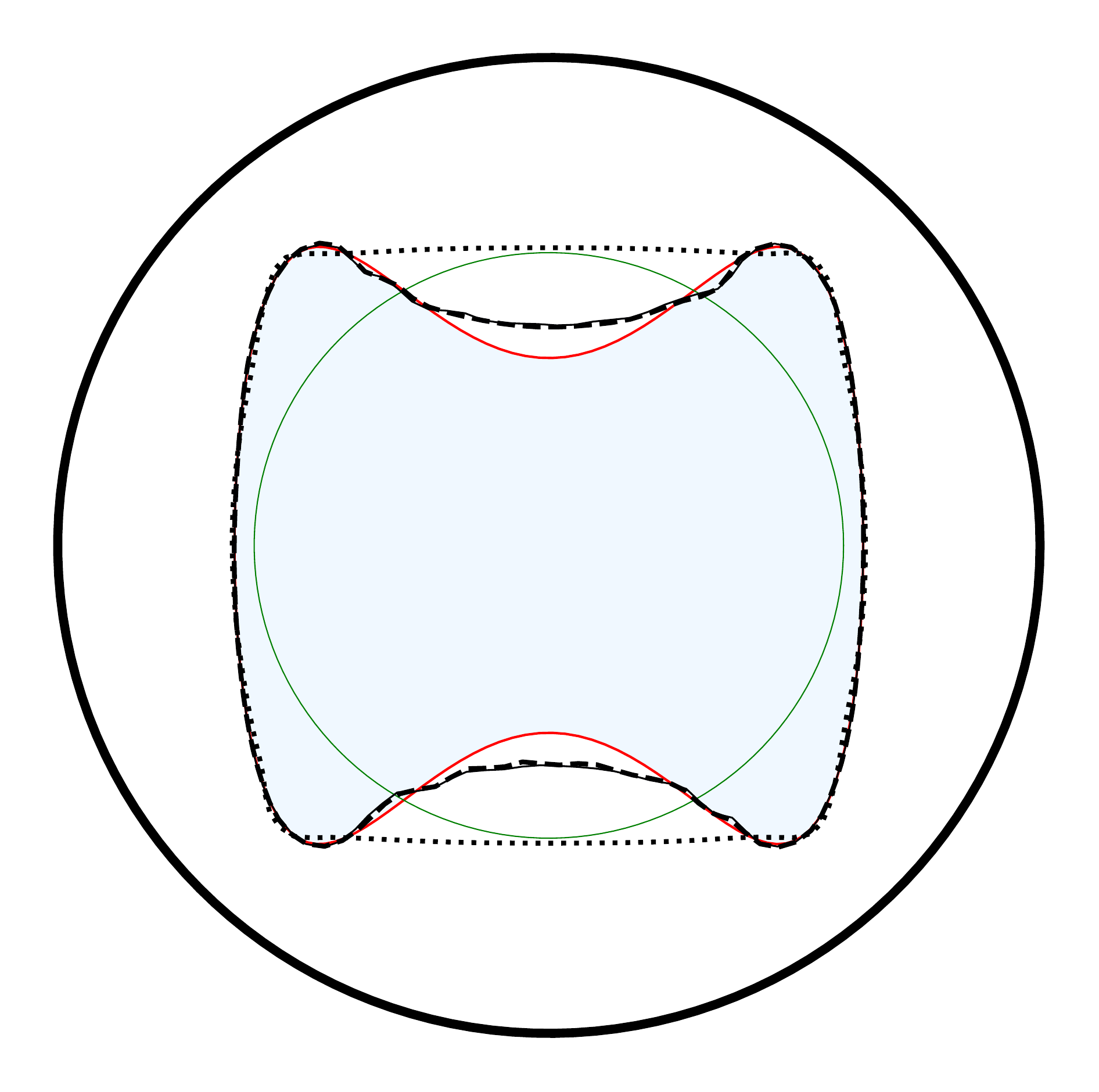}} \
\resizebox{0.235\linewidth}{!}{\includegraphics{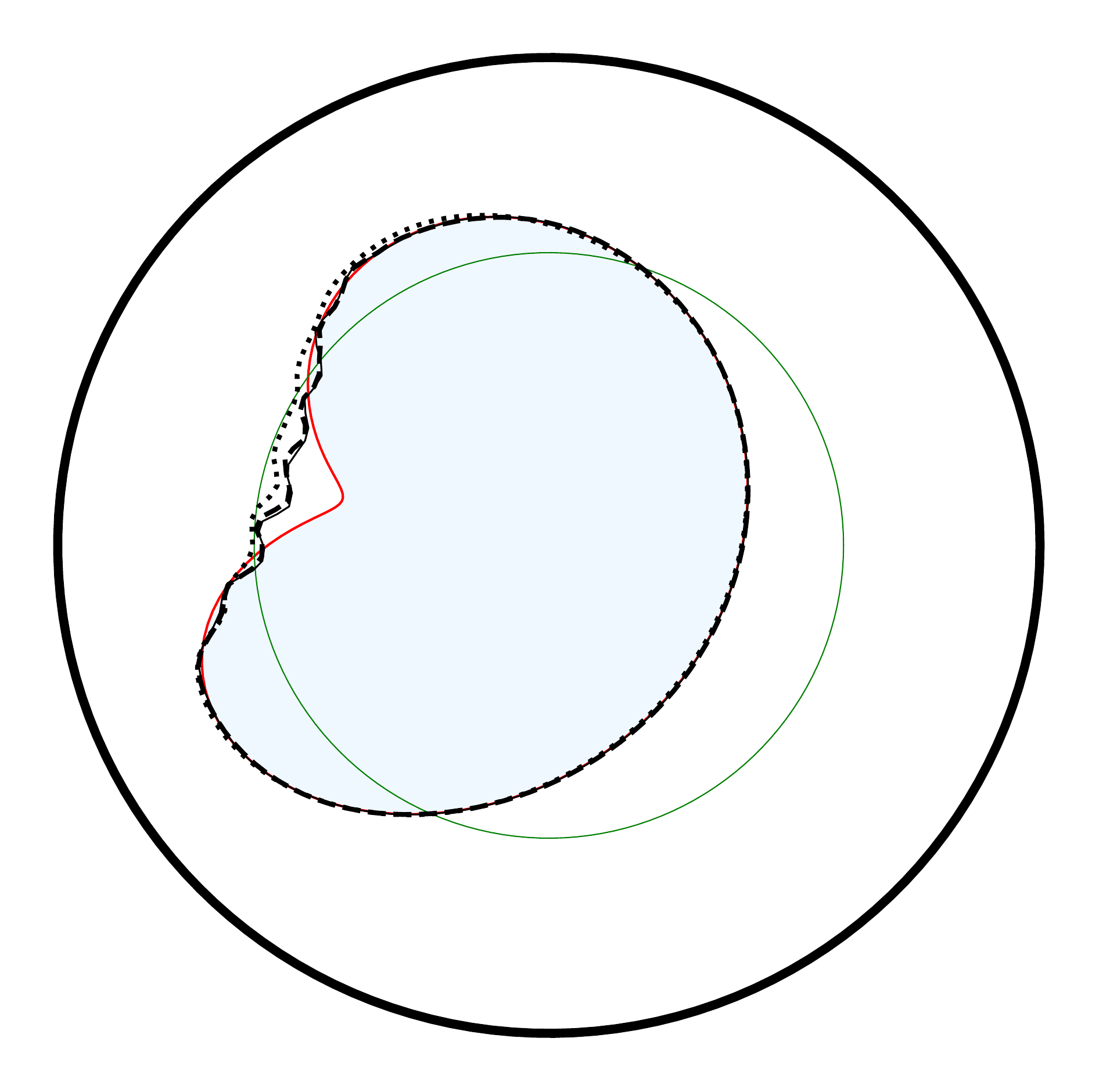}} \
\resizebox{0.235\linewidth}{!}{\includegraphics{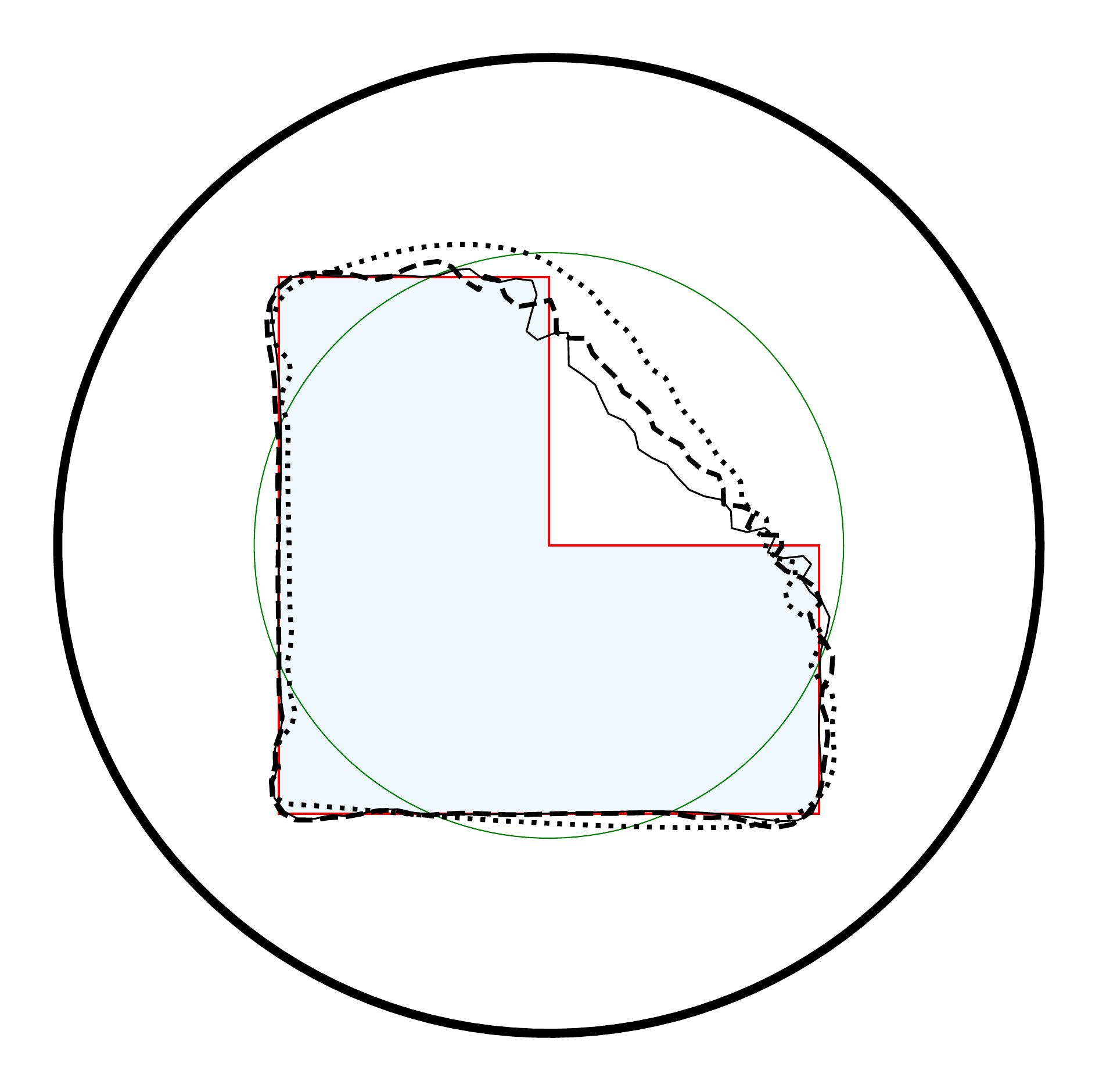}} \\
\resizebox{0.8\linewidth}{!}{\includegraphics{legend2.pdf}} 
\caption{Results of the detections with multiple boundary measurements ($\Gamma^{(0)} = C(\vect{0},0.6)$)}
\label{fig2:figure7}
\end{figure}
%
%
\begin{figure}[htp!]
\centering
\resizebox{0.325\linewidth}{!}{\includegraphics{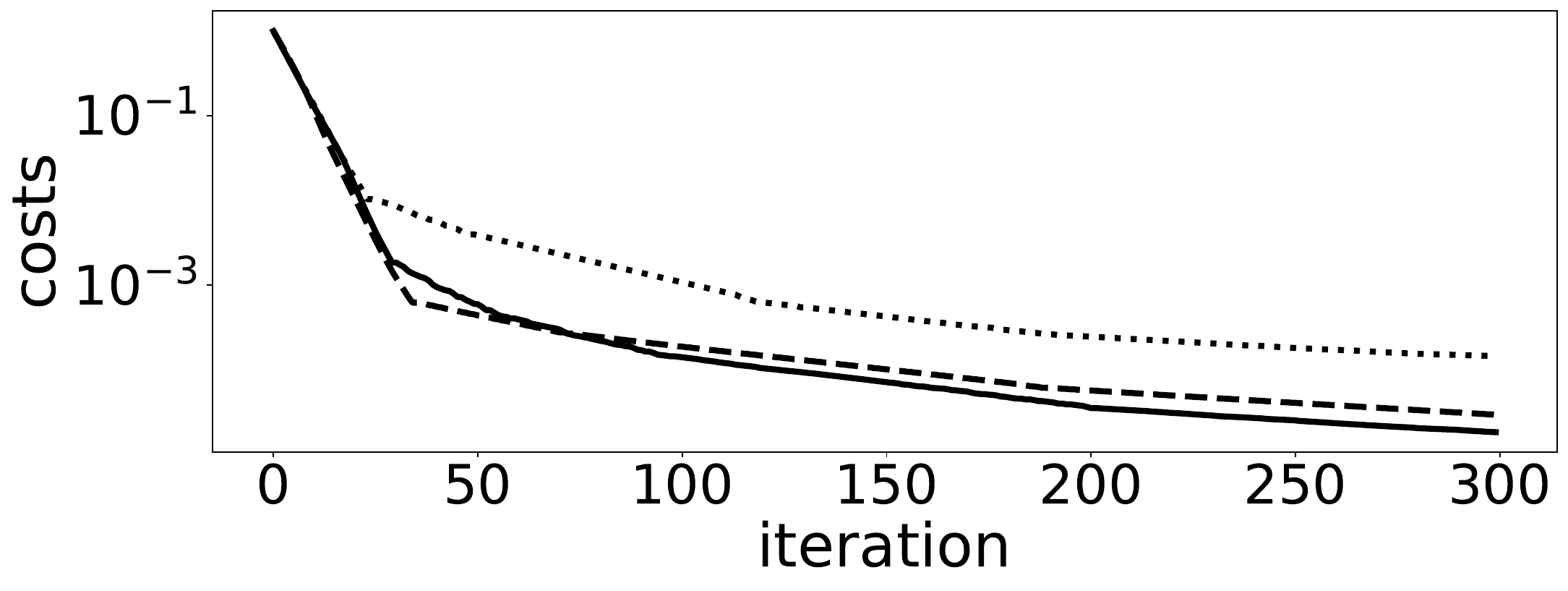}} \hfill
\resizebox{0.325\linewidth}{!}{\includegraphics{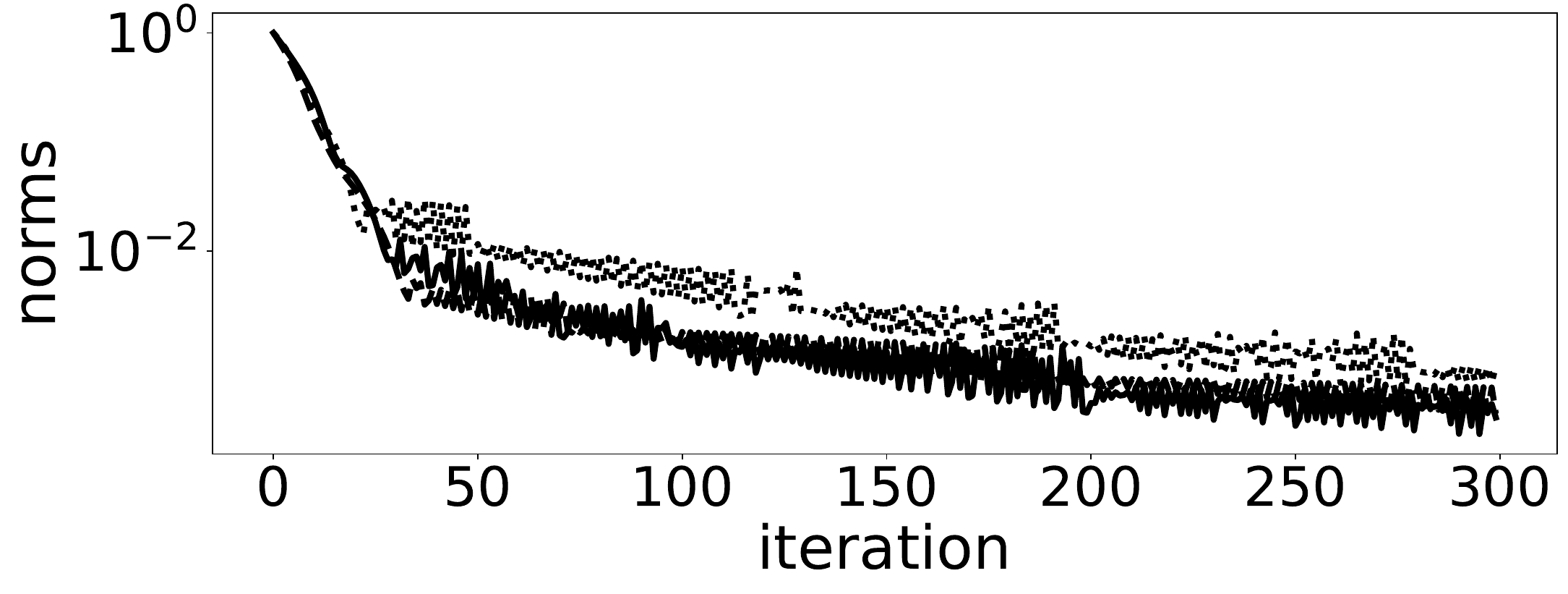}} \hfill
\resizebox{0.325\linewidth}{!}{\includegraphics{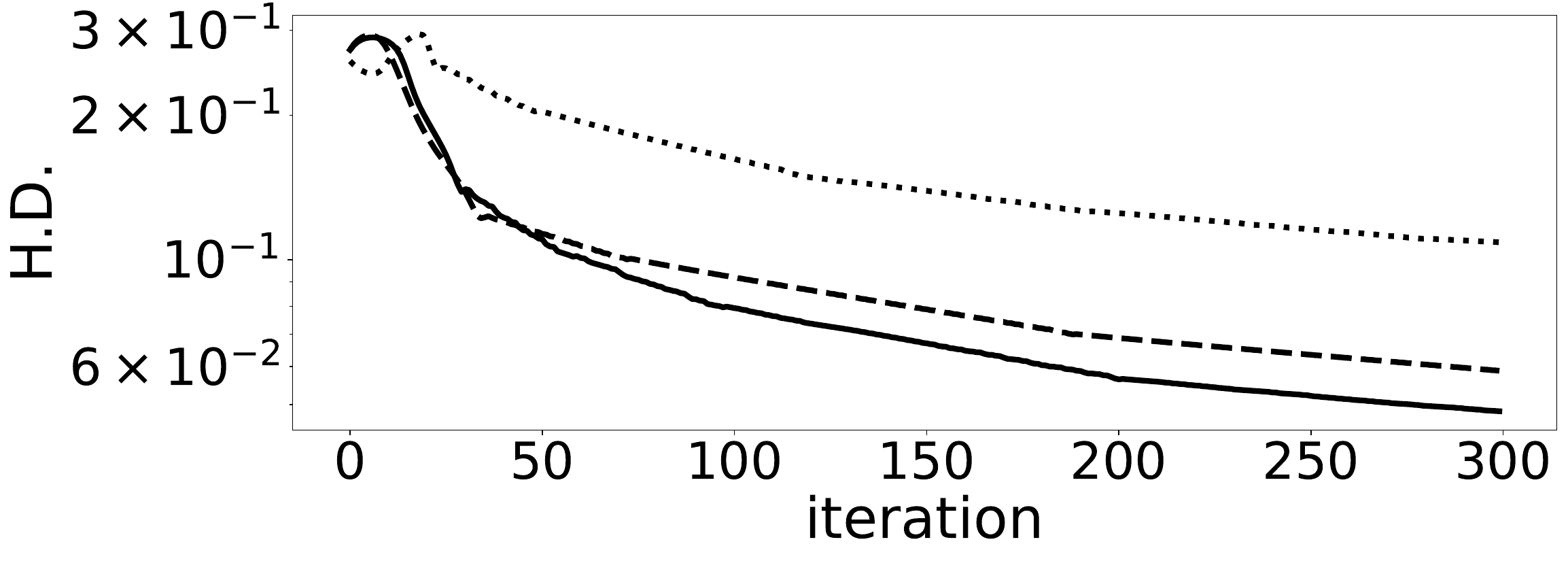}} \\[0.5em]
\resizebox{0.3\linewidth}{!}{\includegraphics{legendlines.pdf}} 
\caption{Histories of (normalized) cost values, Sobolev gradient norms, and Hausdorff distances $d_{H}(\Gamma^{(k)},\Gamma^{\ast})$ for \texttt{Test}($\Gamma_{\text{K}}$)}
\label{fig2:figure8}
\end{figure}%
\subsubsection{A test case in three dimension} 
For the final test case, we consider a problem setup in three dimensions focusing on examining the effect of different combinations of the Cauchy data (where one of the prescribed data is always chosen to be strictly positive). 
With the object's accessible surface given by a sphere of unit radius, the exact geometry of the unknown inclusion is depicted in Figure \ref{fig:3D_exact_view_all}.
The algorithm used to solve the problem is the same as in the two-dimensional case. 
However, this time, we stop the algorithm after a finite number of iterations or when the step size becomes very small (note that the step size is calculated using a backtracking procedure).

Moreover, the computational setup is similar to the two-dimensional case, but with modifications to fit the three-dimensional test case. 
Specifically, we choose the initial guess to be a sphere with radius $0.95$. 
The forward problem is solved with maximum and minimum mesh widths $h_{\max}^{\ast} = 0.08$ and $h_{\min}^{\ast} = 0.06$, and the exact solution is computed using ${P}{2}$ finite elements. 
For the inversion process, the initial computational mesh is set to have maximum and minimum mesh widths of $h{\max} = h_{\min} = 0.125$. 
Lastly, the variational problems corresponding to the state and adjoint problems are solved using ${P}{1}$ finite elements.

Table \ref{tab:table} summarizes the choice of the prescribed boundary data $g$ used in the experiments, and the corresponding numerical results are also depicted in the table.

\begin{table}[htp!]\resizebox{\columnwidth}{!}{%
{\renewcommand{\arraystretch}{1.2} 
\begin{tabular}{c | c c c c c c c c c c c} 
 \toprule
 Input & Fig. \ref{fig:3D_case1} & Fig. \ref{fig:3D_case2} & Fig. \ref{fig:3D_case3} & Fig. \ref{fig:3D_case4} & Fig. \ref{fig:3D_case5} & Fig. \ref{fig:3D_case6} & Fig. \ref{fig:3D_case7} & Fig. \ref{fig:3D_case8} & Fig. \ref{fig:3D_case9} & Fig. \ref{fig:3D_case10} & Fig. \ref{fig:3D_case10} \\
 \midrule
 Case & \texttt{C1} & \texttt{C2} & \texttt{C3} & \texttt{C4} & \texttt{C5} & \texttt{C6} & \texttt{C7} & \texttt{C8} & \texttt{C9} & \texttt{C10} & \texttt{C11} \\  
 \midrule
 $g^{(1)}$ & $1$ & $1$ & $1$ & $1$ & $0.1$ & $0.1$ & $0.1$ & $0.1$ & $0.1$ & $0.1$ & $0.1$ \\ 
 \hline
 $g^{(2)}$ & $-$ & $\sin \theta$ & $\cos \theta$ & $\sin \theta$ & $-$ & $\sin \theta$ & $\sin \theta$ & $0.2 \cos \theta$ & $0.2 \sin \theta$ & $0.2 \sin \theta$ & $0.3 \sin \theta$ \\
 \hline
 $g^{(3)}$ & $-$ & $-$ & $-$ & $\cos \theta$ & $-$ & $-$ & $\cos \theta$ & $0.2 \cos \theta$ & $0.2 \cos \theta$ & $0.2 \cos \theta$ & $0.3 \cos \theta$ \\
 \hline
 $g^{(4)}$ & $-$ & $-$ & $-$ & $-$ & $-$ & $-$ & $-$ & $-$ & $0.6 \cos \phi$ & $0.6 \cos \phi$ & $0.5 \cos 2 \phi$ \\
 \hline
 $g^{(5)}$ & $-$ & $-$ & $-$ & $-$ & $-$ & $-$ & $-$ & $-$ & $-$ & $0.6 \sin \phi$ & $-$  \\\bottomrule
\end{tabular}
}}
\caption{Input data $\{g^{(i)}\}_{i=1}^{M}$, $M=1,2,3,4,5$, for testing in three dimensions}\label{tab:table}
\end{table}
Observing Figures \ref{fig:3D_case1} through \ref{fig:3D_case4}, \ref{fig:3D_case5} through \ref{fig:3D_case7}, and \ref{fig:3D_case8} through \ref{fig:3D_case10}, we note an improvement in detecting the unknown inclusion as the number of data used in the inversion increases. 
However, compared to the two-dimensional case, the detection of concave parts is less pronounced.

Using more pairs of Cauchy data results in higher accuracy in detecting concave parts of the unknown interior boundary, although not entirely satisfactory compared to just using a single measurement, as expected. With a single measurement, only the convex hull of the unknown inclusion is detected, as depicted in Figures \ref{fig:3D_case1} and \ref{fig:3D_case5}. The choice of prescribed data $\{g^{(i)}\}_{i=1}^{M}$ greatly influences the detection results, as seen in the comparison between Figures \ref{fig:3D_case2} and \ref{fig:3D_case6}, Figures \ref{fig:3D_case4}, \ref{fig:3D_case7}, and \ref{fig:3D_case8}, and Figures \ref{fig:3D_case9} and \ref{fig:3D_case10}.
Nevertheless, regions with concavities in the inclusion were satisfactorily detected by our scheme, particularly with the input data $(g^{(1)}, g^{(2)}, g^{(3)}, g^{(4)}) = (0.1,0.3\cos \theta, 0.3\sin \theta, 0.5\cos \phi)$, as shown in Figure \ref{fig:3D_case10_view_all}. Additionally, plots in Figure \ref{fig:3D_case11_cost_and_norm} depict the histories of costs and gradient norms obtained from the data set $(g^{(1)}, g^{(2)}, g^{(3)}, g^{(4)}, g^{(5)}) = (0.1,0.2\cos \theta, 0.2\sin \theta, 0.6\cos \phi, 0.6\sin \phi)$. Notably, schemes with multiple measurements converge faster than those with only one measurement, thereby improving both detection quality and convergence behavior.

In conclusion, while finer meshes and adaptive remeshing could enhance our detections, our current results with coarse meshes are satisfactory. We reserve refinements for future investigations. Overall, our proposed strategy of utilizing multiple boundary measurements significantly enhances the detection of unknown interior boundaries. Although effectiveness depends on the choice and combinations of boundary data, the methodology proves reliable in addressing the challenge of detecting concave parts in unknown inclusions. We anticipate its effectiveness in addressing more complex and general inverse problems.

%
%
%
\begin{figure}[htp!]
\centering
\resizebox{0.225\linewidth}{!}{\includegraphics{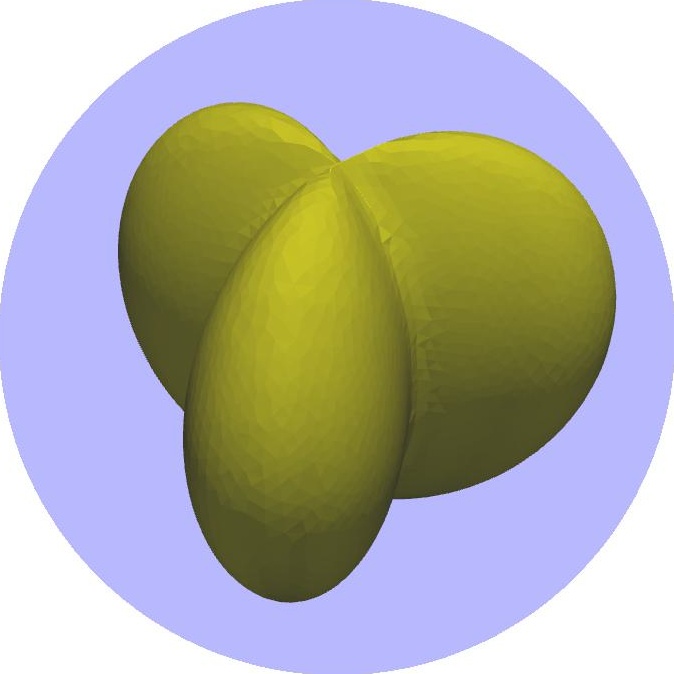}} \hfill
\resizebox{0.225\linewidth}{!}{\includegraphics{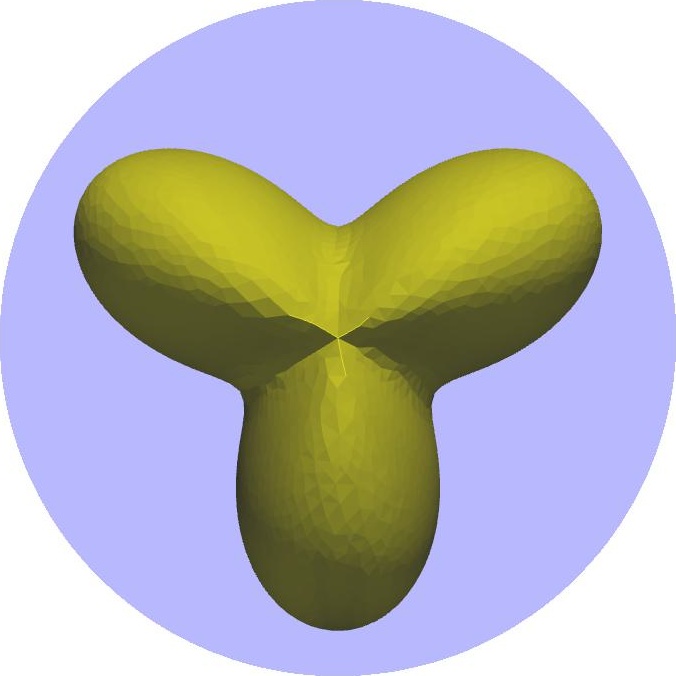}} \hfill
\resizebox{0.225\linewidth}{!}{\includegraphics{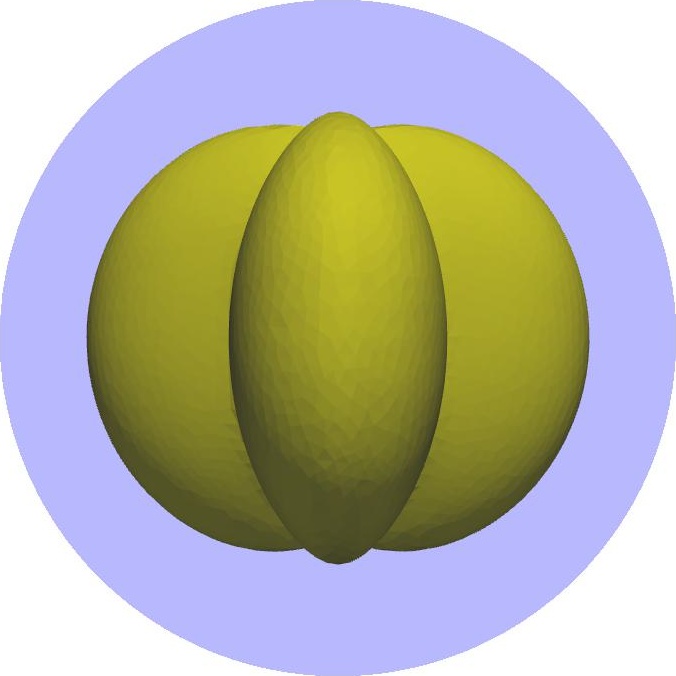}} \hfill
\resizebox{0.225\linewidth}{!}{\includegraphics{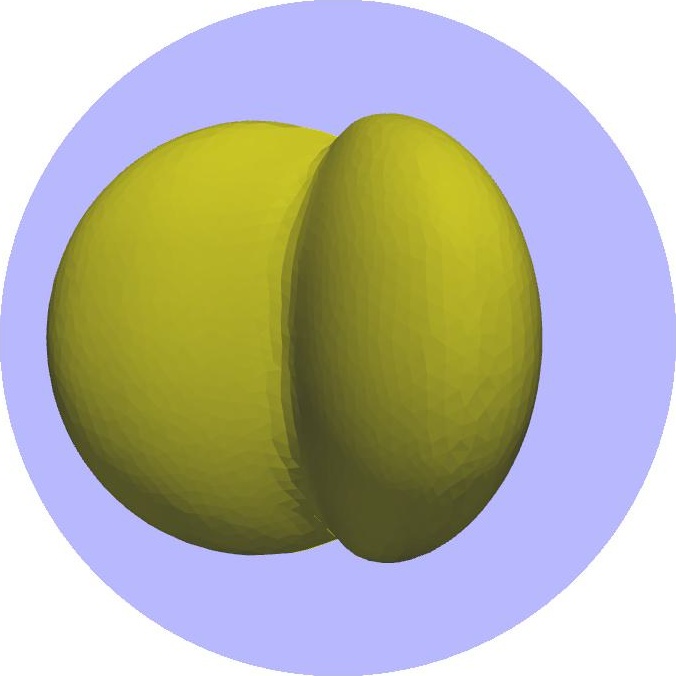}} \hfill
\caption{Exact geometric profile of the unknown inclusion}
\label{fig:3D_exact_view_all}
\end{figure}
\begin{figure}
  \centering
  \begin{subfigure}[t]{0.16\linewidth}
    \resizebox{\linewidth}{!}{\includegraphics{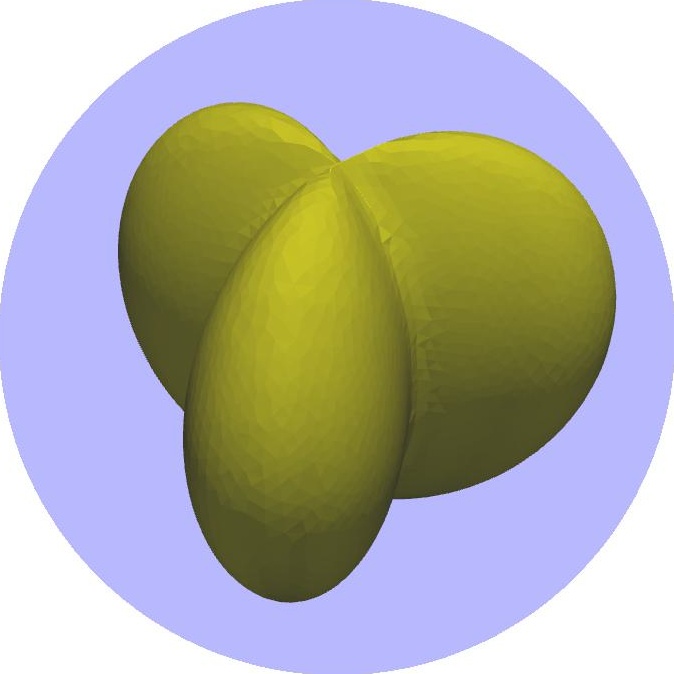}}
    \caption{Exact shape}\label{fig:3D_exact_0}
  \end{subfigure}  
  \begin{subfigure}[t]{0.16\linewidth}
    \resizebox{\linewidth}{!}{\includegraphics{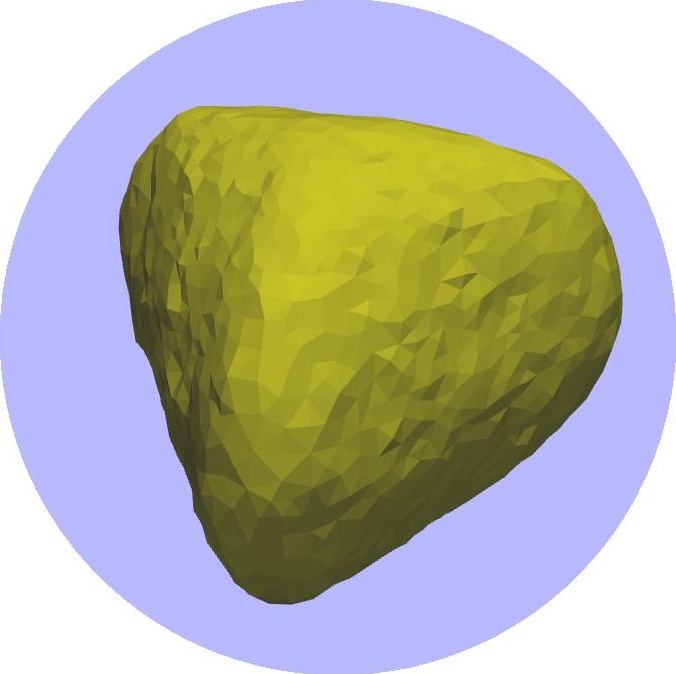}}
    \caption{Case \texttt{C1}}\label{fig:3D_case1}
  \end{subfigure}
  \begin{subfigure}[t]{0.16\linewidth}
    \resizebox{\linewidth}{!}{\includegraphics{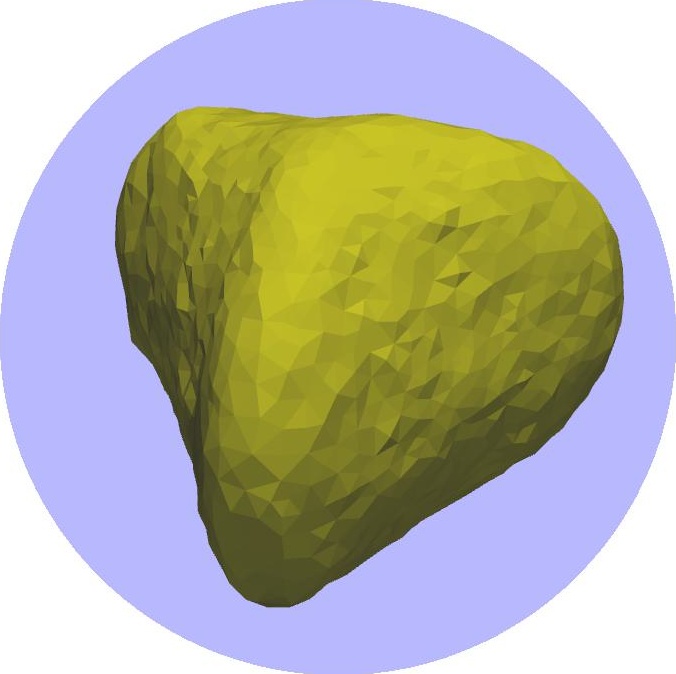}}
    \caption{Case \texttt{C2}}\label{fig:3D_case2}
  \end{subfigure}
  \begin{subfigure}[t]{0.16\linewidth}
    \resizebox{\linewidth}{!}{\includegraphics{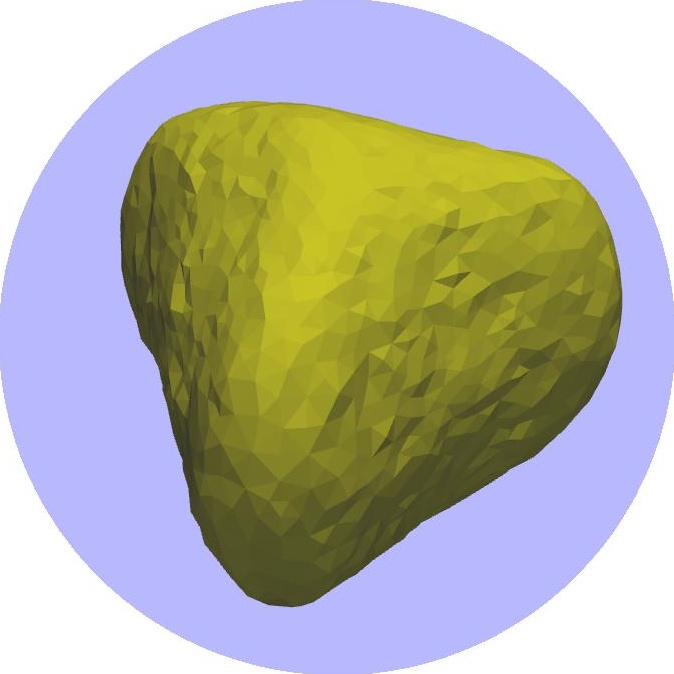}}
    \caption{Case \texttt{C3}}\label{fig:3D_case3}
  \end{subfigure}
  \begin{subfigure}[t]{0.16\linewidth}
    \resizebox{\linewidth}{!}{\includegraphics{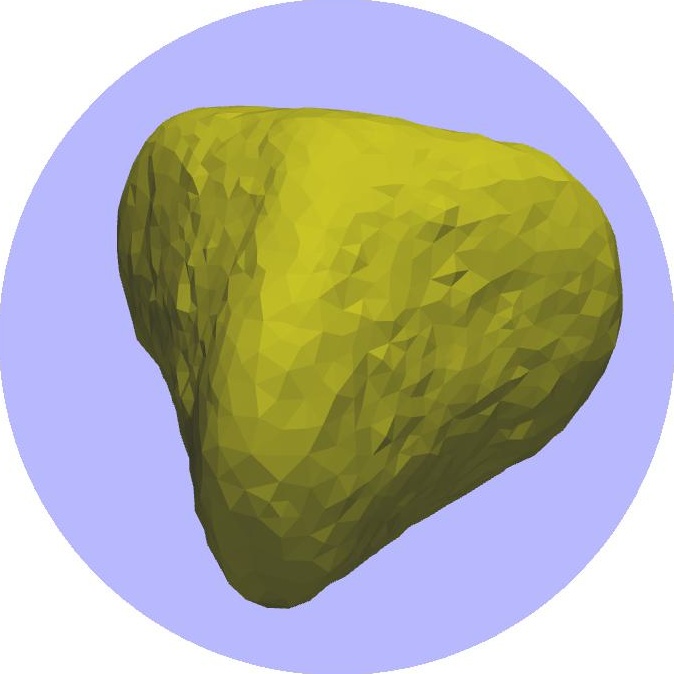}}
    \caption{Case \texttt{C4}}\label{fig:3D_case4}
  \end{subfigure}
  \begin{subfigure}[t]{0.16\linewidth}
    \resizebox{\linewidth}{!}{\includegraphics{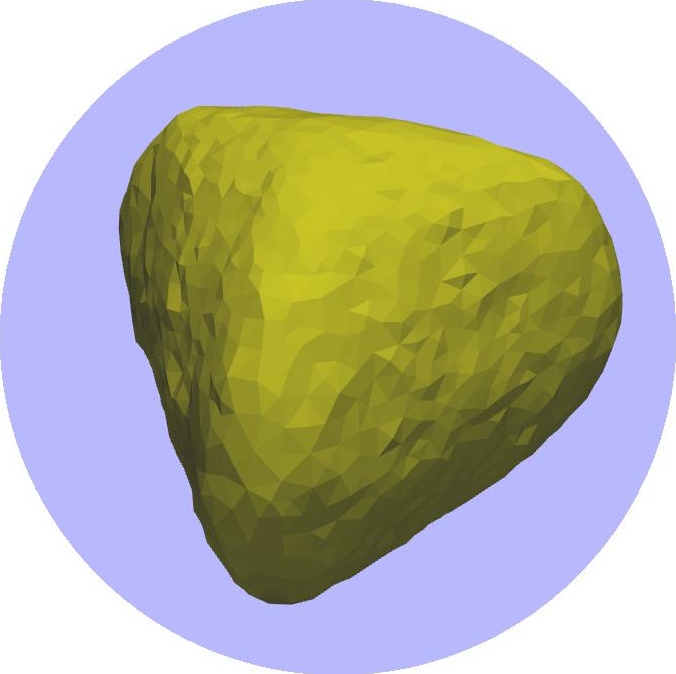}}
    \caption{Case \texttt{C5}}\label{fig:3D_case5}
  \end{subfigure}
  \par\bigskip
  \begin{subfigure}[t]{0.16\linewidth}
    \resizebox{\linewidth}{!}{\includegraphics{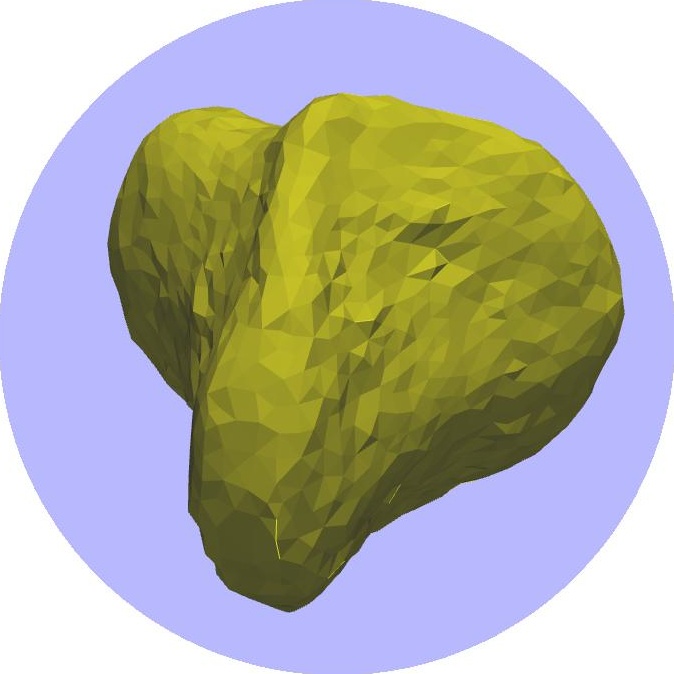}}
    \caption{Case \texttt{C6}}\label{fig:3D_case6}
  \end{subfigure}  
  \begin{subfigure}[t]{0.16\linewidth}
    \resizebox{\linewidth}{!}{\includegraphics{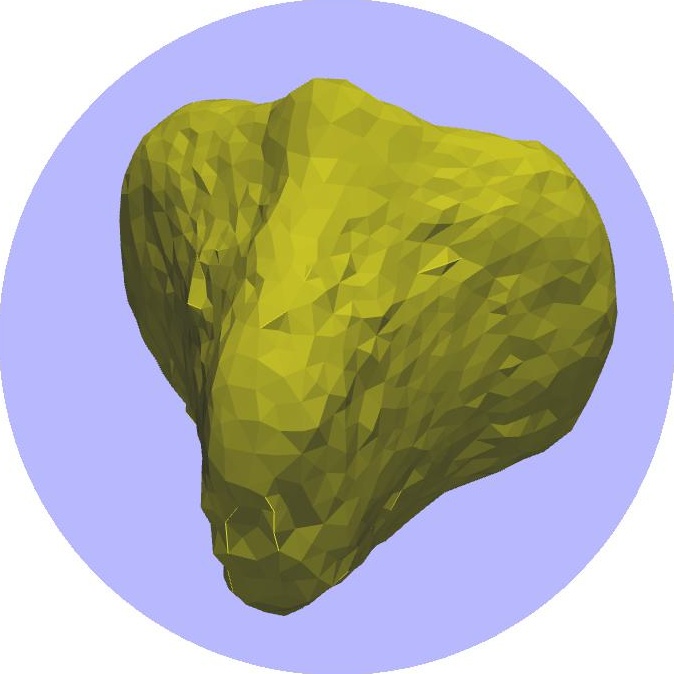}}
    \caption{Case \texttt{C7}}\label{fig:3D_case7}
  \end{subfigure}  
  \begin{subfigure}[t]{0.16\linewidth}
    \resizebox{\linewidth}{!}{\includegraphics{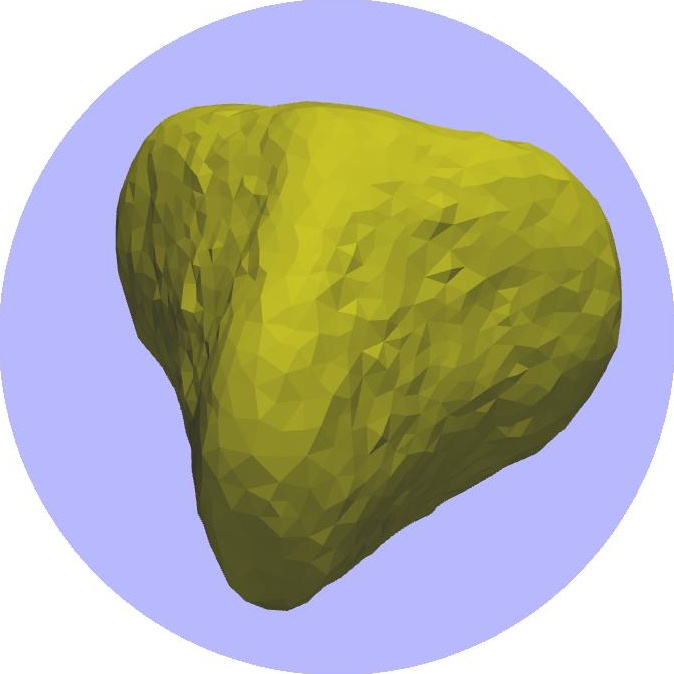}}
    \caption{Case \texttt{C8}}\label{fig:3D_case8}
  \end{subfigure}   
  \begin{subfigure}[t]{0.16\linewidth}
    \resizebox{\linewidth}{!}{\includegraphics{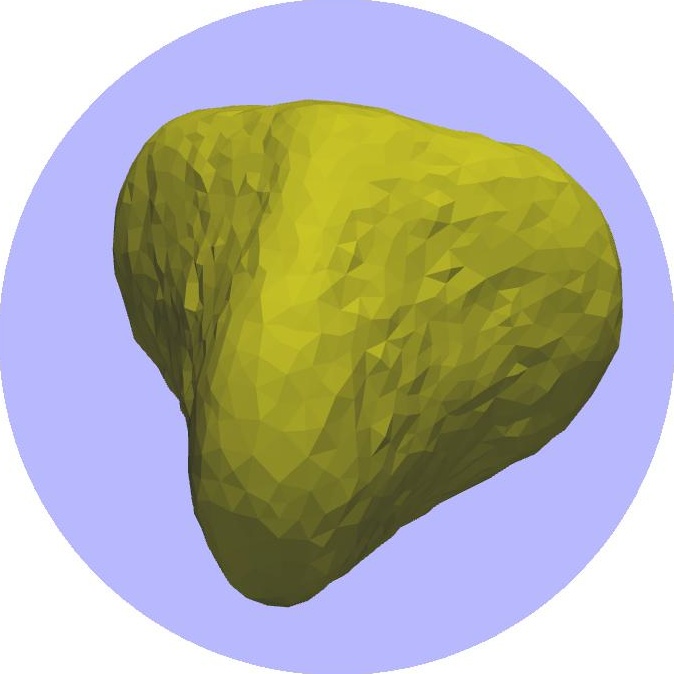}}
    \caption{Case \texttt{C9}}\label{fig:3D_case9}
  \end{subfigure}     
  \begin{subfigure}[t]{0.16\linewidth}
    \resizebox{\linewidth}{!}{\includegraphics{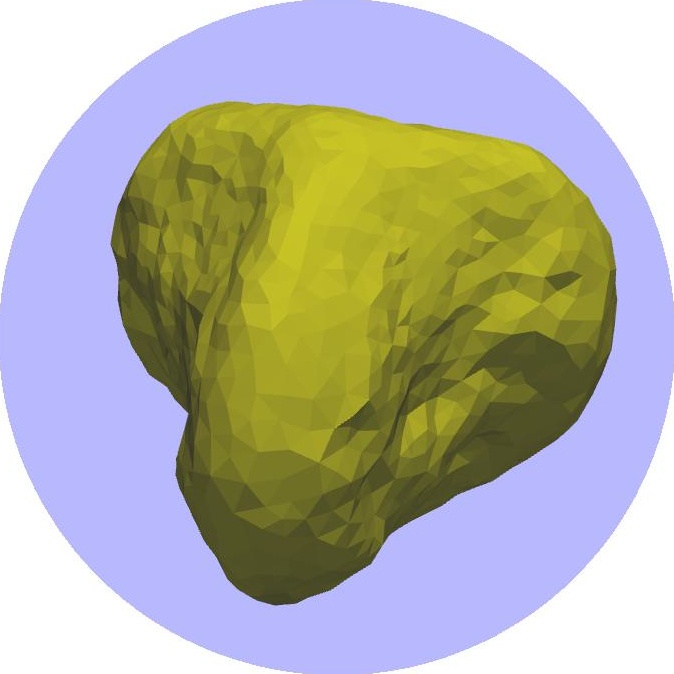}}
    \caption{Case \texttt{C10}}\label{fig:3D_case10}
  \end{subfigure}    
  \begin{subfigure}[t]{0.16\linewidth}
    \resizebox{\linewidth}{!}{\includegraphics{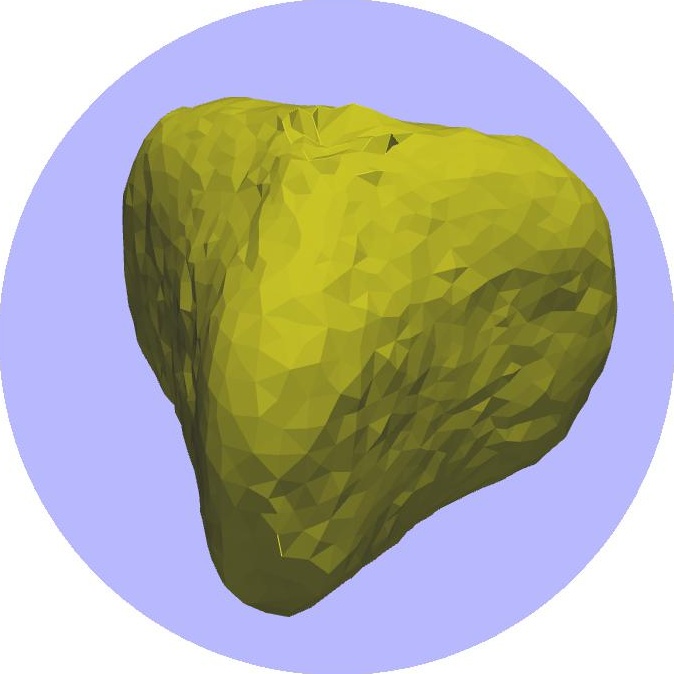}}
    \caption{Case \texttt{C11}}\label{fig:3D_case11}
  \end{subfigure}      
   \caption{Results of approximation with different number/choices of boundary measurements}
\end{figure}
%
%
%
%
%
%
%
\begin{figure}[htp!]
\centering
\resizebox{0.225\linewidth}{!}{\includegraphics{st4XYZ.jpg}} \hfill
\resizebox{0.225\linewidth}{!}{\includegraphics{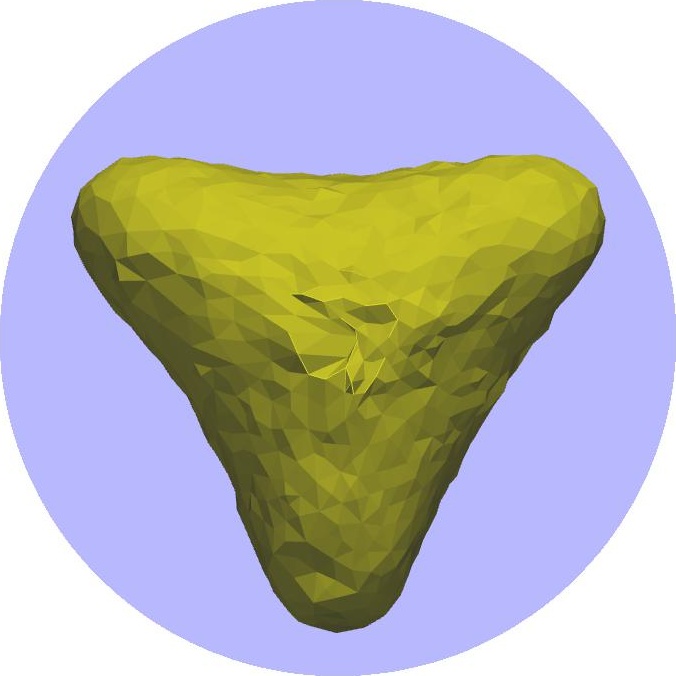}} \hfill
\resizebox{0.225\linewidth}{!}{\includegraphics{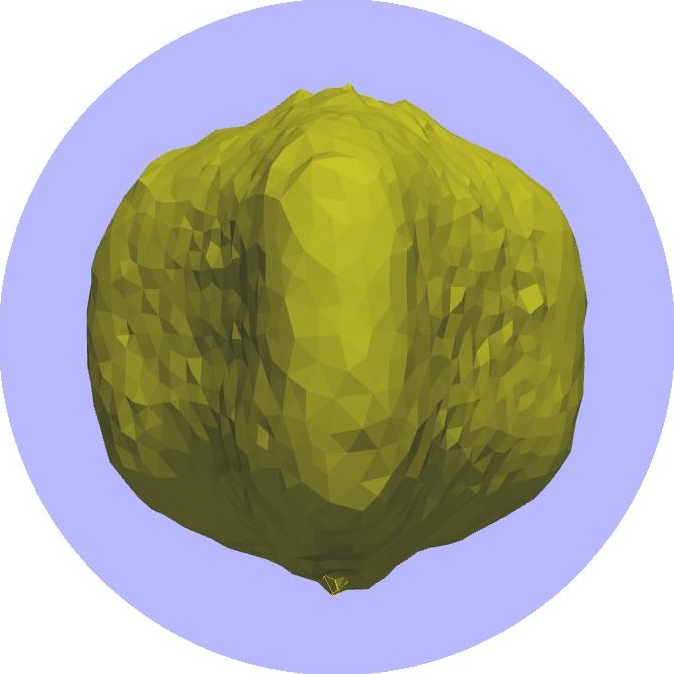}} \hfill
\resizebox{0.225\linewidth}{!}{\includegraphics{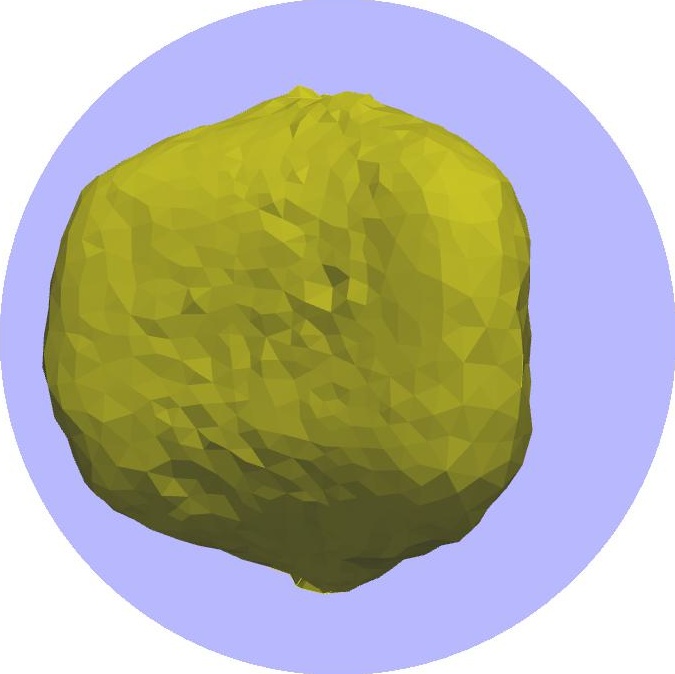}}
\caption{Result of Case \texttt{C11}: $(g^{(1)}, g^{(2)}, g^{(3)}, g^{(4)}) = (0.1,0.3\cos \theta, 0.3\sin \theta, 0.5\cos 2\phi)$}
\label{fig:3D_case10_view_all}
\end{figure}
\begin{figure}[htp!]
\centering
\resizebox{0.49\linewidth}{!}{\includegraphics{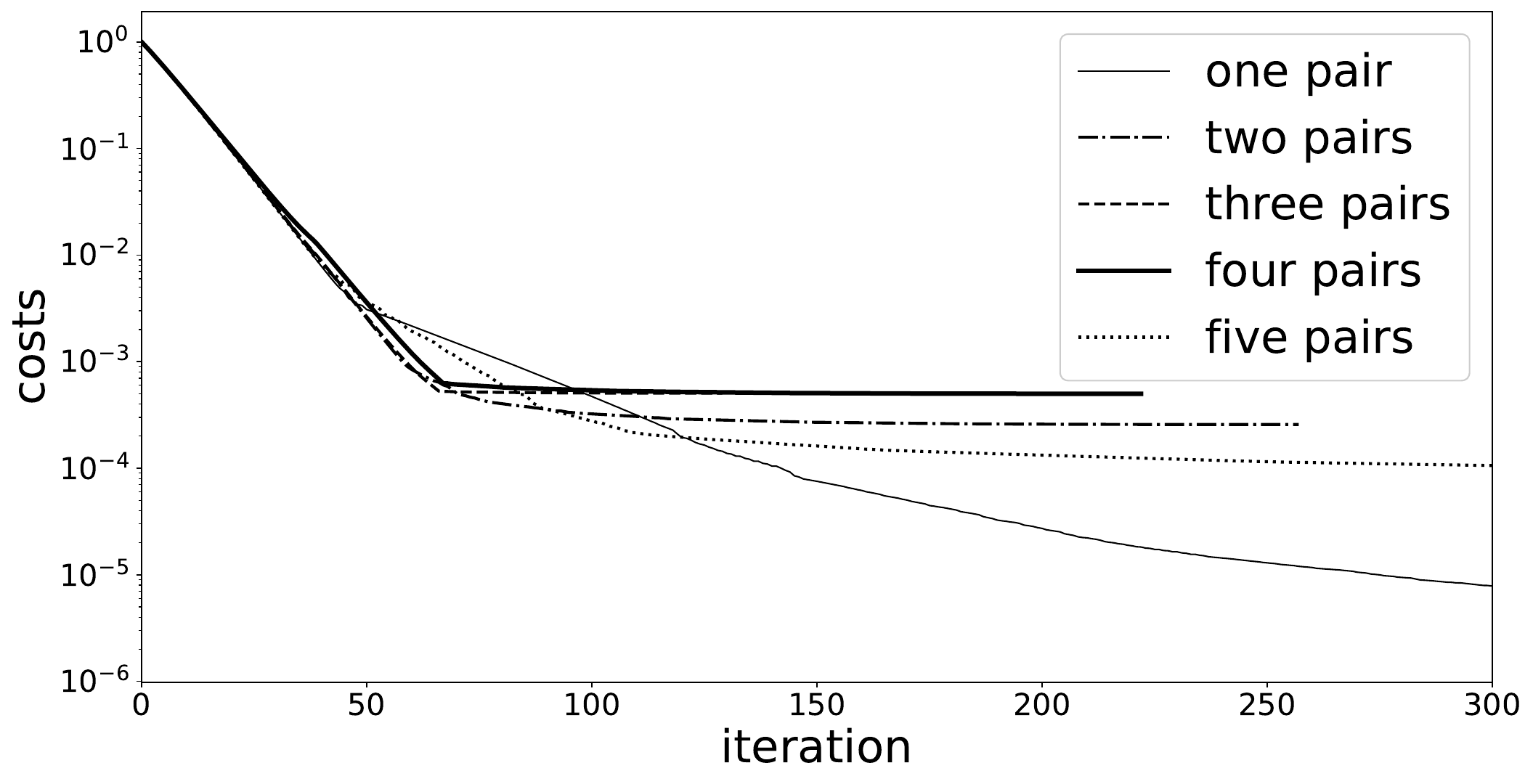}} \hfill
\resizebox{0.49\linewidth}{!}{\includegraphics{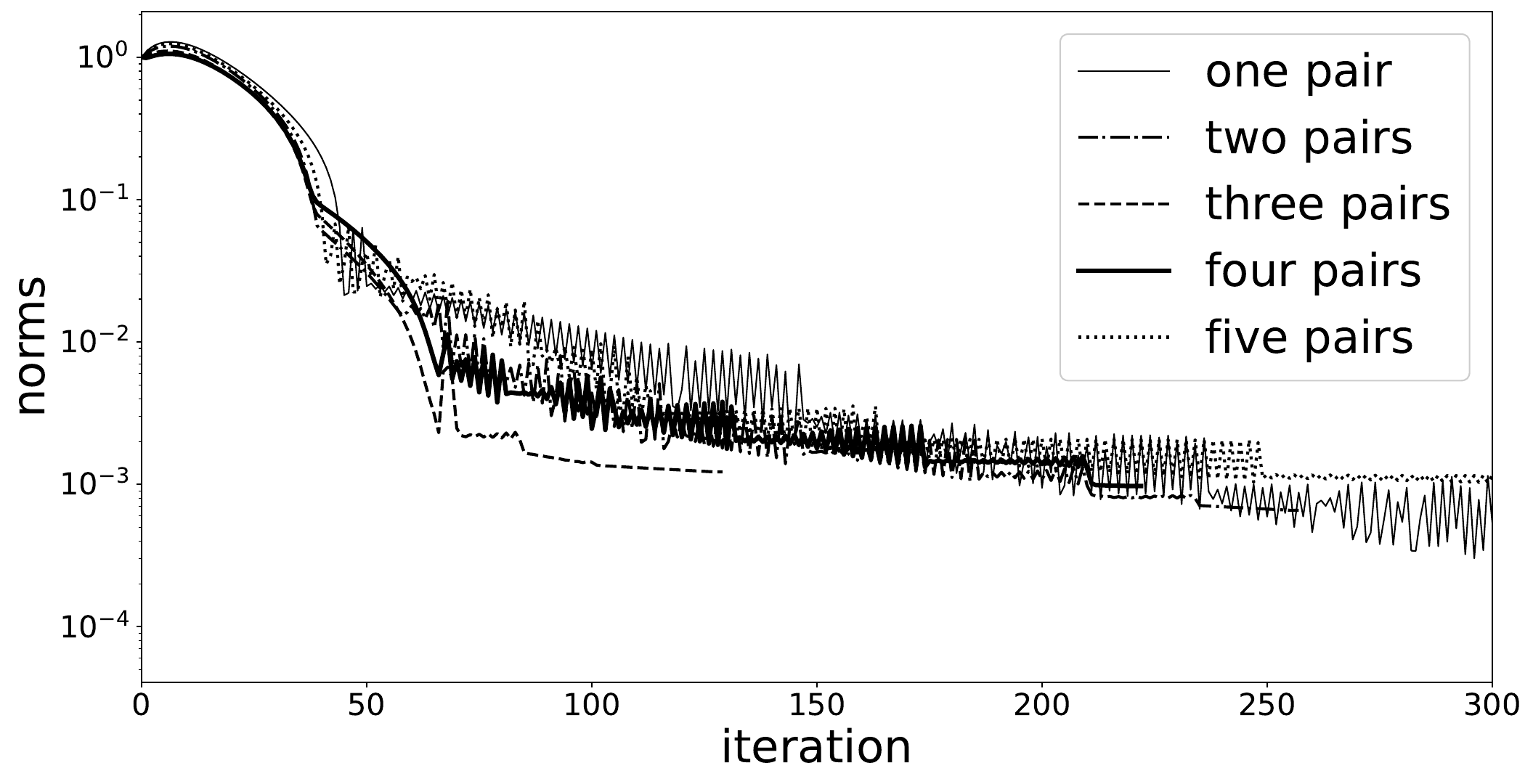}}
\caption{Histories of costs and gradient norms corresponding to the data set $(g^{(1)}, g^{(2)}, g^{(3)}, g^{(4)}, g^{(5)}) = (0.1,0.2\cos \theta, 0.2\sin \theta, 0.6\cos \phi, 0.6\sin \phi)$}
\label{fig:3D_case11_cost_and_norm}
\end{figure}
%
%
%
%
\section{Conclusion}\label{sec:conclusion}
We revisited two classical shape optimization approaches for solving the shape inverse problem with the Robin condition for the Laplace equation, focusing on boundary data tracking in a least-squares sense. 
The problem is highly ill-posed, as demonstrated by the instability of the shape Hessian in the optimization setting.

Our numerical findings show that using multiple boundary measurements improves the accuracy of detecting unknown inclusions, especially in two dimensions. 
However, reconstructions suffer when the concave part of the interior boundary is distant from the measurement region, likely due to the severe ill-posed nature of the problem. Addressing this challenge is a focus of our future work. 
Additionally, the smoothness of the interior boundary affects detection accuracy.
In three-dimensional cases, our proposed strategy does not significantly improve detection unless the prescribed data is carefully chosen. This prompts further exploration of alternative strategies for improved boundary reconstruction. 
Future research will involve developing more advanced computational techniques and schemes.

Furthermore, our computational strategy for solving the shape inverse problem can be adapted to other shape optimization formulations. Employing multiple boundary measurements is expected to significantly enhance detection capabilities in such cases.
\section*{Acknowledgements}
JFTR is partially supported by the Japan Science and Technology Agency under CREST Grant Number JPMJCR2014 and by the Japan Society for the Promotion of Science (JSPS) Grant-in-Aid for Early-Career Scientists under Japan Grant Number JP23K13012.
The authors gratefully acknowledge the anonymous reviewer for their valuable comments, which significantly enhanced the final article. 
Additionally, special thanks to John Sebastian Simon for his correction in the earlier version of the paper.



\bibliographystyle{alpha} 
\bibliography{main}   
\end{document}